\theoremstyle{plain}
\newtheorem{thm}{Theorem}[section]
\newtheorem{cor}[thm]{Corollary}
\newtheorem{prop}[thm]{Proposition}
\numberwithin{equation}{section}
\newtheorem{conjecture}{Conjecture}  % Conjecture labeled by letters
\newtheorem{conj}[conjecture]{Conjecture}  % Conjecture labeled by letters
\theoremstyle{definition}
\newtheorem{defn}[thm]{Definition}
\newtheorem{example}[thm]{Example}
\newtheorem{lemma}[thm]{Lemma}
\newtheorem{rmk}[thm]{Remark}
\theoremstyle{remark}
\newcommand{\BA}{{\mathbb{A}}}
\newcommand{\BC}{{\mathbb{C}}}
\newcommand{\BE}{{\mathbb{E}}}
\newcommand{\BH}{{\mathbb{H}}}
\newcommand{\BQ}{{\mathbb{Q}}}
\newcommand{\BR}{{\mathbb{R}}}
\newcommand{\BZ}{{\mathbb{Z}}}
\newcommand{\CB}{{\mathcal B}}
\newcommand{\CC}{{\mathcal C}}
\newcommand{\CD}{{\mathcal D}}
\newcommand{\CE}{{\mathcal E}}
\newcommand{\CF}{{\mathcal F}}
\newcommand{\CH}{{\mathcal H}}
\newcommand{\CL}{{\mathcal L}}
\newcommand{\CM}{{\mathcal M}}
\newcommand{\CO}{{\mathcal O}}
\newcommand{\CP}{{\mathcal P}}
\newcommand{\CX}{{\mathcal X}}
\newcommand{\CY}{{\mathcal Y}}
\newcommand{\blangle}{\big\langle}
\newcommand{\brangle}{\big\rangle}
\newcommand{\pt}{{\mathsf{p}}}
\newcommand{\td}{{\mathrm{td}}}
\DeclareFontFamily{OT1}{rsfs}{}
\DeclareFontShape{OT1}{rsfs}{n}{it}{<-> rsfs10}{}
\DeclareMathAlphabet{\curly}{OT1}{rsfs}{n}{it}
\newcommand\Ext{\operatorname{Ext}}
\newcommand{\Aut}{\operatorname{Aut}}
\newcommand{\p}{\mathbb{P}}
\newcommand{\Mbar}{{\overline M}}
\newcommand{\vir}{{\text{vir}}}
\newcommand{\Coh}{\mathrm{Coh}}
\newcommand{\Pic}{\mathop{\rm Pic}\nolimits}
\newcommand{\PT}{\mathsf{PT}}
\newcommand{\DT}{\mathsf{DT}}
\newcommand{\dt}{\mathsf{dt}}
\newcommand{\GW}{\mathsf{GW}}
\newcommand{\KM}{\mathsf{KM}}
\newcommand{\VW}{\mathsf{VW}}
\newcommand{\kk}{\mathsf{k}}
\newcommand\ev{\operatorname{ev}}
\newcommand{\QMod}{\mathsf{QMod}}
\newcommand{\Mod}{\mathsf{Mod}}
\newcommand{\Hilb}{\mathsf{Hilb}}
\newcommand{\Span}{\mathrm{Span}}
\newcommand{\Stab}{\mathrm{Stab}}
\newcommand{\id}{\mathrm{id}}
\newcommand{\Tr}{\mathrm{Tr}}
\newcommand{\ch}{\mathsf{ch}}
\newcommand{\wt}{\mathsf{wt}}
\newcommand{\rel}{\mathsf{rel}}
\newcommand{\AHJ}{\mathsf{AHJac}}
\newcommand{\QJac}{\mathsf{QJac}}
\newcommand{\Jac}{\mathsf{Jac}}
\newcommand{\SL}{\mathrm{SL}}
\newcommand{\GL}{\mathrm{GL}}
\newcommand{\ct}{\mathrm{ct}}
\newcommand{\AH}{\mathrm{AH}}
\newcommand{\pr}{\mathrm{pr}}
\DeclareMathOperator{\Wt}{\mathsf{WT}}
\newcommand{\Mon}{\mathrm{Mon}}
\newcommand{\DMon}{\mathrm{DMon}}
\newcommand{\taut}{{\mathrm{taut}}}
\begin{document}
\baselineskip=14.5pt
\title{Curve counting on the Enriques surface and the Klemm-Mari\~{n}o formula}

\author{Georg Oberdieck}

\address{KTH Royal Institute of Technology, Department of Mathematics}
\email{georgo@kth.se}
\date{\today}

\begin{abstract}
We determine the Gromov-Witten invariants of the local Enriques surfaces for all genera and curve classes and prove the Klemm-Mari\~{n}o formula.
In particular, we show that the generating series of genus $1$ invariants of the Enriques surface is the Fourier expansion of a certain power of Borcherds automorphic form on the moduli space of Enriques surfaces. We also determine all Vafa-Witten invariants of the Enriques surface.

The proof uses the 
correspondence between Gromov-Witten and Pandharipande-Thomas theory.
On the Gromov-Witten side we prove the relative Gromov-Witten potentials of an elliptic Enriques surfaces are quasi-Jacobi forms and satisfy a holomorphic anomaly equation.
On the sheaf side, we relate the Pandharipande-Thomas invariants of the Enriques-Calabi-Yau threefold in fiber classes to the $2$-dimensional Donaldson-Thomas invariants
by a version of Toda's formula for local K3 surfaces.
Altogether, we obtain sufficient modular constraints to determine all invariants from basic geometric computations.
\end{abstract}

\maketitle

\setcounter{tocdepth}{1} 
\tableofcontents

\section{Introduction}
\subsection{Main result}
An Enriques surface is a smooth complex projective surface $Y$ 
with non-trivial canonical bundle satisfying $\omega_Y^{\otimes 2} \cong \CO_Y$ and 
$H^1(Y,\CO_Y)=0$. 
Equivalently, an Enriques surface is the quotient of a K3 surface by a fixed point free involution.
Let $Y$ be an Enriques surface
and let $\beta \in H_2(Y,\BZ)$ be a class {\em modulo torsion}.\footnote{Throughout the paper, $H_k(X,\BZ)$ and $H^k(X,\BZ)$ will denote homology and cohomology groups of a topological space $X$ {\em modulo torsion}.
A discussion on the dependence on the torsion can be found in Appendix~\ref{appendix:torsion}.}

The moduli space $\Mbar_g(Y,\beta)$ of genus $g$ degree $\beta$ stable maps to $Y$ 
has a virtual fundamental class of dimension $g-1$,
\[ [ \Mbar_{g}(Y,\beta) ]^{\vir} \in A_{g-1}( \Mbar_g(Y,\beta) ). \]
Let $\BE \to \Mbar_g(Y,\beta)$ be the Hodge bundle
which has fiber $H^0(C,\omega_C)$ over a point $[f:C \to Y]$,
and let its Chern classes be denoted by %denote its Chern classes by
\[ \lambda_i = c_i(\BE) \in H^{2i}( \Mbar_g(Y,\beta) ). \]
The most fundamental Gromov-Witten invariant of the Enriques surface
is the Hodge integral
\begin{equation} \label{Ngbeta} N_{g,\beta} := \int_{[ \Mbar_{g}(Y,\beta) ]^{\vir}} (-1)^{g-1} \lambda_{g-1}. \end{equation}
The significance is that $N_{g,\beta}$  equals the Gromov-Witten invariant
of the local Calabi-Yau threefold $K_Y$ given by the total space of the canonical bundle on $Y$.
The analogue of $N_{g,\beta}$ for K3 surfaces 
was determined in the celebrated
Katz-Klemm-Vafa formula \cite{KKV} which was proven in \cite{PTKKV}.
For abelian surfaces an analogue of $N_{g,\beta}$ was considered in \cite{BOPY},
where an explicit formula was conjectured and partially proven.
We also refer to \cite{LP,CI,FRZZ,L2} for other recent work
on Gromov-Witten invariants of local del Pezzo surfaces.

Based on string theory, Klemm and Mari\~{n}o conjectured in \cite{KM1} an explicit formula for $N_{g,\beta}$, following earlier work on the genus $1$ case by Harvey and Moore \cite{HM}.
In \cite{MP} Maulik and Pandharipande gave a proof of this formula in genus $1$, which relied on the conjectural Virasoro constraints for the Enriques surface in genus $2$. The Virasoro constraints are not yet known in this instance, so their proof remains conditional at this point.\footnote{The proof of the Virasoro constraints for the Enriques surface is an interesting open question.}

We recall the formula conjectured by \cite{KM1}, but in a slightly digested form.
Define coefficients $\omega_g(n)$ by the product formula\footnote{In the notation of \cite[(4.25)]{KM2} we have
$\omega_g(n) = \frac{1}{4} 2^{3-2g} c_g(2n)$.}
\begin{align*}
\sum_{g \geq 0} \sum_{n \geq 0} \omega_g(n) (-1)^{g-1} z^{2g-2} q^n = 
\prod_{m \geq 1} \frac{ (1- e^z q^{2m})^2 (1- e^{-z} q^{2m})^2 (1-q^{2m})^4 }{ (1-e^z q^m)^2 (1- e^{-z} q^m)^2 (1-q^m)^{12} }
%& = 1 + (2 p^1 + 12 + 2 p^{-1})q + (3 p^2 + 24 p + 90 + 24 p^{-1} + 3 p^{-2}) q^2 %+ \ldots 
\end{align*}
where the right hand side is viewed as a power series in formal variables $z,q$.

\begin{conj}[The Klemm-Mari\~{n}o formula, \cite{KM1}] For all $g,\beta$ one has
\[ N_{g,\beta} = 2 \sum_{\substack{\textup{odd } k | \beta}} k^{2g-3} \omega_g\left( \frac{\beta^2}{2 k^2} \right). \]
\end{conj}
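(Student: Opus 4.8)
The plan is to determine all $N_{g,\beta}$ by combining three kinds of input: a structural modularity result on the Gromov-Witten side, explicit sheaf-theoretic computations on the Pandharipande-Thomas side, and the Gromov-Witten/Pandharipande-Thomas correspondence to pass between them. Since the conjectured right-hand side is manifestly a function of $\beta^2$ and of the divisibility of $\beta$, the first reduction I would make is to show that $N_{g,\beta}$ itself has this property. This should follow from deformation invariance of Gromov-Witten invariants together with the transitivity of the monodromy group of Enriques surfaces on classes of fixed square and fixed divisibility; the odd-divisor structure of the formula will then reflect the behavior of imprimitive classes, which I expect to handle by a multiple-cover (Hecke-type) argument.

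The heart of the Gromov-Witten side is to exploit an elliptic fibration $\pi \colon Y \to \BP^1$ on the Enriques surface. Organizing curve classes by their intersection with the fiber $f$, I would form generating series in a variable $q$ tracking the fiber degree and pass to the Gromov-Witten theory \emph{relative} to a smooth fiber $E$. The key claim to establish is that these relative potentials are quasi-Jacobi forms in the elliptic variables $(z,\tau)$, with $q=e^{2\pi i\tau}$, and that they satisfy a holomorphic anomaly equation expressing the anomaly of the genus $g$ potential (equivalently, the failure of $E_2$ to be modular) through lower-genus data. I would derive the anomaly equation from the degeneration formula applied to the fiber, and the quasi-modularity from the action of the Jacobi group on the elliptic parameter. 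Because the space of quasi-Jacobi forms of a given weight and index is finite-dimensional, this reduces each genus to finitely many unknown coefficients, and the holomorphic anomaly equation sets up a recursion in the genus.

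To supply the missing numerical data I would turn to the sheaf side on the Enriques-Calabi-Yau threefold. For curve classes in the fiber direction I would prove a version of Toda's formula, known for local K3 surfaces, expressing the Pandharipande-Thomas invariants in terms of the $2$-dimensional Donaldson-Thomas (Vafa-Witten) invariants of sheaves supported on the Enriques surface. These latter invariants are accessible through the geometry of moduli spaces of sheaves on $Y$ and yield explicit values. Transporting this information back to the Gromov-Witten side via the Gromov-Witten/Pandharipande-Thomas correspondence, together with a handful of basic geometric computations in low genus or low degree for the initial conditions, should provide exactly enough constraints to pin down all the undetermined coefficients of the quasi-Jacobi forms. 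A final check that the resulting closed formula agrees with the product expansion defining $\omega_g(n)$ then completes the proof.

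The step I expect to be the main obstacle is establishing the quasi-Jacobi modularity of the relative potentials together with the precise holomorphic anomaly equation. This is the structural input that makes the finite-dimensionality argument work, and it requires controlling the relative Gromov-Witten theory of the elliptic fibration uniformly in the genus and the fiber degree — in particular identifying the correct weight and index and verifying the full transformation law rather than merely its formal shape. The accounting that the available initial data is genuinely \emph{sufficient} to determine the finitely many free coefficients in each weight is a secondary but essential point: the dimensions of the relevant spaces of quasi-Jacobi forms must be matched against the number of independent geometric computations, and any shortfall would have to be made up by additional vanishing statements or base-case inputs.
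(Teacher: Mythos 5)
Your overall architecture --- quasi-Jacobi modularity and a holomorphic anomaly equation for the fibration-organized potentials obtained by degeneration, a version of Toda's formula together with explicit sheaf computations on the Enriques Calabi--Yau threefold, and the GW/PT correspondence to tie the two sides together --- is the architecture of the paper, and your opening reduction of $N_{g,\beta}$ to a function of $\beta^2$ and $\mathrm{div}(\beta)$ is in the right spirit (the paper obtains it from the \emph{derived} monodromy group acting on the invariant Mukai lattice, via the DT invariants and the GW/PT correspondence, rather than from $\Mon(Y)=O^{+}(H^2(Y,\BZ))$ directly). The gap is in the closing step, where you assert that finite-dimensionality of the spaces of quasi-Jacobi forms plus ``a handful of basic geometric computations'' pins down all coefficients. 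The sheaf side only yields explicit values for \emph{primitive} Mukai vectors (Hilbert schemes of $Y$ and G\"ottsche's formula); transported to the Gromov--Witten side and combined with induction on the divisibility $\ell$, this determines exactly those Fourier coefficients of $F_{g,\ell}$ attached to classes of divisibility $<\ell$. What is missing is the statement that an element of $\frac{1}{\Delta(q)^{\ell}}\QJac_{\frac{1}{2}\ell Q_{E_8}}(\Gamma_0(2))$ is determined by these coefficients, i.e.\ that a nonzero such form cannot have Fourier support contained in the multiples of $\ell$. Dimension counting cannot supply this: the number of unknown coefficients grows with $\ell$ while the stock of geometric inputs does not. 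The required injectivity is a genuine theorem (Lemma~\ref{lemma:quasimodular vanishing}, Proposition~\ref{prop:quasi Jacobi form vanishing}), proved by showing that the group generated by $\Gamma_0(2)$ and $\binom{1\ \ 1/m}{0\ \ 1}$ is dense in $\SL_2(\BR)$ for $m>2$.

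Moreover, that vanishing statement is \emph{false} for $m=2$ (the group is then discrete), so divisibility two requires a separate argument your plan does not anticipate: one factors $\widehat{F}_{g,2}=H_g(q)\,\Theta_{E_8}(\zeta^2,q^2)$, kills $H_g$ for $g\le 8$ by a weight bound after descending to an $\SL_2(\BZ)$-modular form, and for the remaining genera feeds in an additional sheaf-theoretic identity, $\DT^{\mathrm{odd}}_{4d,2}=-\DT^{\mathrm{even}}_{4d,2}$, extracted from pole-freeness of the stable-pairs series at $p=1$, together with the vanishing of $\widehat{F}_2$ and of its second $z$-derivative at $z=0$, to run an induction on $d$. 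Without these two ingredients --- the density/vanishing lemma for $m>2$ and the bespoke divisibility-two argument --- the imprimitive cases of the formula remain undetermined, which is precisely where the multiple-cover structure $\sum_{\text{odd }k\mid\beta}k^{2g-3}\omega_g(\beta^2/2k^2)$ lives.
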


The main result of this paper is the following:
\begin{thm} \label{thm:main theorem}
The Klemm-Mari\~{n}o formula holds for all genera $g$ and all curve classes $\beta$. 
\end{thm}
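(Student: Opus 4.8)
The plan is to determine the entire array $\{N_{g,\beta}\}$ by exploiting the modular symmetry produced by an elliptic fibration on $Y$, using it to reduce the problem to finitely many computable initial values, and transporting the hardest of those computations to the more rigid sheaf-theoretic side through the Gromov-Witten/Pandharipande-Thomas correspondence. Since the invariants $N_{g,\beta}$ are deformation invariant, I may assume $Y$ carries an elliptic fibration $\pi \colon Y \to \mathbb{P}^1$, and I decompose curve classes relative to the fiber class $f$. Organizing the invariants into a generating series in which a formal variable $q$ serves as the modular parameter and tracks the fiber multiplicity while an elliptic variable $z$ encodes the genus through the $\lambda_{g-1}$ insertions, I expect the resulting relative potentials to be quasi-Jacobi forms of fixed weight and index for a congruence subgroup of $\mathrm{SL}_2(\mathbb{Z})$.

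The first and central step is to prove this quasi-Jacobi modularity together with a holomorphic anomaly equation governing the failure of holomorphicity in the modular parameter. Both would be extracted from the elliptic fibration by degeneration and relative Gromov-Witten techniques: degenerating $Y$ so that the base $\mathbb{P}^1$ acquires a node and applying the degeneration formula expresses the potentials in terms of relative invariants of the pieces, while the boundary contributions encode precisely the anomaly. Because quasi-Jacobi forms of bounded weight and index form a finite-dimensional vector space, the transformation law and the anomaly equation together rigidly determine each relative potential up to finitely many free coefficients.

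The second step is to fix those finitely many coefficients, and here I would pass to the Pandharipande-Thomas side on the Enriques-Calabi-Yau threefold. A version of Toda's formula for local K3-type geometries should express the PT invariants in fiber classes in terms of the $2$-dimensional Donaldson-Thomas, equivalently Vafa-Witten, invariants of $Y$ itself. These sheaf-counting invariants are controlled by their own modular forms and are accessible from lattice data and elementary Euler-characteristic computations on the surface; matching them across the correspondence supplies the missing initial conditions. Feeding these into the modular constraints of the first step propagates the answer to all $(g,\beta)$, and comparing the resulting closed form with the product defining $\omega_g(n)$ yields the Klemm-Mari\~{n}o formula, including its characteristic sum $\sum_{\textup{odd } k \mid \beta}$ over odd divisors.

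The principal obstacle I anticipate lies in the first step. The Enriques surface is genuinely different from the K3 surface: its elliptic fibration carries two multiple (half-)fibers of multiplicity two, and it is only the K3 cover---not $Y$ itself---that is holomorphic symplectic, so that the ordinary, non-reduced invariants $N_{g,\beta}$ are nonzero in the first place. Consequently the $2$-torsion of the fibration and the half-integral index phenomena it induces must be tracked with care, and it is precisely these features that should produce the splitting into odd divisors in the conjectured formula. Establishing the exact weight, index, and anomaly equation in the presence of this $2$-torsion is where the real difficulty concentrates; by contrast, once the local model and Toda's formula are in place, the Pandharipande-Thomas reduction of the second step should proceed comparatively mechanically.
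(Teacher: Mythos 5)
Your overall architecture coincides with the paper's: quasi-Jacobi modularity and a holomorphic anomaly equation for the $\pi$-relative potentials obtained by degenerating the elliptic Enriques surface, Toda's formula relating Pandharipande--Thomas invariants in fiber classes to $2$-dimensional Donaldson--Thomas invariants on the sheaf side, and the (already established) GW/PT correspondence for the Enriques Calabi--Yau threefold as the bridge. Your diagnosis that the two half-fibers are the source of the $\Gamma_0(2)$-structure and of the sum over odd divisors is also on target.

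The genuine gap is in your mechanism for pinning down the invariants. You argue that finite-dimensionality of the relevant spaces of quasi-Jacobi forms, together with the anomaly equation, determines each potential ``up to finitely many free coefficients,'' which are then fixed by initial conditions computed mechanically on the sheaf side. This does not close the argument: as the divisibility of the curve class grows, the number of undetermined coefficients grows without bound, and the sheaf side does \emph{not} directly compute the imprimitive invariants --- the only direct geometric computation available is for primitive classes (where the moduli space is a disjoint union of Hilbert schemes of points and G\"ottsche's formula applies), plus the Maulik--Pandharipande evaluation in pure fiber classes. Two further ingredients, absent from your proposal, are what make the induction on divisibility work. First, one needs the derived monodromy group of the Enriques surface (computed via the Mukai lattice of the covering K3 and the Torelli theorem) to show that $N_{g,\beta}$ depends on $\beta$ only through $\beta^2$ and $\mathrm{div}(\beta)$, so that in the induction on divisibility $\ell$ the discrepancy $F^{\mathsf{GW}}_{g,\ell}-F^{\mathsf{KM}}_{g,\ell}$ has Fourier support only on coefficients $q^d\zeta^\alpha$ with $\ell\mid(d,\alpha)$. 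Second, one needs a non-obvious vanishing statement: a $\Gamma_0(2)$ quasi-modular form of nonzero weight whose Fourier expansion is supported on exponents divisible by $m>2$ vanishes identically --- proved in the paper by showing that the group generated by $\Gamma_0(2)$ and $\binom{1\ 1/m}{0\ \ 1}$ is dense in $\mathrm{SL}_2(\mathbb{R})$. This fails for $m=2$ (the group is then discrete), so the divisibility-$2$ case requires a separate argument combining the anomaly equation, the weight bound, and a pole-cancellation constraint on the $2$-dimensional DT invariants coming from the rationality of the PT series. Without these two inputs the ``finitely many coefficients fixed by initial conditions'' strategy cannot be carried out as described.
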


Before explaining the strategy of the proof, we consider
several reformulations
of the Klemm-Mari\~{n}o formula:
In genus $1$, our invariant simply reads
\[ N_{1,\beta} = \int_{[ \Mbar_{1}(Y,\beta) ]^{\vir}} 1. \]
Since elliptic curves on an Enriques surface are expected to be rigid,
this can be seen as a virtual count of elliptic curves on $Y$ in class $\beta$.
The Klemm-Mari\~{n}o formula specializes to:
\begin{cor} \label{cor:genus 1} We have
\[ \exp\left( \sum_{\beta \neq 0} N_{1,\beta} q^{\beta} \right) = \prod_{\beta > 0} \left( \frac{1+q^{\beta}}{1-q^{\beta}} \right)^{a(\beta^2/2)} \]
where the coefficients $a(n)$ are defined by
\[
\sum_{n \geq 0} a(n) q^n = \prod_{n \geq 1} \frac{(1+q^n)^8}{(1-q^n)^8} 
=
1 + 16q + 144q^{2} + 960q^{3} + 5264q^{4} + \ldots
\]
and $q^{\beta}$
is the canonical basis element in the group ring $\BC[H_2(Y,\BZ)]$
completed along the cone of effective curve class ($\beta > 0$ stands for $\beta$ effective).
\end{cor}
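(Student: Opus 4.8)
The plan is to deduce the corollary from the genus $1$ case of the Klemm-Mari\~{n}o formula, which holds by Theorem~\ref{thm:main theorem}; once the coefficient $\omega_1(n)$ is identified, the rest is a formal manipulation of generating series.

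First I would compute $\omega_1(n)$ by specializing the defining product to $z=0$. Writing $F(z,q)$ for its right hand side, every factor $(1-e^{\pm z}q^m)$ with $m \geq 1$ tends to $(1-q^m)$, a unit in $\BC[[q]]$, so $F$ is regular at $z=0$. Comparing with the left hand side $\sum_{g\geq 0}(-1)^{g-1}z^{2g-2}\sum_n \omega_g(n)q^n$, regularity forces the $z^{-2}$-coefficient to vanish, so $\omega_0\equiv 0$, while the $z^0$-coefficient gives
\[ \sum_{n\geq 0}\omega_1(n)q^n = F(0,q) = \prod_{m\geq 1}\frac{(1-q^{2m})^8}{(1-q^m)^{16}} = \prod_{m\geq 1}\frac{(1+q^m)^8}{(1-q^m)^8}, \]
where the last equality uses $1-q^{2m}=(1-q^m)(1+q^m)$. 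Hence $\omega_1(n)=a(n)$.

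Next I would take logarithms of the asserted identity. Since $\Mbar_1(Y,\beta)$ is empty unless $\beta$ is effective, the series $\sum_{\beta\neq 0}N_{1,\beta}q^{\beta}$ is supported on effective classes $\beta>0$. Using $\log\frac{1+x}{1-x}=2\sum_{\text{odd }j\geq 1}x^j/j$, the logarithm of the right hand side becomes
\[ \sum_{\beta>0}a\!\left(\tfrac{\beta^2}{2}\right)\log\frac{1+q^{\beta}}{1-q^{\beta}} = 2\sum_{\beta>0}\ \sum_{\text{odd }j\geq 1}\frac{1}{j}\,a\!\left(\tfrac{\beta^2}{2}\right)q^{j\beta}. \]
On the other hand, Theorem~\ref{thm:main theorem} and $\omega_1=a$ give $N_{1,\beta}=2\sum_{\text{odd }k\mid\beta}k^{-1}a\!\left(\tfrac{(\beta/k)^2}{2}\right)$, and substituting $\beta=k\gamma$ turns $\sum_{\beta>0}N_{1,\beta}q^{\beta}$ into $2\sum_{\gamma>0}\sum_{\text{odd }k\geq 1}k^{-1}a(\gamma^2/2)q^{k\gamma}$. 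This agrees with the previous display after relabeling, so exponentiating yields the claim.

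The argument is elementary, and the only steps requiring care are the identification $\omega_1(n)=a(n)$, which rests on the regularity of $F$ at $z=0$ (equivalently $\omega_0\equiv 0$), and the reindexing $\beta=k\gamma$: one must check it is a bijection between $\{(\beta,k):\beta>0,\ k\text{ odd},\ k\mid\beta\}$ and $\{(\gamma,k):\gamma>0,\ k\text{ odd}\}$ on the support of the summand, using that $a(\gamma^2/2)$ vanishes unless $\gamma^2\geq 0$ and that on an Enriques surface $k\gamma$ is then effective if and only if $\gamma$ is.
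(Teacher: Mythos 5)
Your proposal is correct and follows exactly the route the paper intends: the corollary is stated as the genus~$1$ specialization of the Klemm--Mari\~{n}o formula, and your argument (identify $\omega_1(n)=a(n)$ via the $z=0$ specialization and regularity forcing $\omega_0\equiv 0$, then match logarithms through the reindexing $\beta=k\gamma$ over odd $k$) is precisely that specialization, carried out with appropriate care about effectivity on the support of $a(\gamma^2/2)$.
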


Consider the lattice $M = U \oplus U(2) \oplus E_8(-2)$
and the associated Hermitian symmetric domain of type IV,
\[ \CD_M = \{ x \in \p(M \otimes \BC) | x \cdot x = 0, x \cdot \overline{x} > 0 \}. \]
The moduli space of Enriques surfaces is isomorphic to the arithmetic quotient $\CD_M/O(M)$ with an irreducible hypersurface (corresponding to mildly singular Enriques surfaces, the Coble surfaces) removed, see \cite{EnriquesBook}.
Let $\Phi(t)$ be Borcherds weight $4$ automorphic form on $\CD_M$ for the group $O(M)$ \cite{BorcherdsEnriques}.
The form $\Phi(t)$ vanishes precisely on the locus of singular Enriques,
and admits Fourier expansions around the two cusps of the moduli space of Enriques surfaces.
Corollary~\ref{cor:genus 1} then precisely says that
the generating series $\exp( \sum_{\beta \neq 0} N_{1,\beta} q^{\beta} )$
is the Fourier expansion of
the automorphism form
\[ \frac{1}{\Phi(t)^{1/8}} \]
around one of these cusps
(more precisely, the 'level $1$ cusp' as named in \cite[Sec.7.2]{Yoshikawa}.)

This observation matches the main prediction of genus $1$ mirror symmetry,
which relates the genus $1$ Gromov-Witten invariants in the $A$-model to the analytic torsion in the $B$-model \cite{BCOV, HM, EFM}. Here, Enriques surfaces are mirror to itself
and the analytic torsion of an Enriques surfaces
was computed by Yoshikawa in terms of $\Phi(t)$ in \cite{YoshikawaInvent}.

We can also compare Corollary~\ref{cor:genus 1} to the case when the target variety is an elliptic curve $E$, for which the genus $1$ Gromov-Witten invariants $N_{1,d}^E$ are given by
%\footnote{We refer to \cite{EFM} for the geometric origin of the prefactor $q^{-1/24}$.}
\[ q^{-1/24} \exp\left( \sum_{d \geq 1} N_{1,d}^E q^d \right) = \frac{1}{\Delta(q)^{1/24}}, \quad \Delta(q) = q \prod_{n \geq 1} (1-q^n)^{24}. \]
The weight $12$ cusp form $\Delta(q)$ on the upper half plane 
plays here the role of $\Phi(t)$.

The automorphic form $\Phi(t)$ satisfies a natural second order differential equation (more precisely, it lies in the kernel of the heat operator).
As noted in \cite[4.4]{KM2}, this translates to the following recursion 
which appeared first in the work of Maulik and Pandharipande \cite{MP}:
\begin{cor} For all $\beta \neq 0$ we have
\[ (\beta,\beta) N_{1,\beta} = 8 \sum_{\substack{ \beta_1 + \beta_2 = \beta \\ \beta_1, \beta_2 > 0}} (\beta_1, \beta_2) N_{1,\beta_1} N_{1,\beta_2}. \]
\end{cor}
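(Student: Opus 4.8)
The plan is to recognize the asserted recursion as the Fourier-coefficient form of the second order differential equation satisfied by the Borcherds form $\Phi$, and to feed in Corollary~\ref{cor:genus 1}. Write
\[ F = \sum_{\beta \neq 0} N_{1,\beta}\, q^{\beta}, \]
so that by Corollary~\ref{cor:genus 1} together with the identification of $\exp(F)$ with $\Phi^{-1/8}$ at the level $1$ cusp we may take $\Phi = \exp(-8F)$, normalized to have constant term $1$. Introduce the Laplacian $\Delta$ on the completed group ring $\BC[H_2(Y,\BZ)]$ by $\Delta q^{\beta} = (\beta,\beta)\, q^{\beta}$. Concretely, fixing a basis $(e_i)$ with Gram matrix $g_{ij} = (e_i,e_j)$ and writing $q^{\beta} = e^{(\beta,t)}$ for a formal Kähler parameter $t$, this is the constant coefficient operator $\Delta = \sum_{i,j} g^{ij}\, \partial_{e_i}\partial_{e_j}$, because $\sum_{i,j} g^{ij}(\beta,e_i)(\beta,e_j) = (\beta,\beta)$.

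First I would reduce the recursion to a differential equation for $\Phi$. The product rule gives
\[ \Delta\, \exp(-8F) = \bigl( -8\,\Delta F + 64\,\langle \nabla F, \nabla F\rangle \bigr)\, \exp(-8F), \qquad \langle \nabla F, \nabla F\rangle := \sum_{i,j} g^{ij}\, (\partial_{e_i} F)(\partial_{e_j} F). \]
Expanding both pieces in the Fourier variable and using $\sum_{i,j} g^{ij}(\beta_1,e_i)(\beta_2,e_j) = (\beta_1,\beta_2)$, one finds
\[ \Delta F = \sum_{\beta} (\beta,\beta)\, N_{1,\beta}\, q^{\beta}, \qquad \langle \nabla F, \nabla F\rangle = \sum_{\beta_1, \beta_2 > 0} (\beta_1,\beta_2)\, N_{1,\beta_1} N_{1,\beta_2}\, q^{\beta_1 + \beta_2}. \]
Comparing coefficients of $q^{\beta}$, the asserted recursion holds for every $\beta \neq 0$ if and only if $\Delta F = 8\,\langle \nabla F, \nabla F\rangle$; since $\exp(-8F)$ is a unit, the displayed product rule shows this is in turn equivalent to $\Delta\Phi = 0$.

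It then remains to show $\Delta\Phi = 0$, i.e.\ that $\Phi$ lies in the kernel of the heat operator. I would establish this from the automorphic side: the lattice $M = U \oplus U(2) \oplus E_8(-2)$ has signature $(2,10)$, and $\Phi$ has weight $4$, which is exactly the singular weight $\tfrac{1}{2}(\operatorname{rank} M) - 2 = 4$ for $O(M)$. Hence $\Phi$ is a singular modular form: its non-vanishing Fourier coefficients $c(\beta)$ are supported on the isotropic classes, those with $(\beta,\beta) = 0$. Since $\Delta\Phi = \sum_{\beta} c(\beta)\,(\beta,\beta)\, q^{\beta}$, this is precisely $\Delta\Phi = 0$. (Alternatively, one may verify $\Delta\Phi = 0$ directly from Borcherds' realization of $\Phi$ as a theta lift, using that the underlying Siegel theta function satisfies the heat equation.)

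The hard part will be the last step, both conceptually and in its bookkeeping: one must match the invariant heat operator on $\CD_M$ with the naive constant coefficient Laplacian $\Delta$ in the Fourier–Jacobi coordinates at the chosen cusp, and confirm the normalization implicit in Corollary~\ref{cor:genus 1} — namely that the expansion of $\Phi$ there has constant term $1$ and carries no $q^{\beta_0}$ prefactor, since the identity $\Delta\Phi = 0$ is sensitive to both. Once this calibration is in place, everything else is a purely formal manipulation of the generating series.
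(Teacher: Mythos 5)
Your proposal is correct and follows exactly the route the paper indicates: the paper derives this recursion from Corollary~\ref{cor:genus 1} together with the fact that $\Phi$ lies in the kernel of the heat operator (citing \cite[4.4]{KM2}), which is precisely your reduction of the $q^{\beta}$-coefficient identity to $\Delta\,e^{-8F}=0$. Your additional observation that $\Delta\Phi=0$ follows because weight $4$ is the singular weight for $O(2,10)$ (so the Fourier coefficients of $\Phi$ are supported on isotropic vectors) supplies the justification the paper leaves implicit, and the normalization issue you flag at the cusp is exactly what is packaged into the paper's assertion that $\exp(F)$ \emph{is} the Fourier expansion of $\Phi^{-1/8}$ with constant term $1$ and no $q^{\rho}$ prefactor.
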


In higher genus, we can state the full topological string partition function
of the local Enriques $K_Y$: Consider the genus $g$ Gromov-Witten potential
\[ F^{K_Y}_g(q) = \sum_{\beta > 0} q^{\beta} N_{g,\beta}. \]

\begin{cor}
The partition function of the local Calabi-Yau threefold $K_Y$ is
\[
\exp\left( \sum_{g \geq 1} F^{K_Y}_g(q) (-1)^{g-1} z^{2g-2} \right)
=
\prod_{\beta > 0} \prod_{r \in \BZ} \left( \frac{1 + e^{rz} q^{\beta}}{1- e^{rz} q^{\beta}} \right)^{\omega(r,\beta^2/2)} \]
where the coefficients
$\omega(r,n)$ are defined by
\begin{align*}
\sum_{n \geq 0} \sum_{r \in \BZ} \omega(r,n) p^r q^n & = 
\prod_{m \geq 1} \frac{ (1-p q^{2m})^2 (1-p^{-1} q^{2m})^2 (1-q^{2m})^4 }{ (1-p q^m)^2 (1-p^{-1} q^m)^2 (1-q^m)^{12} }  \\
& = 1 + (2 p^1 + 12 + 2 p^{-1})q + 
%(3 p^2 + 24 p + 90 + 24 p^{-1} + 3 p^{-2}) q^2 + 
\ldots 
\end{align*}
\end{cor}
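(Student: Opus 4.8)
The plan is to reduce the corollary to the Klemm-Mari\~no formula (Theorem~\ref{thm:main theorem}) by a purely formal manipulation of the product on the right hand side, so that the entire content is the main theorem together with a bookkeeping of the divisor sum and the passage from the variable $p$ to $e^z$.

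First I would record the relation between the two families of coefficients. Comparing the two product formulas, the generating series for $\omega_g(n)$ is obtained from that for $\omega(r,n)$ by the substitution $p = e^z$ (note $p^{-1}=e^{-z}$). Hence for every fixed $n \geq 0$,
\[
\sum_{r \in \BZ} \omega(r,n)\, e^{rz} = \sum_{g \geq 0} \omega_g(n)\,(-1)^{g-1} z^{2g-2},
\]
and, after rescaling $z \mapsto kz$,
\[
\sum_{r \in \BZ} \omega(r,n)\, e^{rkz} = \sum_{g \geq 0} \omega_g(n)\,(-1)^{g-1} (kz)^{2g-2}.
\]
Since each coefficient of $q^n$ in the product is a finite, $z\mapsto -z$ symmetric Laurent polynomial in $e^z$, the left hand side is regular at $z=0$; comparing the coefficient of $z^{-2}$ forces $\omega_0(n)=0$ for all $n$. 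In particular $N_{0,\beta}=0$ by the Klemm-Mari\~no formula, so $F^{K_Y}_0(q)=0$ and the outer sum in the exponent on the left hand side may freely be extended to range over $g \geq 0$.

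Next I would take the logarithm of the right hand side and expand $\log\frac{1+x}{1-x} = 2\sum_{k \geq 1,\, k \textup{ odd}} x^k/k$ with $x = e^{rz}q^{\beta}$, giving
\[
\log(\textup{RHS}) = 2 \sum_{\beta>0} \sum_{r \in \BZ} \omega(r,\beta^2/2) \sum_{k\textup{ odd}} \frac{1}{k}\, e^{krz}\, q^{k\beta}.
\]
Substituting $\alpha = k\beta$ (so that $\beta^2/2 = \alpha^2/(2k^2)$ and the sum over $\beta$ becomes a sum over $\alpha > 0$ with $k \mid \alpha$) and applying the rescaled identity of the previous paragraph yields, after using $\tfrac1k\, k^{2g-2} = k^{2g-3}$,
\[
\log(\textup{RHS}) = \sum_{g \geq 0} (-1)^{g-1} z^{2g-2} \sum_{\alpha > 0} q^{\alpha}\; 2\!\!\sum_{\substack{k \textup{ odd}\\ k \mid \alpha}} k^{2g-3}\, \omega_g\!\left(\frac{\alpha^2}{2k^2}\right).
\]
The inner divisor sum is exactly $N_{g,\alpha}$ by Theorem~\ref{thm:main theorem}, so the right hand side becomes $\sum_{g\geq 0}(-1)^{g-1}z^{2g-2}F^{K_Y}_g(q)$, which equals $\log(\textup{LHS})$ by the vanishing of $F_0^{K_Y}$.

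There is essentially no genuine obstacle here beyond the main theorem; the only points requiring care are formal. I would verify that the product over the infinitely many effective classes $\beta$ is well defined in the completed group ring $\BC[H_2(Y,\BZ)]$ (each fixed monomial $q^{\alpha}$ receives only finitely many contributions), and that interchanging the sums over $g$, $r$, $k$ and $\beta$ is legitimate at the level of formal power series in $z$ and $q$. One should also confirm that ``odd $k \mid \beta$'' in the Klemm-Mari\~no formula matches the reparametrization $\alpha = k\beta$ with $\beta$ an integral class modulo torsion, so that the divisor sums on the two sides agree termwise.
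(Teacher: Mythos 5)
Your proposal is correct, and it is essentially the intended derivation: the paper states this corollary in the introduction as an immediate reformulation of Theorem~\ref{thm:main theorem} without writing out a proof, and the manipulation you perform (taking logarithms, expanding $\log\frac{1+x}{1-x}=2\sum_{k\,\mathrm{odd}}x^k/k$ to produce the odd divisor sum, reindexing $\alpha=k\beta$, and matching the multiple-cover structure of the Klemm--Mari\~no formula after the substitution $p=e^{z}$) is exactly the standard argument. The formal points you flag at the end, in particular that $\omega_g(n)=0$ for $n<0$ takes care of any mismatch between effectivity of $\alpha$ and of $\alpha/k$, are the right ones to check.
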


It is very rare in enumerative geometry of Calabi-Yau threefolds
to see completely explicit evaluations.
The invariants of the local Enriques appear as the fiber class invariants
of the (compact) Enriques Calabi-Yau threefold $Q=(X \times E)/\BZ_2$,
where $X \to Y$ is the covering K3 surface and $E$ is an elliptic curve.
Hence the partition function of the local Enriques
is the specialization of the partition function of a compact Calabi-Yau threefold.
It would be interesting to obtain an explicit evaluation of the full partition function of $Q$, see \cite{KM2} for a conjectural computation up to genus $6$.

Another equivalent reformulation of our computation of $N_{g,\beta}$ concerns sheaf counting on the local Calabi-Yau threefold $K_Y$.
Tanaka and Thomas \cite{TT1, TT2} gave a conjectural definition\footnote{After this paper appeared on the arXiv, the author was made aware of the recent preprint of Liu \cite{Liu} in which the Tanaka-Thomas conjecture \cite[Conjecture 1.2]{TT2} is proven in general.}
of an invariant $\VW_S(v)$ which counts compactly supported semi-stable sheaves $F$ on local surfaces $K_S$ with Chern character $\ch(p_{\ast} F) = v$, where $p : K_S \to S$ is the projection.
The {\em Vafa-Witten invariant} $\VW_S(v)$
is 
%expected to be 
a mathematical incarnation of the 
sheaf counting invariant envisioned by Vafa and Witten \cite{VW} using string theory.
We prove the following for the Enriques surface:

\begin{cor}  \label{cor:Vafa Witten}
Conjecture 1.2 in \cite{TT2} holds for the Enriques surface. In particular,
the Vafa-Witten invariant of $Y$ is well-defined. It is given by
\[ \VW_Y(r,\beta,n) = 2 \sum_{\substack{k|(r,\beta,n) \\ k \geq 1 \textup{ odd}}}
\frac{1}{k^2} e\left( \Hilb^{\frac{\beta^2 - 2rn - r^2}{2k^2} + \frac{1}{2}}(Y) \right). \]
where $e(\Hilb^n Y)$ is the topological Euler number of the Hilbert scheme of $n$ points.
\end{cor}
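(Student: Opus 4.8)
The plan is to deduce the corollary from Theorem~\ref{thm:main theorem} by transporting the Klemm-Mari\~{n}o formula across the chain of correspondences used throughout the paper, and only at the end to identify the resulting sheaf counts with the Tanaka-Thomas Vafa-Witten invariants. First I would recall that for a surface $S$ with $p_g(S)=0$ the invariant $\VW_S(v)$ of \cite{TT2} is computed, after $\BC^{\ast}$-localisation on the moduli of Higgs pairs on $K_S$, from the moduli of compactly supported $2$-dimensional sheaves $F$ on $K_S$ with $\ch(p_{\ast}F)=v$, weighted by the Behrend function. These are precisely the $2$-dimensional Donaldson-Thomas invariants of the local Calabi-Yau $K_Y$ that appear in the body of the paper, and since $p_g(Y)=0$ the reduction applies to $Y$.

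Next I would invoke the Toda-type formula established in the paper, which writes the Pandharipande-Thomas partition function of the Enriques Calabi-Yau threefold $Q$ in fiber classes as an explicit infinite product whose exponents are exactly these $2$-dimensional Donaldson-Thomas invariants. Together with the Gromov-Witten/Pandharipande-Thomas correspondence, Theorem~\ref{thm:main theorem} determines the left-hand side of this product completely: the Klemm-Mari\~{n}o formula fixes every $N_{g,\beta}$, and hence all Pandharipande-Thomas invariants in fiber classes.

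The third step is to invert the product formula and read off the $2$-dimensional Donaldson-Thomas invariants, and therefore the numbers $\VW_Y(r,\beta,n)$, from the now-known Pandharipande-Thomas series. Here the factor $\prod_{m\geq 1}(1-q^m)^{-12}$ occurring in the Klemm-Mari\~{n}o product is recognised, through G\"ottsche's formula $\sum_{n\geq 0} e(\Hilb^n Y)\, q^n = \prod_{m\geq 1}(1-q^m)^{-12}$ and $e(Y)=12$, as the generating series of Euler numbers of Hilbert schemes of points on $Y$. Writing $v^2 = \beta^2 - 2rn - r^2$ for the Mukai self-intersection of $v=(r,\beta,n)$, the relevant Hilbert scheme has point index $(v^2+1)/2 = \frac{\beta^2 - 2rn - r^2}{2} + \frac12$, matching the exponent in the corollary; the divisor sum with its $1/k^2$ weights is the familiar rank-$k$ structure predicted by the conjectural modular formula of \cite{TT2}, while the overall factor $2$ and the restriction to odd $k$ reflect the twist $F \mapsto F\otimes\omega_Y$ by the $2$-torsion canonical class and the geometry of the double cover. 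Assembling these gives the closed formula, and the agreement with the shape predicted in \cite{TT2} yields the asserted well-definedness, establishing Conjecture 1.2 of \cite{TT2} for $Y$.

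The hard part will be the first step: rigorously identifying the Tanaka-Thomas Vafa-Witten invariant with the $2$-dimensional Donaldson-Thomas invariant on $K_Y$ for the $p_g=0$ surface $Y$, where strictly semistable sheaves occur for imprimitive $v$ and the monopole branch does not vanish. Controlling these contributions — and in particular proving the independence of the polarisation demanded by Conjecture 1.2 of \cite{TT2} — is where the genuine difficulty lies; once this identification is secured, the remaining steps are a matter of combining the already-established correspondences with the product-formula bookkeeping.
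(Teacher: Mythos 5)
Your second and third steps are sound and essentially reproduce what the paper does: Theorem~\ref{thm:main theorem} together with the GW/PT correspondence and the inversion of Toda's formula (Proposition~\ref{prop:GW vs DT}, Corollary~\ref{cor:consequence of main thm}) pins down the generalized Donaldson--Thomas invariants as $\DT(v) = 8\sum_{\textup{odd }k | v} k^{-2}\,[\eta^{-12}]_{q^{(v/k)^2/2}}$, and G\"ottsche's formula converts the $\eta^{-12}$ coefficients into Euler numbers of $\Hilb^{(v^2+1)/2}(Y)$. But the step you yourself flag as ``the hard part'' --- identifying the Tanaka--Thomas invariant $\VW_Y(v)$, defined through $\BC^{\ast}$-localized Joyce--Song pair invariants $P_v(n)$ on the non-compact $K_Y$, with the invariants $\DT(v)$ appearing in Toda's formula on the compact threefold $Q$ --- is exactly the content of Conjecture 1.2 of \cite{TT2} and is left entirely unargued in your proposal. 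Without it you have computed the right-hand side of the corollary but not shown that it equals $\VW_Y(v)$, nor that $\VW_Y(v)$ is well-defined (independent of $n$ and of the polarization) in the presence of strictly semistable sheaves.

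The paper closes this gap not by a direct analysis of the Higgs/monopole branches on $K_Y$, but by a degeneration argument: the degeneration $Q \rightsquigarrow T\cup_X T$ and the further normal-cone degeneration of $T$, combined with the degeneration formula for stable pairs, give the multiplicative relation $f^Q_{v,n}(t) = f^{K_Y}_{v,n}(t)^4$ between the generating series of pair invariants (Lemma~\ref{lemma:deg formula Pairs}). On the \emph{compact} Calabi--Yau $Q$ the Joyce--Song wall-crossing formula applies unconditionally and expresses $f^Q_{v,n}(t)$ as $\exp\bigl(\sum_{\delta>0} t^{\delta}(-1)^{\chi(\delta v(n))}\chi(\delta v(n))\,\DT(\delta v)\bigr)$ up to $O(t^{1+})$; taking the fourth root shows that $f^{K_Y}_{v,n}(t)$ has exactly the shape demanded by Conjecture 1.2 with $\VW(v)=\tfrac14\DT(v)$, which both proves well-definedness and yields the factor $2 = \tfrac14\cdot 8$ in the final formula. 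If you want to salvage your route, you would need either this degeneration argument or an independent proof of the Tanaka--Thomas conjecture for $p_g=0$ surfaces (as in the preprint of Liu cited in the paper); as written, the proposal assumes its most delicate claim.
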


Concretely, by G\"ottsche's formula \cite{Goettsche} we have
\[ \sum_{n=0}^{\infty} e(\Hilb^n Y) q^n = \prod_{n \geq 1} \frac{1}{(1-q^n)^{12}} \]
and we set $e(\Hilb^n Y)=0$ if $n$ is negative or fractional.

Corollary~\ref{cor:Vafa Witten} is an analogue of the multiple cover formula for sheaf counting invariants on local K3 surfaces conjectured by Toda \cite{Toda} and proven by Maulik and Thomas \cite{MT}.

\subsection{Overview of the proof}
Let $X \to Y$ be the covering K3 surface of the Enriques and let $\tau$ be the covering involution. Consider the Enriques Calabi-Yau threefold
\[ Q = (X \times E)/ \langle (\tau, -1) \rangle. \]
%where $E$ is an elliptic curve. 
The Calabi-Yau threefold $Q$ admits a K3 fibration
\[ p : Q \to E/\langle -1 \rangle = \p^1 \]
with $4$ double Enriques fibers.

There are three different counting invariants that can be defined for $Q$:
\begin{enumerate}
\item[(i)] Gromov-Witten invariants of $Q$ in fiber classes counting stable maps,
\item[(ii)] Pandharipande-Thomas invariants of $Q$ in fiber classes counting stable pairs,
\item[(iii)] Generalized Donaldson-Thomas invariants of $Q$
counting $2$-dimensional sheaves supported on fibers of $p$.
\end{enumerate}
The Gromov-Witten invariants of $Q$ in fiber classes are equal (up to factor of $4$)
to the Hodge integrals $N_{g,\beta}$ we are interested in.
Moreover, %By work of Maulik and Pandharipande, the invariants (i) are equal to $4$ times the invariants $N_{g,\beta}$.
the correspondence between Gromov--Witten theory and Pandharipande--Thomas theory conjectured in \cite{MNOP, PT} was proven for the threefold $Q$ by Pandharipande and Pixton in \cite{PP}.
This yields a correspondence between (i) and (ii).
Here %in Section~\ref{sec:DT of Enriques CalabiYau}
we prove that (ii) and (iii) determine each other by a simple formula.
More precisely, we show a version of Toda's formula for $K3 \times \BC$ \cite{Toda},
see Theorem~\ref{thm:Todas formula}.\footnote{Recently, Feyzbakhsh-Thomas \cite{FT} proved that for many Calabi-Yau threefolds, Pandharipande-Thomas invariants and 2-dimensional generalized Donaldson-Thomas invariants determine each other by a very general but usually quite complicated formula. We do not need their more advanced result here.}

In conclusion all three counting theories are equivalent.

The theories (i) and (iii) yields quite different constraints on the counting invariants.
On the Gromov-Witten side we prove that
for an elliptic fibration $\pi : Y \to \p^1$
on the Enriques surface
the $\pi$-relative generating series of descendent Gromov-Witten invariants of $Y$
are $\Gamma_0(2)$ quasi-Jacobi forms for the $E_8$-lattice and satisfy a holomorphic anomaly equation (Theorem~\ref{thm:HAE}).
This opens the door to using tools from the theory of modular forms.
In particular we use that, for any $a>2$, a $\Gamma_0(2)$-modular form is uniquely determined by those of its $n$-th Fourier coefficients where $n$ is not divisibile by $a$ (Lemma~\ref{lemma:quasimodular vanishing}).
In other words, the $n$-th Fourier coefficients, where $a$ does not divide $n$,
determine those where $a$ does divide $n$.
% knowing only its $n$-th Fourier coefficients for $n$ not divisibile by some $a>2$ (Lemma~\ref{lemma:quasimodular vanishing}). 
This will allow us to determine invariants of higher divisibility from lower divisibility.
On the sheaf side,
autoequivalences of $Y$ will imply the key property that
$N_{g,\beta}$ only depends upon $\beta$ through the square $\beta^2$ and the divisibility $\mathrm{div}(\beta)$.
The final argument can then be best described as Sudoku: Everything is determined from these conditions by modularity constraints and a few geometric computations (essentially we only use G\"ottsche's formula for the Euler characteristic of the Hilbert scheme of $Y$).

\subsection{Plan of the paper}
In Section~\ref{sec:Modular and Jacobi forms} we introduce the background from modular and Jacobi forms that we need. Proofs for this part are deferred to Appendix~\ref{sec:Appendix Background quasi-Jacobi forms}.
In Section~\ref{sec:Mon Enriques} we determine the derived monodromy group of the Enriques in terms of the Mukai lattice of the covering K3 surface,
and classify the orbits of primitive invariant Mukai vectors under its action.
In Section~\ref{sec:GW theory} we study the Gromov-Witten theory of an elliptic Enriques surface
by a degeneration to the rational elliptic surface. The main result is the holomorphic anomaly equation for the relative potentials (Theorem~\ref{thm:HAE}).
In Section~\ref{sec:DT of Enriques CalabiYau} we consider the Enriques-Calabi-Yau threefold and its three counting theories.
First, by using the derived momodromy group of the Enriques, we show that the generalized 2-dimensional Donaldson-Thomas invariants only depend on the square, the divisibility and the type. Then we prove Toda's formula and discuss its consequences. 
%In particular, the Klemm-Mari\~{n}o formula is equivalent to a very simple evaluation of the $2$-dimensional DT invariants (Proposition~\ref{prop:GW vs DT}).
In Section~\ref{sec:putting everything together} we put the constraints from Gromov-Witten and Donaldson-Thomas theory together and finish the proof of Theorem~\ref{thm:main theorem}.
Section~\ref{sec:Vafa Witten} proves Corollary~\ref{cor:Vafa Witten} on the Vafa-Witten invariants.

\subsection{Acknowledgements}
I thank Honglu Fan, Daniel Huybrechts, Jan-Willem van Ittersum, Giacomo Mezzedimi, 
Rahul Pandharipande, Maximilian Schimpf, and Richard Thomas for discussions on the Enriques surface.
I also thank the referees for careful reading and useful comments.
The author was supported by the starting grant 'Correspondences in enumerative geometry: Hilbert schemes, K3 surfaces and modular forms', No 101041491
 of the European Research Council.
 
%\newpage

\section{Modular and Jacobi forms} \label{sec:Modular and Jacobi forms}
Modular forms are holomorphic functions $f : \BH \to \BC$
on the upper half plane $\BH = \{ \tau \in \BC : \mathrm{Im}(\tau) > 0 \}$
which satisfy a transformation property with respect to a congruence subgroup of $\SL_2(\BZ)$ and are bounded at infinity. Jacobi forms are generalizations of modular forms that also depend on elliptic parameters $x=(x_1,\ldots,x_n)$. Quasi-Jacobi forms are holomorphic parts of non-holomorphic Jacobi forms.
For detailed expositions on these subjects we refer to \cite{123, Koblitz} for modular forms, \cite{EZ, Ziegler} for Jacobi forms and \cite{Lib, RES, IOP} for quasi-Jacobi forms. 

Here we introduce only the background we need later on. In particular,
Proposition~\ref{prop:theta identity} is used in Section~\ref{subsec:Consequences of Toda} and not before.
Lattice quasi-Jacobi forms (Section~\ref{subsec:Lattice index quasi Jacobi forms}) and their Hecke operators (Section~\ref{subsec:Hecke operators}) appear only in Sections~\ref{sec:GW theory} and~\ref{sec:putting everything together}.
The vanishing results of Section~\ref{subsec:Vanishing results} only appear
in the final step of Section~\ref{proof:main thm}.

\subsection{Modular forms}
For $\Gamma \subset \mathrm{SL}_2(\BZ)$ a congruence subgroup,
let $\Mod_k(\Gamma)$ and $\QMod_k(\Gamma)$ be the vector space of weight $k$ modular and quasi-modular forms for $\Gamma$. The algebra of (quasi)-modular forms is $\Mod(\Gamma) = \oplus_k \Mod_k(\Gamma)$ and $\QMod(\Gamma) = \oplus_k \QMod_k(\Gamma)$.  
Throughout we identify a quasi-modular form $f(\tau)$
%where $\tau$ is the coordinate on the upper half plane $\BH = \{ \tau \in \BC : \mathrm{Im}(\tau) > 0 \}$,
with its Fourier expansion in the variable $q=e^{2 \pi i \tau}$. We often write $f(q)$ instead of $f(\tau)$ by a slight abuse of notation.

In the case $\Gamma = \SL_2(\BZ)$, we drop the group from the notation.
The basic examples here are the weight $k$ Eisenstein series
defined  for odd $k>0$
by $G_k=0$, and for even $k>0$ by
\[ G_k(\tau) = - \frac{B_k}{2 \cdot k} + \sum_{n \geq 1} \sum_{d|n} d^{k-1} q^n. \]
Each $G_k$ is modular for $k>2$ and $G_2$ is quasi-modular for $\SL_2(\BZ)$. We have
\[ \Mod = \BC[G_4, G_6], \quad \QMod = \BC[G_2, G_4, G_6]. \]
Define the Dedekind function $\eta(\tau) = q^{1/24} \prod_{n \geq 1} (1-q^n)$. We have
\[ \Delta(\tau) := \eta^{24}(\tau) \in \Mod_{12}. \]

More generally, consider the groups $\Gamma_0(N) = \{ \binom{a\ b}{c\ d} \in \mathrm{SL}_2(\BZ) | c \equiv 0 (N) \}$.
The series
\begin{equation} F_2(\tau) = G_2(\tau) - 2 G_2(2 \tau) = \frac{1}{24} + \sum_{\substack{\text{odd } d|n}} d q^n, \label{F2} \end{equation}
is a modular form of weight $2$ for $\Gamma_0(2)$ and we have \cite[Sec.12]{Borcherds}
%which is also $\frac{1}{24}$ times the theta function of the $D_4$-lattice.
\[ \Mod(\Gamma_0(2)) = \BC[ F_2, G_4 ], \quad \QMod(\Gamma_0(2)) = \BC[ G_2, F_2, G_4 ]. \]
Moreover, it is well-known \cite{Koblitz} that, for $f \in \Mod_k$ we have $f(2 \tau) \in \Mod_k(\Gamma_0(2))$, and
\begin{gather} \label{example modualr forms for gamma02}
%\widetilde{\Delta}(q) = 
(\eta(\tau) \eta(2 \tau))^8 \in \Mod_8(\Gamma_0(2)),
\quad \quad
\frac{\Delta(\tau)^2}{\Delta(2 \tau)} \in \Mod_{12}(\Gamma_0(2)).
\end{gather}

\begin{lemma} \label{lemma:function vanishing at cusp}
The function $f(\tau) = \eta^{16}(2 \tau) / \eta^{8}(\tau)$ is a modular form for $\Gamma_0(2)$ of weight $4$ which vanishes at the cusp $\tau = i \infty$ and is non-vanishing at $\tau = 0$.
\end{lemma}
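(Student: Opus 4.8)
The plan is to verify the modularity, weight, and cusp behavior of $f(\tau) = \eta^{16}(2\tau)/\eta^8(\tau)$ by reducing everything to known facts about the Dedekind $\eta$-function and the cusps of $\Gamma_0(2)$. First I would establish the weight. Since each factor $\eta^{16}(2\tau)$ and $\eta^{-8}(\tau)$ is built from $\eta$, which has weight $1/2$, the total weight is $\tfrac{1}{2}(16 - 8) = 4$, as claimed. The modularity for $\Gamma_0(2)$ is the first substantive point: one way is to invoke the already-recorded fact that $(\eta(\tau)\eta(2\tau))^8 \in \Mod_8(\Gamma_0(2))$ together with $\Delta(2\tau) = \eta^{24}(2\tau) \in \Mod_{12}(\Gamma_0(2))$, and observe that
\[
f(\tau) = \frac{\eta^{16}(2\tau)}{\eta^{8}(\tau)} = \frac{\eta^{24}(2\tau)}{(\eta(\tau)\eta(2\tau))^{8}} = \frac{\Delta(2\tau)}{(\eta(\tau)\eta(2\tau))^{8}},
\]
which realizes $f$ as the quotient of a weight $12$ and a weight $8$ modular form for $\Gamma_0(2)$, hence a weight $4$ meromorphic modular form for $\Gamma_0(2)$. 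It then remains to check that $f$ is in fact holomorphic (including at the cusps), i.e.\ that the denominator does not introduce poles.

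The cleanest route is to work directly with the $q$-expansion and the standard theory of orders of vanishing of $\eta$-quotients at cusps. I would write $f(\tau) = q^{(16\cdot 2 - 8)/24} \prod_{n\geq 1}(1-q^{2n})^{16}(1-q^n)^{-8} = q^{1} \prod_{n\geq 1}(1-q^{2n})^{16}(1-q^n)^{-8}$, so that $f$ has a $q$-expansion with nonnegative integer exponents starting at $q^1$; in particular $f$ is holomorphic at $\tau = i\infty$ and vanishes there (the leading exponent is $1 > 0$). For the holomorphy at the cusps and the behavior at $\tau = 0$, I would apply the Ligozat order-of-vanishing formula for $\eta$-quotients on $\Gamma_0(N)$: for an $\eta$-quotient $\prod_{d|N}\eta(d\tau)^{r_d}$, the order of vanishing at the cusp $a/c$ (with $\gcd(a,c)=1$) is proportional to $\sum_{d|N} \frac{\gcd(c,d)^2 r_d}{d}$. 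For $N=2$ the two cusps are represented by $\infty$ (i.e.\ $c\equiv 0 \bmod 2$) and $0$ (i.e.\ $c=1$), with $r_1 = -8$, $r_2 = 16$. At $\infty$ this gives order $\propto \frac{(-8)}{1} + \frac{4\cdot 16}{2} = -8 + 32 = 24 > 0$, confirming vanishing at $i\infty$; at $0$ it gives order $\propto \frac{1\cdot(-8)}{1} + \frac{1\cdot 16}{2} = -8 + 8 = 0$, confirming that $f$ is holomorphic and nonvanishing at $\tau = 0$. These two computations simultaneously show $f$ is holomorphic at both cusps (hence genuinely a modular form, not just meromorphic) and pin down both boundary statements.

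The main obstacle, such as it is, lies in making the cusp-order computation at $\tau = 0$ airtight and correctly normalized, since the constant of proportionality and the width of the cusp must be tracked to distinguish ``vanishing'' from ``nonvanishing'' rather than merely holomorphic. I would therefore double-check the order at $0$ by an independent argument: apply the Fricke involution $\tau \mapsto -1/(2\tau)$, under which $\eta(-1/(2\tau)) = \sqrt{-2i\tau}\,\eta(2\tau)$ and $\eta(-1/\tau) = \sqrt{-i\tau}\,\eta(\tau)$, so that $\eta(2\cdot(-1/(2\tau))) = \eta(-1/\tau) = \sqrt{-i\tau}\,\eta(\tau)$. Substituting into $f$ and comparing leading terms transforms the behavior of $f$ at $0$ into the behavior of a related $\eta$-quotient at $i\infty$, where the $q$-expansion is transparent; the leading exponent there will come out to $0$, giving a nonzero finite limit and reconfirming nonvanishing at $\tau = 0$. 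With both cusps handled and the weight and $\Gamma_0(2)$-modularity established, all assertions of the lemma follow.
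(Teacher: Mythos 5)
Your proposal is correct and follows essentially the same route as the paper: weight $4$ from the $\eta$-weights, $\Gamma_0(2)$-modularity by expressing $f$ through the $\eta$-products already recorded in \eqref{example modualr forms for gamma02}, vanishing at $i\infty$ from the leading exponent $(32-8)/24=1$ of the $q$-expansion, and non-vanishing at $0$ via the transformation law of $\eta$ under an involution sending $i\infty$ to $0$ (the paper uses $\tau\mapsto -1/\tau$ and finds $\tau^{-4}f(-1/\tau)=2^{-8}+O(q^{1/2})$; your Fricke computation with $\tau\mapsto -1/(2\tau)$ is the same calculation in a slightly different normalization). The Ligozat order formula you invoke is a correct additional check but is not needed once the explicit transformation is carried out.
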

\begin{proof}
That $f$ is modular of the given weight follows from \eqref{example modualr forms for gamma02}. Its Fourier expansion is $f=q + O(q^2)$ so it vanishes at $\tau = i \infty$. Moreover, $\tau^{-4} f(-1/\tau) = \frac{1}{2^8} + O(q)$ by the transformation property of the $\eta$-function \cite{Koblitz}, which shows the non-vanishing at $\tau=0$.
\end{proof}

\subsection{Jacobi forms in one elliptic variable} \label{subsec:Jacobi forms one variable}
Let $z \in \BC$, $\tau \in \BH$, and set $p=e^z$, $q=e^{2 \pi i \tau}$. 
Consider the renormalized odd Jacobi theta function:
\[ \Theta(z,\tau) = \frac{1}{\eta^3(\tau)} \sum_{\nu\in \mathbb{Z}+\frac{1}{2}} (-1)^{\lfloor \nu \rfloor} e^{z \nu} q^{\nu^2/2}. \]

We require the following set of identities:
\begin{prop} \label{prop:theta identity} We have
\begin{align*}
 \frac{\Theta(z,2 \tau)^2}{\Theta(z,\tau)^2} \frac{\eta(2 \tau)^8}{\eta(\tau)^4} \ 
& \overset{(i)}{=} \ q^{1/2} \prod_{m \geq 1} \frac{ (1-p q^{2m})^2 (1-p^{-1} q^{2m})^2 (1-q^{2m})^4  }{ (1-pq^m)^2 (1-p^{-1} q^m)^2 } \\
 & \overset{(ii)}{=}
\frac{1}{4} \left( \frac{1}{\Theta\left( \frac{z}{2}, \frac{\tau}{2} \right)^2} - \frac{1}{\Theta\left( \frac{z}{2}, \frac{\tau+1}{2} \right)^2} \right)  \\
& \overset{(iii)}{=}
\sum_{\substack{r \geq 1 \\ r \text{ odd}}} \left( r q^{r^2/2} + 
\sum_{n \geq 1} (n+r) (p^n+p^{-n}) q^{rn + r^2/2} \right) \\
& = 
q^{1/2} + 2 (p + p^{-1}) q^{3/2} + 3 (p^2 + p^{-2}) q^{5/2} + \ldots 
%((4*s^12 + 4)/s^6)*q^3 + ((5*s^16 + 3*s^8 + 5)/s^8)*q^4
\end{align*}
\end{prop}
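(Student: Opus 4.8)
The plan is to base everything on the Jacobi triple product, which turns the defining series of $\Theta$ into the infinite product
\[ \Theta(z,\tau) = (p^{1/2}-p^{-1/2})\prod_{m\geq 1}\frac{(1-pq^m)(1-p^{-1}q^m)}{(1-q^m)^2}. \]
Identity (i) then reduces to a manipulation of infinite products: substituting this into $\Theta(z,2\tau)^2/\Theta(z,\tau)^2$, the prefactors $(p^{1/2}-p^{-1/2})^{\pm 2}$ cancel, and writing $\eta(\tau)=q^{1/24}\prod_m(1-q^m)$ gives $\eta(2\tau)^8/\eta(\tau)^4 = q^{1/2}\prod_m(1-q^{2m})^8/(1-q^m)^4$. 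Multiplying the two products and cancelling the redundant factors $(1-q^m)^4$ (and reducing $(1-q^{2m})^8$ to $(1-q^{2m})^4$) leaves exactly the right-hand side of (i). This step is routine bookkeeping.

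\textbf{The Fourier expansion (iii).} I would compute the Fourier expansion of the product in (i) directly. Taking its logarithm, expanding each factor as a Lambert series, and collapsing the sum over $m$ via $\sum_{m\geq 1}q^{mj}=q^j/(1-q^j)$, the even and odd powers of $q$ combine into the closed form
\[ \log\!\Big(q^{1/2}\cdot(\text{product in (i)})\Big) = \tfrac12\log q + \sum_{j\geq1}\frac{2}{j}\cdot\frac{(p^j+p^{-j})q^j-2q^{2j}}{1-q^{2j}}. \]
Exponentiating and collecting powers of $q^{1/2}$ produces a sum over a shifted lattice; the linear coefficient $(n+r)$ and the restriction to odd $r$ emerge from this resummation. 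I expect this to be bookkeeping rather than a conceptual difficulty, and the final displayed terms $q^{1/2}+2(p+p^{-1})q^{3/2}+\dots$ are read off from the $r=1$, $n=0,1$ contributions.

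\textbf{The crux (ii).} The key observation is that passing from $\tau/2$ to $(\tau+1)/2$ replaces the nome $e^{2\pi i\sigma}=q^{1/2}$ by $-q^{1/2}$. Hence, if $1/\Theta(z/2,\sigma)^2 = \sum_{n\geq 0} a_n(z)\,e^{2\pi i n\sigma}$ denotes the Fourier expansion in $\sigma$, then
\[ \frac14\left(\frac{1}{\Theta(z/2,\tau/2)^2}-\frac{1}{\Theta(z/2,(\tau+1)/2)^2}\right) = \frac12\sum_{\substack{n\geq 1 \\ n\ \mathrm{odd}}} a_n(z)\, q^{n/2}, \]
so that the $\sigma$-constant term, which carries the double pole $\sim 4/z^2$ at $z=0$, cancels automatically and only odd powers of $q^{1/2}$ survive. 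From the product formula for $\Theta(z/2,\sigma)$, each $a_n(z)$ is a priori only a Laurent polynomial in $p^{1/2}$ divided by $(p^{1/2}-2+p^{-1/2})$. The heart of the matter is to show that for odd $n$ this quotient is an honest Laurent polynomial in $p$ and that $\tfrac12\sum_{n\ \mathrm{odd}}a_n q^{n/2}$ agrees with the product of (i), equivalently with the series (iii); one checks e.g. $a_1=2$ and $a_3=4(p+p^{-1})$, matching the $q^{1/2}$ and $q^{3/2}$ coefficients. The main obstacle is controlling these cancellations uniformly in $n$. I expect the cleanest route is to phrase (ii) as a Landen-type theta transformation expressing the half-period combination on its left directly in terms of $\Theta(z,\tau)$ and $\Theta(z,2\tau)$, which reflects the underlying $\Gamma_0(2)$ (Atkin--Lehner) symmetry already visible in the factor $\eta(2\tau)^8/\eta(\tau)^4$; absent such an identity, matching coefficients against (iii) via the expansion above furnishes a direct but more laborious proof.
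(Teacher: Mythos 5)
Your step (i) is exactly the paper's argument (Jacobi triple product plus the product form of $\eta$), and your structural observations about (ii) are sound: passing from $\tau/2$ to $(\tau+1)/2$ flips the sign of the nome $q^{1/2}$, so the difference kills the constant term (the double pole at $z=0$) and extracts the odd Fourier coefficients. But the proposal has two genuine gaps, and they are precisely the two steps where the identity has actual content.

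First, for (iii) you propose to exponentiate the Lambert series
\[
\tfrac12\log q + \sum_{j\geq1}\frac{2}{j}\cdot\frac{(p^j+p^{-j})q^j-2q^{2j}}{1-q^{2j}}
\]
and assert that the Hecke-type sum $\sum_{r\ \mathrm{odd}}\bigl(rq^{r^2/2}+\sum_{n\geq1}(n+r)(p^n+p^{-n})q^{rn+r^2/2}\bigr)$ "emerges from this resummation" as bookkeeping. It does not: exponentiating a Lambert series has no routine closed form, and the passage from the product to this indefinite-theta-type sum is exactly the nontrivial identity being proved. The paper gets (iii) not from the product at all, but from (ii) combined with the known expansion
\[
\frac{1}{\Theta^2(z,\tau)} = \frac{1}{(p^{1/2}-p^{-1/2})^2} + \sum_{r \geq 1} \Bigl( 2 r q^{r^2} + \sum_{n \geq 1} (2r + n) (p^n + p^{-n}) q^{rn+r^2} \Bigr),
\]
which is obtained by differentiating Zagier's identity for $\Theta(z+w)/(\Theta(z)\Theta(w))$ at $w=-z$; taking the odd-$q$ part and rescaling $(z,\tau)\mapsto(z/2,\tau/2)$ then gives (iii) immediately. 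Second, you explicitly leave (ii) unproved: you name it the crux and offer either an unspecified "Landen-type transformation" or "matching coefficients against (iii)" — the latter being circular given that your (iii) was to be derived from the product. The paper's proof of (ii) is a short Liouville argument you do not mention: form the ratio $F(z)$ of the two sides, use the index-$1$ transformation law of $\Theta(z,\tau)^2$ to show $F(z+4\pi i)=F(z+4\pi i\tau)=F(z)$, check that $F$ is holomorphic at the finitely many potential poles (the zeros of the theta factors) in a fundamental domain, and conclude $F\equiv1$ from the normalization at $z=0$. Supplying that argument, together with Zagier's expansion of $1/\Theta^2$, is what is needed to close the proof.
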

\begin{proof}
Equality (i) follows by the well-known Jacobi triple product which reads:
\[ \Theta(z,\tau) =  (p^{1/2}-p^{-1/2})\prod_{m\geq 1} \frac{(1-pq^m)(1-p^{-1}q^m)}{(1-q^m)^{2}}. \]
We prove (ii). Let
\[
F(z) = 
\frac{1}{4} \frac{\Theta(z,\tau)^2}{\Theta(z,2 \tau)^2} \frac{\eta(\tau)^4}{\eta(2 \tau)^8}
\left( \frac{1}{\Theta\left( \frac{z}{2}, \frac{\tau}{2} \right)^2} - \frac{1}{\Theta\left( \frac{z}{2}, \frac{\tau+1}{2} \right)^2} \right)
\]
By a direction computation
or since $\Theta(z,\tau)^2$ is a (weak) Jacobi form of index $1$ we have
\[ \Theta( z + 2 \pi i \lambda \tau + 2 \pi i \mu, \tau)^2= q^{-\lambda^2} p^{-2\lambda} \Theta(z,\tau)^2 \]
for all $\lambda, \mu \in \BZ$. One finds that
\[ F(z+4 \pi i) = F(z), \quad F(z+4 \pi i \tau) = F(z). \]
The theta function $\Theta(z)$ has a simple zero at each of the lattice points $\frac{z}{2 \pi i} \in \BZ+ \BZ \tau$. Hence $F$ can have poles only at the points $\frac{z}{2 \pi i} \in \{ 0, 1, \tau, 1+\tau \} + 2 \BZ + 2 \BZ \tau$. By a direct check $F$ is holomorphic at all of these points and the constant term at $z=0$ is one. Hence $F(z) = 1$.

For (iii) we use the following identity of Zagier \cite[Sec.3]{Zagier}:
\[
\frac{\Theta(z+w)}{\Theta(z) \Theta(w)}
=
\frac{1}{2} \left( \coth \frac{w}{2}  + \coth \frac{z}{2} \right)
-2 \sum_{n=1}^{\infty} \left( \sum_{d|n} \sinh( dw + \frac{n}{d} z ) \right) q^n. \] 
By computing $\frac{d}{dw}|_{w=-z}$ on both sides, one gets
\begin{equation} \label{30swdifsd}
\frac{1}{\Theta^2(z,\tau)} = \frac{1}{(p^{1/2} - p^{-1/2})^2} + \sum_{r \geq 1} \left( 2 r q^{r^2} + \sum_{n \geq 1} (2r + n) (p^n + p^{-n}) q^{rn+r^2} \right).
\end{equation}
Observe that
\[
\frac{1}{2} \left( \frac{1}{\Theta\left( \frac{z}{2}, \frac{\tau}{2} \right)^2} - \frac{1}{\Theta\left( \frac{z}{2}, \frac{\tau+1}{2} \right)^2} \right)
=
\left[ \frac{1}{\Theta(z,\tau)^2} \right]_{q^{\text{odd}}} (z/2, \tau/2)
%\Bigg|_{p \mapsto p^{1/2}, q \mapsto q^{1/2}}
\]
Hence (iii) follows immediately from \eqref{30swdifsd} by taking the odd $q$-exponents. 
\end{proof}

We also note the expansion
\begin{equation} \label{theta gk expansion}
\frac{\Theta(z,2 \tau)^2}{\Theta(z,\tau)^2}
=
\exp\left( 4 \sum_{k \geq 2} ( G_k(\tau) - G_k(2 \tau) ) \frac{z^{k}}{k!} \right)
\end{equation}
which follows by the well-known Taylor expansion:
$\Theta(z) = z \exp(-2\sum_{k\geq 2} G_k z^k/k!)$.

\subsection{Lattice index quasi-Jacobi forms} \label{subsec:Lattice index quasi Jacobi forms}
Let\footnote{If $n=1$, the variable $x \in \BC$ is related to the variable $z \in \BC$ of Section~\ref{subsec:Jacobi forms one variable} by $z = 2 \pi i x$.} $x = (x_1, \ldots, x_n) \in \BC^n$ and consider the following real analytic functions on $\BC^n \times \BH$:
\[ 
\nu(\tau) = \frac{1}{8 \pi \mathrm{Im}(\tau)},
\quad
\alpha_i(x,\tau) = \frac{x_i - \overline{x_i}}{\tau - \overline{\tau}} = \frac{\mathrm{Im}(x_i)}{\mathrm{Im}(\tau)},
\quad i = 1, \ldots, n.
\]
An \emph{almost holomorphic function} on $\BC^n \times \BH$ is a function
$\Phi : \BC^n \times \BH \to \BC$ of the form
\[ \Phi(x, \tau) = \sum_{i \geq 0} \sum_{j = (j_1, \ldots, j_n) \in (\BZ_{\geq 0})^n}
\phi_{i, j}(x,\tau) \nu^{i} \alpha^j, \quad \quad \alpha^j = \alpha_1^{j_1} \cdots \alpha_n^{j_n} \]
such that each of the finitely many non-zero $\phi_{i, j}(z,\tau)$ is holomorphic
on $\BC^n \times \BH$.

Write $R^{(m,n)}$ for the group of $m \times n$-matrices with coefficients in a ring $R$. Consider a congruence subgroup
and a finite index subgroup
\[ \Gamma \subset \SL_2(\BZ), \quad \Lambda \subset \BZ^{(n,2)} \]
such that $\Lambda$ is preserved under the action of $\Gamma$ on $\BZ^{(n,2)}$ by multiplication on the right. 

An {\em index} is a symmetric rational $n \times n$-matrix $L$ such that
\[ \Tr(L \kappa) \in \BZ \text{ for all symmetric } \kappa \in \Span_{\BZ}( \mu \lambda^t, \lambda \mu^t | (\lambda,\mu) \in \Lambda ). \]
For example, if $\Lambda = \BZ^{(n,2)}$
this says that 
$L_{ij} \in \frac{1}{2} \BZ$ and $L_{ii} \in \BZ$ for all $i,j$.

\begin{defn} \label{defn:quasi jacobi forms main text}
An \emph{almost holomorphic weak Jacobi form} of weight $k$ and index $L$
for the group $\Gamma \ltimes \Lambda$ is an almost-holomorphic function $\Phi(x,\tau) : \BC^n \times \BH \to \BC$ satisfying:
\begin{enumerate}
\item[(i)] For all $\binom{a\ b}{c\ d} \in \Gamma$ and $(\lambda, \mu) \in \Lambda$ we have\footnote{We write
$e(x) = e^{2\pi i x}$ for $x \in \BC$.}
\begin{equation} \label{TRANSFORMATIONLAWJACOBI*}
\begin{aligned}
\Phi\left( \frac{x}{c \tau + d}, \frac{a \tau + b}{c \tau + d} \right)
& = (c \tau + d)^k e\left( \frac{c x^t L x}{c \tau + d} \right) \Phi(x,\tau) \\
\Phi\left( x + \lambda \tau + \mu, \tau \right)
& = e\left( - \lambda^t L \lambda \tau - 2 \lambda^t L x \right) \Phi(x,\tau).
\end{aligned}
\end{equation}
\item[(ii)] 
For all $\binom{a\ b}{c\ d} \in \SL_2(\BZ)$, the almost-holomorphic function 
\[ (c \tau + d)^{-k} e\left( - \frac{ c x^t L x }{c \tau + d} \right) \Phi\left( \frac{x}{c \tau + d}, \frac{a \tau + b}{c \tau + d} \right) \]
is of the form $\sum_{i,j} \phi_{i,j} \alpha^i \nu^j$ such that each $\phi_{i,j}$ is holomorphic on $\BC^n \times \BH$ and for some $N \geq 1$ admits a Fourier expansion of the form $\sum_{v \in \BZ_{\geq 0}} \sum_{r \in \BZ^n} c(v,r) q^{v/N} e\left( \frac{1}{N} \sum_i x_i r_i \right)$ in the region $|q|<1$. % for some $N \geq 1$.
\end{enumerate}
\end{defn}

\begin{defn}
A quasi-Jacobi form of weight $k$ and index $L$ 
for the group $\Gamma \ltimes \Lambda$
is the coefficient of $\nu^0 \alpha^0$
of an almost-holomorphic Jacobi form of the same kind.

A Jacobi form is an almost-holomorphic Jacobi form which is already a holomorphic function (i.e. does not depend on $\alpha$ and $\nu$).\footnote{This is called a weak Jacobi form in \cite{EZ}, but we drop the word 'weak' here since we do not need the distinction.}

The vector spaces of almost-holomorphic Jacobi forms,
of quasi-Jacobi forms, and of Jacobi forms
of weight $k$ and index $L$ for $\Gamma \ltimes \Lambda$
are denoted by:
\[
\AHJ_{k,L}(\Gamma \ltimes \Lambda),\quad  \QJac_{k,L}(\Gamma \ltimes \Lambda), \quad \Jac_{k,L}(\Gamma \ltimes \Lambda).
\]
If $\Lambda = \BZ^{(n,2)}$ we write $\Gamma$ instead of $\Gamma \ltimes \Lambda$. If $\Gamma = \SL_2(\BZ)$ we often drop it from the notation.
The space of quasi-Jacobi forms of index $L$ is denoted by
\[
\QJac_{L} = \bigoplus_{k} \QJac_{k,L}. \]
\end{defn}

Taking the constant term of an almost-holomorphic Jacobi form
defines an isomorphism:
\begin{equation} \mathrm{ct} : \AHJ_{k,L}(\Gamma \ltimes \Lambda) \xrightarrow{\cong}
\QJac_{k,L}(\Gamma \ltimes \Lambda), \quad 
\sum_{i,j} \phi_{i,j} \alpha^i \nu^j \mapsto \phi_{0,0}. \label{constant term map} \end{equation}
\begin{defn}[{\cite[Sec.1]{RES}}] Define the holomorphic anomaly operators by:\footnote{As explained in \cite[Sec.1.3.4]{RES} one can interprete the first holomorphic anomaly operator as the formal derivative in the second Eisenstein series $G_2(\tau)$.}
\[ \frac{d}{dG_2} :=  \mathrm{ct} \circ \frac{d}{d\nu} \circ \mathrm{ct}^{-1}:
\QJac_{k,L}(\Gamma \ltimes \Lambda)
\to \QJac_{k-2,L}(\Gamma \ltimes \Lambda). \]
and for all $\lambda = (\lambda_1, \ldots, \lambda_n) \in \BZ^n$,
\[ \xi_{\lambda} := \mathrm{ct} \circ \left( \sum_i \lambda_i \frac{d}{d\alpha_i} \right) \circ \mathrm{ct}^{-1}:
\QJac_{k,L}(\Gamma \ltimes \Lambda) \to \QJac_{k-1,L}(\Gamma \ltimes \Lambda). \]
\end{defn}

\subsection{Jacobi forms for the $E_8$-lattice}
\label{subsec:Jacobi forms for E8}
Let $E_8 = (\BZ^8,\ \cdot\ )$ be the unique even integral unimodular lattice of signature $(8,0)$.
Let $b_1, \ldots, b_8$ be an integral basis of $E_8$,
and let
\[ Q_{E_8} = ( b_i \cdot b_j )_{i,j=1}^{8} \]
be the intersection matrix of the lattice in this basis.
For example, for a suitable basis,
\[ Q_{E_8} = \begin{pmatrix}
2 &  & -1 & & & & & \\
& 2 & & -1 & & & & \\
-1 & & 2 & -1 & & & \\
& -1 & -1 & 2 & -1 & & & \\
& & & -1 & 2 & -1 & & \\
& & & & -1 & 2 & -1 & \\
& & & & & -1 & 2 & -1 \\
& & & & & & -1 & 2
\end{pmatrix}
\]

We identify $x = (x_1, \ldots, x_8) \in \BC^8$ with $\sum_i x_i b_i$,
and for $\alpha \in E_8$ we write
\[ \zeta^{\alpha} = e(x \cdot \alpha) %= \exp(2 \pi i x \cdot \alpha) 
= \prod_i \zeta_i^{b_i \cdot \alpha}, \quad \text{ where } \quad \zeta_i = e^{2 \pi i x_i}. \]
The theta function of the $E_8$-lattice is defined by:
\[ \Theta_{E_8}(\zeta, q) = \sum_{ \alpha \in E_8 } \zeta^{\alpha} q^{\alpha \cdot \alpha/2}. \]
It is a Jacobi form of weight $4$ and index $\frac{1}{2} Q_{E_8}$
for the group $\SL_2(\BZ) \ltimes \BZ^{(8,2)}$, see \cite{Ziegler}.

\begin{lemma} \label{lemma: QJac scaling to Gamma0(2)}
Let $\alpha_0 \in E_8$ be a vector of square $4$.
Let $f(x,\tau)$ be a quasi-Jacobi form for $\SL_2(\BZ) \ltimes (\BZ^8 \oplus \BZ^8)$
of weight $k$ and index $m Q_{E_8}$ where $m \in \BZ_{\geq 0}$.
Then the series
\[
M_{\alpha_0,m}(f)(x,\tau) :=
q^{2m} e\left( m (\alpha_0 \cdot_{E_8} x) \right) \left( e^{\xi_{\alpha_0}/2}  f \right) (x + \alpha_0 \tau, 2 \tau) \]
is a quasi-Jacobi form of weight $k$ and index $\frac{m}{2} Q_{E_8}$
for the group $\Gamma_0(2) \ltimes (2 \BZ^8 \oplus \BZ^8)$.
Moreover,
\[ \frac{d}{d G_2} M_{\alpha_0,m}(f) =
\frac{1}{2} M_{\alpha_0,m}\left( \frac{d}{dG_2} f \right). \]
\end{lemma}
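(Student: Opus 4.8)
The plan is to pass to the almost-holomorphic side via the isomorphism $\ct$, where the substitution $\tau \mapsto 2\tau$, $x \mapsto x + \alpha_0 \tau$ becomes transparent. Write $\widehat{f} = \ct^{-1}(f) \in \AHJ_{k,\, m Q_{E_8}}(\SL_2(\BZ) \ltimes (\BZ^8 \oplus \BZ^8))$ for the completion of $f$, and set
\[ \widehat{g}(x,\tau) := q^{2m}\, e(m\, \alpha_0 \cdot x)\, \widehat{f}(x + \alpha_0 \tau,\, 2\tau). \]
Since $\nu(2\tau) = \tfrac12 \nu(\tau)$ and $\alpha_l(x+\alpha_0\tau, 2\tau) = \tfrac12 \alpha_l(x,\tau) + \tfrac12 (\alpha_0)_l$ (immediate from the definitions of $\nu$ and $\alpha_l$), the function $\widehat{g}$ is again almost-holomorphic in $(x,\tau)$. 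I would then prove the two statements: (A) $\ct(\widehat{g}) = M_{\alpha_0,m}(f)$, and (B) $\widehat{g} \in \AHJ_{k,\, \frac{m}{2} Q_{E_8}}(\Gamma_0(2) \ltimes (2\BZ^8 \oplus \BZ^8))$. Together these give $M_{\alpha_0,m}(f) = \ct(\widehat{g}) \in \QJac_{k,\, \frac m2 Q_{E_8}}(\Gamma_0(2) \ltimes (2\BZ^8 \oplus \BZ^8))$, which is the first assertion.

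For (A), I expand $\widehat{f} = \sum_{i,j} \phi_{i,j} \nu^i \alpha^j$ and substitute, so that the $\nu$- and $\alpha$-dependent terms drop out in the constant term, leaving $\ct(\widehat{g}) = q^{2m} e(m \alpha_0 \cdot x) \sum_j \phi_{0,j}(x+\alpha_0\tau, 2\tau) \prod_l (\tfrac12 (\alpha_0)_l)^{j_l}$. The key input is the identity $\phi_{0,j} = \tfrac{1}{j!} \prod_l \xi_{e_l}^{j_l} f$, which follows from $\ct^{-1} \circ \xi_{e_l} = \tfrac{d}{d\alpha_l} \circ \ct^{-1}$ (a restatement of Definition 2.4, using that formal differentiation in $\alpha_l$ preserves $\AHJ$) together with $\ct\big(\prod_l (\tfrac{d}{d\alpha_l})^{j_l}\widehat{f}\big) = j!\, \phi_{0,j}$. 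Resumming $\sum_j \tfrac{1}{j!\, 2^{|j|}} \prod_l (\alpha_0)_l^{j_l} \prod_l \xi_{e_l}^{j_l} = e^{\xi_{\alpha_0}/2}$ (a finite sum, as $\widehat f$ is polynomial in $\alpha$, and $\xi_{\alpha_0} = \sum_l (\alpha_0)_l \xi_{e_l}$) then yields $\ct(\widehat{g}) = q^{2m} e(m\alpha_0\cdot x) (e^{\xi_{\alpha_0}/2} f)(x+\alpha_0\tau, 2\tau) = M_{\alpha_0,m}(f)$. This step is really where the operator $e^{\xi_{\alpha_0}/2}$ in the definition of $M_{\alpha_0,m}$ comes from: it is exactly the correction produced by the shift $\alpha_l \mapsto \tfrac12\alpha_l + \tfrac12(\alpha_0)_l$.

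For (B), I verify the transformation laws of Definition 2.3 for $\widehat{g}$ directly. The elliptic law for $(\lambda,\mu) \in 2\BZ^8 \oplus \BZ^8$ reduces, after writing $\lambda = 2\lambda'$, to the elliptic law of $\widehat f$ for the genuine period $\lambda'(2\tau) + \mu$ of the modular parameter $2\tau$; the cross terms cancel using $\lambda = 2\lambda'$ and $\alpha_0 \cdot \mu \in \BZ$, and one checks the surviving exponent is exactly the one for index $\tfrac m2 Q_{E_8}$ (this uses $\alpha_0 \cdot \alpha_0 = 4$, so that $q^{2m} = e(\tfrac m2 (\alpha_0\cdot\alpha_0)\tau)$). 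For $\gamma = \binom{a\ b}{c\ d} \in \Gamma_0(2)$, I use the standard device $2\gamma\tau = \gamma'(2\tau)$ with $\gamma' = \binom{a\ \ 2b}{c/2\ \ d} \in \SL_2(\BZ)$ (here $c$ even is essential), apply the modular law of $\widehat f$ at $2\tau$, and reduce the resulting argument $x + \alpha_0(a\tau+b)$ back to $x + \alpha_0\tau$ by the elliptic law — legal precisely because $a$ is odd, so $\alpha_0(a-1)\tau$ is an integral multiple of the period $2\tau$. Collecting the automorphy factors gives weight $k$ and index factor $e(\tfrac{c\,(m/2)(x\cdot x)}{c\tau+d})$, while the growth condition is inherited from $\widehat f$ under $q \mapsto q^2$. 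I expect this bookkeeping of automorphy factors to be the main technical obstacle, though it is entirely routine.

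Finally, the holomorphic anomaly equation follows from the same picture. By (A)–(B) and uniqueness of completions, $\ct^{-1}(M_{\alpha_0,m}(f)) = \widehat{g}$, so $\tfrac{d}{dG_2} M_{\alpha_0,m}(f) = \ct(\tfrac{d}{d\nu}\widehat{g})$. Since the prefactor $q^{2m} e(m\alpha_0\cdot x)$ is holomorphic and $\nu$ enters $\widehat{g}$ only through $\nu(2\tau) = \tfrac12 \nu(\tau)$, the chain rule gives $\tfrac{d}{d\nu}\widehat{g} = \tfrac12\, q^{2m} e(m\alpha_0\cdot x) (\tfrac{d}{d\nu}\widehat f)(x+\alpha_0\tau, 2\tau)$. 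Now $\tfrac{d}{d\nu}\widehat f = \ct^{-1}(\tfrac{d}{dG_2} f)$ is the completion of $\tfrac{d}{dG_2}f$, so applying the constant-term computation of (A) to $\tfrac{d}{dG_2} f$ in place of $f$ yields $\ct(\tfrac{d}{d\nu}\widehat g) = \tfrac12 M_{\alpha_0,m}(\tfrac{d}{dG_2}f)$. The factor $\tfrac12$ is thus precisely the chain-rule factor coming from $\nu(2\tau) = \tfrac12\nu(\tau)$.
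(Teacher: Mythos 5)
Your argument is correct, and the computations you flag all check out: the identity $\ct(\widehat g)=M_{\alpha_0,m}(f)$ via $\phi_{0,j}=\tfrac{1}{j!}\prod_l\xi_{e_l}^{j_l}f$, the role of $a$ odd and of $\alpha_0\cdot\mu\in\BZ$ in the transformation laws, the use of $\alpha_0^2=4$ to match the prefactor $q^{2m}$ with the index-$\tfrac m2 Q_{E_8}$ automorphy factor, and the factor $\tfrac12$ in the anomaly equation coming from $\nu(2\tau)=\tfrac12\nu(\tau)$. The paper organizes the same content through a different decomposition: it factors the operator as $M_{\alpha_0,m}(f)=(f|R_2)\,||_{\frac m2 Q_{E_8}}(\alpha_0,0)$ and reduces the lemma to two general-purpose results proved in the appendix --- Lemma~\ref{lemma:G2 derivative and scaling}, which shows that the rescaling $R_N\colon f(x,\tau)\mapsto f(x,N\tau)$ maps $\SL_2(\BZ)$-forms of index $L$ to $\Gamma_0(N)$-forms of index $L/N$ for the lattice $N\BZ^n\oplus\BZ^n$ and rescales $\tfrac{d}{dG_2}$ by $\tfrac1N$, and Lemma~\ref{lemma:Jac2}, which shows that the translation $\phi\mapsto e(\lambda^tL\lambda\tau+2\lambda^tLx)(e^{\xi_\lambda}\phi)(x+\lambda\tau+\mu,\tau)$ preserves quasi-Jacobi forms for the stabilizer subgroup $\Gamma_X\ltimes\Lambda_X$, computed there to be $\Gamma_0(2)\ltimes(2\BZ^8\oplus\BZ^8)$ by exactly your ``$a$ odd'' observation. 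What the factorization buys is reuse (both lemmas are needed again, e.g.\ for the Hecke operators of Proposition~\ref{prop:Hecke operator}) and the fact that the slash-operator bookkeeping is done once and for all; in particular the cusp condition of Definition~\ref{defn:AHJ}(ii) at the cusp $\tau=0$, which you dispatch with ``inherited under $q\mapsto q^2$,'' is handled there by the $\GL_2^{+}(\BQ)$ decomposition of Lemma~\ref{lemma:GL2 expansion} --- worth spelling out if you keep the monolithic version. In return, your single direct verification is self-contained and makes the one genuinely non-formal point --- that $e^{\xi_{\alpha_0}/2}$ is precisely the correction forced by the shift $\alpha_l\mapsto\tfrac12\alpha_l+\tfrac12(\alpha_0)_l$ of the non-holomorphic variables --- more visible than the paper's two-line reduction does.
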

\begin{proof}
This will be proven in Appendix~\ref{subsec:appendix modifications}.
\end{proof}

\subsection{Hecke operators for $\Gamma_0(2)$} \label{subsec:Hecke operators}
For a Laurent series
\[ f(\zeta,q) = \sum_{v} \sum_{r \in \BZ^n} c(v,r) q^v \zeta^r \]
and an integer $\ell \geq 1$ define the series
\begin{equation} \label{Hecke:main text}
 f|_{k,L} V_{\ell}
=
 \sum_{v \geq 0} \sum_{r \in \BZ^n}  \sum_{\substack{ \text{odd }a|(v,r,\ell) }}
 a^{k-1} c\left( \frac{\ell v}{a^2}, \frac{r}{a} \right) q^v \zeta^r
 \end{equation}
 
\begin{prop} \label{prop:Hecke operator}
For every $\ell,s \geq 1$ the mapping $f \mapsto f|_{k,L} V_{\ell}$ defines a homomorphism
\[ \frac{1}{\Delta(q)^{s}} \QJac_{k,L}(\Gamma_0(2)) \to \frac{1}{\Delta(q)^{\ell s}} \QJac_{k, \ell L}(\Gamma_0(2)) \]
such that
\begin{gather*}
\frac{d}{dG_2} (f |_{k,L} V_{\ell}) = \ell \left( \frac{d}{dG_2} f \right)\Big|_{k-2,L} V_{\ell} \\
\xi_{\lambda} (f |_{k,L} V_{\ell}) = \ell (\xi_{\lambda} f)|_{k-1,L}
\end{gather*}
for all $f \in \frac{1}{\Delta(q)^s} \QJac_{k,L}(\Gamma_0(2))$ and $\lambda \in \BZ^n$
\end{prop}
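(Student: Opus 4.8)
The plan is to realize $V_\ell$ as a finite sum of slash operators and to reduce every assertion to the combinatorics of a set of coset representatives. Writing $\mathcal{M}_\ell = \{ \binom{a\ b}{0\ d} : ad = \ell,\ a \text{ odd},\ 0 \le b < d \}$, I would first check, by carrying out the geometric sum over $b \bmod d$ exactly as in the classical Eichler--Zagier computation, that the Fourier-coefficient formula \eqref{Hecke:main text} coincides with the substitution sum
\[ f|_{k,L} V_\ell = \ell^{k-1} \sum_{\substack{ad = \ell,\ a \text{ odd} \\ 0 \le b < d}} d^{-k}\, f\!\left( a x, \tfrac{a\tau + b}{d} \right). \]
The odd divisors $a$ appearing in \eqref{Hecke:main text} correspond precisely to the factorizations $\ell = ad$ indexing $\mathcal{M}_\ell$. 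This identity is the bridge between the explicit formula (convenient for Fourier expansions and pole bounds) and the substitution description (convenient for modularity), and I would use whichever is appropriate at each step.

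For the elliptic behaviour I would verify the second line of \eqref{TRANSFORMATIONLAWJACOBI*}, with $L$ replaced by $\ell L$, term by term: substituting $x \mapsto a x$ and shifting $x \mapsto x + \lambda \tau + \mu$ with $\lambda,\mu \in \BZ^n$, the lattice shift turns into the $\tau'$-period $d\lambda$ (where $\tau' = (a\tau+b)/d$) together with an integral shift, while the elliptic argument is scaled to $ax$; since $ad = \ell$ the index-$L$ automorphy factor of $f$ becomes exactly the index-$(\ell L)$ factor, and the spurious phase $e(-bd\,\lambda^t L \lambda)$ is trivial because $\lambda^t L \lambda \in \BZ$. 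The $\SL_2$-transformation is the crux: here I would prove the key combinatorial lemma that right multiplication by any $\gamma \in \Gamma_0(2)$ permutes $\mathcal{M}_\ell$ up to left multiplication by $\Gamma_0(2)$. Concretely, the congruence conditions ``lower-left entry even, upper-left entry odd'' cut out a set of determinant-$\ell$ integer matrices stable under right multiplication by $\Gamma_0(2)$, and $\mathcal{M}_\ell$ is a complete, irredundant set of representatives for its $\Gamma_0(2)$-orbits; the oddness of $a$ is exactly the condition concentrating the prime $2$ in the lower-right entry $d$, which is what makes this set compatible with the lower-left-even condition defining $\Gamma_0(2)$. Granting the lemma, $f|V_\ell$ inherits the weight-$k$, index-$(\ell L)$, $\Gamma_0(2)$-transformation from that of $f$, since each $f|_{k,L}\gamma_M = f$. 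For the Fourier/holomorphy condition (ii) of Definition~\ref{defn:quasi jacobi forms main text} at every cusp and for the pole order, I would instead use an $\SL_2(\BZ)$-Hermite decomposition $M\sigma = \sigma' M'$ for $\sigma \in \SL_2(\BZ)$: because $\Delta$ is modular for the full group, slashing $f = g/\Delta^s$ by $\sigma'$ keeps the pole order $\le s$, and the substitution by the upper-triangular $M'$ multiplies it by at most $a'^2/\ell \le \ell$, so multiplication by $\Delta^{\ell s} \sim q^{\ell s}$ yields a form holomorphic at all cusps. This identifies $f|V_\ell \in \tfrac{1}{\Delta^{\ell s}}\QJac_{k,\ell L}(\Gamma_0(2))$, the same coset argument applied to the almost-holomorphic completion showing that the almost-holomorphic structure is preserved.

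For the two commutation relations I would lift $V_\ell$ to the almost-holomorphic side, defining $\widetilde{V_\ell}$ on $\AHJ_{k,L}(\Gamma_0(2))$ by the same substitution sum. Since the matrices of $\mathcal{M}_\ell$ are upper triangular, their automorphy factor is free of $\nu$ and $\alpha$, so taking the constant term \eqref{constant term map} commutes with the operator, giving $\mathrm{ct} \circ \widetilde{V_\ell} = V_\ell \circ \mathrm{ct}$. The entire content is then the elementary transformation of the real-analytic generators under $\tau \mapsto (a\tau+b)/d$, $x \mapsto a x$: from $\mathrm{Im}\big((a\tau+b)/d\big) = (a/d)\,\mathrm{Im}(\tau)$ one reads off $\nu \mapsto (d/a)\,\nu$ and $\alpha_i \mapsto d\,\alpha_i$. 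Feeding this through the chain rule and tracking the normalization $\ell^{k-1}d^{-k}$ gives, for the $\nu$-derivative, the factor $(d/a)\,\ell^{k-1}d^{-k} = \ell^{k-2}d^{2-k}$ (using $ad=\ell$), which is exactly $\ell$ times the normalization of $V_\ell$ in weight $k-2$; and for the $\alpha_i$-derivative, the factor $d\,\ell^{k-1}d^{-k} = \ell^{k-1}d^{1-k}$, which is $\ell$ times the weight-$(k-1)$ normalization. Applying $\mathrm{ct}$ and unwinding the definitions of $\tfrac{d}{dG_2} = \mathrm{ct}\circ\tfrac{d}{d\nu}\circ\mathrm{ct}^{-1}$ and $\xi_\lambda$ then produces the two displayed identities. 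As $\tfrac{d}{dG_2}\Delta = \xi_\lambda \Delta = 0$, both relations pass verbatim from $\QJac$ to $\tfrac{1}{\Delta^s}\QJac$.

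The hard part is the combinatorial lemma of the second paragraph: showing that $\mathcal{M}_\ell$ is stable (up to left $\Gamma_0(2)$) under right multiplication by $\Gamma_0(2)$ and is a complete, irredundant system of representatives. This is the genuinely arithmetic input and the only place where the congruence subgroup $\Gamma_0(2)$ and the restriction to odd $a$ enter essentially; everything else is either the Fourier bookkeeping of the first paragraph, the term-by-term elliptic check and the Hermite-reduction cusp estimate of the second, or the short chain-rule computation of the third.
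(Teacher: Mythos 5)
Your proposal is correct and follows essentially the same route as the paper's proof in Appendix B.3: the upper-triangular coset representatives $\binom{a\ b}{0\ d}$, $ad=\ell$, $a$ odd, $0\le b<d$ for $\Gamma_0(2)\backslash S_\ell$, the term-by-term elliptic and modular transformation checks, the pole-order bound $s a'^2/\ell \le s\ell$ at the second cusp (which the paper obtains via the explicit representatives $I_1\sqcup I_2$ of the $S$-translated coset — the same Hermite reduction you invoke), and the chain-rule computation of $\nu\mapsto (d/a)\nu$, $\alpha\mapsto d\alpha$ for the two anomaly-operator commutation relations. The combinatorial lemma you isolate as the essential arithmetic input is exactly the fact the paper cites from the literature, so nothing is missing.
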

\begin{proof} This is proven in Appendix~\ref{subsec:Hecke operators appendix}. \end{proof}

\subsection{Vanishing results}
\label{subsec:Vanishing results}
\begin{lemma} \label{lemma:non discrete}
Let $m$ be a positive integer with $m>2$. The subgroup $\Gamma$ of $\SL_2(\BR)$ generated by $-\id$ and the matrices
\[ A := \begin{pmatrix} 1 & 1/m \\ 0 & 1 \end{pmatrix}, 
\quad B:= \begin{pmatrix} 1 & 0 \\ 2 & 1 \end{pmatrix}. \]
is dense in $\SL_2(\BR)$.
%If $m > 2$ then $\Gamma$ is dense in $\SL_2(\BR)$, that is 
%$\overline{\Gamma} = \SL_2(\BR)$.
\end{lemma}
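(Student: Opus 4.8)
The plan is to exhibit an element of $\Gamma$ that is elliptic with rotation angle an irrational multiple of $\pi$, so that its powers accumulate on an entire circle subgroup inside the closure $\overline{\Gamma}$, and then to promote this to density via the classification of closed subgroups of $\SL_2(\BR)$.

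First I would compute the product
\[ AB^{-1} = \begin{pmatrix} 1 & 1/m \\ 0 & 1 \end{pmatrix}\begin{pmatrix} 1 & 0 \\ -2 & 1 \end{pmatrix} = \begin{pmatrix} 1 - 2/m & 1/m \\ -2 & 1 \end{pmatrix}, \]
which has determinant $1$ and trace $2 - 2/m$. For an integer $m>2$ one has $|2-2/m| < 2$, so $AB^{-1}$ is elliptic and is therefore conjugate in $\SL_2(\BR)$ to a rotation $R_\theta$ with $\cos\theta = 1 - 1/m$. The arithmetic heart of the argument is to show that $\theta$ is an irrational multiple of $\pi$: I would apply Niven's theorem, which says that if $\theta/\pi$ is rational then the rational number $\cos\theta$ must lie in $\{0, \pm 1/2, \pm 1\}$. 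Since $\cos\theta = 1 - 1/m$ lies in the open interval $(1/2, 1)$ for every integer $m > 2$, this is impossible, so $\theta/\pi$ is irrational. Consequently the powers $(AB^{-1})^n$ are dense in the compact one-parameter subgroup $K$ (a conjugate of $SO(2)$) that contains $AB^{-1}$, and hence $K \subseteq \overline{\Gamma}$.

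I would then conclude using the classification of connected closed subgroups of $\SL_2(\BR)$. The closure $\overline{\Gamma}$ is a closed, hence Lie, subgroup containing the circle $K$, so its identity component $\overline{\Gamma}^0$ contains $\mathfrak{so}(2)$. Every $2$-dimensional subalgebra of $\mathfrak{sl}_2(\BR)$ is conjugate to the Borel subalgebra, whose elements have real eigenvalues and which therefore contains no elliptic element; so no conjugate of the Borel can contain $\mathfrak{so}(2)$. Since a $1$-dimensional subalgebra containing $\mathfrak{so}(2)$ equals $\mathfrak{so}(2)$, this forces $\overline{\Gamma}^0$ to be either $K$ itself or all of $\SL_2(\BR)$. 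If it were $K$, then $\overline{\Gamma}$ would normalize $K$, so $\overline{\Gamma} \subseteq N_{\SL_2(\BR)}(K) = K$ (the normalizer of a compact Cartan is itself, as it stabilizes the corresponding point of the upper half-plane); but this contradicts $A \in \overline{\Gamma} \setminus K$, since $A \neq \id$ is unipotent and hence not elliptic. Therefore $\overline{\Gamma}^0 = \SL_2(\BR)$ and $\Gamma$ is dense.

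I expect the only genuine obstacle to be the rationality step: it is exactly the place where the hypotheses that $m$ is an integer and $m > 2$ enter (ruling out the Niven values $\cos\theta = 0, \pm 1/2, \pm 1$), and without it the angle could be a rational multiple of $\pi$ and $\Gamma$ need not be dense. The remaining Lie-theoretic steps — the subgroup classification and the self-normalizing property of the compact Cartan — are standard.
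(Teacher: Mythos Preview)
Your proof is correct. Both you and the paper begin by observing that $AB^{-1}$ is elliptic with rotation angle an irrational multiple of $\pi$ for $m>2$; you invoke Niven's theorem while the paper argues directly that the characteristic polynomial would otherwise have to be cyclotomic and hence have integer coefficients, which is essentially the same fact. The genuine difference is in how you pass from ``$\overline{\Gamma}$ contains a circle'' to ``$\overline{\Gamma}=\SL_2(\BR)$'': the paper observes that $\Gamma_0(2)\subset\Gamma$ and checks explicitly that its adjoint action on $\mathfrak{sl}_2(\BR)$ is irreducible, forcing the Lie algebra of $\overline{\Gamma}$ to be all of $\mathfrak{sl}_2$; you instead run through the classification of connected closed subgroups of $\SL_2(\BR)$ and use that a maximal compact is self-normalizing to rule out the $1$-dimensional case. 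Your route is slicker for $\SL_2$ specifically and avoids the matrix computation verifying irreducibility; the paper's route is more robust in that it would transport to higher-rank situations where one still has a Zariski-dense discrete subgroup acting irreducibly on the Lie algebra but no easy subgroup classification.
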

\begin{proof}
We first show that $\Gamma$ is not discrete in $\SL_2(\BR)$. Indeed, the matrix
\[ M = A B^{-1} = \begin{pmatrix} \frac{m-2}{m} & \frac{1}{m} \\ -2 & 1 \end{pmatrix}. \]
has characteristic polynomial $P(t)=\det(M-t I) = t^2 + (-2 + 2/m) t + 1$
and two distinct eigenvalues $\lambda_1, \lambda_2$ of absolute value $1$.
If $\lambda_1$ is a primitive $k$-th root of unity, then its minimal polynomial over $\BQ$ (a cyclotomic polynomial) divides $P(t)$.
Since $\pm 1$ is not a root of $P(t)$ we hence must have that $P(t)$ equals a cyclotomic polynomial,
so must have integer coefficients, hence $m \in \{ 1,2 \}$.
Hence for $m>2$, $\lambda_i$ is not a root of unity, so $\{ M, M^2, M^3, \ldots \} \subset \Gamma$ has an accumulation point. Hence $\Gamma$ is not discrete.
%The sequence $M_0^{-1} M^k$ thus has the accumulation point $I$, and $\Gamma$ is not discrete.

Let now $G$ be the closure of $\Gamma$ in the Lie group $\mathrm{SL}_2(\BR)$.
We need to show that $G = \SL_2(\BR)$.
By the Cartan's closed-subgroup theorem, $G$ is a Lie group,
and since $\Gamma$ is non-discrete, $G$ has dimension $\geq 1$.
The modular group $\Gamma_0(2)$ is generated by
$-I$ and $\binom{1\ 1}{0\ 1}$ and $\binom{1\ 0}{2\ 1}$, see \cite{Koblitz}, and hence $\Gamma_0(2) \subset \Gamma$.
The conjugation action of $\Gamma_0(2)$ on the tangent space $\mathfrak{sl}_2 = T_{\id} \mathrm{SL}_2(\BR)$
is irreducible.\footnote{Concretely, in a suitable basis of $\mathfrak{sl}_2(\BR)$ 
conjugation with $\binom{1\ 1}{0\ 1}$ and $\binom{1\ 0}{2\ 1}$  act by the matrices
\[ 
S=\begin{pmatrix} 1 & 0 & 1 \\ -2 & 1 & -1 \\ 0 & 0 & 1 \end{pmatrix} , \quad 
T=\begin{pmatrix} 1 & -2 & 0 \\ 0 & 1 & 0 \\ 4 & -4 & 1 \end{pmatrix}.
\]
The matrices $S,T$ have a single eigenvalue of geometric multiplicity $1$, but the corresponding eigenvectors are linearly independent. 
%with a non-linear eigenvector.
Hence any subspace invariant under them is all of $\mathfrak{sl}_2(\BR)$.}
The $\Gamma_0(2)$ action on $\mathfrak{sl}_2$ preserves the tangent space to $G$.
Hence $T_{\id} G$ is a non-zero subrepresentation of $T_{\id} \mathrm{SL}_2(\BR)$, so by Schur's lemma we get $T_{\id} G = \mathfrak{sl}_2$, and since $\SL_2(\BR)$ is connected, we find $G=\mathrm{SL}_2(\BR)$.
\end{proof}

\begin{rmk} \label{rmk:m=2 generation}
%On the other hand, 
If $m=2$, the group generated by $A$, $B$ and $-\id$ in Lemma~\ref{lemma:non discrete} is
the discrete group $C^{-1} \SL_2(\BZ) C$ for $C=\binom{2\ 0}{0\ 1}$.
\end{rmk}

\begin{rmk}
The problem to decide when a finitely generated subgroup of $\mathrm{SL}_2(\BR)$ is discrete
was answered in full generality in \cite{Gilman}.
\end{rmk}

\begin{lemma} \label{lemma:quasimodular vanishing}
Let $m>2$ and let $f(\tau)$ be a quasi-modular form of weight $k$ for $\Gamma_0(2)$
with Fourier expansion $f(\tau) = \sum_{\ell \geq 0} a_{\ell} q^{\ell m}$.
%uch that $a_n = 0$ for $n \neq 0$ modulo $m$
If $k \neq 0$, then $f = 0$. If $k=0$, then $f$ is constant.
\end{lemma}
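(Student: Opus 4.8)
The plan is to translate the Fourier hypothesis into an additional group symmetry, pass to the almost-holomorphic completion, and then exploit the density statement of Lemma~\ref{lemma:non discrete}. Note first that $f(\tau)=\sum_{\ell\ge 0}a_\ell q^{\ell m}$ holds if and only if $f(\tau+\tfrac1m)=f(\tau)$, i.e.\ $f$ is invariant under the matrix $A=\binom{1\ 1/m}{0\ 1}$ of Lemma~\ref{lemma:non discrete}. Since $A^m=\binom{1\ 1}{0\ 1}$ and $\Gamma_0(2)=\langle-\id,\binom{1\ 1}{0\ 1},B\rangle$ with $B=\binom{1\ 0}{2\ 1}$, the subgroup generated by $\Gamma_0(2)$ together with $A$ is precisely $\Gamma=\langle-\id,A,B\rangle$, which by Lemma~\ref{lemma:non discrete} is dense in $\SL_2(\BR)$ for $m>2$. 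If I can show that the completion of $f$ transforms as a genuine weight-$k$ modular form under this dense group, the result will follow from a short rigidity argument at a fixed point of a rotation.

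To make sense of ``modular under $\Gamma$'' for a quasi-modular form, I would work with the almost-holomorphic completion $\widehat f:=\mathrm{ct}^{-1}(f)$ from \eqref{constant term map}: the unique almost-holomorphic modular form of weight $k$ for $\Gamma_0(2)$ whose constant term is $f$. Writing $\widehat f=\sum_{r\ge0}f_r\,\nu^r$ with $f_0=f$ and $\nu=\tfrac1{8\pi\,\mathrm{Im}(\tau)}$, the point is to upgrade the invariance $f_0(\tau+\tfrac1m)=f_0(\tau)$ to the invariance of all of $\widehat f$. Because $A$ has vanishing lower-left entry, $\nu(A\tau)=\nu(\tau)$, so $\widehat f(\tau+\tfrac1m)=\sum_r f_r(\tau+\tfrac1m)\nu^r$ is again almost-holomorphic with constant term $f_0(\tau+\tfrac1m)=f$ by hypothesis, and a routine slash computation shows it is modular of weight $k$ for the conjugate group $A^{-1}\Gamma_0(2)A$. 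Thus both $\widehat f$ and $\widehat f(\,\cdot+\tfrac1m)$ are almost-holomorphic modular forms of weight $k$ for the intersection $\Gamma'=\Gamma_0(2)\cap A^{-1}\Gamma_0(2)A$, and they share the same constant term $f$. As $A\in\GL_2(\BQ)$ normalises the commensurability class of $\SL_2(\BZ)$, the group $\Gamma'$ is commensurable with $\SL_2(\BZ)$, in particular a Fuchsian group of the first kind for which the constant-term isomorphism \eqref{constant term map} continues to hold; its injectivity forces $\widehat f(\tau+\tfrac1m)=\widehat f(\tau)$, so $\widehat f$ is $A$-invariant. This promotion of the symmetry from $f$ to $\widehat f$ is the step I expect to be the main obstacle, since it is exactly where the quasi-modular anomaly (the higher coefficients $f_r$) must be controlled rather than just the holomorphic part $f_0$.

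Granting this, $\widehat f$ is modular of weight $k$ for $\langle\Gamma_0(2),A\rangle=\Gamma$. Since $\widehat f$ is continuous on $\BH$ and the automorphy factor $(c\tau+d)^k$ is continuous in $\gamma=\binom{a\ b}{c\ d}$, the transformation law $\widehat f(\gamma\tau)=(c\tau+d)^k\widehat f(\tau)$ extends from the dense subgroup $\Gamma$ to all of $\SL_2(\BR)$. Evaluating at the fixed point $\tau=i$ of the rotations $\binom{\cos\theta\ \sin\theta}{-\sin\theta\ \cos\theta}\in\mathrm{SO}(2)$, whose automorphy factor at $i$ equals $e^{-i\theta}$, gives $\widehat f(i)=e^{-ik\theta}\widehat f(i)$ for all $\theta$. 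For $k\ne0$ this forces $\widehat f(i)=0$, and since $\SL_2(\BR)$ acts transitively on $\BH$ we conclude $\widehat f\equiv0$, hence $f=\mathrm{ct}(\widehat f)=0$. For $k=0$ the transformation law says $\widehat f$ is $\SL_2(\BR)$-invariant, hence constant on the single orbit $\BH$, so $f$ is constant.
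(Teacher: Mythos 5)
Your proposal is correct and follows essentially the same route as the paper's own proof: both pass to the almost-holomorphic completion, upgrade the $A$-invariance of $f$ to $A$-invariance of the completion via the injectivity of the constant-term map on a common congruence subgroup (the paper uses $\Gamma_0(2m^2)\subset\Gamma_0(2)\cap A^{-1}\Gamma_0(2)A$, you use the intersection itself), invoke the density of $\langle\Gamma_0(2),A\rangle$ from Lemma~\ref{lemma:non discrete}, and conclude by continuity of the automorphy factor together with the rotation argument at $\tau=i$. The step you flag as the main obstacle is handled exactly as in the paper.
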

\begin{proof}
Let $\Gamma$ be the group generated by $\Gamma_0(2)$ and $\binom{1\ 1/m}{0 \ \phantom{/m}1}$.
Because $\binom{1\ 0}{2\ 1} \in \Gamma_0(2)$
we have that $\Gamma$ is define by Lemma~\ref{lemma:non discrete}.
% and see that $\Gamma$ is dense in $\SL_2(\BR)$.
%
%The modular group $\Gamma_0(2)$ is generated by
%$-I$ and $\binom{1\ 1}{0\ 1}$ and $\binom{1\ 0}{2\ 1}$.
%Hence $\Gamma$ is not discrete by Lemma~\ref{lemma:non discrete}.
%Let $G = \overline{\Gamma}$ be its closure in the Lie group $\mathrm{SL}_2(\BR)$.
%By the Cartan's closed-subgroup theorem $G$ is a Lie group,
%and since $\Gamma$ is non-discrete, $G$ has dimension $\geq 1$.
%The action of $\Gamma_0(2)$ on the tangent space $\mathfrak{sl}_2 = T_{\id} \mathrm{SL}_2(\BR)$
%by conjugation is irreducible.\footnote{Concretely, in a suitable basis of $\mathfrak{sl}_2(\BR)$ 
%conjugation with $\binom{1\ 1}{0\ 1}$ and $\binom{1\ 0}{2\ 1}$  act by the matrices
%\[ 
%S=\begin{pmatrix} 1 & 0 & 1 \\ -2 & 1 & -1 \\ 0 & 0 & 1 \end{pmatrix} , \quad 
%T=\begin{pmatrix} 1 & -2 & 0 \\ 0 & 1 & 0 \\ 4 & -4 & 1 \end{pmatrix}.
%\]
%The matrices $S,T$ have a single eigenvalue of geometric multiplicity $1$, but the corresponding eigenvectors are linearly independent. 
%%with a non-linear eigenvector.
%Hence any subspace invariant under them is all of $\mathfrak{sl}_2(\BR)$.}
%The $\Gamma_0(2)$ action on $\mathfrak{sl}_2$ preserves the tangent space to $G$,
%hence $T_{\id} G$ is a non-zero subrepresentation, so $T_{\id} G = \mathfrak{sl}_2$, so $G=\mathrm{SL}_2(\BR)$.

Assume first that $f(\tau)$ is a modular form. % the modular form $f(\tau)$.
By our assumption on the Fourier expansion of $f$ we have
$f(\tau + 1/m) = f(\tau)$ and hence that
\begin{equation} f\left( \frac{ a \tau + b}{c \tau + d} \right) = (c \tau + d)^k f(\tau) \label{dfsdokfs} \end{equation}
for all $\gamma = \binom{a\ b}{c\ d} \in \Gamma$. %\Gamma_0(2) \sqcup \{ \begin{pmatrix} 1 & 1/m \\ 0 & 1 \end{pmatrix} \}$.
Since both sides of \eqref{dfsdokfs} depends continously on $\gamma$ and $\tau$, we must have that \eqref{dfsdokfs} holds for all $\gamma$ in the closure of $\Gamma$, that is for all $\gamma \in \SL_2(\BR)$.
The group $\mathrm{SL}_2(\BR)$ acts transitively on the upper half plane $\BH$.
Hence, if $k=0$ we get that $f$ is constant.
%If $k=0$, then since $\mathrm{SL}_2(\BR)$ acts transitively on the upper half plane $\BH$ equation \eqref{dfsdokfs} implies that $f$ is constant.
If $k \neq 0$, consider the element
$\gamma = \binom{\phantom{-}\lambda\ \mu}{-\mu\ \lambda} \in \SL_2(\BR)$
% $\gamma = \begin{pmatrix} \lambda & \mu \\ -\mu & \lambda \end{pmatrix}$
for some $\lambda, \mu \in \BR$ with $\lambda^2 + \mu^2 = 1$ and $\mu \neq 0$.
Inserting into \eqref{dfsdokfs} we get $f(i) = (-\mu \tau+ \lambda)^k f(i)$, and hence $f(i)=0$.
By the transitivity of the $\mathrm{SL}_2(\BR)$ action, \eqref{dfsdokfs} implies hence $f=0$.
%so for all $\tau \in \BH$ we have $f(\tau)=(c \tau+d)^k f(i)=0$ where $\gamma = \binom{a\ b}{c\ d} \in \mathrm{SL}_2(\BR)$ is choosen such that $\gamma(i) = \tau$. Hence $f=0$.

If $f$ is quasi-modular, let $F = \sum_{j} f_j y^{-j}$, where $y=\mathrm{Im}(\tau)$, be the almost-holomorphic modular form whose constant term is $f_0 = f$.
%,  and $f=f_0$ its almost-holomorphic completion.
For $\gamma = \begin{pmatrix} 1 & 1/m \\ 0 & 1 \end{pmatrix}$,
observe that $\Gamma_0(2m^2) \subset \gamma^{-1} \Gamma_0(2) \gamma \cap \Gamma_0(2)$.
%where $\Gamma(n) := \ker( \SL_2(\BZ) \to \SL_2(\BZ/n\BZ) )$.
This implies that $F(\tau + 1/m)$ and $F(\tau)$ are both almost-holomorphic modular forms for $\Gamma_0(2m^2)$. By assumption, their holomorphic parts are equal to the same function $f(\tau) = f(\tau+1/m)$. Since the constant term map \eqref{constant term map} is an isomorphism, we get $F(\tau+1/m) = F(\tau)$.
Hence $F$ satisfies  \eqref{dfsdokfs} for all $\gamma \in \Gamma$, so by the same argument as before it satisfies \eqref{dfsdokfs} for all $g \in \SL_2(\BR)$.
Hence $F$ constant if $k=0$, and $F=0$ if $k \neq 0$.
\end{proof}

\begin{lemma} \label{lemma:taylor}
Let $f(x,\tau)$ be a quasi-Jacobi form for $\Gamma_0(2) \ltimes \BZ^{(n,2)}$ of lattice index $L$ and weight $k$, and let
\[ f(x,\tau) = \sum_{i_1, \ldots, i_n} f_{i_1, \ldots, i_n}(\tau) x_{1}^{i_1} \cdots x_n^{i_n} \]
be its Taylor expansion around the point $x=0$. Then
each $f_{i_1, \ldots, i_{n}}$ is a quasi-modular form for $\Gamma_0(2)$ of weight $k+\sum_{l} i_l$.
\end{lemma}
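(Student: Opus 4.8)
The plan is to deduce the lemma from two elementary closure properties of quasi-Jacobi forms and then iterate. Write $\partial_l := \partial/\partial x_l$ for the formal derivative in the $l$-th elliptic variable. The two properties I would establish are: \textbf{(A)} the operator $\partial_l$ maps $\QJac_{k,L}(\Gamma_0(2) \ltimes \BZ^{(n,2)})$ into $\QJac_{k+1,L}(\Gamma_0(2) \ltimes \BZ^{(n,2)})$; and \textbf{(B)} the evaluation $g \mapsto g(0,\tau)$ maps $\QJac_{w,L}(\Gamma_0(2) \ltimes \BZ^{(n,2)})$ into $\QMod_w(\Gamma_0(2))$. Granting these, the lemma follows at once: since
\[ f_{i_1,\dots,i_n}(\tau) = \frac{1}{i_1!\cdots i_n!}\,\big(\partial_1^{i_1}\cdots \partial_n^{i_n} f\big)(0,\tau), \]
applying (A) a total of $\sum_l i_l$ times and then (B) shows that $f_{i_1,\dots,i_n}$ is a quasi-modular form for $\Gamma_0(2)$ of weight $k + \sum_l i_l$.

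Property (B) is the easy half, and I would prove it directly from the defining transformation law. Let $\Phi = \mathrm{ct}^{-1}(g) \in \AHJ_{w,L}$ be the almost-holomorphic completion. Setting $x = 0$ in the first line of \eqref{TRANSFORMATIONLAWJACOBI*} makes the index factor trivial, so $\Phi(0,\gamma\tau) = (c\tau+d)^w \Phi(0,\tau)$ for all $\binom{a\,b}{c\,d} \in \Gamma_0(2)$; moreover $\Phi(0,\tau)$ is a polynomial in $\nu$ with holomorphic coefficients, since all $\alpha_i$ vanish at $x=0$. Thus $\Phi(0,\cdot)$ is an almost-holomorphic modular form of weight $w$ for $\Gamma_0(2)$, and its constant term, which is $g(0,\tau)$, is therefore quasi-modular of weight $w$; the growth at the cusps is inherited from condition (ii) of Definition~\ref{defn:quasi jacobi forms main text}.

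The real work is (A), and this is the step I expect to be the main obstacle: the index factor $e(c\,x^tLx/(c\tau+d))$ obstructs $\partial_l$ from preserving honest Jacobi forms, and one must verify that the resulting anomaly is of exactly the type permitted for quasi-Jacobi forms. I would argue by exhibiting the completion explicitly. Let $\Phi = \mathrm{ct}^{-1}(g)$. First note that $\partial_l$ preserves almost-holomorphic functions, because $\partial_l \nu = 0$ and $\partial_l \alpha_m = \delta_{lm}/(\tau - \overline\tau) = -4\pi i\,\nu\,\delta_{lm}$, so differentiating a polynomial in $\nu,\alpha$ with holomorphic coefficients again yields one. The candidate completion of $\partial_l g$ is
\[ \widetilde\Phi := \partial_l \Phi + 4\pi i \sum_{m} L_{lm}\,\alpha_m\,\Phi. \]
Using the clean transformation rules $\alpha_m\big(\tfrac{x}{c\tau+d},\gamma\tau\big) = (c\tau+d)\alpha_m - c\,x_m$ and $\alpha_m(x+\lambda\tau+\mu,\tau) = \alpha_m + \lambda_m$, a direct computation shows that the $\alpha$-correction cancels precisely the anomalies produced when $\partial_l$ hits the index factor in the modular law and the factor $e(-\lambda^t L\lambda\,\tau - 2\lambda^t L x)$ in the elliptic law. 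Hence $\widetilde\Phi$ satisfies the exact transformation laws \eqref{TRANSFORMATIONLAWJACOBI*} of an almost-holomorphic Jacobi form of weight $k+1$ and index $L$ for $\Gamma_0(2) \ltimes \BZ^{(n,2)}$. Finally $\mathrm{ct}(\widetilde\Phi) = \partial_l g$, because $\mathrm{ct}(\alpha_m \Phi) = 0$ as every term has positive $\alpha$-degree, while $\mathrm{ct}(\partial_l \Phi) = \partial_l\,\mathrm{ct}(\Phi) = \partial_l g$. This proves (A). I expect the only delicate points to be the bookkeeping in these two anomaly cancellations and the verification that the growth condition (ii) is preserved under $\partial_l$, both of which are routine given the explicit formula for $\widetilde\Phi$.
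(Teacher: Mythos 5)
Your proof is correct and follows exactly the same route as the paper: the paper also writes $f_{i_1,\dots,i_n}$ as $\frac{1}{i_1!\cdots i_n!}\partial_1^{i_1}\cdots\partial_n^{i_n}f\big|_{x=0}$ and invokes the two facts that $\partial_{x_l}$ preserves quasi-Jacobi forms of index $L$ while raising the weight by one, and that restriction to $x=0$ yields a quasi-modular form, citing \cite[Sec.\,1, Sec.\,1.3.5]{RES} for both. The only difference is that you supply the proofs of these two closure properties (your completion $\partial_l\Phi + 4\pi i\sum_m L_{lm}\alpha_m\Phi$ and the anomaly cancellations check out), where the paper simply cites the reference.
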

\begin{proof}
The restriction to $x=0$ of any quasi-Jacobi form for $\Gamma_0(2) \ltimes \BZ^{(n,2)}$ of index $L$ and weight $k$ is a quasi-modular form for $\Gamma_0(2)$,
see \cite[Sec.1.3.5]{RES}.
Moreover, the derivative operator $\frac{d}{d x_i}$ acts on the space of quasi-Jacobi forms of index $L$ and increases the weight by $1$,
see \cite[Sec.1]{RES}.
Hence
\[ f_{i_1, \ldots, i_n} = \frac{1}{i_1! \cdots i_n!} \left( \frac{d}{d x_1} \right)^{i_1} \cdots \left( \frac{d}{d x_n}\right)^{i_n} f(x,\tau) \Big|_{x=0} \]
is a quasi-modular form of weight $\sum_l i_l$ increased.
\end{proof}

\begin{prop} \label{prop:quasi Jacobi form vanishing}
Let $m > 2$ and let $f(x,\tau)$ be a quasi-Jacobi form for some lattice index $L$ and weight $k$
for the group $\Gamma_0(2) \ltimes (\BZ^n \oplus \BZ^n)$.
Assume that $f$ has a Fourier-Jacobi expansion
$f = \sum_{n,r} c(n,r) q^n \zeta^r$
such that $c(n,r) \neq 0$ only if $n=m \ell$ for some $\ell \in \BZ$.
If $L \neq 0$ or $k \neq 0$, we have that $f=0$, otherwise $f$ is constant.
\end{prop}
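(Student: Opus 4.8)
The plan is to reduce the statement, via Taylor expansion in the elliptic variables, to the scalar vanishing result for quasimodular forms already established in Lemma~\ref{lemma:quasimodular vanishing}. First I would expand $f$ in its Taylor series around $x=0$,
\[ f(x,\tau) = \sum_{j=(j_1,\ldots,j_n)} f_j(\tau)\, x^j, \qquad x^j = x_1^{j_1}\cdots x_n^{j_n}, \]
which converges everywhere since $f$ is holomorphic on $\BC^n\times\BH$. By Lemma~\ref{lemma:taylor} each coefficient $f_j$ is a quasimodular form for $\Gamma_0(2)$ of weight $k+|j|$, where $|j|=\sum_l j_l$. The key point is that the Fourier support hypothesis is inherited by every $f_j$: differentiating $\zeta^r = e(x\cdot r)$ in $x$ and setting $x=0$ multiplies the coefficient $c(n,r)$ by a $\tau$-independent constant and never changes the $q$-exponent $n$, so $f_j(\tau)=\sum_{\ell\geq 0} a_{j,\ell}\, q^{m\ell}$ is again supported on $q$-powers divisible by $m$.

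Next I would apply Lemma~\ref{lemma:quasimodular vanishing} to each $f_j$: since $m>2$, a $\Gamma_0(2)$-quasimodular form supported on exponents divisible by $m$ vanishes unless its weight is zero, in which case it is constant. Hence $f_j=0$ whenever $k+|j|\neq 0$, and $f_j$ is a constant $c_j$ when $k+|j|=0$. Consequently $f$ collapses to the homogeneous polynomial $\sum_{|j|=-k} c_j x^j$ with constant coefficients (the empty sum, so $f=0$, when $k>0$). To remove the remaining degrees I would invoke the integer periodicity of quasi-Jacobi forms: the translation law for $(\lambda,\mu)=(0,\mu)$ with $\mu\in\BZ^n$ gives $f(x+\mu,\tau)=f(x,\tau)$, because a real integer shift of $x$ leaves the functions $\alpha_i=\mathrm{Im}(x_i)/\mathrm{Im}(\tau)$ unchanged and therefore commutes with the constant-term map \eqref{constant term map}. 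A polynomial invariant under all integer translations is constant, so $\sum_{|j|=-k} c_j x^j$ is forced to be constant, which kills it unless $-k=0$. This already proves $f=0$ whenever $k\neq 0$, and leaves $f=c_0$ constant when $k=0$; note that this part of the argument never referred to the index $L$.

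The one remaining point — and the step I expect to be the main obstacle — is to rule out a nonzero constant of nonzero index, i.e.\ to show that $f=c_0\neq 0$ forces $L=0$. Here I would use the elliptic transformation law for quasi-Jacobi forms from the background of Section~\ref{subsec:Lattice index quasi Jacobi forms} (cf.\ \cite{RES}),
\[ f(x+\lambda\tau+\mu,\tau) = e\!\left(-\lambda^t L\lambda\,\tau - 2\lambda^t L x\right)\sum_{a\geq 0}\frac{1}{a!}\,(\xi_\lambda^a f)(x,\tau), \]
together with the vanishing $\xi_\lambda c_0=0$. The latter holds because $\xi_\lambda c_0$ is again a quasi-Jacobi form for $\Gamma_0(2)\ltimes(\BZ^n\oplus\BZ^n)$ with the same Fourier support but of weight $-1\neq 0$, so the index-free reduction above (Taylor expansion, Lemma~\ref{lemma:quasimodular vanishing}, and integer periodicity) already forces it to vanish; equivalently, the almost-holomorphic completion of a constant carries no $\alpha$-dependence. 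With every correction term gone, the transformation law degenerates to $c_0 = e(-\lambda^t L\lambda\tau - 2\lambda^t L x)\,c_0$ for all $\lambda\in\BZ^n$, which for $c_0\neq 0$ is possible only if $L=0$.

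Putting the cases together yields exactly the claim: $f=0$ whenever $k\neq 0$, $f=0$ whenever $k=0$ and $L\neq 0$, and $f$ is constant precisely when $k=0$ and $L=0$. The delicate part is the final index argument, since it is the only place where the non-exactness of the elliptic transformation for quasi-Jacobi forms enters; everything else is a formal consequence of Lemmas~\ref{lemma:taylor} and~\ref{lemma:quasimodular vanishing}.
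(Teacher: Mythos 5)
Your proposal is correct and follows the same route as the paper: Taylor-expand in $x$, apply Lemma~\ref{lemma:taylor} and Lemma~\ref{lemma:quasimodular vanishing} to conclude that $f$ is independent of $q$, and then use the translation laws in $x$ to dispose of the nonzero-weight and nonzero-index cases. In fact you supply more detail than the paper, whose proof compresses the entire weight/index discussion into the single clause ``so it vanishes if $(L,k)\neq 0$''; your analysis of the elliptic transformation law with its $\xi_\lambda$-corrections (and why they vanish for a constant) is precisely the point the paper leaves implicit.
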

\begin{proof}
By Lemma~\ref{lemma:taylor} each coefficient in the Taylor expansion of $f$
is a quasi-modular form satisfying the assumptions of Lemma~\ref{lemma:quasimodular vanishing},
and hence is a constant. We see that $f$ does not depend on $q$,
so it vanishes if $(L,k) \neq 0$. If $(L,k)=0$, then $f$ is a $2$-periodic function of $x$ so a constant.
\end{proof}
\begin{rmk} \label{rmk:vanishing with poles}
Since we never used the boundedness at poles in the above argument,
Proposition~\ref{prop:quasi Jacobi form vanishing} holds also for quasi-Jacobi forms in the space
\[ \frac{1}{\Delta(q)^{s}} \QJac_{k,L}( \Gamma_0(2) \ltimes (\BZ^{n} \oplus \BZ^n)), \quad s \geq 0. \]
\end{rmk}

\section{Monodromy and autoequivalences of the Enriques surface}
We study monodromy and auto-equivalences on Enriques surfaces $Y$
by relating them to the covering K3 surface $X \to Y$ and using the global Torelli theorem. In Section~\ref{subsec:monodromy group} we first compute the monodromy group of the Enriques surface.
In Section~\ref{subsec:derived monodromy group} we also consider auto-equivalences and determine in Proposition~\ref{prop:DMon tilde} the derived monodromy group of an Enriques surface.
The classification of the orbits of vectors under the derived monodromy group is given in Section~\ref{subsec:orbit of vectors}.
For an introduction to Enriques surfaces we refer to \cite[VIII]{BHPV} and \cite{EnriquesBook}.

\label{sec:Mon Enriques}
\subsection{Notation}
Let $U = \binom{0\ 1}{1\ 0}$ be the hyperbolic lattice
and let $E_8$ be the unique unimodular even lattice of signature $(8,0)$.
Given a lattice $L$, we write $L(m)$ for the lattice with intersection form multiplied by $m$.
\subsection{Cohomology}
Let $Y$ be an Enriques surface, let $\pi : X \to Y$ be the covering K3 surface,
and let $G = \langle \tau \rangle \cong \BZ_2$ be the group generated by the covering involution $\tau : X \to X$.
There exists an isometry (a 'marking')
\begin{equation} \varphi : H^2(X,\BZ) \xrightarrow{\cong} 
U \oplus U \oplus U \oplus E_8(-1) \oplus E_8(-1)
%U \oplus (U \oplus E_8(-1)) \oplus (E \oplus E_8(-1)) 
\label{esdf} \end{equation}
such that $\varphi \tau^{\ast} \varphi^{-1} = \widetilde{\tau}$,
where $\widetilde{\tau}(x_1, x_2, x_3, y_1, y_2) = (-x_1, x_3, x_2, y_2, y_1)$.
In particular, we have the invariant part:
\[ H^2(X,\BZ)^G \cong E_8(-2) \oplus U(2) \]
and the anti-invariant part:
\[ H^2(X,\BZ)^{G,-} = \{ \alpha \in H^2(X,\BZ) |  \tau^{\ast} \alpha = -\alpha \} \cong U \oplus U(2) \oplus E_8(-2). \]
Since any invariant class is Hodge and curves on K3 surfaces vary in a linear system, any invariant class in $H^2(X,\BZ)$ descends to the Enriques surface, so that one has the 
isomorphism 
\[ \pi^{\ast} : H^2(Y,\BZ) \xrightarrow{\cong} H^2(X,\BZ)^G. \]
It satisfies $\pi^{\ast}(a) \cdot \pi^{\ast}(b) = 2 a \cdot b$.
In particular, $H^2(Y,\BZ) \cong E_8(-1) \oplus U$.

\subsection{Monodromy group}
\label{subsec:monodromy group}
Let $Y_1, Y_2$ be Enriques surfaces.
An isomorphism $f : H^{\ast}(Y_1,\BZ) \to H^{\ast}(Y_2,\BZ)$ is a {\em parallel transport operator},
if there exists a smooth projective morphism $\epsilon : \CY \to B$ over a smooth simply-connected curve, points $b_1, b_2 \in B$ and isomorphisms 
$\varphi_{i} : Y_i \to Y_{b_i}$ such that $f$ is the composition
\[ f : H^{\ast}(Y_1,\BZ) \xrightarrow{\varphi_{1 \ast}}
H^{\ast}(Y_{b_1},\BZ) \xrightarrow{} H^{\ast}(Y_{b_2},\BZ) \xrightarrow{\varphi_2^{\ast}} H^{\ast}(Y_2,\BZ), \]
where the middle arrow is the parallel transport obtained from trivializing $R \epsilon_{\ast} \BZ$. When $Y_1 = Y_2 =: Y$, then $f$ is called a monodromy operator. Let $\Mon(Y) \subset \mathrm{GL}(H^{\ast}(Y,\BZ))$ be the subgroup generated by all monodromy operators.

Let $X_i \to Y_i$ be the K3 cover for $i=1,2$.
%, X_2$ be the K3 covers of the Enriques surfaces $Y_1, Y_2$.
A {\em lifted parallel transport operator} (of a parallel transport operator of Enriques surfaces) is the composition
\[ \widetilde{f} : H^{\ast}(X_1,\BZ) \xrightarrow{\widetilde{\varphi}_{1 \ast}}
H^{\ast}(X_{b_1},\BZ) \xrightarrow{} H^{\ast}(X_{b_2},\BZ) \xrightarrow{\widetilde{\varphi}_2^{\ast}} H^{\ast}(X_2,\BZ), \]
where $\CX \to \CY$ is the double cover defined by the relative canonical bundle of $\epsilon$, giving rise to a family of K3 surfaces $\CX \to B$,
the isomorphisms
$\widetilde{\varphi}_i : X_i \to \CX_{b_i}$ are lifts of $\varphi_i$ (which always exist, since K3 surfaces are simply connected),
and the middle arrow is the parallel transport operator of the family $\CX \to B$ (which is automatically $G$-equivariant, since it arises as an double cover).
Each lifted parallel transport operator $\widetilde{f}$ is $G$-equivariant, and its restriction to the invariant part recovers the parallel transport operator $f$ under the isomorphism $H^2(X,\BZ)^G \cong H^2(Y,\BZ)$.
In case $Y_1 = Y_2$, so $X_1 = X_2 = X$, we call $\widetilde{f}$ a lifted monodromy operator.
Let $\mathrm{GL}(H^{\ast}(X,\BZ))_G$ denote the group of automorphisms $f : H^{\ast}(X,\BZ) \to H^{\ast}(X,\BZ)$ such that $f \circ g^{\ast} = g^{\ast} \circ f$ for all $g \in G$.
The lifts $\widetilde{\varphi}_i$ are unique only up to composing with the covering involution; hence we define the lifted monodromy
$\widetilde{\Mon}(Y)$ as the subgroup of $\mathrm{GL}(H^{\ast}(X,\BZ))_G/G$ generated by all lifted parallel transport operators.

Since (lifted) parallel transport operators are degree-preserving with fixed action on $H^0$ and $H^4$, and K3 and Enriques surfaces have no odd cohomology,
we often identify them with their restriction to $H^2$, which are isometries. Hence we naturally write:
\[ \Mon(Y) \subset O(H^2(Y,\BZ)), \quad \widetilde{\Mon}(Y) \subset O(H^2(X,\BZ))_G/G. \]
We have a natural commutative diagram
\begin{equation} \label{commutative diag}
\begin{tikzcd}
\widetilde{\Mon}(Y) \ar[hook]{r} \ar{d}{r_{\Mon}} & O(H^2(X,\BZ))_{G}/G \ar{d}{r} \\
\Mon(Y) \ar[hook]{r} & O(H^2(Y,\BZ))
\end{tikzcd}
\end{equation}
where $r(\psi) = (\pi^{\ast})^{-1} \circ \psi|_{H^2(X,\BZ)^G} \circ \pi^{\ast}$.
By construction $r_{\Mon}$ is surjective.
Since the invariant lattice $H^2(X,\BZ)^G$ is $2$-elementary,
also $r$ is surjective by a criterion of Nikulin \cite{Nikulin, Namikawa}.

\begin{defn}
Let $L$ be a lattice of signature $(m,n)$, $m>0$. The unit sphere in any positive-definite $m$-dimensional subspace of $L \otimes \BR$ is a deformation retract of the positive cone $\{ x \in L_{\BR} : (x,x)>0 \}$.
The top cohomology of the sphere is hence a $1$-dimensional representation of $O(L)$, corresponding to a character $\nu : O(L) \to \{ \pm 1 \}$. We write 
$O^{+}(L) = \mathrm{Ker}(\nu)$ for the subgroup of $O(L)$ of orientaton-preserving isometries.
\end{defn}

\begin{example}
Since $H^2(Y,\BZ)$ is of signature $(1,9)$, the cone $\{ x \in H^2(Y,\BR) | x \cdot x > 0 \}$ has two connected components. 
Then $O^{+}(H^2(Y,\BZ)) \subset O(H^2(Y,\BZ))$
is the subgroup of orthogonal transformations which preserves the components.
\end{example}

We will also denote by
\[ O^{+,+}(H^2(X,\BZ))_G \subset O(H^2(X,\BZ))_G \] 
the group of $G$-equivariant isometries of $H^2(X,\BZ)$ such that their restriction to {\em both the invariant and anti-invariant part} are orientation-preserving.

\begin{prop} \label{prop:Mon-tilde}
$\widetilde{\Mon}(Y) = O^{+,+}(H^2(X,\BZ))_G/G$.
\end{prop}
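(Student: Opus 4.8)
The plan is to prove the two inclusions separately, regarding $H^2(X,\BZ)$ as the K3 lattice equipped with the extra involution $\tau^{\ast}$, whose $(\pm 1)$-eigenlattices are the invariant part $I := H^2(X,\BZ)^{G}$ and the anti-invariant part $A := H^2(X,\BZ)^{G,-} \cong M$. Since $I$ and $A$ are precisely the eigenspaces of $\tau^{\ast}$, every $G$-equivariant isometry preserves both of them, so its restrictions $\psi_I$ and $\psi_A$ and the orientations of the positive-definite parts $I^{+}$ (one-dimensional) and $A^{+}$ (two-dimensional) are all well defined.

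For the inclusion $\widetilde{\Mon}(Y) \subseteq O^{+,+}(H^2(X,\BZ))_{G}/G$, I would check that each lifted parallel transport operator $\widetilde{f}$ lies in the right-hand side. That $\widetilde{f}$ is $G$-equivariant is built into its definition. For the orientations, recall that the family of K3 surfaces $\CX \to B$ carries a fibrewise holomorphic $2$-form $\omega$, anti-invariant under $G$, and that $(\mathrm{Re}\,\omega, \mathrm{Im}\,\omega)$ spans a continuously varying, positively oriented positive-definite $2$-plane inside $A \otimes \BR$; hence parallel transport preserves the orientation of $A^{+}$. Likewise a Kähler class on $Y$ pulls back to an invariant Kähler class on $X$, a continuously varying positive vector in $I^{+}$, so the orientation of $I^{+}$ is preserved as well (equivalently, $r_{\Mon}(\widetilde{f}) \in \Mon(Y) \subseteq O^{+}(H^2(Y,\BZ))$). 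Passing to the quotient by $G$ gives the inclusion.

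The reverse inclusion is the substantial part, and is the Enriques-equivariant analogue of the surjectivity half of the K3 monodromy theorem. Since all Enriques surfaces are deformation equivalent, $\widetilde{\Mon}(Y)$ is independent of $Y$, so I may take $Y$ with a very general period $p = \BC\,\omega_X \in \CD_M \subset \p(A \otimes \BC)$, avoiding the hypersurface of singular (Coble) surfaces. Given $\psi \in O^{+,+}(H^2(X,\BZ))_{G}$, its restriction $\psi_A \in O(A)$ preserves the orientation of $A^{+}$ and hence the connected component of $\CD_M$, so $\psi_A(p)$ is again a very general Enriques period. By surjectivity of the Enriques period map (Horikawa, Namikawa; see \cite{EnriquesBook}) there is a marked Enriques surface with period $\psi_A(p)$, and by connectedness of the moduli space of Enriques surfaces (the quotient $\CD_M/O(M)$ with an irreducible divisor removed) this surface is joined to $Y$ by a path. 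Trivializing the local system along the path produces a lifted parallel transport operator $g$ with $g(p) = \psi_A(p)$.

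It then remains to upgrade the equality of periods to the equality $g = \psi$ modulo $G$, and this is where I expect the main difficulty to lie. The global Torelli theorem for K3 surfaces, applied $G$-equivariantly, should pin down $g$ on the transcendental part $A$ up to the indeterminacy generated by $\tau^{\ast} = (\mathrm{id}_I, -\mathrm{id}_A)$, which acts trivially on periods and is exactly the generator of $G$; the $G$-equivariance together with Nikulin's control of the gluing between the $2$-elementary lattices $I$ and $A$ then forces $g$ and $\psi$ to agree on $I$ as well. Carrying this out carefully — tracking the two orientations, the discriminant-form (gluing) constraint, the removed Coble divisor, and the $/G$ indeterminacy of the two lifts of each marking — is the technical heart of the argument. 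Once it is complete, $\psi \bmod G = g \in \widetilde{\Mon}(Y)$, giving $O^{+,+}(H^2(X,\BZ))_{G}/G \subseteq \widetilde{\Mon}(Y)$ and finishing the proof. As a byproduct, feeding this equality into the surjective map $r_{\Mon}$ of diagram \eqref{commutative diag} and computing $r\bigl(O^{+,+}(H^2(X,\BZ))_{G}/G\bigr)$ — every $\phi \in O^{+}(I)$ extends $G$-equivariantly and can be corrected on $A$ by a reflection in a positive vector of a unimodular $U$ summand — recovers $\Mon(Y) = O^{+}(H^2(Y,\BZ))$.
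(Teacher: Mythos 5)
Your forward inclusion is fine and matches the paper: $G$-equivariance is built into the definition, the fibrewise $2$-form gives the orientation of the anti-invariant positive plane, and an invariant K\"ahler class (equivalently $r_{\Mon}(\widetilde f)\in\Mon(Y)\subseteq O^{+}(H^2(Y,\BZ))$) gives the orientation of the invariant positive line.

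The reverse inclusion, however, has a genuine gap, and you have flagged it yourself ("this is where I expect the main difficulty to lie", "the technical heart"). Your plan is to produce \emph{some} parallel transport $g$ with $g(p)=\psi_A(p)$ and then upgrade equality of periods to $g=\psi$ modulo $G$. Two problems. First, equality of periods only controls the restriction to $A$: for a very general period the Hodge isometries of $A$ fixing $p$ are $\pm\id_A=\{\id,\tau^{\ast}|_A\}$, so $g^{-1}\psi$ is indeed $\pm\id$ on $A$; but on $I\cong U(2)\oplus E_8(-2)$ the only constraint coming from the gluing of the two $2$-elementary eigenlattices is that $(g^{-1}\psi)|_I$ induce the matching map on the discriminant group, and the kernel of $O(I)\to O(q_I)$ is large, so your claim that Nikulin's gluing "forces $g$ and $\psi$ to agree on $I$" is false. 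Closing this would require knowing that every orientation-preserving isometry acting trivially on $A$ is already a lifted monodromy, i.e.\ essentially $\Mon(Y)=O^{+}(H^2(Y,\BZ))$ --- which in the paper is a \emph{corollary} of this proposition, so you would be running in a circle. Second, the $g$ you construct by choosing a path in the unmarked moduli space is only well defined up to the monodromy group you are trying to compute. The paper sidesteps both issues by working in the moduli space of \emph{marked} Enriques surfaces: it takes the two markings $\varphi$ and $\varphi g$ of the same surface $(X,\tau)$, observes that the marked moduli space has exactly four connected components distinguished by the two orientation characters (two from the sign of the K3 marking, two from the components of $\CD_M$), so the hypothesis $g\in O^{+,+}$ places both marked surfaces in the same connected component; a path between them then tautologically realizes $g$ itself (not just something with the same period) as a lifted parallel transport operator.
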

\begin{proof}
Every lifted monodromy operator is a monodromy of a K3 surface,
so it preserves the orientation on $H^2(X,\BZ)$.
Moreover, its restriction to the invariant part $H^2(X,\BZ)^G$
can be identified with the monodromy of the Enriques surface,
hence is also orientation-preserving. 
Since the orientation-character on $H^2(X,\BZ)$ is the product of the orientation characters of the antiinvariant and invariant parts, 
this shows that every lifted monodromy operator lies in $O^{+,+}(H^2(X,\BZ))_G$.

The converse direction is a consequence of the Torelli theorem for Enriques surfaces \cite{Namikawa, BHPV},
as we explain now.
A marked Enriques surfaces is a triple $(X,\tau,\varphi)$ where $X$ is a K3 surface with a fixed-point free involution $\tau$ and $\varphi : H^2(X,\BZ) \to U^3 \oplus E_8(-1)^2$ is a marking such that $\varphi \tau^{\ast} \varphi^{-1} = \widetilde{\tau}$.
By the Torelli theorem for K3 surfaces, the moduli space of marked Enriques surfaces $\CM_{\mathrm{Enriques}}$
is the subspace of the moduli space of marked K3 surfaces $(X,\varphi)$ such that
$\varphi^{-1} \widetilde{\tau} \varphi$ is a Hodge isometry which sends an ample class to an ample class.
%So we need to show '$\supset$'.
Consider the orthogonal complement of the invariant lattice in
$U^3 \oplus E_8(-1)^{2}$:
\[ M := (U(2) \oplus E_8(-2))^{\perp} \cong U \oplus U(2) \oplus E_8(-2). \]
Define the period domain
\[ \CD_M = \{ x \in \p(M \otimes \BC) | x \cdot x = 0, x \cdot \overline{x} > 0 \} \]
and let $\CD_M^{\circ} \subset \CD_M$ be the complement
of the locus of periods orthogonal to a $(-2)$-class:
\[
 \CD_M^{\circ} := \CD_M \setminus \CH, \quad 
 \CH := \{ x \in \p(M \otimes \BC) | x \cdot d = 0 \text{ for some } d \in M, d \cdot d = -2. \}. \]
The period map on the moduli space of marked K3 surfaces,
then restricts to a surjection $\mathrm{per} : \CM_{\mathrm{Enriques}} \to \CD_M^{\circ}$.
The moduli space of marked K3 surfaces has two connected components
interchanged by $(X,\varphi) \mapsto (X,-\varphi)$ \cite{HuybrechtsK3}. This gives rise to two (not necessarily connected) components $\CM_{\mathrm{Enriques},i}$, $i \in \{ 1, 2 \}$ of $\CM_{\mathrm{Enriques}}$. 
By the Torelli theorem of Enriques surfaces the restriction
$\mathrm{per} : \CM_{\mathrm{Enriques},i} \to \CD_M^{\circ}$ remains surjective
and is generically 1-to-1.
Since $M$ is of signature $(2,\ast)$, $\CD_M$ and hence $\CD_M^{\circ}$
has two connected components. Since $\mathrm{per}$ is generically $1$-to-$1$, %By the Torelli theorem of Enriques surfaces, the generic fibers of $\CM_{\mathrm{Enriques},i} \to \CD_M^{\circ}$ consist of a single point,
%hence 
it follows that each $\CM_{\mathrm{Enriques},i}$ then decomposes as the disjoint union of two {\em connected} components. 
Let $\varphi$ be a marking of $(X,\tau)$ and let $x = \mathrm{per}(X,\tau, \varphi) = \varphi(H^{2,0}(X))$ be its period.
Let $g \in O^{+,+}(H^2(X,\BZ))_G$.
Then also $(X,\tau, \varphi g)$ is a marked Enriques surface.
Since the restriction of $g$ to the anti-invariant part is
orientation-preserving, the period $\mathrm{per}(X,\tau, \varphi g) = g'(x)$, where $g' = \varphi \circ g \circ \varphi^{-1}$,
lies in the same connected component as $x$.
Moreover, since $g$ is orientation-preserving on $H^2(X,\BZ)$,
$(X,\varphi)$ and $(X, \varphi g)$ lie in the same $\CM_{\mathrm{Enriques},i}$ for some $i$. 
We see that $(X,\tau, \varphi)$ and $(X,\tau,g \varphi)$ lie in the same connected component of  $\CM_{\mathrm{Enriques}}$
and hence can be connected by a path.
%and hence $(X,\tau, \varphi)$ and $(X,\tau, \varphi g)$ can be connected in $\CM_{\mathrm{Enriques}}$ by a path,
It follows that $\varphi^{-1} \circ \varphi g=g$ is a lifted parallel transport operator.
\end{proof}

Using the diagram \eqref{commutative diag} we obtain the following by restriction:
\begin{cor} \label{cor:Monodromy Enriques}
$\Mon(Y) = O^{+}(H^2(Y,\BZ))$.
\end{cor}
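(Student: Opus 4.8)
The plan is to read off $\Mon(Y)$ from Proposition~\ref{prop:Mon-tilde} by transporting the identification of $\widetilde{\Mon}(Y)$ along the surjection $r_{\Mon}$ in the commutative diagram \eqref{commutative diag}. Since $r_{\Mon}$ is surjective and $r$ restricts to it along the inclusions, one gets
\[ \Mon(Y) = r_{\Mon}\big(\widetilde{\Mon}(Y)\big) = r\big( O^{+,+}(H^2(X,\BZ))_G/G \big), \]
so the whole problem reduces to computing the image of the subgroup $O^{+,+}(H^2(X,\BZ))_G$ under $r$. One inclusion is immediate: for $g \in O^{+,+}(H^2(X,\BZ))_G$ the restriction $g|_{H^2(X,\BZ)^G}$ is orientation-preserving by definition, and since $\pi^{\ast}$ identifies the two lattices up to a positive rescaling of the form, it preserves the orientation of the positive cone; hence $r(g) \in O^{+}(H^2(Y,\BZ))$.

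For the reverse inclusion I would start from an arbitrary $\phi \in O^{+}(H^2(Y,\BZ))$ and use that $r$ is surjective (by the Nikulin criterion already invoked, as the invariant lattice is $2$-elementary) to choose a lift $g \in O(H^2(X,\BZ))_G$ with $r(g) = \phi$. Because $\phi$ preserves orientation, $g$ is automatically orientation-preserving on the invariant part $H^2(X,\BZ)^G$; the only thing that may fail is that $g$ could reverse orientation on the anti-invariant part $H^2(X,\BZ)^{G,-} \cong U \oplus U(2) \oplus E_8(-2)$, which has signature $(2,10)$. The key point is then to correct this last sign without disturbing the invariant part or the value of $r$.

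The correcting device I would use is the reflection $s_v(x) = x - (x\cdot v)\,v$ in a vector $v$ of square $2$ inside the anti-invariant lattice, which exists because the summand $U$ contains such a $v$. Since $v^2 = 2$, the map $s_v$ is an integral isometry of $H^2(X,\BZ)$; since $\tau^{\ast} v = -v$, a short computation shows $s_v$ commutes with $\tau^{\ast}$, so $s_v \in O(H^2(X,\BZ))_G$; and since $v$ is orthogonal to the invariant lattice, $s_v$ acts as the identity there, whence $r(s_v) = \id$. Finally, $s_v$ reverses the orientation of the positive-definite $2$-plane of the anti-invariant lattice, as it negates the positive vector $v$ and fixes $v^{\perp}$. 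Thus if $g$ happens to reverse the anti-invariant orientation, replacing $g$ by $g\,s_v$ produces an element of $O^{+,+}(H^2(X,\BZ))_G$ with $r(g s_v) = r(g) = \phi$, proving $\phi \in r(O^{+,+}(H^2(X,\BZ))_G/G)$. I expect the main obstacle to be precisely this last step: producing an honest lattice isometry that flips the anti-invariant orientation while remaining the identity on the invariant part. The reflection in a $(+2)$-vector is what makes it work, and the fact that both $-\id$ and reflections in negative-norm vectors \emph{preserve} the orientation of a $2$-dimensional positive part is the reason a positive-norm vector is genuinely needed here.
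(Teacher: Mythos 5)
Your proposal is correct and follows the same route as the paper, which deduces the corollary by restricting the identification $\widetilde{\Mon}(Y) = O^{+,+}(H^2(X,\BZ))_G/G$ of Proposition~\ref{prop:Mon-tilde} along the diagram \eqref{commutative diag} and the Nikulin surjectivity of $r$. You additionally make explicit the one step the paper leaves implicit — that a $G$-equivariant lift $g$ of a given $\phi \in O^{+}(H^2(Y,\BZ))$ can be corrected to lie in $O^{+,+}(H^2(X,\BZ))_G$ by composing with the reflection in a square-$2$ vector of the anti-invariant lattice $U \oplus U(2) \oplus E_8(-2)$, which commutes with $\tau^{\ast}$, acts trivially on the invariant part, and flips the anti-invariant orientation — and this argument is sound.
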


The invariant part of $H^2(X,\BZ)$ carries a canonical orientation determined by any invariant ample class, and the anti-invariant part has a canonical orientation
given by $\mathrm{Re}([\sigma]), \Im([\sigma])$
where $\sigma$ is the symplectic form on $X$.
If $X_i \to Y_i$ are covering K3 surfaces of Enriques surfaces for $i=1,2$,
a $G$-equivariant isometry $\varphi : H^2(X_1,\BZ) \to H^2(X_2,\BZ)$
is {\em orientation-preserving}, if its restrictions to the invariant and antiinvariant part preserve the natural orientations.
\begin{cor} \label{cor:lifted parallel transport}
Let $X_i \to Y_i$ be K3 surfaces covering an Enriques surface for $i=1,2$.
Any $G$-equivariant orientation-preserving morphism $\varphi : H^2(X_1, \BZ) \to H^2(X_2, \BZ)$ is a lifted parallel-transport operator.
\end{cor}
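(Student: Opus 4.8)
The plan is to deduce Corollary~\ref{cor:lifted parallel transport} from Proposition~\ref{prop:Mon-tilde} by a standard two-step argument: first reduce the case of two distinct Enriques surfaces $Y_1, Y_2$ to the single-surface case $Y_1 = Y_2$ handled by the proposition, and second match up the two different notions of orientation-preserving that appear in the statements. The key observation is that Proposition~\ref{prop:Mon-tilde} identifies $\widetilde{\Mon}(Y) = O^{+,+}(H^2(X,\BZ))_G/G$, where $O^{+,+}$ is defined via orientation-preservation on both the invariant and anti-invariant parts. I must check that this group-theoretic condition coincides with the geometric condition in the corollary, namely preservation of the canonical orientations coming from an invariant ample class (on the invariant part) and from $\mathrm{Re}([\sigma]), \Im([\sigma])$ (on the anti-invariant part).

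First I would treat the diagonal case. Given a single covering $X \to Y$ and a $G$-equivariant isometry $\varphi : H^2(X,\BZ) \to H^2(X,\BZ)$ preserving the natural orientations, the point is that ``preserving the natural orientation'' on the invariant part (determined by an invariant ample class) and on the anti-invariant part (determined by $\mathrm{Re}([\sigma]), \Im([\sigma])$) are exactly the conditions that $\varphi$ restricts to an orientation-preserving isometry of each summand in the $O^{+,+}$ sense. Indeed, the positive cone in the invariant lattice (signature $(1,\ast)$) has its component singled out by the ample class, and the positive-definite plane in the anti-invariant lattice (signature $(2,\ast)$) is oriented precisely by the real and imaginary parts of the period. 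Hence $\varphi \in O^{+,+}(H^2(X,\BZ))_G$, and by Proposition~\ref{prop:Mon-tilde} its class modulo $G$ is a lifted monodromy operator; since lifts are only well-defined up to the covering involution, $\varphi$ itself is a lifted parallel transport operator.

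For the general case $Y_1 \neq Y_2$, I would use connectedness of the moduli space of marked Enriques surfaces, which is already established in the proof of Proposition~\ref{prop:Mon-tilde}: each component $\CM_{\mathrm{Enriques},i}$ decomposes into connected pieces indexed by the two components of $\CD_M$. Fix markings $\varphi_i$ of $(X_i,\tau_i)$. The given orientation-preserving $\varphi$ lets me transport the marking $\varphi_1$ to a marking $\varphi_2 \circ \varphi$ of $(X_1,\tau_1)$, and the orientation-preservation of $\varphi$ guarantees that $(X_1,\tau_1,\varphi_1)$ and $(X_1,\tau_1,\varphi_2\varphi)$ lie in the same connected component of $\CM_{\mathrm{Enriques}}$ --- this is the verbatim mechanism used at the end of the proof of Proposition~\ref{prop:Mon-tilde}, where preservation of the anti-invariant orientation keeps the period in the same component of $\CD_M$ and preservation of the $H^2(X,\BZ)$-orientation keeps the marking in the same $\CM_{\mathrm{Enriques},i}$. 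A path in moduli space joining the two marked surfaces then produces, via the associated family and its relative canonical double cover, a lifted parallel transport operator realizing $\varphi$.

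The main obstacle I anticipate is purely bookkeeping: carefully verifying that the \emph{two} orientation conditions in the corollary (invariant part oriented by the ample class; anti-invariant part oriented by $\mathrm{Re}[\sigma], \Im[\sigma]$) are equivalent to membership in $O^{+,+}$ as defined before Proposition~\ref{prop:Mon-tilde}. One must check that the orientation character $\nu$ on each $2$-elementary summand is detected correctly by these geometric choices, and that the ambiguity by $G$ (the covering involution acts as $-1$ on the anti-invariant part and trivially on the invariant part, hence reverses the anti-invariant orientation) is handled consistently --- this is exactly why the lift is defined only up to $G$ and why the corollary phrases the conclusion as ``a lifted parallel transport operator'' rather than identifying it with a unique element. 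No genuinely new input is required beyond Proposition~\ref{prop:Mon-tilde} and the connectedness analysis already carried out there.
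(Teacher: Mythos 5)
Your proposal is correct and follows essentially the same route as the paper: the paper's proof simply uses connectedness of the moduli space of Enriques surfaces to produce \emph{one} lifted parallel transport operator $\psi : H^2(X_1,\BZ) \to H^2(X_2,\BZ)$ and then applies the single-surface statement (Proposition~\ref{prop:Mon-tilde} via Corollary~\ref{cor:Monodromy Enriques}) to $\psi^{-1}\circ\varphi$, which is exactly your reduction to the diagonal case. The one point to tighten is your comparison of $(X_1,\tau_1,\varphi_1)$ with $(X_1,\tau_1,\varphi_2\circ\varphi)$: for arbitrary markings the discrepancy $\varphi_1^{-1}\circ\varphi_2\circ\varphi$ need not lie in $O^{+,+}$, so the markings must first be aligned by a path in moduli (equivalently, one should introduce the auxiliary $\psi$ explicitly, as the paper does), after which your argument coincides with the paper's.
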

\begin{proof}
Clearly, any lifted parallel transport operator is orientation-preserving.
Conversely,
since the moduli space of Enriques surfaces is connected, there
exists a lifted parallel transport operator $\psi : H^2(X_1,\BZ) \to H^2(X_2, \BZ)$. The claim then follows by applying Corollary~\ref{cor:Monodromy Enriques} to $\psi^{-1} \circ \varphi$.
\end{proof}

\subsection{Derived monodromy group}
\label{subsec:derived monodromy group}
For any smooth projective variety $X$, let $D^b(X)$ be the bounded derived category of coherent sheaves on $X$.
Let $v(E) = \ch(E) \sqrt{\td_X} \in H^{\ast}(X,\BQ)$ denote the Mukai vector
of an element $E \in D^b(X)$.
For $\gamma, \gamma' \in H^{2 \ast}(X,\BQ)$ let
%The Mukai pairing between elements $\gamma, \gamma' \in H^{2 \ast}(X,\BQ)$ is
\[ (\gamma,\gamma') = -\int_X \gamma^{\vee} \gamma' \]
be the Mukai pairing, where the dualization morphism $(- )^{\vee}$ acts by multiplication by $(-1)^{i}$ on $H^{2i}(X,\BQ)$.
%$\alpha^{\vee} = (-1)^i \alpha$ for any $\alpha \in H^{2i}(X,\BQ)$ and we extend $( - )^{\vee}$ linearly. 
In paricular, for all $E, E' \in D^b(X)$ we have
\[ (v(E), v(E')) = -\chi(E,E'), \]
where $\chi(E,E') = \sum_{i} (-1)^i \dim \Ext^i(E,E')$.

%\[ (v(E), v(E")) = -\chi(E,E') \text{ for all } E, E' \in D^b(X). \]

Any derived equivalence $\Phi : D^b(X_1) \to D^b(X_2)$ 
is the Fourier-Mukai transform
along a kernel $\CE \in D^b(X \times Y)$.
Let $\mathrm{pr}_1,\mathrm{pr}_2$ be the projections of $X \times Y$ to its factors.
Then the induced transform on cohomology is:
\[ \Phi^H : H^{\ast}(X_1,\BQ) \to H^{\ast}(X_2, \BQ), \quad 
\gamma \mapsto \mathrm{pr}_{2 \ast}( \mathrm{pr}_1^{\ast}(\gamma) \cdot v(\CE) ) \]
We have $\Phi^H(v(E)) = v( \Phi(E))$ and $(\Phi_1 \circ \Phi_2)^{H} = \Phi_1^H \circ \Phi_2^H$.

Let $Y$ be an Enriques surfaces and let $Y_1, Y_2$ be smooth deformations of $Y$.
For any equivalence\footnote{In fact, by a result of Bridgeland-Macioca \cite{BM}, one has that $Y_1 \cong Y_2$ in this situation.} $\Phi : D^b(Y_1) \to D^b(Y_2)$, consider the composition
\begin{equation*} H^{\ast}(Y) \xrightarrow{} H^{\ast}(Y_1,\BQ) \xrightarrow{\Phi^H} H^{\ast}(Y_2, \BQ) \xrightarrow{} H^{\ast}(Y,\BQ), 
\end{equation*}
where the outher arrows are the parallel transport operators induced by the deformations. The {\em derived monodromy group} of $Y$ is the group $\DMon(Y) \subset O( H^{\ast}(Y,\BQ))$ generated by all these operators.
 Clearly, we have $\Mon(Y) \subset \DMon(Y)$.

Any auto-equivalence $\Phi : D^b(Y_1) \to D^b(Y_2)$ between Enriques surfaces lifts to an auto-equivalence $\widetilde{\Phi} : D^b(X_1) \to D^b(X_2)$ of the covering K3 surfaces,
which is equivariant with respect to, and unique up to, the covering involutions,
see \cite[Sec.3.3]{Ploog} and \cite[Prop.3.5]{MMS}.
We let $\widetilde{\DMon}(Y) \subset O(H^{\ast}(X,\BZ))$ then be the subgroup generated by all compositions
\begin{equation}
\label{composition}
 H^{\ast}(X,\BZ) \xrightarrow{} H^{\ast}(X_1,\BZ) \xrightarrow{\widetilde{\Phi}^H} H^{\ast}(X_2, \BZ) \xrightarrow{} H^{\ast}(X,\BZ),
 \end{equation}
where the outer arrows are the lifted parallel transport operators.
Recall here that for K3 surfaces equivalences act on the integral cohomology, see \cite{HuybrechtsK3}.

Let $O(H^{\ast}(X,\BZ))_G$ be the subgroup of $G$-equivariant isometries,
and let
\[ O^{+,+}(H^{\ast}(X,\BZ))_G \subset O(H^{\ast}(X,\BZ))_G \]
be the index $4$ subgroup of $G$-equivariant automorphisms $\varphi : H^{\ast}(X,\BZ) \to H^{\ast}(X,\BZ)$ such their restriction to both the invariant and anti-invariant part preserves the orientation.

\begin{prop} \label{prop:DMon tilde}
$\widetilde{\DMon}(Y) = O^{+,+}(H^{\ast}(X,\BZ))_G$.
\end{prop}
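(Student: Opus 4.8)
The plan is to prove the equality $\widetilde{\DMon}(Y) = O^{+,+}(H^{\ast}(X,\BZ))_G$ by establishing the two inclusions separately, mirroring the structure of the proof of Proposition~\ref{prop:Mon-tilde}. The ``$\subseteq$'' inclusion should be the easier direction: I would start from the observation, already recorded in the excerpt, that any auto-equivalence $\Phi : D^b(Y_1) \to D^b(Y_2)$ lifts to a $G$-equivariant auto-equivalence $\widetilde{\Phi} : D^b(X_1) \to D^b(X_2)$ of the covering K3 surfaces. For K3 surfaces it is classical (Huybrechts~\cite{HuybrechtsK3}, building on Mukai and Orlov) that the cohomological transform $\widetilde{\Phi}^H$ is an orientation-preserving Hodge isometry of the full Mukai lattice $H^{\ast}(X,\BZ)$. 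The lifted parallel transport operators on the outer arrows of \eqref{composition} are likewise orientation-preserving by Corollary~\ref{cor:lifted parallel transport}. I would then argue that $G$-equivariance forces $\widetilde{\Phi}^H$ to respect the splitting into invariant and anti-invariant sublattices, and that orientation-preservation on the full lattice together with orientation-preservation on (say) the invariant part forces orientation-preservation on the anti-invariant part as well, since the orientation character of $H^{\ast}(X,\BZ)$ factors as the product of the characters of the two pieces. The anti-invariant orientation is controlled by the period $\mathrm{Re}[\sigma], \Im[\sigma]$, which any equivalence of K3 surfaces preserves; the invariant orientation is pinned down by ample classes as in Proposition~\ref{prop:Mon-tilde}. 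This places every generator of $\widetilde{\DMon}(Y)$ in $O^{+,+}(H^{\ast}(X,\BZ))_G$.

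For the reverse inclusion ``$\supseteq$'', the goal is to realize every element $g \in O^{+,+}(H^{\ast}(X,\BZ))_G$ as a composition of the form \eqref{composition}. The strategy I would pursue is to reduce to the $H^2$-statement already proven: by Corollary~\ref{cor:lifted parallel transport}, any $G$-equivariant orientation-preserving isometry on $H^2(X,\BZ)$ is a lifted parallel transport operator, so it suffices to show that the degree-zero and degree-four components (the ``$H^0 \oplus H^4$ part'' of the Mukai lattice, a copy of the hyperbolic lattice $U$) can be adjusted freely by genuine auto-equivalences, while keeping $G$-equivariance and the orientation constraints. The natural tool is the K3 spherical twist and, most importantly, the shift by the line bundle / the action of tensoring by $\CO_X(D)$ for $G$-invariant divisors $D$, together with the ``$(0,0,1) \leftrightarrow (1,0,0)$'' type mixing coming from the derived dual or from a Fourier--Mukai transform associated to a moduli space of sheaves. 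I would exhibit an explicit generating set of $O^{+,+}(H^{\ast}(X,\BZ))_G$: roughly, the isometries acting trivially on the added $U$ summand (handled by Corollary~\ref{cor:lifted parallel transport}), together with a few distinguished equivalences realizing the action on $U$ and its interaction with $H^2$. Concretely I expect to use the $B$-field shifts $\exp(B\cup -)$ for invariant $B \in H^2(X,\BZ)^G$ and an equivalence inducing the Mukai reflection in the class $(1,0,1)$ or the spherical object $\CO_X$, checking in each case that the underlying auto-equivalence descends $G$-equivariantly to the Enriques surface.

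The main obstacle I anticipate is the reverse inclusion, and specifically verifying that one has \emph{enough} $G$-equivariant auto-equivalences to generate the full orientation-and-$G$-preserving orthogonal group of the rank-$2$ extension, rather than merely a finite-index subgroup. This is a lattice-theoretic surjectivity problem: one must show that the subgroup of $O(H^{\ast}(X,\BZ))_G$ generated by the cohomological actions of $G$-equivariant derived auto-equivalences, projected appropriately, hits all of $O^{+,+}$. Here I would lean on the structure of the invariant and anti-invariant lattices computed earlier, namely $H^2(X,\BZ)^G \cong U(2)\oplus E_8(-2)$ and $H^2(X,\BZ)^{G,-} \cong U \oplus U(2) \oplus E_8(-2)$, and combine this with Nikulin's theory of discriminant forms and $2$-elementary lattices (invoked already after diagram~\eqref{commutative diag}) to control the images of isometries in the respective orthogonal groups. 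The delicate point is compatibility: an equivalence must act correctly on \emph{both} summands simultaneously while remaining $G$-equivariant, and I expect the cleanest route is to fix the $H^2$-action using Corollary~\ref{cor:lifted parallel transport} first and then generate the remaining $U$-direction by a short explicit list of standard K3 auto-equivalences, reducing the whole statement to a finite computation in $O(U)$ together with a gluing argument along the discriminant group.
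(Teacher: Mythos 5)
Your ``$\subseteq$'' direction is essentially the paper's argument (orientation-preservation of K3 Fourier--Mukai transforms on the full Mukai lattice, the canonical orientation of the anti-invariant part via the period, and the factorization of the orientation character over the two summands), and that part is fine. The gap is in the reverse inclusion. You propose to exhibit an explicit generating set of $O^{+,+}(H^{\ast}(X,\BZ))_G$ --- isometries fixing the $H^0\oplus H^4$ copy of $U$, $B$-field shifts $\exp(B\cup-)$ for invariant algebraic $B$, and the spherical twist along $\CO_X$ (your ``Mukai reflection in $(1,0,1)$'' is the same operator) --- and then verify generation by lattice theory. But the available transvections have $B$ ranging over $H^2(X,\BZ)^G\cong U(2)\oplus E_8(-2)$, so the invariant Mukai lattice $U\oplus U(2)\oplus E_8(-2)$ contains only \emph{one} hyperbolic plane with unimodular pairing. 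The Eichler criterion, which is what normally makes this kind of ``few explicit generators'' argument work, requires two copies of $U$ (or $U\oplus U(m)$ in a form that is not available here); its failure is precisely why Proposition~\ref{prop:orbit in M} has to distinguish odd and even types and is proved via Nikulin's theory rather than by transvections. You correctly flag this surjectivity as the delicate point, but you do not resolve it, and there is no reason to expect the subgroup generated by your list to be all of $O^{+,+}$ --- for instance, it is not clear that the orbit of $(0,0,1)$ under your generators reaches every primitive isotropic vector of odd type.

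The paper's proof closes exactly this gap by a different mechanism: rather than generating the group, it realizes an \emph{arbitrary} $\psi\in O^{+,+}(H^{\ast}(X,\BZ))_G$ directly, following \cite{MMS}. After normalizing with shifts and line-bundle twists (your Case-1-type moves), the essential case is $\psi(0,0,1)=v=(r,\ell,s)$ with $r\neq 0$: one takes the moduli space $M=M_h(v)$ of stable sheaves on $X$ with the $G$-invariant Mukai vector $v$, shows that the induced involution $\widetilde\tau$ on $M$ is fixed-point free (this uses the computation of its anti-invariant lattice and \cite[Thm.~0.1]{AST}), so that $M$ is again the K3 cover of an Enriques surface, and uses the $G$-equivariant Fourier--Mukai equivalence $\Phi_{\CE}:D^b(M)\to D^b(X)$ to move $v$ back to $(0,0,1)$; Corollary~\ref{cor:lifted parallel transport} and Proposition~\ref{prop:Mon-tilde} then finish the argument, with the spherical twist along $\CO_X$ used only to reduce the remaining case $r=0$, $\ell\neq 0$ to $r\neq 0$. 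This moduli-space construction --- in particular the fixed-point-freeness of the induced involution, which is what guarantees the new equivalence stays within the Enriques world --- is the key idea missing from your proposal, and without it (or a genuinely new lattice-theoretic generation argument) the inclusion $O^{+,+}(H^{\ast}(X,\BZ))_G\subseteq\widetilde{\DMon}(Y)$ is not established.
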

\begin{proof}
We first explain the direction '$\subset$'.
For any K3 surface $X$,
the lattice $H^{\ast}(X,\BZ)$ has a natural orientation determined by
$( \mathrm{Re}(\sigma), \mathrm{Im}(\sigma), 1-\omega^2, \omega )$ for any K\"ahler class $\omega$ and for any symplectic form $\sigma$.
Moreover, any equivalence between K3 surfaces preserves this orientation,
 see \cite[Sec.4.5]{HMS}.
 If $X \to Y$ covers an Enriques, the anti-invariant part
 has a natural orientation
 determined by $(\mathrm{Re}(\sigma), \mathrm{Im}(\sigma) )$.
Moreover, any equivalence acts as a Hodge isometry on cohomology,
so preserves this orientation.
Taken together, it follows that every equivalence also preserves the orientation
on the invariant given by $(1-\omega^2, \omega )$ for some $G$-invariant K\"ahler class $\omega$.
Further, any lifted parallel transport operator
is orientation preserving on the invariant and anti-invariant part of $H^{\ast}(X,\BZ)$, see Corollary~\ref{cor:lifted parallel transport}.
Hence we get that any element in $\widetilde{\DMon}(Y)$ is orientation-preserving.
Thus $\widetilde{\DMon}(Y) \subset O^{+,+}(H^{\ast}(X,\BZ))_G$.
%
%If $X \to Y$ is a K3 surface covering an Enriques surface,
%th

Conversely,
let $\psi \in O^{+,+}(H^{\ast}(X,\BZ))_G$.
We will show $\psi \in \widetilde{\DMon}(Y)$
by following \cite[Proof of Prop.3.5]{MMS}.
%and we use also the notation from \cite{MMS}.
Since the statement is independent of the choice of the Enriques surface,
we may assume that $Y$ is generic,
so that the Hodge classes in $H^{\ast}(X,\BZ)$
are precisely the invariant classes, that is:
\[ H^{1,1}(X,\BC) \cap H^{2}(X,\BZ) = H^{2}(X,\BZ)^{G}. \]
%The direction '$\subset$' is clear by \cite{HMS},
%so let $\psi \in O^{+}_G(H^{\ast}(X,\BZ))$.

\vspace{3pt}
\noindent
\textbf{Case 1: $\psi(0,0,1) = \pm (0,0,1)$}

\vspace{2pt}
After composing $\psi$ with the shift functor $[1]^{H} = -\id_{H^{\ast}(X,\BZ)}$, we may assume $\psi(0,0,1) = (0,0,1)$.
Let $v=\psi(1,0,0)$ and consider the decomposition $v = (r,\ell,s)$ according to degree. Since $\psi$ is $G$-equivariant
and $(1,0,0)$ is $G$-invariant,
$v$ and hence $\ell$ is $G$-invariant.
%Hence $\tau^{\ast}(\ell)=\ell$.
Since all invariant classes on $H^2(X,\BZ)$ are Hodge,
there exists a line bundle $L \in \Pic(Y)$ with $c_1(L) = \ell$.
This shows that $v=\exp(c_1(L))$.
After composing $\psi$ with tensoring with $L^{\vee}$ we may hence assume 
$\psi(1,0,0) = (1,0,0)$.
Thus $\psi = \id_{H^0(X,\BZ) \oplus H^4(X,\BZ)} \oplus \psi_2$ for some $\psi_2 \in O^+(H^2(X,\BZ))_G$.
By Proposition~\ref{prop:Mon-tilde} $\psi_2$ is in the image of $\Mon(Y)$ so we are done.

\vspace{3pt} \noindent
\textbf{Case 2:} $\psi(0,0,1) = (r,\ell,s) =: v$ with $r \neq 0$

\vspace{2pt}
After composing with the shift, we may again assume that $r>0$.
As before we have that $v$ is $G$-invariant, and hence that it is Hodge.
Consider the moduli space $M = M_h(v)$ of stable sheaves on $X$ of Mukai vector $v$ with respect to a generic ample polarization $h$ (which by our assumption is automatically $G$-invariant).
Then $M$ is a K3 surface and there exists a universal family $\CE$ on $M \times X$ inducing an equivalence $\Phi_{\CE} : D^b(M) \to D^b(X)$.

Consider the composition
\[ \Psi = \Phi_{\CE}^{-1} \circ \tau^{\ast} \circ \Phi_{\CE} : D^b(M) \to D^b(M). \]
For any $x \in M$, we have $\Psi( k(x)) = \Phi_{\CE}^{-1}( \tau^{\ast} \CE_x )$,
and since $\tau^{\ast}(\CE_x)$ is again stable of Mukai vector $\tau^{\ast}(v) =v$ we have that $\tau^{\ast}(\CE_x) = \CE_{x'}$ for some $x' \in M$ and hence
$\Psi(k(x)) = k(x')$. Since $\Psi$ takes sykscraper sheaves to skyscraper sheaves,
we have
$\Psi = (L \otimes (-)) \circ \widetilde{\tau}^{\ast}$ for some $L \in \Pic(M)$ and automorphism $\widetilde{\tau}$.
Since $\tau$ is an involution, the same holds for $\Psi$, and hence $\widetilde{\tau}^2 = \id$ and $\widetilde{\tau}^{\ast}(L) = L^{\vee}$.
Moreover, let $T(X)$ be the transcendental lattice. Then by our assumption that $Y$ is generic, we have that $T(X)$ is rank $12$.
The equivalence induces an isomorphism $\Phi_{\CE}^{H}: T(M) \xrightarrow{\cong} T(X)$,
so $T(M)$ is of rank $12$ and $\Psi^H|_{T(M)} = -\id_{T(M)}$.
This shows that $\Psi^H : H^{\ast}(M,\BQ) \to H^{\ast}(M,\BQ)$ acts as the identity on all algebraic classes. In particular, $\Psi^H(c_1(L)) = 0$ shows that $L=\CO$, so $\Psi = \widetilde{\tau}^{\ast}$.
Moreover, since the antiinvariant lattice of the action of $\langle \widetilde{\tau} \rangle$ on cohomology is the same as that of $G$, namely $U(2) \oplus E_8(-2)$,
by \cite[Thm.0.1]{AST} $\widetilde{\tau}$ is fixed point free.
%is conjugate to the action of $\tau$, so they share the same invariants,
%it follows by ... that $\widetilde{\tau}$ is fixed-point free.

Consider now the composition 
\[ \widetilde{\psi} := \psi^{-1} \circ \Phi_{\CE}^H : H^{\ast}(M,\BZ) \to H^{\ast}(X,\BZ). \]
By construction we have $\widetilde{\psi}(0,0,1) = (0,0,1)$.
Thus we get $\widetilde{\psi}(1,0,0) =: \widetilde{v} =  (1,\widetilde{\ell}, \widetilde{s})$.
As in Case 1 we have that $\widetilde{\ell}$ is $G$-invariant (since $\Psi^H(1,0,0) = \widetilde{\tau}(1,0,0) = (1,0,0)$ and $\widetilde{\psi}$ is $G$-equivariant), hence it is algebraic 
and $\widetilde{v} = \exp(c_1(\widetilde{L}))$ for some $\widetilde{L} \in \Pic(X)$.
After replacing $\psi$ by $\psi \circ (\widetilde{L} \otimes (- ))^H$ we hence may assume that $\widetilde{\psi}(1,0,0) = (1,0,0)$, so that
\[ \widetilde{\psi}(a,\beta,b) = (a, \widetilde{\psi}_2(\beta), b), \]
for some isometry $\psi_2 : H^2(M,\BZ) \to H^2(X,\BZ)$
which is $G$-equivariant, i.e. $\psi_2 \circ \widetilde{\tau}^{\ast} = \tau^{\ast} \psi_2$.
Since $\Phi_{\CE}^H$ is orientation-preserving (as explained when proving the direction '$\subset$'), and $\psi^{-1}$ is orientation preserving by assumption,
we find that $\psi_2$ is orientation-preserving (both on the invariant and anti-invariant parts).
%
%We consider how $\psi_2$ acts on the orientations of the (anti)invariant parts:
%Observe that $\Phi_{\CE}^H$ is a Hodge isometry, so $\Phi_{\CE}^H([\sigma_M]) = \lambda [\sigma_X]$ where $\sigma_X$ is the symplectic form on $X$ and $\lambda \in \BC^{\ast}$; hence it acts orientation-preserving when restricted to the positive anti-invariant $2$-space $\mathrm{Span}(\mathrm{Re}(\sigma_M), \mathrm{Re}(\sigma_M))$ with the natural orientation. On the other hand, $\Phi_{\CE}^{H}$ acts orientation-preserving with respect to the natural orientations on $H^{\ast}(M,\BZ)$ and $H^{\ast}(X,\BZ)$ given by $\{ \mathrm{Re}(\sigma), \mathrm{Im}(\sigma), 1-\omega^2, \omega \}$ for any K\"ahler class, see \cite[Sec.4.5]{HMS}.
%Hence when restrict to the invariant part, $\Phi_{\CE}^H$ is orientation-preserving as well with respect to the natural orientation provided by the ordered basis
%$\{ 1-\omega^2, \omega \}$ for any $G$-invariant K\"ahler class $\omega$.
%We conclude that $\psi_2$ is orientation-preserving, both on the invariant and anti-invariant parts.
Hence by Corollary~\ref{cor:lifted parallel transport} $\psi_2$ is a lifted parallel transport operator of an Enriques surface.
In conclusion, we get 
$\psi = \Phi_{\CE}^H \circ \widetilde{\psi}^{-1}$ lies in $\widetilde{\DMon}(Y)$.

\vspace{3pt} \noindent
\textbf{Case 3:} $\psi(0,0,1) = (0,\ell,s)$: Reduce to case 2 as in \cite[Case 3]{MMS}
by composing with the spherical twist along $\CO_X$.
\end{proof}

We get a description of the derived monodromy group of the Enriques surface $Y$ as follows:
Let $\Lambda_Y \subset H^{\ast}(Y,\BQ)$ be the lattice generated by the Mukai vectors $v(E)$ for all $E \in D^b(Y)$.
Consider the pullback to the K3 cover which is given by:\footnote{On the other hand, $\pi_{\ast}(r, \beta', n) = (2r, \pi_{\ast} \beta', n)$.}
\[ \pi^{\ast} : H^{\ast}(Y,\BZ) \to H^{\ast}(X, \BZ), \quad \pi^{\ast}(r,\beta,n) = (r, \pi^{\ast} \beta, 2n). \]
%In particular, any $E \in D^b(Y)$ satisfies $\ch(E) = (r,\beta', 2n)$ with $n \in \BZ$ and $\beta' \in H^2(S,\BZ)^G$.
By a direct check, with respect to the Mukai pairing we have the isomorphism of lattices
\[ \Lambda_Y(2) \cong \pi^{\ast}(\Lambda_Y) = \{ (r,\beta, n) \in H^{\ast}(X,\BZ) | r + n \text{ even}, \beta \in H^2(X,\BZ)^G \} \]

The invariant lattice is the index $2$ overlattice:
\[ H^{\ast}(X,\BZ)^{G} =
\{ (r,\beta, n) \in H^{\ast}(X,\BZ) | \beta \in H^2(X,\BZ)^G \}
\cong U \oplus U(2) \oplus E_8(-2). \]
Nevertheless, by Lemma~\ref{lemma:isometry preserving sublattice} below any isometry $\varphi \in O(H^{\ast}(X,\BZ)^G)$ preserves the sublattice $\pi^{\ast}(\Lambda_Y)$, so we get an inclusion
\[ O(H^{\ast}(X,\BZ)^G) \subset O(\Lambda_Y). \]
We thus have the commutative diagram:
\[
\begin{tikzcd}
\widetilde{\DMon}(Y) \ar[hook]{r} \ar{d} & O(H^{\ast}(X,\BZ))_G \ar{d}{r} \\
\DMon(Y) \ar[hook]{r} & O(H^{\ast}(X,\BZ)^G) \ar[hook]{r} & O(\Lambda_Y),
\end{tikzcd}
\]
where $r$ is the restriction to the invariant part.

\begin{cor} \label{cor:Dmon}
$\DMon(Y) = O^{+}(H^{\ast}(X,\BZ)^G)$
\end{cor}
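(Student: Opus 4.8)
The strategy is to realize $\DMon(Y)$ as the restriction of $\widetilde{\DMon}(Y)$ to the invariant lattice and then to identify this image using Proposition~\ref{prop:DMon tilde} together with a gluing argument. First I would note that the left vertical arrow $\widetilde{\DMon}(Y)\to\DMon(Y)$ in the diagram is surjective: each generator of $\DMon(Y)$ is induced by an equivalence $\Phi\colon D^b(Y_1)\to D^b(Y_2)$ of deformations of $Y$, which lifts to a $G$-equivariant equivalence $\widetilde{\Phi}$ of the covering K3 surfaces whose cohomological action restricts on $H^{\ast}(X,\BZ)^G$ to that of $\Phi$. Hence $\DMon(Y)=r\bigl(\widetilde{\DMon}(Y)\bigr)$, and by Proposition~\ref{prop:DMon tilde} this equals $r\bigl(O^{+,+}(H^{\ast}(X,\BZ))_G\bigr)$, where $r$ denotes restriction to the invariant part. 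It therefore suffices to prove
\[ r\bigl(O^{+,+}(H^{\ast}(X,\BZ))_G\bigr)=O^{+}(H^{\ast}(X,\BZ)^G). \]
The inclusion $\subseteq$ is immediate, since every element of $O^{+,+}(H^{\ast}(X,\BZ))_G$ preserves by definition the orientation of the positive-definite part of $H^{\ast}(X,\BZ)^G$.

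For the reverse inclusion I would use Nikulin's gluing theory \cite{Nikulin}. Both the invariant lattice $H^{\ast}(X,\BZ)^G$ and the anti-invariant lattice $H^{\ast}(X,\BZ)^{G,-}$ are isomorphic to $M:=U\oplus U(2)\oplus E_8(-2)$, which is $2$-elementary and contains a copy of $U$. Since $H^{\ast}(X,\BZ)$ is even unimodular, the overlattice $H^{\ast}(X,\BZ)\supset H^{\ast}(X,\BZ)^G\oplus H^{\ast}(X,\BZ)^{G,-}$ is the graph of an anti-isometry $\gamma$ of the discriminant groups, and a pair $(g_+,g_-)$ of isometries of the two summands extends to a $G$-equivariant isometry of $H^{\ast}(X,\BZ)$ precisely when it is compatible with $\gamma$ on discriminant groups. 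Given $g_+\in O^{+}(H^{\ast}(X,\BZ)^G)$, the required discriminant action $\gamma\,\overline{g_+}\,\gamma^{-1}$ on $H^{\ast}(X,\BZ)^{G,-}$ is realized by some isometry $g_-$ thanks to the surjectivity of $O(M)\to O(D(M))$ (here one uses that $M$ has a hyperbolic summand). This produces an extension $(g_+,g_-)\in O(H^{\ast}(X,\BZ))_G$ restricting to $g_+$.

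The one point that requires care is that the $g_-$ so obtained need not be orientation-preserving on the anti-invariant part, so the extension may lie in $O(H^{\ast}(X,\BZ))_G$ but not in $O^{+,+}$. To fix this I would exploit the reflection $s_v$ in a vector $v\in H^{\ast}(X,\BZ)^{G,-}$ of square $(v,v)=2$, which exists inside the $U$-summand. For $y$ in the dual lattice one has $s_v(y)-y=-(y,v)v\in H^{\ast}(X,\BZ)^{G,-}$, so $s_v$ acts trivially on the discriminant group, while on the rank-$2$ positive-definite part $s_v$ is a reflection and hence reverses orientation. Therefore $(\id,s_v)$ extends to a $G$-equivariant isometry of $H^{\ast}(X,\BZ)$ that restricts to the identity on the invariant lattice and reverses the anti-invariant orientation. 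Multiplying $(g_+,g_-)$ by this element when necessary yields a representative in $O^{+,+}(H^{\ast}(X,\BZ))_G$ still restricting to $g_+$, completing the reverse inclusion.

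The main obstacle is precisely this last orientation adjustment: the lattice-theoretic extension only controls the action on discriminant groups, and one must check that the anti-invariant orientation can be corrected independently of the prescribed discriminant data. The essential observation is that a reflection in a $(+2)$-vector reverses the positive-definite orientation while acting trivially on the $2$-elementary discriminant group, whereas the more obvious candidates---$-\id$ (which has determinant $+1$ on the positive-definite plane) and reflections in $(-2)$-vectors (which fix that plane pointwise)---preserve the orientation and are therefore of no use here.
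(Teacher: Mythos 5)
Your proof is correct and follows essentially the same route as the paper: surjectivity of $\widetilde{\DMon}(Y)\to\DMon(Y)$, Proposition~\ref{prop:DMon tilde}, and then surjectivity of the restriction $O^{+,+}(H^{\ast}(X,\BZ))_G\to O^{+}(H^{\ast}(X,\BZ)^G)$, which the paper disposes of in one line by citing Nikulin's criterion for the $2$-elementary lattice $U\oplus U(2)\oplus E_8(-2)$. The only difference is that you open up that black box, carrying out the discriminant-group gluing and, in particular, supplying the orientation correction on the anti-invariant part via the reflection in a $(+2)$-vector of the $U$-summand --- a detail the paper leaves implicit but which is exactly what is needed to land in $O^{+,+}$ rather than merely in $O(H^{\ast}(X,\BZ))_G$.
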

\begin{proof}
By construction, $\widetilde{\DMon}(Y) \to \DMon(Y)$ is surjective.
Moreover, since $H^{\ast}(X,\BZ)^G$ is $2$-elementary,
by a criterion of Nikulin \cite{Nikulin, Namikawa}
the restriction map $r : O^{+}(H^{\ast}(X,\BZ))_G \to O^{+}(H^{\ast}(X,\BZ)^G)$ is surjective. So the claim follows from Proposition~\ref{prop:DMon tilde}.
\end{proof}

\begin{rmk} \label{rmk:distinguished component}
In the proof above we only used derived auto-equivalences $\widetilde{\Phi} : D^b(X_1) \to D^b(X_2)$ with the property:
\begin{enumerate}
\item[(i)] $X_i$ are covering K3 surfaces of generic Enriques surfaces,
%\item[(ii)] $\widetilde{\Phi}$ maps the distinguished component of the stability manifold $\Stab(X_1)$ 
\item[(ii)] $\widetilde{\Phi}$ maps $\Stab^{\circ}(X_1)$ to $\Stab^{\circ}(X_2)$
(see \cite{HuybrechtsFM}),
\end{enumerate}
where we let $\Stab^{\circ}(X)$ denote the distinguished component of the space of Bridgeland stability conditions of a K3 surface $X$ constructed by Bridgeland \cite{Bridgeland}.
%for an introduction and further references.
Hence the group $\widetilde{\DMon}(Y)$ is generated by compositions \eqref{composition}, where $\widetilde{\Phi}$ is an auto-equivalence satisfying (i) and (ii).
% of the covering K3 surface of a generic Enriques and preserves the distinguished stability condition.
\end{rmk}

\subsection{Orbit of vectors}
\label{subsec:orbit of vectors}
Consider the lattice
\[ M = U \oplus U(2) \oplus E_8(-2). \]
A primitive vector $v \in M$ is called of even type if $v \cdot w$ is even for all $w \in M$,
and it is odd otherwise.
If we let $e_1, f_1$ and $e_2, f_2$ be a symplectic basis of $U$ and $U(2)$ respectively (so $e_i \cdot e_i=0, f_i \cdot f_i = 0$, and $e_1 \cdot f_1 = 1$ and $e_2 \cdot f_2 = 2$), and write an element $v \in M$ as
\[ v = a_1 e_1 + b_1 f_1 + a_2 e_2 + b_2 f_2 + \alpha, \quad \alpha \in E_8(-2), \]
then $v$ is even if both $a_1, b_1$ are even, and it is odd otherwise.
Clearly, the type of a primitive vector is preserved under any isometry of the lattice.

\begin{lemma}
\label{lemma:isometry preserving sublattice}
Define the sublattice $L = \{  a_1 e_1 + b_1 f_1 + a_2 e_2 + b_2 f_2 + \alpha | a_1 + b_1 \text{ even} \} \subset M$ . Any $\varphi \in O(M)$ satisfies $\varphi(L) = L$.
\end{lemma}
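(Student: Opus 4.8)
The plan is to reduce everything modulo $2$ and exploit the canonical quadratic refinement carried by the even lattice $M$. Since $U$, $U(2)$ and $E_8(-2)$ are all even, $M$ is even, so the function
\[ q(\bar x) = \tfrac{1}{2}(x\cdot x) \bmod 2 \]
is a well-defined quadratic form on the $\BF_2$-vector space $M/2M$ (the value is unchanged if $x$ is altered by an element of $2M$), and it refines the reduced bilinear form $\bar b(\bar x,\bar y) = (x\cdot y)\bmod 2$ in the sense that $q(\bar x+\bar y)-q(\bar x)-q(\bar y)=\bar b(\bar x,\bar y)$. Both $\bar b$ and $q$ are manifestly invariant under $O(M)$, since they are built only from the pairing. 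The strategy is to show that $L/2M$ is intrinsically characterised inside the triple $(M/2M,\bar b,q)$, so that every $\varphi\in O(M)$, which descends to an isometry of this triple, must preserve it; because $2M\subset L$ and $\varphi(2M)=2M$, this yields $\varphi(L)=L$.

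First I would compute the radical $R$ of $\bar b$. From $v\cdot e_1=b_1$, $v\cdot f_1=a_1$, together with the fact that all pairings with $e_2,f_2$ and with $E_8(-2)$ are even, one reads off $R=\{\bar x : a_1\equiv b_1\equiv 0\ (2)\}$, the image in $M/2M$ of the even-type vectors; it is spanned by $\bar e_2,\bar f_2$ and the reductions of a basis of $E_8(-2)$. On $R$ the form $\bar b$ vanishes, so $q|_R$ is additive; as $q$ vanishes on each of these spanning vectors (all of square divisible by $4$), we get $q|_R\equiv 0$. Hence $q$ descends to a quadratic form $\bar q$ on the two-dimensional quotient $V=(M/2M)/R$, which has symplectic basis $\bar e_1,\bar f_1$ with $\bar b(\bar e_1,\bar f_1)=1$. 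A direct evaluation gives $\bar q(\bar e_1)=\bar q(\bar f_1)=0$ and
\[ \bar q(\bar e_1+\bar f_1)=\bar q(\bar e_1)+\bar q(\bar f_1)+\bar b(\bar e_1,\bar f_1)=1, \]
so $\bar e_1+\bar f_1$ is the \emph{unique} nonzero vector of $V$ on which $\bar q$ takes the value $1$.

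Finally, tracing the definition, $L/2M$ is the preimage in $M/2M$ of the line $\{0,\bar e_1+\bar f_1\}\subset V$: indeed $a_1+b_1$ is even precisely when $\bar a_1=\bar b_1$ in the $V$-coordinates. Any $\varphi\in O(M)$ preserves $R$ and induces an element of the orthogonal group $O(V,\bar q)$, which must fix the unique value-$1$ vector $\bar e_1+\bar f_1$; hence it preserves that line, its preimage $L/2M$, and therefore $L$. The point that needs care — and the reason the argument cannot be run with $\bar b$ alone — is that the orthogonal group of the symplectic plane $(V,\bar b)$ is all of $\SL_2(\BF_2)$, which acts transitively on the three nonzero vectors of $V$; it is only the quadratic refinement $\bar q$, available because $M$ is even, that singles out $\bar e_1+\bar f_1$ and rigidifies the line. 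Thus the main task is the bookkeeping confirming that $q$ descends (i.e. $q|_R=0$) and that $L$ corresponds to this distinguished vector.
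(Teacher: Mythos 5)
Your proof is correct and rests on exactly the same two $O(M)$-invariants that the paper uses — the type of a vector (equivalently, whether its class lies in the radical of the mod-$2$ bilinear form) and the residue of $v^2$ modulo $4$ (your quadratic refinement $q$) — so it is essentially the paper's argument, repackaged as an intrinsic characterization of $L/2M$ inside $(M/2M,\bar b,q)$. The paper argues by a direct two-case check: even-type vectors lie in $L$ and even type is preserved under any isometry, while an odd-type vector lies in $L$ if and only if $v^2\equiv 2\pmod 4$.
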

\begin{proof}
Let $v=a_1 e_1 + b_1 f_1 + a_2 e_2 + b_2 f_2 + \alpha \in L$.
If $a_1, b_1$ are both even, then $v$ is of even type, hence $gv$ is even,
hence $gv \in L$.
If $v$ is odd, then $v^2 \equiv 2$ modulo $4$ if and only if $a_1,b_1$ are both odd, that is if and only if $v \in L$.
Hence if $v$ is odd and in $L$, so is $gv$.
\end{proof}
%If $a_1, b_1$ are both odd, then $v^2 \equiv 2$ modulo $4$

\begin{prop} \label{prop:orbit in M}
Any two primitive vectors in $M$ of the same norm and the same type
lie in the same $O(M)$ orbit.
\end{prop}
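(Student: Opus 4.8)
The norm $v^2$ and the type are visibly preserved by $O(M)$, so the task is to show they form a complete set of invariants. The plan is to produce an explicit normal form in each orbit and to reduce an arbitrary primitive vector to it. First I would pin down the arithmetic meaning of the type via the divisibility $\mathrm{div}(v)$, the positive generator of the ideal $\{ v\cdot w : w \in M\} \subset \BZ$. Because the $U(2)$ and $E_8(-2)$ summands contribute only even pairings, the discussion of even and odd vectors above shows that $v\cdot e_1 = b_1$ and $v \cdot f_1 = a_1$ are the only possibly odd pairings, and a short computation using primitivity forces $\mathrm{div}(v) \in \{1,2\}$: one has $\mathrm{div}(v)=1$ exactly when $v$ is odd and $\mathrm{div}(v)=2$ exactly when $v$ is even. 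Since the lattice is even, $v^2 \in 2\BZ$; using that $a_1,b_1$ are even one checks further that $v^2 \equiv 0 \pmod 4$ in the even case. I would therefore take as normal forms $v_{\mathrm{odd}} = e_1 + \tfrac{v^2}{2} f_1$ and $v_{\mathrm{even}} = e_2 + \tfrac{v^2}{4} f_2$, and verify directly that these are primitive of the prescribed norm, divisibility, and type.

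The reduction to $v_{\mathrm{odd}}$ is clean and uses only the unimodular plane $U$. The main tool is the Eichler transvection attached to an isotropic vector $e$ and a vector $x \perp e$, namely $E(e,x)(y) = y - (x\cdot y) e + (e\cdot y) x - \tfrac12 (x \cdot x)(e\cdot y) e$, which lies in $O(M)$. Writing $v = a_1 e_1 + b_1 f_1 + w$ with $w \in U(2)\oplus E_8(-2)$, the transvections $E(e_1,x)$ and $E(f_1,x)$ (for $x$ in the complementary summand $U(2)\oplus E_8(-2)$) alter $a_1$, respectively $b_1$, while translating $w$; combined with the values $w\cdot x$, which generate $\mathrm{div}(w)\BZ$, and the norms $x^2$ available in $E_8(-2)$, a Euclidean reduction brings $a_1 = v\cdot f_1$ down to $\mathrm{div}(v) = 1$. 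Once $a_1 = 1$, the single transvection $E(f_1, -w)$ sends $w \mapsto w + a_1(-w) = 0$, leaving a vector $e_1 + b_1' f_1$; comparing norms forces $b_1' = v^2/2$, so $v \sim v_{\mathrm{odd}}$.

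For the even case the same strategy brings $v$ to the form $2e_1 + b f_1 + w$ with $w$ now only well defined modulo $2(U(2)\oplus E_8(-2))$, since the transvections through $U$ translate $w$ by multiples of $\mathrm{div}(v)=2$. The remaining content is therefore the class of $v$ in the discriminant group, and here I would pass to the discriminant form: $M$ is $2$-elementary with $A_M \cong \BF_2^{10}$ and finite quadratic form $q_M$, and $v_* := v/2$ is a nonzero class with $q_M(v_*) = v^2/4 \bmod 2\BZ$ determined by the norm. The plan is (i) to invoke transitivity of $O(q_M)$ on the vectors of a fixed $q_M$-value, a Witt-type statement for the nondegenerate $\BF_2$-form $q_M$ whose low-rank exceptions do not occur here; (ii) to lift this to $M$ using surjectivity of $O(M)\to O(q_M)$, valid since $M$ is $2$-elementary of large rank \cite{Nikulin}; and (iii) to use the Eichler transvections attached to the isotropic vectors $e_2,f_2$ of $U(2)$ together with the Weyl group of $E_8(-2)$ to realize a chosen representative and finally clear everything outside $U(2)$, landing on $v_{\mathrm{even}} = e_2 + \tfrac{v^2}{4} f_2$.

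The main obstacle is exactly this even-type step. The odd-type argument is painless because a single unimodular hyperbolic plane lets one reach divisibility $1$ and then annihilate the complementary component in one stroke; with divisibility $2$ the complementary component survives modulo $2$, and a single unimodular $U$ no longer suffices to run the usual Eichler transitivity at fixed $(v^2, v_*)$. Making step (iii) rigorous—either by a careful transvection reduction that substitutes the second plane $U(2)$ for a second unimodular $U$, or by a direct $\BF_2$-orbit computation of the action of $W(E_8)$ on $E_8/2E_8$ together with $O(U(2))$ on $A_{U(2)}$—is where the genuine work lies, and is the part I would develop most carefully.
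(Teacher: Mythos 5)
Your identification of the invariants is correct (odd type $\Leftrightarrow\mathrm{div}_M(v)=1$, even type $\Leftrightarrow\mathrm{div}_M(v)=2$ with $4\mid v^2$), and your normal forms $e_1+\tfrac{v^2}{2}f_1$ and $e_2+\tfrac{v^2}{4}f_2$ agree with the representatives the paper uses elsewhere. But the proof is not complete, and the gap is not only in the even case. The paper itself does not run an Eichler-type reduction at all: it reduces the statement to Nikulin's classification of primitive embeddings of the rank-one lattice $\langle v^2\rangle$ into $M$, where the orbit is read off from the discriminant-form data --- one checks that the subgroup $H_\Lambda\subset\Lambda^*/\Lambda$ is forced (trivial in the odd case, the order-two subgroup in the even case), that the orthogonal complement $K$ is unique in its genus, and that $O(K)\to O(q_K)$ is surjective. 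This is the route of Namikawa (Theorems 2.13, 2.15, for norms $-2,-4$) cited in the text, and it treats both types uniformly; the alternative reference \cite{KS} uses an argument of Allcock. Your transvection approach is a genuinely different, more hands-on route, but it is exactly the route that is obstructed by the shape of $M$: Eichler's criterion requires \emph{two} unimodular hyperbolic planes, and $M=U\oplus U(2)\oplus E_8(-2)$ has only one.

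Concretely, the odd case is not as ``clean'' as you assert. Writing $v=a_1e_1+b_1f_1+w$ with $w\in N:=U(2)\oplus E_8(-2)$, the transvection $E(e_1,x)$ sends $a_1\mapsto a_1-x\cdot w-\tfrac12 b_1x^2$ while fixing $b_1$ and translating $w$ by $b_1x$. Since every pairing $x\cdot w$ and every half-norm $\tfrac12x^2$ on $N$ is an \emph{even} integer, these moves change $a_1$ only by even amounts at fixed $b_1$; the naive Euclidean reduction therefore stalls on parity, and ``bringing $a_1$ down to $1$'' requires a careful interleaving of moves on $a_1$, $b_1$ and $w$ (or a different device, e.g.\ exhibiting an isotropic $u$ with $u\cdot v=1$ and splitting off the unimodular plane $\langle u,\,v-\tfrac{v^2}{2}u\rangle$, which itself needs an existence argument). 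The even case you concede is open, and it is the harder half for the same structural reason: the component of $v$ in $N$ survives modulo $2$, and substituting the non-unimodular plane $U(2)$ for a second copy of $U$ in the Eichler argument is precisely what must be justified. To close the proof you should either carry out that reduction in full (including the Witt-type transitivity of $O(q_M)$ on classes of fixed value and its realization by isometries of $M$), or simply follow the paper and invoke Nikulin's criterion, which disposes of both cases in a few lines.
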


Proposition~\ref{prop:orbit in M} was proven for primitive vectors of norm $-2$ and $-4$ in \cite[Theorems 2.13 and 2.15]{Namikawa} using Nikulin's criterion.
One can check that the same proof also implies the general case.
%\cite[Prop.1.15.1]{Nikulin}.
Alternatively, the general case was proven in \cite{KS} using an argument of Allcock.

\begin{cor} \label{cor:orbit of primitive vector in M}
Any two primitive vectors in $M$ of the same norm and the same type
lie in the same $O^{+}(M)$ orbit.
\end{cor}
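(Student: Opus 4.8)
The plan is to deduce the $O^{+}(M)$-statement from its $O(M)$-counterpart (Proposition~\ref{prop:orbit in M}) by a stabilizer argument. Recall that $M = U \oplus U(2) \oplus E_8(-2)$ has signature $(2,10)$, so the orientation character $\nu : O(M) \to \{ \pm 1\}$ of the preceding definition is surjective and $O^{+}(M) = \Ker(\nu)$ has index $2$. Given two primitive vectors $v, v'$ of the same norm and type, Proposition~\ref{prop:orbit in M} provides $g \in O(M)$ with $g(v) = v'$. If $\nu(g) = 1$ we are done; otherwise it suffices to exhibit an orientation-reversing isometry $s \in \Stab_{O(M)}(v)$, for then $gs \in O^{+}(M)$ still satisfies $gs(v) = v'$. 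Since $\nu$ is a homomorphism to an abelian group, it is conjugation-invariant, which reduces the task to producing, for each orbit, one standard representative $v_0$ admitting an orientation-reversing stabilizer: if $k(v) = v_0$ and $s_0 \in \Stab(v_0)$ has $\nu(s_0) = -1$, then $s := k^{-1} s_0 k \in \Stab(v)$ has $\nu(s) = -1$ as well.

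The isometries I would use are reflections. For $w \in M$ with $(w,w) > 0$ the reflection $s_w(x) = x - \tfrac{2(x,w)}{(w,w)} w$ is orientation-reversing, i.e. $\nu(s_w) = -1$: indeed $w^{\perp}$ then has signature $(1,10)$, and the positive $2$-plane spanned by $w$ and a positive vector $u \in w^{\perp}$ is preserved by $s_w$ with $w \mapsto -w$, $u \mapsto u$, hence with reversed orientation. Thus I need, for each standard representative $v_0$, a positive-norm vector $w \perp v_0$ whose reflection preserves $M$. As explicit representatives I would take $v_0 = e_1 + m f_1$ in the odd-type, norm-$2m$ orbit, and $v_0 = e_2 + \tfrac{m}{2} f_2$ in the even-type orbit (which forces $m$ even, since even-type vectors have norm divisible by $4$); both are primitive and of the asserted norm and type, hence are representatives of the orbits of Proposition~\ref{prop:orbit in M}. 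For the even-type representative, lying in $U(2)$, the vector $w = e_1 + f_1 \in U$ is orthogonal to $v_0$, has $(w,w) = 2$, and $s_w$ is automatically integral; it is the required orientation-reversing stabilizer.

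The main obstacle is the odd-type representative $v_0 = e_1 + m f_1$: a direct computation gives $v_0^{\perp} \cong (-2m) \oplus U(2) \oplus E_8(-2)$, and when $m$ is even this lattice represents only multiples of $4$, so no $(+2)$-vector orthogonal to $v_0$ exists and the naive reflection is unavailable. The way around this is to use instead $w = e_2 + f_2 \in U(2)$, which is orthogonal to $v_0 \in U$ and has $(w,w) = 4 > 0$; although $(w,w) \neq 2$, the pairing $(w, -)$ takes only even values on $M$ (as $U(2)$ has all pairings even and $w$ is orthogonal to the remaining summands), so $\tfrac{2(x,w)}{(w,w)} = \tfrac{(x,w)}{2} \in \BZ$ and $s_w$ does preserve $M$. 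Hence $s_w$ is an integral orientation-reversing isometry fixing $v_0$. With orientation-reversing stabilizers produced for both types, the reduction of the first paragraph collapses each $O(M)$-orbit to a single $O^{+}(M)$-orbit, which proves the corollary.
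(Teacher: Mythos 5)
Your proof is correct and follows the same strategy as the paper's: reduce to the $O(M)$-statement of Proposition~\ref{prop:orbit in M} and then exhibit, for a representative of each orbit, an orientation-reversing isometry in its stabilizer. The only difference is the choice of that isometry --- the paper uses the block isometries $\id_U \oplus (-\id_{U(2)}) \oplus \id_{E_8(-2)}$ (fixing $e_1+mf_1$) and $-\id_U \oplus \id_{U(2)\oplus E_8(-2)}$ (fixing $e_2+mf_2$), which makes orientation-reversal and integrality immediate and avoids the discussion you need for the reflection in the odd-type case; your reflections $s_{e_2+f_2}$ and $s_{e_1+f_1}$ work just as well.
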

\begin{proof}
By the previous proposition a primitive vector of odd type (resp. even type)
lies in the $O(M)$ orbit of $e_1 + m f_1$ (resp. of $e_2 + m f_2$) for some $m$.
There exists orientation-reserving isometries fixing these vectors, namely
$\id_U \oplus (- \id_{U(2)}) \oplus \id_{E_8(-2)}$ for $e_1 + m f_1$,
and $-\id_{U} \oplus \id_{U(2) \oplus E_8(-2)}$ for $e_2 + m f_2$,
so the $O^{+}(M)$ and $O(M)$ orbits of these vectors agree.
\end{proof}

\section{Gromov-Witten theory of the Enriques surface}
In this section we study the relative generating series %$F_{g,k}(\taut; \tau_{m_1}(\gamma_1) \cdots \tau_{m_n}(\gamma_n))$ 
of descendent Gromov-Witten invariants
of an elliptic Enriques surface $Y \to \p^1$.
We prove that these series are $\Gamma_0(2)$ quasi-Jacobi forms for the lattice $E_8$ and satisfy a holomorphic anomaly equation (Theorem~\ref{thm:HAE}). Examples are discussed in Section~\ref{subsec:Examples GW section}.
The proof takes place in Section~\ref{subsec:proof of thm HAE enriques}.

\label{sec:GW theory}
\subsection{Elliptic fibration}
%Let $Y$ be a generic Enriques surface. 
Consider an elliptic fibration on a generic Enriques surface
\[ \pi : Y \to \p^1. \]
The fibration $\pi : Y \to \p^1$ has $12$ rational nodal fibers and two double fibers $2f_1, 2 f_2$,
where the {\em half-fibers} $f_1, f_2$ are smooth rigid elliptic curves on $Y$.
The generic fiber has class $2 f_1 = 2 f_2$ and the canonical bundle is $\omega_Y = \CO_Y(f_1 - f_2)$.
We will denote the image of $f_1$ (or equivalently $f_2$) modulo torsion by
\[ f \in H_2(Y,\BZ). \]
Since $Y$ is generic, by \cite[Thm.17.7]{BHPV} we may assume that $\pi$ has a $2$-section $s$ which is represented by a smooth rigid elliptic curve,
which appears as a half-fiber of another elliptic fibration of $Y$.
We have
\[ s \cdot s = 0, \quad s \cdot f = 1, \quad f \cdot f = 0. \]
We will use a fixed identification
\begin{equation} H^2(Y,\BZ) \cong U \oplus E_8(-1) \label{UE8 decomposition} \end{equation}
where $s,f \in H^2(Y,\BZ)$ are identified with the canonical basis $e,f \in U$.

\subsection{Gromov-Witten invariants}
The moduli space $\Mbar_{g,n}(Y,\beta)$
of degree $\beta \in H_2(Y,\BZ)$ stable maps to $Y$ from connected $n$-marked genus $g$ curves has a virtual fundamental class
of dimension $g-1+n$. Let $\psi_i \in H^2(\Mbar_{g,n}(Y,\beta))$ be the cotangent line classes.
If $2g-2+n>0$, 
let 
$\tau : \Mbar_{g,n}(Y,\beta) \to \Mbar_{g,n}$
%\[ \tau : \Mbar_{g,N}(S^{[n]},\beta+kA) \to \Mbar_{g,N} \]
be the forgetful morphism
to the moduli space of stable curves. Consider the pullback of a {\em tautological class} \cite{FP}
\[ \taut := \tau^{\ast}(\alpha), \quad \alpha \in R^{\ast}(\Mbar_{g,n}). \] %  \in \tau^{\ast} R^{\ast}(\Mbar_{g,N}). \]
In the unstable cases $2g-2+n \leq 0$ we always set $\taut := 1$.

For $\gamma_1, \ldots, \gamma_n \in H^{\ast}(Y)$ and $k_1, \ldots, k_n \geq 0$
define the Gromov-Witten invariants of $Y$ by
\begin{equation} \label{defn GW invt}
\blangle \taut ; \tau_{k_1}(\gamma_1), \, \ldots, \, \tau_{k_n}(\gamma_n) \brangle^{Y}_{g,\beta}
=
\int_{[ \Mbar_{g,n}(Y,\beta) ]^{\vir}}
\taut \cup
\prod_{i=1}^{n} \ev_i^{\ast}(\gamma_i) \psi_i^{k_i}.
\end{equation}

\begin{rmk} \label{rmk:GW stuff}
(a) If $\taut = 1$, we often omit it from the notation in \eqref{defn GW invt}.
Similarly, if $k_1 = \ldots = k_n = 0$, we omit the symbols $\tau_{k_i}$ from the notation. \\
(b) If $\beta \neq 0$ is not effective, then the Gromov-Witten invariant vanishes by definition.
If $\beta = ks + df + \alpha$ is effective, then since $s,f$ are half-fibers of elliptic
fibrations, we mast have $k, d \geq 0$ and $(k,d) \neq (0,0)$. \\
(c) In genus zero, the virtual class of $\Mbar_{0,n}(Y,\beta)$ vanishes for dimension reasons.\footnote{The virtual class of $\Mbar_{0,n}(Y,\beta)$ is the pullback of the virtual class of $\Mbar_0(Y,\beta)$ and hence vanishes.}
Hence for $2g-2+n>0$, the classes $\psi_i$ and $\tau^{\ast}(\psi_i)$ on $\Mbar_{g,n}(Y,\beta)$ 
%and the pullback of the class $\psi_i \in H^2(\Mbar_{g,n})$
differ by a term that vanishes after intersecting with the virtual class. We find that:
%and we have
\[ 
\blangle \taut ; \tau_{k_1}(\gamma_1), \ldots, \tau_{k_n}(\gamma_n) \brangle^{Y}_{g,\beta}
=
\blangle \taut \prod_{i} \psi_i^{k_i} ; \gamma_1, \ldots, \gamma_n \brangle^{Y}_{g,\beta}.
\]
%where we dropped $\tau_{k_i}$ from notation on the right hand side.
\end{rmk}

\subsection{Generating series}
Let
$\alpha_1, \ldots, \alpha_8 \in H^2(Y,\BZ)$ be an integral basis of the summand $E_8(-1)$ in \eqref{UE8 decomposition}.
We will identify the element $x = (x_1, \ldots, x_8) \in \BC^8$ with the element
$\sum_i x_i \alpha_i \in E_8(-1) \otimes \BC \subset H^2(Y,\BC)$.
Let $\zeta_i = e^{2 \pi i x_i}$.
Then for any $\beta \in H_2(Y,\BZ)$ we will write:
\begin{equation} \zeta^{\beta} := \exp( 2 \pi i x \cdot \beta ) = \prod_{i=1}^{8} \zeta_i^{\alpha_i \cdot \beta}. \label{zeta beta} \end{equation}

\begin{defn} \label{defn:relative gen series} Assume either $k > 0$, or $k=0$ and $2g-2+n>0$.
We define the $\pi$-relative generating series of Gromov-Witten invariants of $Y$ by
%\[ F_{g,k}( \taut ; \tau_{m_1}(\gamma_1) \cdots \tau_{m_n}(\gamma_n) )
%=
%\sum_{\substack{ \beta \in H_2(Y,\BZ) \\ \beta \cdot f = k }}
%q^{s \cdot \beta} \zeta^{\beta} \left\langle \taut ; \tau_{m_1}(\gamma_1) \cdots \tau_{m_n}(\gamma_n) \right\rangle^{Y}_{g, \beta}
%\]
\begin{align*} 
F_{g,k}( \taut ; \tau_{m_1}(\gamma_1) \cdots \tau_{m_n}(\gamma_n) )
& =
\sum_{\substack{ \beta \in H_2(Y,\BZ) \\ \beta \cdot f = k }}
q^{s \cdot \beta} \zeta^{\beta} \left\langle \taut ; \tau_{m_1}(\gamma_1) \cdots \tau_{m_n}(\gamma_n) \right\rangle^{Y}_{g, \beta} \\
& = \sum_{d \geq 0} \sum_{\alpha \in E_8(-1)} 
q^{d} \zeta^{\alpha} \left\langle \taut ; \tau_{m_1}(\gamma_1) \cdots \tau_{m_n}(\gamma_n) \right\rangle^{Y}_{g, k s + df + \alpha}.
\end{align*}
where for the last equality we used Remark~\ref{rmk:GW stuff}(b).
\end{defn}

Consider the negative of the pairing matrix of the basis $\alpha_i$,
\[ Q_{E_8} := \left( -\alpha_i \cdot \alpha_j \right)_{i,j=1}^{8}, \]
which is a intersection matrix of the $E_8$-lattice.

Our main result about the series $F_{g,k}$ is the following:
\begin{thm} \label{thm:HAE}
Each series $F_{g,k}( \taut; \tau_{m_1}(\gamma_1) \cdots \tau_{m_n}(\gamma_n) )$ is a quasi-Jacobi form
for $\Gamma_0(2) \ltimes (\BZ^{8} \oplus \BZ^8)$ of index $\frac{1}{2} k Q_{E_8}$, with pole of order $\leq k$ at cusps. More precisely, we have
\[ F_{g,k}(  \taut; \tau_{m_1}(\gamma_1) \cdots \tau_{m_n}(\gamma_n) ) \in 
%\left(\frac{ \eta^{16}(q^2) }{\eta(q)^8} \right)^k
%\frac{1}{\Delta(q^2)^{k}} 
\left(\frac{ \eta^{8}(q^2) }{\eta^{16}(q)} \right)^k
\QJac_{\frac{1}{2} k Q_{E_8}}(\Gamma_0(2)). \]
It satisfies the holomorphic anomaly equation:
\begin{equation} \label{HAE}
\begin{aligned}
& \frac{d}{dG_2} F_{g,k}( \taut ; \tau_{m_1}(\gamma_1) \cdots \tau_{m_n}(\gamma_n) ) \\
& =
\sum_i F_{g-1,k}( \taut' ;  \tau_{m_1}(\gamma_1) \cdots \tau_{m_n}(\gamma_n) \tau_0(\delta_i) \tau_0(\delta_i^{\vee}) ) \\
& + \sum_{\substack{ g=g_1+g_2 \\ k = k_1 + k_2 \\ \{ 1 , \ldots, n \} = A \sqcup B \\ i }}
F_{g_1,k_1}\left( \taut_1 ; \prod_{i\in A} \tau_{m_i}(\gamma_i) \tau_0(\delta_i) \right) 
F_{g_2,k_2}\left( \taut_2 ; \prod_{i\in B} \tau_{m_i}(\gamma_i) \tau_0(\delta_i^{\vee}) \right) \\
& -2 \sum_{i=1}^{n} F_{g,k}( \taut ; \tau_{m_1}(\gamma_1) \cdots \tau_{m_{i-1}}(\gamma_{i-1}) \tau_{m_i+1}( U(\gamma_i)) \tau_{m_{i+1}}(\gamma_{i+1}) \cdots \tau_{m_n}(\gamma_n) ).
\end{aligned}
\end{equation}
where 
\begin{itemize}
\item $\sum_i \delta_i \boxtimes \delta_i^{\vee}$ is a K\"unneth decomposition of the class
\[ U = \frac{1}{2} (\pi \times \pi)^{\ast} \Delta_{\p^1} = \pr_1^{\ast}(f) + \pr_2^{\ast}(f) \in H^{\ast}(Y \times Y) \]
where $\Delta_{\p^1} \in H^{2}(\p^1 \times \p^1)$
is the diagonal class
and $\pr_i : Y^2 \to Y$ are the projections,
\item in the last line we let $U$ act as a correspondence, i.e. $U(\gamma) = \pr_{2 \ast}( \pr_1^{\ast}(\gamma) \cup U )$,
\item in the stable case, where $\taut = \tau^{\ast}(\alpha)$, we let $\taut':=\tau^{\ast} \iota^{\ast}(\alpha)$
where $\iota : \Mbar_{g-1,n+2} \to \Mbar_{g,n}$ is the gluing map, in the unstable case, where $\taut=1$, we set $\taut':=1$,
\item where $\taut_1, \taut_2$ stands for summing over the K\"unneth decomposition
of $\xi^{\ast}(\taut)$ with $\xi$ the gluing map
\[ \xi : \Mbar_{g_1,|A|+1}(Y,\beta_1) \times_{Y} \Mbar_{g_2,|B|+1}(Y,\beta_2) \to \Mbar_{g,n}(Y,\beta). \]
%\item in the stable case, where $\tau = \tau^{\ast}(\alpha)$, the classes $\taut_1, \taut_2$ stand for summing over the pullback by $\tau$ of the K\"unneth decomposition of 
%$\xi^{\ast}(\alpha)$ where $\xi$ is the gluing map
%\[ \xi : \Mbar_{g_1,|A|+1} \times \Mbar_{g_2,|B|+1} \to \Mbar_{g,n}. \]
\end{itemize}
\end{thm}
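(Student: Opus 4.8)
The plan is to establish the quasi-Jacobi property and the holomorphic anomaly equation \eqref{HAE} simultaneously, by degenerating the elliptic Enriques surface to the rational elliptic surface and transporting the analogous (simpler) statement for the latter through the scaling operator of Lemma~\ref{lemma: QJac scaling to Gamma0(2)}. First I would recall, or separately establish by a further degeneration of the base $\mathbb{P}^1$ into a chain of rational components, the corresponding statement for the rational elliptic surface $R\to\mathbb{P}^1$ (the half-K3, namely $\mathbb{P}^2$ blown up at $9$ points): its $\pi$-relative descendent potentials in fiber classes are quasi-Jacobi forms for $\SL_2(\BZ)\ltimes(\BZ^8\oplus\BZ^8)$ of $E_8$-index proportional to $Q_{E_8}$, and satisfy a holomorphic anomaly equation of the same shape as \eqref{HAE}, but over $\SL_2(\BZ)$ and with different numerical normalizations. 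This is the base case, and it is where the $E_8$-index enters, through the Mordell--Weil lattice of sections of $R$.

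Next I would set up the degeneration of $Y$ itself. The covering K3 surface carries an elliptic fibration over a double cover of the base, branched at the two points lying under the multiple fibers $2f_1,2f_2$; this is precisely the geometric source of the substitution $\tau\mapsto 2\tau$ and hence of the group $\Gamma_0(2)$. Passing to a one-parameter degeneration of $Y$ whose special fibre is assembled from copies of $R$ glued along half-fibers, the degeneration formula for relative Gromov--Witten invariants expresses each $F_{g,k}(\taut;\prod_i\tau_{m_i}(\gamma_i))$ as a sum over genus and fiber-degree splittings of relative potentials of $R$. I would then recognise the resulting combination, via the operator $M_{\alpha_0,m}$ of Lemma~\ref{lemma: QJac scaling to Gamma0(2)} (with $m$ determined by $k$) and the Hecke operators of Proposition~\ref{prop:Hecke operator}, as an element of $\left(\eta^8(q^2)/\eta^{16}(q)\right)^k\,\QJac_{\frac12 kQ_{E_8}}(\Gamma_0(2))$: the operator sends $\SL_2(\BZ)$-forms of index $mQ_{E_8}$ to $\Gamma_0(2)$-forms of index $\tfrac{m}{2}Q_{E_8}$, producing exactly the claimed index $\tfrac12 kQ_{E_8}$, while the weight and pole bookkeeping of the gluing yields the automorphic prefactor, whose behaviour at the two cusps is controlled as in Lemma~\ref{lemma:function vanishing at cusp}.

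The holomorphic anomaly equation is then obtained by transporting the $\SL_2(\BZ)$-anomaly of $R$ through $M_{\alpha_0,m}$ and matching it to the geometry of the degeneration. Here the identity $\tfrac{d}{dG_2}M_{\alpha_0,m}(f)=\tfrac12 M_{\alpha_0,m}\!\left(\tfrac{d}{dG_2}f\right)$ of Lemma~\ref{lemma: QJac scaling to Gamma0(2)} is decisive: the factor $\tfrac12$ it introduces is exactly the factor $\tfrac12$ built into $U=\tfrac12(\pi\times\pi)^{\ast}\Delta_{\mathbb{P}^1}=\pr_1^{\ast}(f)+\pr_2^{\ast}(f)$. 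The three terms on the right of \eqref{HAE} should then be read off the three standard sources of boundary contributions measured by $\tfrac{d}{dG_2}$: the genus-reduction term $F_{g-1,k}(\ldots\tau_0(\delta_i)\tau_0(\delta_i^{\vee}))$ from a non-separating node, the product term summed over $g=g_1+g_2$ and $k=k_1+k_2$ from a separating node, and the descendent shift $-2\,\tau_{m_i+1}(U(\gamma_i))$ coming from the interaction of the $\psi$-classes with the node in the fiber direction, with the K\"unneth factors of $U$ inserted at the new marked points. The tautological insertion $\taut$ is carried along the gluing and forgetful maps exactly as recorded in the bullet points of the statement.

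The main obstacle I expect is the holomorphic anomaly equation, specifically the exact matching of the geometric boundary terms with the formal $G_2$-derivative while keeping every numerical factor correct: the coefficient $-2$ and the factor $\tfrac12$ in $U$, the K\"unneth sums, and the $\SL_2(\BZ)\to\Gamma_0(2)$ normalizations introduced by $M_{\alpha_0,m}$. A further delicate point is the pole bookkeeping at both cusps of $\Gamma_0(2)$ needed to pin down the prefactor $\left(\eta^8(q^2)/\eta^{16}(q)\right)^k$ exactly (rather than only its order), together with the careful treatment of descendents, of the pulled-back tautological classes under gluing, and of the unstable cases $2g-2+n\le 0$ where $\taut$ is set to $1$.
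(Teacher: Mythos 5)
Your overall strategy -- degenerate to the rational elliptic surface, import its $\SL_2(\BZ)$ quasi-Jacobi property and anomaly equation, and transport through the operator $M_{\alpha_0,m}$ of Lemma~\ref{lemma: QJac scaling to Gamma0(2)} -- is the same as the paper's, and your identification of the factor $\tfrac12$ in $U$ with the $\tfrac12$ in $\tfrac{d}{dG_2}M_{\alpha_0,m}=\tfrac12 M_{\alpha_0,m}\tfrac{d}{dG_2}$ is exactly right. But there are two genuine gaps. First, the degeneration you describe is not correct: the special fibre is not ``copies of $R$ glued along half-fibers'' (the rational elliptic surface has no multiple fibers to glue along, and two copies of $R$ glued along a smooth fiber degenerate a K3, not an Enriques surface). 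The degeneration actually used is $Y\rightsquigarrow R\cup_E X_1$, glued along a \emph{smooth full fiber} $E$, where $X_1=(E\times\p^1)/\langle(t_E,\mathrm{inv}_{\p^1})\rangle$ is an isotrivial elliptic surface carrying the two double fibers of $Y\to\p^1$. This component is not decorative: the quasi-modularity and anomaly equation for $(X_1,E)$ must be established separately (by deforming $X_1$ to $E'\times\p^1$ and using the product case), and it is on the $X_1$ side -- via the rescaling $R_2$ forced by the fact that the fiber of $\p^1\times E$ is \emph{twice} the half-fiber of $X_1$ -- that part of the $\Gamma_0(2)$ structure and the normalization $U=\tfrac12(\pi\times\pi)^{\ast}\Delta_{\p^1}$ originate. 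On the $R$ side the $2$-section $s$ of $Y$ restricts to a sum of two sections $B+B_{\alpha_0}=2W+F+\alpha_0$, which is why $M_{\alpha_0,k}$ is applied to the degree-$2k$ series $F^{R/E,\bullet}_{g,2k}$; your proposal does not account for this doubling.

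Second, even granting the degeneration, the output of the degeneration formula combined with $M_{\alpha_0,k}$ is only a quasi-Jacobi form for the group $\Gamma_0(2)\ltimes(2\BZ^8\oplus\BZ^8)$ (see the codomain in Lemma~\ref{lemma: QJac scaling to Gamma0(2)}), not for $\Gamma_0(2)\ltimes(\BZ^8\oplus\BZ^8)$ as the theorem asserts. The missing step is the use of the monodromy group of the Enriques surface: for $\lambda\in E_8(-1)$ the isometry $e^{t_\lambda}$ lies in $\Mon(Y)=O^+(H^2(Y,\BZ))$ (Corollary~\ref{cor:Monodromy Enriques}), and deformation invariance of the invariants together with the elliptic anomaly equation of Corollary~\ref{cor:elliptic anomaly} yields the full elliptic transformation law $(x,\tau)\mapsto(x+\lambda\tau,\tau)$ for all integral $\lambda$. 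Without this argument the claimed index lattice is strictly smaller than stated. (A minor additional remark: the Hecke operators of Proposition~\ref{prop:Hecke operator} play no role in this proof; they enter only in Section~\ref{sec:putting everything together}.)
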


Before turning to the proof, we note some basic consequences
of the theorem.
Consider the semisimple weight operator
\[ \Wt = [s \cup - , U] : H^{\ast}(Y) \to H^{\ast}(Y) \]
and let $\wt(\gamma) \in \{ -1, 0, 1 \}$ be the eigenvalue of an eigenvector.
Concretely, we have
\[
\wt(\gamma) = 
\begin{cases}
1 & \text{ if } \gamma \in \{ s, \pt \} \\
0 & \text{ if } \gamma \in E_8(-1) \\
-1 & \text{ if } \gamma \in \{ 1, f \}.
\end{cases}
\]

\begin{cor} \label{cor:weight} If all $\gamma_i$ are $\wt$-homogeneous, then the series $F_{g,k}( \taut; \tau_{m_1}(\gamma_1) \cdots \tau_{m_n}(\gamma_n) )$ is of quasi-Jacobi form of weight
$2g-2+n + \sum_i \wt(\gamma_i)$ and index $\frac{1}{2} k Q_{E_8}$.
\end{cor}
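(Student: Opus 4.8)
The index is already supplied by Theorem~\ref{thm:HAE}, so the only thing to establish is the weight, and since the space of quasi-Jacobi forms is graded by weight it suffices to show the series is homogeneous of weight $W:=2g-2+n+\sum_i\wt(\gamma_i)$. The plan is to read this off from the holomorphic anomaly equation \eqref{HAE}, the point being that $W$ is exactly the weight rendering \eqref{HAE} weight-homogeneous. First I would record the relevant numerology: $\frac{d}{dG_2}$ lowers weight by $2$, so the left-hand side has weight $W-2$; the class $U=\pr_1^\ast(f)+\pr_2^\ast(f)$ has K\"unneth decomposition $f\boxtimes 1+1\boxtimes f$, so every pair of factors satisfies $\wt(\delta_i)+\wt(\delta_i^\vee)=\wt(f)+\wt(1)=-2$; and as a correspondence $U$ satisfies $U(s)=1$, $U(\pt)=f$ and $U=0$ on $1$, $f$ and on $E_8(-1)$, whence $\wt(U(\gamma_i))=\wt(\gamma_i)-2$ whenever $U(\gamma_i)\neq0$. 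With these inputs --- and noting that the descendent shifts $m_i\mapsto m_i+1$ and the passages $\taut\mapsto\taut',\taut_1,\taut_2$ do not enter $W$ --- each of the three terms on the right of \eqref{HAE} has weight $W-2$, in agreement with the left-hand side.

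Next I would turn this consistency into an actual determination by induction, using the two structural facts that all genus-$0$ invariants of $Y$ vanish (Remark~\ref{rmk:GW stuff}(c)) and that $U^2=0$ (immediate from $U(s)=1$, $U(\pt)=f$, $U(1)=U(f)=0$). The primary induction is on $g$: the first term of \eqref{HAE} drops the genus to $g-1$, while in the splitting term the vanishing of genus-$0$ invariants forces $1\le g_1,g_2\le g-1$; the third term keeps $g$ but replaces $\gamma_i$ by $U(\gamma_i)$, and since $\mathrm{im}(U)\subset\ker(U)=\Span(1,f,E_8(-1))$ this strictly decreases the number of insertions outside $\ker(U)$, giving a terminating secondary induction. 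Writing $F_{g,k}=\sum_w F^{(w)}$ for the decomposition into weight-homogeneous pieces, the inductive hypothesis (lower $g$, or lower $U$-complexity at the same $g$) makes the right-hand side of \eqref{HAE} homogeneous of weight $W-2$; since $\frac{d}{dG_2}\colon\QJac_{w,L}\to\QJac_{w-2,L}$ respects the grading, this forces $\frac{d}{dG_2}F^{(w)}=0$ for all $w\neq W$.

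What remains --- and what I expect to be the main obstacle --- is to rule out nonzero off-weight components $F^{(w)}$ with $w\neq W$ lying in $\ker(\frac{d}{dG_2})$, and to supply the base cases (small $g$ with all insertions in $\ker(U)$). The base cases are exactly the series computed explicitly in Section~\ref{subsec:Examples GW section}, whose weights can be read off directly. To eliminate the spurious components I would pass to the Taylor expansion in the elliptic variable: by Lemma~\ref{lemma:taylor} each coefficient $f_{i_1,\dots,i_n}(\tau)$ is a quasimodular form for $\Gamma_0(2)$ of weight $w+\sum_l i_l$, so homogeneity of $F_{g,k}$ reduces to that of finitely many meromorphic quasimodular forms, and there the weight is pinned by matching the pole order at the cusps --- controlled by the prefactor $(\eta^8(q^2)/\eta^{16}(q))^k$ of Theorem~\ref{thm:HAE} --- against a geometrically evaluated leading coefficient. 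The difficulty is genuinely this last step: the holomorphic anomaly equation forces the numerology and the recursion, but weight-homogeneity, i.e. the absence of $\frac{d}{dG_2}$-closed pieces of the wrong weight, is what must be secured by feeding in the explicit base-case evaluations.
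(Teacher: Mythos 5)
Your weight count for the three terms of \eqref{HAE} is correct, and your inductive scheme (primary induction on $g$, secondary induction on the number of insertions outside $\ker U$, using $U^2=0$ and the vanishing of genus-zero invariants) does show that the off-weight components $F^{(w)}$, $w\neq W$, are annihilated by $\frac{d}{dG_2}$. But the argument genuinely breaks down at exactly the point you flag, and in two places. First, the proposed final step cannot work: a meromorphic quasi-modular form is not determined, even up to scalar, by its pole order at the cusps together with its leading Fourier coefficient --- for instance $G_4/\Delta$, $G_6/\Delta$ and $G_4G_6/\Delta$ all have a simple pole at $q=0$ with nonzero leading coefficient and pairwise distinct weights --- so matching the pole order controlled by $(\eta^8(q^2)/\eta^{16}(q))^k$ against a geometrically computed leading coefficient does not pin down the weight of a $\frac{d}{dG_2}$-closed piece. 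Second, the base of your induction is far larger than the examples of Section~\ref{subsec:Examples GW section}: already at $g=1$ the splitting term is empty and the first term lands in genus zero, so for every series whose insertions all lie in $\ker U=\Span(1,f)\oplus E_8(-1)$ (arbitrary $n$, $k$, descendents and tautological classes) equation \eqref{HAE} gives no information, and the weight of this infinite family would have to be established by hand.

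The paper's proof, following \cite[Sec.3.1]{RES}, avoids both problems by computing the commutator $[\frac{d}{dG_2},\,q\frac{d}{dq}]$ applied to $F_{g,k}$ in two ways. On the quasi-Jacobi-form side this commutator equals $-2$ times the weight operator plus correction terms involving the elliptic derivatives; on the geometric side $q\frac{d}{dq}$ is the divisor equation for the class $s$, and combining it with \eqref{HAE}, the dilaton equation (which produces the $2g-2+n$) and the identity $\Wt=[s\cup-,U]$ (which produces $\sum_i\wt(\gamma_i)$) evaluates the same commutator explicitly. Comparing the two expressions yields the weight directly, with no induction and no ambiguity coming from the kernel of $\frac{d}{dG_2}$. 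Some identity of this kind is the missing idea in your proposal: without it the holomorphic anomaly equation only sees the weight up to $\frac{d}{dG_2}$-closed corrections.
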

\begin{proof}
This can be proven by the same argument as in \cite[Sec.3.1]{RES}.
\end{proof}

For any $\lambda \in E_8(-1) \subset H^2(Y,\BZ)$ consider the operator:\footnote{Under the well-known identification $\mathfrak{so}(H^2(Y,\BQ)) = \wedge^2 H^2(Y,\BQ)$, we have $t_{\lambda} = \lambda \wedge f$.}
\[ t_{\lambda} : H^{\ast}(Y,\BZ) \to H^{\ast}(Y, \BZ), \quad t_{\lambda}(x) = (f \cdot x) \lambda - (\lambda \cdot x) f. \]

\begin{cor} \label{cor:elliptic anomaly}
For any $\lambda \in E_8(-1) \subset H^2(Y,\BZ)$ we have the elliptic anomaly equation:
\begin{equation} \label{elliptic anomaly} \xi_{\lambda} F_{g,k}( \tau_{m_1}(\gamma_1) \cdots \tau_{m_n}(\gamma_n) )
= \sum_{i=1}^{n} F_{g,k}(  \cdots \tau_{m_{i-1}}(\gamma_{i-1}) \tau_{m_i}( t_{\lambda}(\gamma_i) ) \tau_{m_{i+1}}( \gamma_{i+1} )  \cdots ).
\end{equation}
\end{cor}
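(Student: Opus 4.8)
The plan is to deduce the elliptic anomaly equation from the \emph{deformation invariance} of Gromov--Witten invariants under the monodromy group, which by Corollary~\ref{cor:Monodromy Enriques} equals $O^{+}(H^2(Y,\BZ))$. The operator $t_{\lambda}\in\mathfrak{so}(H^2(Y,\BR))$ is nilpotent (one checks $t_\lambda^3=0$ on the span of $s,\lambda,f$, and $t_\lambda$ annihilates its orthogonal complement), so its exponential $E_\lambda:=\exp(t_\lambda)$ is a well-defined isometry. Since $E_8(-1)$ is even we have $\tfrac12\lambda^2\in\BZ$, and a direct computation gives $E_\lambda(s)=s+\lambda-\tfrac12\lambda^2 f$, $E_\lambda(f)=f$, and $E_\lambda(\varepsilon)=\varepsilon-(\lambda\cdot\varepsilon)f$ for $\varepsilon\in E_8(-1)$; hence $E_\lambda$ preserves $H^2(Y,\BZ)$ and, being unipotent, lies in $O^{+}(H^2(Y,\BZ))=\Mon(Y)$. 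Monodromy operators act trivially on the moduli of curves, so deformation invariance of the virtual class gives, for every curve class $\beta$,
\[ \big\langle \tau_{m_1}(\gamma_1)\cdots\tau_{m_n}(\gamma_n)\big\rangle^{Y}_{g,\beta} = \big\langle \tau_{m_1}(E_\lambda\gamma_1)\cdots\tau_{m_n}(E_\lambda\gamma_n)\big\rangle^{Y}_{g,E_\lambda\beta}. \]

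Next I would translate this invariance into an elliptic transformation law for the generating series. Writing $\beta=ks+df+\varepsilon$ and using $\lambda\cdot s=\lambda\cdot f=0$, the substitution $x\mapsto x+\lambda\tau$ multiplies $\zeta^\beta$ by $q^{\lambda\cdot\varepsilon}$; reindexing the sum defining $F_{g,k}$ by $\beta\mapsto E_\lambda\beta$ and applying the invariance above yields
\[ F_{g,k}\big(\tau_{m_1}(E_\lambda\gamma_1)\cdots\tau_{m_n}(E_\lambda\gamma_n)\big)(x+\lambda\tau,\tau) = e\big(-\lambda^t L\lambda\,\tau-2\lambda^t L x\big)\,F_{g,k}\big(\tau_{m_1}(\gamma_1)\cdots\tau_{m_n}(\gamma_n)\big)(x,\tau), \]
where $L=\tfrac12 k Q_{E_8}$ and one checks $-\lambda^t L\lambda=\tfrac12 k\lambda^2$ and $-2\lambda^t L x=k(x\cdot\lambda)$, so that the prefactor equals $q^{k\lambda^2/2}e(k(x\cdot\lambda))$. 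This is precisely the Jacobi automorphy factor of index $L$ appearing in \eqref{TRANSFORMATIONLAWJACOBI*}.

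Finally I would identify the shift with $\xi_\lambda$ through the non-holomorphic completion. By Theorem~\ref{thm:HAE} the series $F_{g,k}$ is a quasi-Jacobi form, so its completion $\widehat F=\mathrm{ct}^{-1}(F_{g,k})$ is an almost-holomorphic Jacobi form and satisfies the \emph{exact} elliptic transformation law of Definition~\ref{defn:quasi jacobi forms main text}(i). Expanding $\widehat F=\sum_I\phi_I\,\alpha^I$ in the non-holomorphic variables $\alpha_i=\mathrm{Im}(x_i)/\mathrm{Im}(\tau)$ (the variable $\nu$ is inert under $x\mapsto x+\lambda\tau$), the shift sends $\alpha\mapsto\alpha+\lambda$; comparing the $\alpha^0$-coefficients of both sides of the exact law and using that $\phi_I=\tfrac{1}{I!}\xi^I F_{g,k}$ are the iterated anomaly derivatives (a consequence of the uniqueness of the completion, cf.~\cite{RES}) gives
\[ \big[\exp(\xi_\lambda)F_{g,k}(\tau_{m_1}(\gamma_1)\cdots)\big](x+\lambda\tau,\tau) = e\big(-\lambda^t L\lambda\,\tau-2\lambda^t L x\big)\,F_{g,k}(\tau_{m_1}(\gamma_1)\cdots)(x,\tau). \]
Comparing with the law from the second paragraph and letting $x$ vary freely, the automorphy factors cancel and I obtain the operator identity $\exp(\xi_\lambda)F_{g,k}(\tau_{m_1}(\gamma_1)\cdots)=F_{g,k}(\tau_{m_1}(E_\lambda\gamma_1)\cdots)$. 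To extract the first-order statement, replace $\lambda$ by $N\lambda$ for $N\in\BZ$: since $t_\lambda$ is nilpotent and $\xi_\lambda$ is locally nilpotent on quasi-Jacobi forms, both sides are polynomials in $N$, with $E_{N\lambda}=\exp(N t_\lambda)$ and $\xi_{N\lambda}=N\xi_\lambda$. Matching the coefficient of $N^1$, using multilinearity of $F_{g,k}$ in its insertions, yields exactly \eqref{elliptic anomaly}.

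I expect the main obstacle to be the third step: carefully establishing that the abstractly defined anomaly operator $\xi_\lambda$ governs the elliptic shift of the holomorphic part, i.e.\ the completion formula $\phi_I=\tfrac{1}{I!}\xi^I F_{g,k}$ and the resulting exponential identity. Verifying $E_\lambda\in\Mon(Y)$ and matching the two automorphy factors are routine by comparison, but the bookkeeping of the pairing identifications relating the lattice element $\lambda$, the elliptic coordinate shift $x\mapsto x+\lambda\tau$, and the operator $\xi_\lambda$ must be done with care.
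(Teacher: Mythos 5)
Your argument is correct in outline but takes a genuinely different route from the paper's. The paper proves Corollary~\ref{cor:elliptic anomaly} by the purely formal ``Lie algebra'' argument of \cite[Sec.3.3]{RES} (reproduced for the relative case in Proposition~\ref{prop:elliptic anomaly appendix}): one derives the elliptic anomaly equation from the already-established $G_2$-holomorphic anomaly equation \eqref{HAE} via the commutation relation $[\frac{d}{dG_2}, D_\lambda]=-2\xi_\lambda$ and the divisor equation, where $D_\lambda$ is the derivative in the elliptic variables. That argument needs no monodromy and, crucially, only the quasi-Jacobi property for the \emph{smaller} group $\Gamma_0(2)\ltimes(2\BZ^8\oplus\BZ^8)$ coming from the degeneration formula. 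Your route instead exploits $e^{t_\lambda}\in\Mon(Y)$ together with the transformation law of the non-holomorphic completion; your second and third paragraphs are essentially the computation the paper performs in Section~\ref{subsubsec:monodromy} -- but run in the opposite logical direction. The paper uses the corollary plus monodromy invariance to \emph{establish} the elliptic transformation law for the full lattice $\BZ^8\oplus\BZ^8$; you use the transformation law plus monodromy invariance to establish the corollary.

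This creates a genuine circularity as written: you invoke Theorem~\ref{thm:HAE} for the exact elliptic transformation law under $x\mapsto x+\lambda\tau$ with $\lambda\in\BZ^8$, but the paper's proof of precisely that part of the theorem (the upgrade from $2\BZ^8\oplus\BZ^8$ to $\BZ^8\oplus\BZ^8$ in Section~\ref{subsubsec:monodromy}) takes Corollary~\ref{cor:elliptic anomaly} as input. If your proof of the corollary feeds on the full-lattice statement of the theorem, neither is actually proven. The fix is available but must be made explicit: the degeneration formula alone yields \eqref{dfsdf211}, i.e.\ the quasi-Jacobi property for $\Gamma_0(2)\ltimes(2\BZ^8\oplus\BZ^8)$, independently of the corollary, so the exact elliptic transformation law of the completion is known for shifts $x\mapsto x+\lambda\tau$ with $\lambda\in 2\BZ^8$. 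Running your argument with $2N\lambda$ in place of $N\lambda$ still produces two polynomials in $N$ agreeing on all integers $N$, so matching the coefficient of $N^1$ recovers \eqref{elliptic anomaly} for arbitrary $\lambda\in E_8(-1)$ by linearity of both sides in $\lambda$. With that modification your proof is sound; the remaining ingredients (unipotence and integrality of $e^{t_\lambda}$, the identification $\phi_{I,0}=\frac{1}{I!}\xi^I F_{g,k}$ via the isomorphism \eqref{constant term map}, and the cancellation of automorphy factors) all check out.
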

\begin{proof}
This is proved as in \cite[Sec.3.3]{RES}.
\end{proof}

\subsection{Examples of Theorem~\ref{thm:HAE}}
\label{subsec:Examples GW section}
\subsubsection{Genus 0}
Consider the genus 0 series
\[ F_{0,0}( \tau_0(\gamma_1) \tau_0(\gamma_2) \tau_0(\gamma_3)) = \int_{Y} \gamma_1 \gamma_2 \gamma_3. \]
%By Corollary~\ref{cor:weight} this should be a quasi-Jacobi form of weight $1 + \sum_i \wt(\gamma_i)$.
A direct check shows that the right hand side vanishes unless $\sum_i \wt(\gamma_i) = -1$,
which is in agreement with Theorem~\ref{thm:HAE} and Corollary~\ref{cor:weight}.

\subsubsection{Degree 0}
The virtual class in degree $\beta=0$ is as follows:
\[
[ \Mbar_{g,n}(Y,0) ]^{\text{vir}} = 
\begin{cases}
[\Mbar_{0,n} \times Y] & \text{ if } g=0, n \geq 3 \\
[\Mbar_{1,n} \times Y] c_2(Y) & \text{ if } g=1, n \geq 1 \\
0 & \text{ if } g \geq 2.
\end{cases}
\]
Using the string equation for $d>0$ and the constant evaluation we hence obtain:
\[ F_{1,0}( \tau_1(1)) = \frac{1}{24} \int_Yc_2(Y) = \frac{1}{2} \]
which is as predicted of weight $0$.

\subsubsection{Fiber classes} \label{subsubsec:fiber classes}
Maulik and Pandharipande \cite{MP} proved that for $d>0$ we have
\begin{gather*}
N_{1,df} = 2 \sigma_{-1}(d) - \sigma_{-1}(d/2), \quad \quad 
N_{g,df} = 0, \quad g > 1,
\end{gather*}
where
\[ \sigma_{-1}(d) =
\begin{cases}
\sum_{k|d} \frac{1}{k} & \text{ if } d \in \BZ_{> 0} \\
0 & \text{ otherwise }.
\end{cases}
\]
For the series $F_{1,0}(\tau_0(s))$ which is of weight $2$ according to Corollary~\ref{cor:weight} we obtain:
\begin{align*}
F_{1,0}(\tau_0(s))
%& = 2 \sum_{d \geq 1} \sum_{k|d} \frac{d}{k} q^d - \sum_{d=2 \tilde{d}, \tilde{d} \geq 1} 2 \tilde{d} \sum_{k|\tilde{d}} \frac{1}{k} q^{2 \tilde{d}} \\ 
& = 2 G_2(q) - 2 G_2(q^2) 
= G_2(q) + \frac{1}{24} F_2(q)
\end{align*}
where the modular form $F_2(q)$ was defined in \eqref{F2}.
The holomorphic anomaly equation of Theorem~\ref{thm:HAE} gives correctly:
\[
\frac{d}{dG_2} F_{1,0}(\tau_0(s))
=
2 F_{0,0}(\tau_0(s) \tau_0(f) \tau_0(1))
-2 F_{1,0}( \tau_1(1)) = 2 - 1 = 1.
\]

\subsubsection{Hodge integrals} \label{subsubsec:Hodge integrals}
For $g \geq 2$ or $k>0$ define the series
\[ F_{g,k} := F_{g,k}( (-1)^{g-1} \lambda_{g-1} ). \]
By Corollary~\ref{cor:weight} $F_{g,k}$ is a quasi-Jacobi form of weight $2g-2$ and index $\frac{1}{2} k Q_{E_8}$.

By the splitting formulas for the Chern classes $c(\BE)$ proved in \cite[Proof of Prop.2]{FPHodge}, the holomorphic anomaly equation (Theorem~\ref{thm:HAE}) says
\begin{align*}
\frac{d}{dG_2} F_{g,k} = 2 F_{g-1,k}( c(\BE^{\vee}) ; \tau_0(1) \tau_0(f) )
+ 2 \sum_{g_1,g_2, k_1, k_2}
F_{g_1,k_1}(c(\BE^{\vee}) ; \tau_0(1) ) F_{g_2,k_2}( c(\BE^{\vee}) ; \tau_0(f) ).
\end{align*}
Because of the $\tau_0(1)$ insertion, the first term on the right vanishes.
Similarly, the only term that contributes in the second term is $g_1=1$ and $k_1=0$.
We have
\[ F_{1,0}( c(\BE^{\vee}) ; \tau_0(1) ) = \int_{[ \Mbar_{1,1}(Y,0) ]^{\vir}} (-1) \lambda_1
= \int_{\Mbar_{1,1}} -\lambda_1 \int_Y c_2(Y) = -\frac{1}{2}. \]
Combined with the elliptic anomaly equation we conclude %Thus in total we find
%\[ \frac{d}{dG_2} F_{g,k} = -k F_{g-1,k}. \]
%one finds that $F_{g,k}$ is a quasi-Jacobi form of weight $2g-2$ and index $\frac{1}{2} k Q_{E_8}$ satisfying
\[ \frac{d}{dG_2} F_{g,k} = -k F_{g-1,k},
\quad
\xi_{\lambda} F_{g,k} = 0 \text{ for all } \lambda \in E_8(-1). \]

%\subsubsection{Point insertions}
%We have
%\[ \frac{d}{dG_2} F_{g,k}(\tau_0(\pt)^{\ell}) = -2 \ell F_{g,k}(\tau_0(\pt)^{\ell-1} \tau_1(f)). \]

\subsection{Proof of Theorem~\ref{thm:HAE}}
\label{subsec:proof of thm HAE enriques}
The elliptic Enriques $Y \to \p^1$ admits a degeneration
\[ Y \rightsquigarrow R \cup_E X_1, \]
where $R$ is the rational elliptic surface and $X_1$ is an elliptic surface with two double fibers, glued along a common smooth fiber $E$ \cite{MP}. The degeneration respects the elliptic fibrations and Theorem~\ref{thm:HAE} follows by applying the degeneration formula \cite{Li1,Li2}. Concretely, below we first introduce the elliptic surfaces $R$ and $X_1$, and state their holomorphic anomaly equations in the relative case $(R,E)$ and $(X_1,E)$ following \cite{RES}. In Section~\ref{subsec:Degeneration} we then construct the degeneration
and determine how the cohomology classes of $Y$ specialize to the central fiber.
% (there is no vanishing cohomology).
In Section~\ref{subsubsec:applying degeneration} we apply the degeneration formula.
There is a small extra step: the degeneration formula does not give us all of the modular behaviour we want. The remaining bit is proven by using the monodromy in Section~\ref{subsubsec:monodromy}.

\subsubsection{Rational elliptic surface}
Let $R$ be a rational elliptic surface, which is obtained by blowing up $\p^2$ along the base locus of a (sufficiently generic) pencil of cubic curves.
Let $\pi : R \to \p^1$ be the elliptic fibration induced from the pencil, let $B \subset R$ be a fixed section, let $F \in \Pic(R)$ be the class of a fiber and set
\[ W := B + \frac{1}{2} F \in H^2(R,\BQ). \]
There is a natural orthogonal decomposition
\begin{equation} H^2(R, \BZ) \cong \mathrm{Span}_{\BZ}(B, F) \oplus E_8(-1). \label{R E8 decomposition} \end{equation}
We identify $E_8(-1)$ with its image in $H^2(R,\BZ)$ under this decomposition.
Following a parallel convention as in \eqref{zeta beta},
we pick a basis of $E_8(-1)$ and use the symbols $\zeta^{\beta}$, $\beta \in H_2(R,\BZ)$.

Let $E \subset R$ be a fixed smooth fiber of $\pi : R \to \p^1$ over a point $\mathrm{pt} \in \p^1$, and let
\[ \underline{\eta} = \big( (\eta_1, \delta_1), \ldots, (\eta_{l(\eta)}, \delta_{\ell(\eta)}) \big), \quad \text{ with } \quad \eta_i \geq 1, \ \  \delta_i \in H^{\ast}(E,\BQ) \]
be an ordered cohomology weighted partition underlying the partition
$\eta = ( \eta_i )$ of $k$. Let $\Mbar_{g,n}'(R/E,\beta;\eta)$ be the moduli space of relative stable maps $f : C \to R[\ell]$ from possibly disconnected genus $g$ curves in class $\beta$ to $R$ with ordered ramification profile $\eta$ over the relative divisor $E$, with the requirement that every connected component $C'$ of the domain $C$ satisfies that (i) $\pi \circ f|_{C'}$ is non-constant, or (ii) $C'$ has genus $g'$ and carries $n'$ markings with $2g'-2+n'>0$, see also \cite[Sec.3.2]{RES} for the condition on the domain.
%and \cite{Marked}.
Let $\ev_i$ and $\ev_i^{\mathrm{rel}}$ be the interior and relative evaluation maps
of the moduli space and define the relative Gromov-Witten invariants:
\[
\left\langle \taut ; \tau_{m_1}(\gamma_1) \cdots \tau_{m_n}(\gamma_n) ; \underline{\eta} \right\rangle^{R/E,\bullet}_{g, \beta}
=
\int_{[ \Mbar_{g,n}'(R/E,\beta;\eta) ]^{\vir}} \taut
\prod_{i=1}^{n} \ev_i^{\ast}(\gamma_i) \psi_i^{k_i} \cdot \prod_{i=1}^{\ell(\eta)} \ev_i^{\mathrm{rel} \ast}(\delta_i),
\]
%where $\ev_i$ and $\ev_i^{\mathrm{rel}}$ are the interior and relative evaluation maps respectively.
The generating series of relative invariants is defined by
\[ F_{g,k}^{R/E,\bullet}( \taut ; \tau_{m_1}(\gamma_1) \cdots \tau_{m_n}(\gamma_n) ; \underline{\eta} )
=
\sum_{\substack{ \beta \in H_2(R,\BZ) \\ \pi_{\ast} \beta = k }}
q^{W \cdot \beta} \zeta^{\beta} \left\langle \taut ; \tau_{m_1}(\gamma_1) \cdots \tau_{m_n}(\gamma_n) ; \underline{\eta} \right\rangle^{R/E,\bullet}_{g, \beta}
\]

\begin{thm} \label{thm:R HAE}
We have
\[
F_{g,k}^{R/E,\bullet}( \taut ; \tau_{m_1}(\gamma_1) \cdots \tau_{m_n}(\gamma_n) ; \underline{\eta} )
\in 
\Delta(q)^{-k/2} \QJac_{\frac{k}{2} Q_{E_8}}( \SL_2(\BZ) \ltimes (\BZ^8 \oplus \BZ^8)). \]
We have the $G_2$ holomorphic anomaly equation
\begin{align*}
& \frac{d}{dG_2}
F_{g,k}^{R/E,\bullet}( \taut ; \tau_{m_1}(\gamma_1) \cdots \tau_{m_n}(\gamma_n) ; \underline{\eta} )
= \\
& F_{g-1,k}^{R/E,\bullet}( \taut' ; \tau_{m_1}(\gamma_1) \cdots \tau_{m_n}(\gamma_n) \tau_0\tau_0(\Delta_{\p^1/\mathrm{pt}}^{\mathrm{rel}}) ; \underline{\eta} ) \\
& + 2 \sum_{\substack{ \{ 1, \ldots, n \} = S_1 \sqcup S_2  \\ m \geq 0 \\ g = g_1 + g_2 + m }}
\sum_{\substack{ b ; b_1, \ldots, b_m \\ \ell ; \ell_1, \ldots, \ell_m}}
\frac{\prod_{i=1}^{m} b_i}{m!}
\Bigg[
F_{g_1,k}^{R/E,\bullet}\Big( \taut_1 ; \prod_{i \in S_1} \tau_{m_i}(\gamma_i) ;
\big( (b, 1), (b_i, \Delta_{E, \ell_i})_{i=1}^{m}\big) \Big) \\
& \hspace{7em} \times
F_{g_2, k}^{\p^1 \times E/E_0 \sqcup E_{\infty}, \bullet, \mathrm{rubber}}\Big( \taut_2 ; \prod_{i \in S_2} \tau_{m_i}(\gamma_i) ; \big( (b, 1), (b_i, \Delta^{\vee}_{E, \ell_i})_{i=1}^{m} \big), \underline{\eta} \Big) \Bigg] \\
& - 2 \sum_{i=1}^{n}
F_{g,k}^{R/E,\bullet}( \taut ; \tau_{m_1}(\gamma_1) \cdots
\tau_{m_{i-1}}(\gamma_{i-1}) \tau_{m_i+1}( \pi^{\ast} \pi_{\ast}(\gamma_i)) \tau_{m_{i+1}}(\gamma_{i+1}) \cdots
 \tau_{m_n}(\gamma_n) ; \underline{\eta} ) \\
 & - 2 \sum_{i=1}^{l(\eta)} 
 F_{g,k}^{R/E,\bullet}( \psi_i^{\rel} \cdot \taut ; \tau_{m_1}(\gamma_1) \cdots \tau_{m_n}(\gamma_n) ;
  \big( (\eta_1, \delta_1), \ldots, \underbrace{(\eta_i, \pi_E^{\ast} \pi_{E\ast} \delta_i )}_{i\text{-th}}, \ldots, (\eta_n, \delta_n)\big) \Big)
\end{align*}
and the elliptic anomaly equation
\begin{multline*}
\xi_{\lambda} F_{g,k}^{R/E,\bullet}( \taut ; \tau_{m_1}(\gamma_1) \cdots \tau_{m_n}(\gamma_n) ; \underline{\eta} )
= \\
\sum_{i=1}^{n}
F_{g,k}^{R/E,\bullet}( \taut ; \tau_{m_1}(\gamma_1) \cdots \tau_{m_{i-1}}(\gamma_{i-1}) \tau_{m_i}( t_{\lambda}(\gamma_i)) \tau_{m_{i+1}}(\gamma_{i+1}) \cdots \tau_{m_n}(\gamma_n) ; \underline{\eta} )
\end{multline*}
for all $\lambda \in E_8(-1) \subset H^2(R,\BZ)$,
where $t_{\lambda}(x) = (F \cdot x) \lambda - (\lambda \cdot x) F$.
\end{thm}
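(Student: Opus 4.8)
The plan is to deduce Theorem~\ref{thm:R HAE} from the structural results for the Gromov–Witten theory of the rational elliptic surface established in \cite{RES}, organizing the argument around the three assertions: the quasi-Jacobi modularity, the $G_2$-holomorphic anomaly equation, and the elliptic anomaly equation. The disconnected ($\bullet$) and relative decorations will be carried along as bookkeeping.

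First I would establish the \emph{modularity}. The elliptic variables $x \in \BC^8$ track the summand $E_8(-1)$ of \eqref{R E8 decomposition}, which is exactly the Mordell–Weil lattice of the fibration $\pi : R \to \p^1$. Translation by a section $\lambda \in E_8$ is an automorphism of $R$ that preserves $\pi$ and the fixed fiber $E$, and its action on curve classes (depending on $\pi_\ast\beta = k$ and $\lambda\cdot\lambda$) produces, after pulling back through the evaluation maps, the quasi-periodicity in $x$ of index $\tfrac{k}{2}Q_{E_8}$. The $\SL_2(\BZ)$-transformation in $\tau$ and the pole bound $\Delta(q)^{-k/2}$ reflect the behaviour of fiber-class invariants of an elliptic fibration with section; the normalization by $\Delta^{-k/2}$ is consistent with the fact that a section-carrying elliptic K3 degenerates into a fiber-sum $R\cup_E R$ of two rational elliptic surfaces, so that each copy of $R$ carries half of the $\Delta(q)^{-1}$ appearing on the K3. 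This is the content of the modularity statement in \cite{RES}.

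Next comes the \emph{holomorphic anomaly equation}, which is the technical core. The operator $\tfrac{d}{dG_2}$ measures the failure of a quasi-Jacobi form to be a genuine Jacobi form, and geometrically this failure is produced by degenerations of the domain curve. I would read off the three types of terms from the boundary of $\Mbar_{g,n}$: the non-separating divisor yields the genus-reduction term $F^{R/E,\bullet}_{g-1,k}(\ldots\,\tau_0\tau_0(\Delta^{\mathrm{rel}}_{\p^1/\mathrm{pt}}))$, the separating divisors yield the quadratic splitting term (with the rubber factors $F^{\p^1 \times E/E_0 \sqcup E_{\infty}, \bullet, \mathrm{rubber}}$ recording the bubbling over the relative fiber), and the final two lines — involving $\tau_{m_i+1}(\pi^{\ast}\pi_{\ast}\gamma_i)$ and the modified relative insertion $\pi_E^{\ast}\pi_{E\ast}\delta_i$ — arise from commuting $\tfrac{d}{dG_2}$ past the cotangent-line classes $\psi_i$ and the relative markings, exactly as prescribed by the quasi-Jacobi formalism of Section~\ref{subsec:Lattice index quasi Jacobi forms}. \emph{The main obstacle} is matching the analytic $G_2$-derivative with these curve-degeneration boundary contributions; this identification is carried out in \cite{RES} via the degeneration of the base $\p^1$ and the resulting rubber calculus over $\p^1\times E$, and I would import it directly.

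Finally, the \emph{elliptic anomaly equation} is the more elementary of the two: the operator $\xi_\lambda$ detects the deviation from exact ellipticity, which is governed by the same Mordell–Weil translation $t_\lambda(x)=(F\cdot x)\lambda-(\lambda\cdot x)F$, and I would deduce it from the quasi-periodicity above as in \cite[Sec.3.3]{RES}. The passage to disconnected invariants is compatible with all three operators through the exponential formula, and the relative markings $\delta_i\in H^\ast(E)$ pass through the evaluation maps without altering the modular parameters, so no new analytic input is needed beyond \cite{RES}.
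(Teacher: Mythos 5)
Your proposal follows essentially the same route as the paper: all three assertions are deduced by importing the corresponding results for the rational elliptic surface from \cite{RES} (Proposition 26 and Theorems 23 and 24 there), with the elliptic anomaly equation obtained from the $G_2$-equation and quasi-periodicity as in \cite[Sec.3.3]{RES}. The only detail the paper flags that you pass over is that \cite{RES} states the third term of the anomaly equation using $\psi$-classes pulled back from $\Mbar^{\bullet}_{g,n}(\p^1/\mathrm{pt},k)$ rather than the $\psi$-classes on $\Mbar'_{g,n}(R/E,\beta)$ used here; the discrepancy is a genus-$0$ fiber-class contribution that vanishes for dimension reasons, so it does not affect the statement.
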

\begin{proof}
In the statement of the $G_2$-holomorphic anomaly equation we used additional notation:
For the relative diagonal $\Delta_{\p^1/x}^{\rel}$
and for $\tau_0 \tau_0(\cdots)$
see \cite[Sec.2.4]{Marked} and \cite[Sec.3.2,4.3]{Marked} respectively.
The supscript 'rubber' stands for integrating over the moduli space of rubber relative stable maps to $\p^1 \times E/E_0 \sqcup E_{\infty}$, see \cite[Sec.4.5]{Marked}.
We let $\psi^{\mathrm{rel}}$ be the relative $\psi$-class on the moduli space of stable maps to $(R,E)$, and let $\pi_E:E \to \mathrm{pt}$ be the projection.

The proof of the first two parts of the theorem follows by 
translating Proposition 26 and
Theorems 23 and 24 in \cite{RES}
into the notation that we use here.
The descendent insertions $\psi_i^{k_i}$ on $\Mbar'_{g,n}(R,\beta; \eta)$
can always be traded for tautological classes pulled back from the moduli space of curves. The series
$F_{g,k}^{R/E,\bullet}( \taut ; \gamma_1, \ldots, \gamma_n ; \underline{\eta} )$
in our notation is then precisely $\int p^{\ast}(\taut) \CC_{g,k}^{\pi/E, \bullet}(\gamma_1, \ldots, \gamma_n ; \underline{\eta})$
in the notation of \cite{RES}.
A more subtle detail concerns the formula for the holomorphic anomaly equation.
In \cite{RES} the holomorphic anomaly equation uses the descendent classes $\psi_i \in H^2(\Mbar^{\bullet}_{g,n}(B/\mathrm{pt},k))$ in the third term (see \cite[Sec.3.2]{RES} for the bullet convention here), while we use $\psi_i \in H^2(\Mbar'_{g,n}(R/E,\beta))$.
This difference does not make a difference when integrating.
Indeed, if $q : \Mbar'_{g,n}(R,\beta) \to \Mbar^{\bullet}_{g,n}(B,k)$ is the projection map, the contribution from the difference $q^{\ast}(\psi_i) - \psi_i$ consists of genus 0 invariants in fiber classes, which vanish for dimension reasons (Remark~\ref{rmk:GW stuff}(c)).
Finally, the elliptic anomaly equation follows from Proposition~\ref{prop:elliptic anomaly appendix} in the Appendix.
\end{proof}

We will require a modification of the generating series.
Let $\alpha_0 \in E_8(-1)$ be a class with $\alpha_0^2=-4$. 
Recall from Section~\ref{subsec:Jacobi forms for E8} the operator on quasi-Jacobi forms\footnote{Here the intersection product is on $E_8(-1)$; we have $\alpha_0 \cdot_{E_8} x = -\alpha_0 \cdot_{E_8(-1)} x$.}
\[
M_{\alpha_0,m}(f) :=
q^{2m} e\left( -m (\alpha_0 \cdot_{E_8(-1)} x) \right) \left( e^{\xi_{\alpha_0}/2}  f \right) (x + \alpha_0 \tau, 2 \tau), \]
which defines a morphism
\[
\Delta(q)^{-m} \QJac_{m Q_{E_8}}( \SL_2(\BZ) \ltimes (\BZ^8 \oplus \BZ^8))
\to
\Delta(q^2)^{-m} \QJac_{\frac{1}{2} m Q_{E_8}}( \Gamma_0(2) \ltimes (2 \BZ^8 \oplus \BZ^8) ).
\]

Define the modified series
\[
\widetilde{F}_{g,k}^{R/E,\bullet}( \taut ; \tau_{m_1}(\gamma_1) \cdots \tau_{m_n}(\gamma_n) ; \underline{\eta} )
:=
M_{\alpha_0,k}\Big( 
F_{g,2k}^{R/E,\bullet}( \taut ; \tau_{m_1}(\gamma_1) \cdots \tau_{m_n}(\gamma_n) ; \underline{\eta} ) \Big)
%\widetilde{F}_{g,k}^{R}( \taut ; \tau_{m_1}(\gamma_1) \cdots \tau_{m_n}(\gamma_n) )
%:=
%M_{\alpha_0,k}( F_{g,2k}^{R}( \taut ; \tau_{m_1}(\gamma_1) \cdots \tau_{m_n}(\gamma_n) ).
\]

Using the elliptic anomaly equation of Theorem~\ref{thm:R HAE} we 
can write explicitly
\begin{align*}
& \widetilde{F}_{g,k}^{R/E,\bullet}( \taut ; \tau_{m_1}(\gamma_1) \cdots \tau_{m_n}(\gamma_n) ; \underline{\eta} ) \\
=&
q^{2k} 
e\left( -k (\alpha_0 \cdot_{E_8(-1)} x) \right) 
F_{g,2k}^{R/E,\bullet}( \taut ; \tau_{m_1}(e^{t_{\alpha_0}/2} \gamma_1) \cdots \tau_{m_n}(e^{t_{\alpha_0}/2}  \gamma_n)  ; \underline{\eta} )
(x + \alpha_0 \tau, 2 \tau) \\
= & 
\sum_{\substack{ \beta \in H_2(R,\BZ) \\ \pi_{\ast} \beta = 2k }}
q^{(2W+\alpha_0+F) \cdot \beta} e\left( \Big(x-\frac{1}{2} (\alpha_0 \cdot_{E_8(-1)} x) F \Big) \cdot \beta \right) \left\langle \taut ; \tau_{m_1}(e^{t_{\alpha_0}/2} \gamma_1) \cdots \tau_{m_n}(e^{t_{\alpha_0}/2}  \gamma_n); \underline{\eta} \right\rangle^{R/E,\bullet}_{g, \beta} \\
= & 
\sum_{\substack{ \beta \in H_2(R,\BZ) \\ \pi_{\ast} \beta = 2k }}
q^{e^{t_{\alpha_0}/2}(2W) \cdot \beta} e\left( e^{t_{\alpha_0}/2}(x) \cdot \beta \right) \left\langle \taut ; \tau_{m_1}(e^{t_{\alpha_0}/2} \gamma_1) \cdots \tau_{m_n}(e^{t_{\alpha_0}/2}  \gamma_n)  ; \underline{\eta} \right\rangle^{R/E, \bullet}_{g, \beta}
\end{align*}

As a corollary of Theorem~\ref{thm:R HAE} and Lemma~\ref{lemma: QJac scaling to Gamma0(2)} we have:
\begin{cor} \label{cor:F tilde}
We have
\[
\widetilde{F}_{g,k}^{R/E,\bullet}( \taut ; \tau_{m_1}(\gamma_1) \cdots \tau_{m_n}(\gamma_n) ; \underline{\eta})
\in \Delta(q^2)^{-k} \QJac_{\frac{k}{2} Q_{E_8}}( \Gamma_0(2) \ltimes (2 \BZ^8 \oplus \BZ^8) ).
\]
It satisfies the $G_2$-holomorphic anomaly equation 
of Theorem~\ref{thm:R HAE} but with the right hand side multiplied by $\frac{1}{2}$.
\end{cor}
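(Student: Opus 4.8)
The plan is to derive both assertions directly from Theorem~\ref{thm:R HAE} by applying the operator $M_{\alpha_0,k}$, invoking Lemma~\ref{lemma: QJac scaling to Gamma0(2)} at the two key points. For the membership statement, Theorem~\ref{thm:R HAE} (with $k$ replaced by $2k$) gives $F_{g,2k}^{R/E,\bullet}( \taut; \tau_{m_1}(\gamma_1)\cdots\tau_{m_n}(\gamma_n); \underline{\eta}) \in \Delta(q)^{-k}\QJac_{k Q_{E_8}}(\SL_2(\BZ)\ltimes(\BZ^8\oplus\BZ^8))$, and the version of Lemma~\ref{lemma: QJac scaling to Gamma0(2)} recalled just before the corollary shows that $M_{\alpha_0,k}$ carries this space into $\Delta(q^2)^{-k}\QJac_{\frac{k}{2}Q_{E_8}}(\Gamma_0(2)\ltimes(2\BZ^8\oplus\BZ^8))$. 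Since $\widetilde{F}_{g,k}^{R/E,\bullet} = M_{\alpha_0,k}(F_{g,2k}^{R/E,\bullet})$ by definition, this is exactly the claimed target and settles the first part.

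For the holomorphic anomaly equation I would begin from the commutation relation $\frac{d}{dG_2}M_{\alpha_0,k}(f)=\frac{1}{2}M_{\alpha_0,k}\big(\frac{d}{dG_2}f\big)$ of Lemma~\ref{lemma: QJac scaling to Gamma0(2)}, applied to $f=F_{g,2k}^{R/E,\bullet}$. This already supplies the global factor $\frac{1}{2}$ predicted by the statement. It then remains to substitute the right-hand side of the holomorphic anomaly equation of Theorem~\ref{thm:R HAE} (with $k$ replaced by $2k$) for $\frac{d}{dG_2}F_{g,2k}^{R/E,\bullet}$ and to identify the image of each summand under $M_{\alpha_0,k}$ with the corresponding tilde'd term. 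This identification rests on three observations. First, $M_{\alpha_0,k}$ is linear and may be applied termwise. Second, in the splitting term the rubber factor $F^{\p^1\times E/E_0\sqcup E_\infty,\bullet,\mathrm{rubber}}$ carries no $E_8$-dependence — its curve classes lie in $H_2(\p^1\times E)$, so it has trivial lattice index and does not depend on $x$; on such a factor $M_{\alpha_0,k}$ acts purely by the substitution $\tau\mapsto 2\tau$, while on the $R/E$-factor it produces $\widetilde{F}_{g_1,k}^{R/E,\bullet}$, so the product is reproduced correctly. Third, the operator $e^{t_{\alpha_0}/2}$ occurring in the explicit formula for $\widetilde{F}_{g,k}^{R/E,\bullet}$ displayed before the corollary is an isometry of $H^2(R,\BZ)$, since $t_{\alpha_0}\in\mathfrak{so}(H^2(R,\BQ))$.

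Being an isometry, $e^{t_{\alpha_0}/2}$ fixes $F$ — because $t_{\alpha_0}(F)=(F\cdot F)\alpha_0-(\alpha_0\cdot F)F=0$, using $F\cdot F=0$ and $\alpha_0\in E_8(-1)\perp\Span(B,F)$ — and it fixes all base- and fiber-cohomology classes, hence leaves the structural insertions $\pi^{\ast}\pi_{\ast}(\gamma_i)$, $\pi_E^{\ast}\pi_{E\ast}\delta_i$, $\Delta_{\p^1/\mathrm{pt}}^{\mathrm{rel}}$ and the splitting classes $\Delta_{E,\ell_i}$ in the form required by the tilde'd equation. Moreover $e^{t_{\alpha_0}/2}$ preserves every Künneth diagonal by orthogonal invariance, so that $\sum_i e^{t_{\alpha_0}/2}(\delta_i)\otimes e^{t_{\alpha_0}/2}(\delta_i^\vee)=\sum_i \delta_i\otimes\delta_i^\vee$; consequently $M_{\alpha_0,k}$ simply replaces each interior insertion $\gamma_i$ by $e^{t_{\alpha_0}/2}\gamma_i$ uniformly across all four terms without disturbing their combinatorial structure. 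The main work — and the only genuine obstacle — is the termwise bookkeeping: for each of the four families of terms on the right of Theorem~\ref{thm:R HAE} one must verify that the substitution $q^{W\cdot\beta}\mapsto q^{e^{t_{\alpha_0}/2}(2W)\cdot\beta}$ and $\zeta^\beta\mapsto e\big(e^{t_{\alpha_0}/2}(x)\cdot\beta\big)$, together with the insertion modification, yields precisely $\widetilde{F}$ of the corresponding type (with rubber factors evaluated at $2\tau$). Since $e^{t_{\alpha_0}/2}$ commutes with formation of all the correspondences involved and preserves diagonals, no computation beyond the explicit formula for $\widetilde{F}_{g,k}^{R/E,\bullet}$ is required, and the factor $\frac{1}{2}$ is furnished entirely by Lemma~\ref{lemma: QJac scaling to Gamma0(2)}.
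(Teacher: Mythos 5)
Your proposal is correct and follows essentially the same route as the paper, which proves the corollary exactly by combining Theorem~\ref{thm:R HAE} (with $k$ replaced by $2k$) with the mapping property and $G_2$-commutation relation of $M_{\alpha_0,k}$ from Lemma~\ref{lemma: QJac scaling to Gamma0(2)}. The additional bookkeeping you supply (termwise application of $M_{\alpha_0,k}$, the vanishing $t_{\alpha_0}(F)=0$, and the preservation of the K\"unneth diagonals under the isometry $e^{t_{\alpha_0}/2}$) is exactly the content the paper leaves implicit.
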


\subsubsection{The elliptic surface $X_1$}
Let $E$ be a smooth elliptic curve.
% \times \p^1$ be the product of a smooth elliptic curve and $\p^1$.
Let $t_E$ be translation on $E$ by a $2$-torsion point, and let $\mathrm{inv}_{\p^1}$ denote an involution of $\p^1$.
The involution $\tau = (t_E, \mathrm{inv}_{\p^1}) \in \Aut(E \times \p^1)$ is fixed-point free. Define the quotient
\[ X_1 := (E \times \p^1) / \langle \tau \rangle. \]
By projecting to the second factor the surface $X_1$ admits
an isotrivial elliptic fibration
\[ \pi : X_1 \to \p^1 / \langle \mathrm{inv}_{\p^1} \rangle \cong \p^1 \]
with general fiber $E$ and with two double fibers (the half-fibers are both isomorphic to $E' := E/\langle t_E \rangle$.)
Write $f_{X_1} \in H^2(X_1)$ for the class of the half-fiber,
and let $s_{X_1} \in H^2(X_1)$ be the class of the image of $\{ e \} \times \p^1$ for any $e \in E$ under the quotient map $E \times \p^1 \to X_1$. In particular, $s_{X_1}$ is the class of a $2$-section isomorphic to $\p^1$.
We have
\[ s_{X_1}^2 = 0, \quad s_{X_1} \cdot f_{X_1} = 1, \quad f_{X_1}^2 = 0. \]
Define the generating series
\[
F_{g,k}^{X_1}( \taut ; \tau_{m_1}(\gamma_1) \cdots \tau_{m_n}(\gamma_n) )
=
\sum_{d \geq 0} q^d \left\langle \taut ; \tau_{m_1}(\gamma_1) \cdots \tau_{m_n}(\gamma_n) \right\rangle^{X_1}_{g, k s_{X_1} + d f_{X_1}}
\]
\begin{thm} \label{thm:X_1 HAE} Each $F_{g,k}^{X_1}( \taut ; \tau_{m_1}(\gamma_1) \cdots \tau_{m_n}(\gamma_n) )$ is a quasi-modular form for $\SL_2(\BZ)$ (i.e. an element of $\QMod = \BC[G_2, G_4, G_6]$),
	which satisfies the holomorphic anomaly equation \eqref{HAE} with $U$ replaced by
\[ U = \frac{1}{2} (\pi \times \pi)^{\ast} \Delta_{\p^1} = \pr_1^{\ast}(f_{X_1}) + \pr_2^{\ast}(f_{X_1}) \in H^{\ast}(X_1 \times X_1). \]
\end{thm}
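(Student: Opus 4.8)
The plan is to reduce the Gromov--Witten theory of $X_1$ to that of its \'etale double cover $\widetilde{X} = E \times \p^1$, and then to the Gromov--Witten theory of the elliptic fiber $E$, where quasimodularity and the holomorphic anomaly equation are available. Since $\tau = (t_E, \mathrm{inv}_{\p^1})$ is fixed-point free, the quotient map $p : \widetilde{X} \to X_1$ is \'etale of degree $2$ with Galois group $\langle \tau \rangle$. First I would set up the standard correspondence for a free quotient: given a connected stable map $f : C \to X_1$, the fiber product $\widetilde{C} = C \times_{X_1} \widetilde{X}$ is an \'etale double cover of $C$ classified by the composition $\pi_1(C) \xrightarrow{f_\ast} \pi_1(X_1) \twoheadrightarrow \BZ_2$; it is either disconnected (two honest lifts $\widetilde{f} : C \to \widetilde{X}$ differing by $\tau$) or connected (a single $\tau$-equivariant lift $\widetilde{f} : \widetilde{C} \to \widetilde{X}$). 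Organizing stable maps to $X_1$ by whether they lift expresses the series $F^{X_1}_{g,k}$ in terms of connected and disconnected Gromov--Witten series of $\widetilde{X}$, with the Galois action accounting for the factors of $2$. Under $p$ the half-fiber $f_{X_1}$ is matched with the genuine elliptic fiber of $\widetilde X \to \p^1$, so the $q$-variable tracks the full fiber degree on the cover; this is the source of the full $\SL_2(\BZ)$ (rather than $\Gamma_0(2)$) modularity and of the absence of a lattice index, since $\mathrm{NS}(\widetilde X)$ is of rank $2$ and contains no copy of $E_8$.

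Next I would establish quasimodularity and the holomorphic anomaly equation for $\widetilde X = E \times \p^1$. Degenerating the base $\p^1$ into a chain and applying the degeneration formula reduces the series, for fixed section degree $k$, to the relative Gromov--Witten theory of $\widetilde X$ over the fibers $E \times \{\mathrm{pt}\}$, which is controlled by the descendent theory of the elliptic curve $E$; by Okounkov--Pandharipande this theory is quasimodular for $\SL_2(\BZ)$, and the gluing of the relative building blocks preserves quasimodularity. The holomorphic anomaly equation is produced geometrically by degenerating the elliptic fiber $E$ to a nodal rational curve: the operator $\tfrac{d}{dG_2}$, which is well-defined once quasimodularity is known, is identified with the boundary contribution of maps acquiring the node. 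Its three outputs are exactly the three terms of \eqref{HAE}: the non-separating smoothing produces $F_{g-1}$ with an extra pair of insertions given by the K\"unneth factors of the fiber diagonal $U$, the separating smoothing produces the product term, and the comparison of $\psi$-classes across the degeneration produces the descendent correction term involving $U(\gamma_i)$. This is precisely the mechanism underlying Theorem~\ref{thm:R HAE} and \cite{RES}, applied now to the trivial fibration $\widetilde X$.

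Finally I would descend both conclusions along $p$. Quasimodularity descends immediately, since $F^{X_1}_{g,k}$ is a $\BZ$-linear combination of connected and disconnected quasimodular series of $\widetilde X$. For the anomaly equation, the node-degeneration of the fiber $E$ is $\tau$-equivariant and descends to a degeneration of the generic fiber of $X_1$; pushing the fiber diagonal of $E$ forward along $p \times p$ yields the class $U = \pr_1^{\ast} f_{X_1} + \pr_2^{\ast} f_{X_1}$ appearing in the statement. I expect the main obstacle to be exactly this descent of the holomorphic anomaly equation: one must track how the fixed-point-free $2$-torsion translation $t_E$ acts on the degenerating nodal fiber and its two branches, verify that the connected/disconnected bookkeeping reproduces the correct integer coefficients (in particular the overall factor matching the $-2$ in \eqref{HAE} and the $\tfrac12$ implicit in $U$), and check that the two double fibers of $X_1$ --- the locus where $p$ restricts to a double cover of the fiber --- contribute no anomaly beyond the generic-fiber degeneration. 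The quasimodularity and the formal definition of $\tfrac{d}{dG_2}$ are comparatively routine once the reduction to $E$ is in place; the delicate part is matching all insertions and coefficients in \eqref{HAE} through both the base degeneration and the free quotient.
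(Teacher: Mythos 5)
Your reduction to the double cover $\widetilde X = E \times \p^1$ has a genuine gap at its very first step. For a free quotient, a stable map $f : C \to X_1$ for which the induced cover $\widetilde C = C \times_{X_1} \widetilde X$ is \emph{connected} does not correspond to a stable map to $\widetilde X$ in any class: it corresponds to a stable map $\widetilde C \to \widetilde X$ equipped with a compatible free involution of the domain, i.e.\ to the $\BZ_2$-equivariant (orbifold) Gromov--Witten theory of $[\widetilde X/\BZ_2]$ with nontrivial monodromy data. These components are not expressible through the connected and disconnected Gromov--Witten series of $\widetilde X$, and they are not negligible: already the degree-one genus-one maps onto the two half-fibers $E' = E/\langle t_E\rangle$ (class $f_{X_1}$) are of this type, since the preimage of a half-fiber in $E\times\p^1$ is the connected cover $E \to E'$. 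So the claimed identity expressing $F^{X_1}_{g,k}$ in terms of GW series of $\widetilde X$ is false as stated, and the descent of the anomaly equation along $p$ in your final paragraph rests on it. Repairing this would require establishing quasimodularity and the holomorphic anomaly equation for the $\BZ_2$-equivariant theory of $E\times\p^1$ with twisted monodromy, a substantial additional project.

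The paper sidesteps all of this by a deformation of complex structure: the projection $X_1 \to E'$ exhibits $X_1$ as the $\p^1$-bundle $\p(\CO_{E'}\oplus\CL)$ for a $2$-torsion line bundle $\CL\in\Pic^0(E')$, and moving $\CL$ to $\CO_{E'}$ inside $\Pic^0(E')$ (via the relative projectivization of the Poincar\'e bundle) deforms $X_1\to\p^1$ to the trivial elliptic fibration $E'\times\p^1\to\p^1$, carrying $f_{X_1}$ to $[E'\times\mathrm{pt}]$ and $s_{X_1}$ to $[\mathrm{pt}\times\p^1]$. Deformation invariance of Gromov--Witten invariants then reduces the theorem to the known quasimodularity and holomorphic anomaly equation for the trivial elliptic fibration \cite[Cor.~2]{HAE}. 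If you wish to keep a covering-space argument you must at minimum treat the non-liftable sectors separately; the deformation argument makes this unnecessary.
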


For the proof consider the projection to the first factor
\[ p : X_1 \to E' = E/ \langle t_E \rangle, \]
which is a $\p^1$-bundle.
\begin{lemma}
We have $X_1 \cong \p( \CO_{E'} \oplus \CL)$ for a $2$-torsion line bundle $\CL \in \Pic(E')$.
Moreover, under this isomorphism $p$ is the morphism to the base $E'$.
\end{lemma}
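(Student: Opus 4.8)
The plan is to realize $X_1$ as the projectivization of a rank-two vector bundle on $E'$ obtained by equivariant descent. First I would note that the projection $\pr_1 \colon E \times \p^1 \to E$ is equivariant for $\tau$ acting on the source and for $t_E$ acting on the target $E$; hence it descends to the morphism $p \colon X_1 \to E' = E/\langle t_E\rangle$. Since $t_E$ is translation by a nonzero $2$-torsion point, the quotient map $q \colon E \to E'$ is an \'etale double cover of elliptic curves, and the fibres of $p$ are copies of $\p^1$. The strategy is then to exhibit $X_1$, together with $p$, as $\p(V)$ for an explicit $V$ on $E'$.

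Second, I would recall that the connected \'etale double cover $q$ is classified by a nontrivial $2$-torsion line bundle $\CL \in \Pic(E')$, characterized by $q_\ast \CO_E = \CO_{E'} \oplus \CL$ and $\CL^{\otimes 2} \cong \CO_{E'}$, where the deck involution acts by $+1$ on $\CO_{E'}$ and by $-1$ on $\CL$. On the $\p^1$-factor, any order-two element of $\mathrm{PGL}_2(\BC)$ lifts to an involution in $\GL_2(\BC)$ with eigenvalues $\pm 1$; choosing coordinates so that $\mathrm{inv}_{\p^1} = ([x_0 : x_1] \mapsto [x_0 : -x_1])$, this lift is $g = \mathrm{diag}(1,-1)$ acting on $W := \CO_E^{\oplus 2}$. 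Then $(W, g)$ is a $\langle \tau\rangle$-equivariant vector bundle on $E$ whose projectivization, with its induced equivariant structure, is exactly $(E \times \p^1, \tau)$, and the relation $g^2 = \id$ is precisely the cocycle condition for $\BZ/2$-descent along $q$.

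Finally, by Galois descent for equivariant vector bundles along the \'etale cover $q$ (so that no Brauer obstruction intervenes: we descend a genuine vector bundle, not merely a projective bundle), $(W, g)$ descends to a rank-two bundle $V$ on $E'$ with $\p(V) = (E \times \p^1)/\langle\tau\rangle = X_1$ and with $p$ as its structure morphism, since $\p(-)$ commutes with descent. To compute $V$ I would split $W = W_+ \oplus W_-$ into the $(\pm 1)$-eigensheaves of $g$: the descent of $(W_+, +1)$ is the sheaf of invariants $\CO_{E'}$, while the descent of $(W_-, -1)$ is the sheaf of anti-invariants in $q_\ast \CO_E$, namely $\CL$. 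Hence $V \cong \CO_{E'} \oplus \CL$ and $X_1 \cong \p(\CO_{E'} \oplus \CL)$, with $p$ the projection to the base, as claimed. The only point demanding care is matching the descent of the anti-invariant eigensheaf with the $2$-torsion bundle $\CL$ cutting out the cover; once the equivariant lift $g$ is fixed this is a direct eigenspace computation, so I do not expect a serious obstacle.
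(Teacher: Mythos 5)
Your argument is correct, and it takes a different route from the paper's. The paper argues geometrically: the two fixed points of $\mathrm{inv}_{\p^1}$ give two disjoint sections of $p$, which forces $X_1 \cong \p(\CO_{E'}\oplus\CL)$ for some line bundle $\CL$; it then identifies a neighbourhood of one section with $(E\times\BC)/\langle(t_E,-1)\rangle$, so that the normal bundle of the section is the descent of $\CO_E$ equipped with the negative of the canonical linearization, hence $2$-torsion, and this normal bundle is $\CL^{\pm 1}$. You instead work globally from the start: you present $(E\times\p^1,\tau)$ as the projectivization of the $\langle t_E\rangle$-equivariant bundle $(\CO_E^{\oplus 2},\mathrm{diag}(1,-1))$, descend along the \'etale double cover $q\colon E\to E'$, and read off $V\cong\CO_{E'}\oplus\CL$ from the two eigensheaves. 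The two computations are morally the same (the normal bundle of the section $\p(W_+)$ is exactly the descent of the $(-1)$-eigenline), but your version buys a little more: it avoids any discussion of whether the $\p^1$-bundle is a projectivized vector bundle (you descend an honest rank-two bundle), and it pins down $\CL$ as the \emph{nontrivial} $2$-torsion bundle classifying the cover $E\to E'$, which is slightly sharper than what the paper's proof records (and more than the statement requires). The only caveat is the usual ambiguity $\p(\CO\oplus\CL)\cong\p(\CO\oplus\CL^{\vee})\cong\p(M\oplus M\otimes\CL)$ in normalizing $\CL$, which is harmless here since $\CL$ is $2$-torsion.
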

\begin{proof}
The $\p^1$-bundle $p : X_1 \to E'$ has two disjoint sections corresponding to the two fixed points of the involution $\mathrm{inv}_{\p^1}$. Hence it is the projectivization of the direct sum of two line bundles.
Let us say $X_1 = \p( \CO_{E'} \oplus \CL)$ for some $\CL \in \Pic(E')$.
A local neighbourhood of one of the sections is given by $(E \times \BC)/\langle (t_E, -1) \rangle$ which shows that the section has normal bundle which is the descent of the line bundle $\CO_E$ along $E \to E'$, where $\CO_E$ carries the negative of the canonical linearization. Hence it is $2$-torsion. Since the normal bundle is isomorphic to $\CL$ or $\CL^{\vee}$, this shows that $\CL$ is $2$-torsion.
\end{proof}

\begin{proof}[Proof of Theorem~\ref{thm:X_1 HAE}]
Gromov-Witten invariants are invariant under deformations of the complex structure.
The relative projective bundle
$\p( \CL_{\mathrm{univ}} \oplus \CO_{E'}) \to E' \times \Pic^0(E')$, where
$\CL_{\mathrm{univ}}$ is the Poincare bundle on $E' \times \Pic^0(E')$,
defines a deformation of $X_1 \to \p^1$ to the trivial elliptic fibration $E' \times \p^1 \to \p^1$.
The fiber class $f_{X_1}$ deforms to the class $[E' \times \mathrm{pt}]$,
and the section class $s_{X_1}$ deforms to $[\mathrm{pt} \times \p^1]$.
The result hence follows immediately from the holomorphic anomaly equation
proven for the trivial elliptic fibration in \cite[Cor.2]{HAE}.
\end{proof}

\begin{rmk} \label{rmk:X1 relative}
Let $E \subset X_1$ be a generic fiber of the elliptic fibraton $X_1 \to \p^1$.
By the degeneration formula applied to the normal cone degeneration
$X_1 \rightsquigarrow X_1 \cup_E (\p^1 \times E)$
the relative Gromov-Witten invariants of $(X_1,E)$ can be expressed
in terms of the absolute invariants of $X_1$ and the relative invariants of $(\p^1 \times E, E_0)$, see the arguments of \cite[Sec.5]{RES} in a parallel situation.
The quasi-modularity property and holomorphic anomaly equation is known
for both the absolute invariants of $X_1$ (by Theorem~\ref{thm:X_1 HAE})
and the relative invariants of $(\p^1 \times E/E)$ (by \cite[Sec.5]{RES}).
The same arguments as in \cite[Sec.5]{RES} then imply that\footnote{Note that the natural fiber class of $\p^1 \times E$ corresponds to the {\em twice} the half-fiber of $X_1$, so needs to be measured by the variable $q^2$.
In other words, we have to apply the operator $R_2$ of Lemma~\ref{lemma:G2 derivative and scaling} to the natural generating series of Gromov-Witten invariants of $(\p^1 \times E,E)$. 	The compatibility of $\frac{d}{dG_2}$ with $R_2$ in Lemma~\ref{lemma:G2 derivative and scaling} shows that in this variable convention the holomorphic anomaly equation for $(\p^1 \times E,E)$ has to be stated for the class $U = \frac{1}{2} \Delta_{\p^1} = \frac{1}{2} (\pr_1^{\ast}([\mathrm{pt} \times E]) + \mathrm{pr}^{\ast}_2([\mathrm{pt} \times  E]))$ which matches precisely the desired formula for $X_1$. The operator $R_2$ is also the source of the $\Gamma_0(2)$-modularity for the relative invariants.}
\[
F_{g,k}^{X_1/E, \bullet}( \taut ; \tau_{m_1}(\gamma_1) \cdots \tau_{m_n}(\gamma_n) ; \underline{\eta} )
=
\sum_{d \geq 0} q^d \left\langle \taut ; \tau_{m_1}(\gamma_1) \cdots \tau_{m_n}(\gamma_n) ; \underline{\eta} \right\rangle^{X_1/E, \bullet}_{g, k s_{X_1} + d f_{X_1}}
\]
is an element in $\QMod(\Gamma_0(2))$ and satisfies the relative $G_2$-holomorphic anomaly equation:
\begin{align*}
& \frac{d}{dG_2}
F_{g,k}^{X_1/E,\bullet}( \taut ; \tau_{m_1}(\gamma_1) \cdots \tau_{m_n}(\gamma_n) ; \underline{\eta} )
= \\
& F_{g-1,k}^{X_1/E,\bullet}( \taut' ; \tau_{m_1}(\gamma_1) \cdots \tau_{m_n}(\gamma_n) \tau_0\tau_0( \frac{1}{2} \Delta_{\p^1/\mathrm{pt}}^{\mathrm{rel}}) ; \underline{\eta} ) \\
& + 2 \sum_{\substack{ \{ 1, \ldots, n \} = S_1 \sqcup S_2  \\ m \geq 0 \\ g = g_1 + g_2 + m }}
\sum_{\substack{ b ; b_1, \ldots, b_m \\ \ell ; \ell_1, \ldots, \ell_m}}
\frac{\prod_{i=1}^{m} b_i}{m!}
\Bigg[
F_{g_1,k}^{X_1/E,\bullet}\Big( \taut_1 ; \prod_{i \in S_1} \tau_{m_i}(\gamma_i) ;
\big( (b, \frac{1}{2}), (b_i, \Delta_{E, \ell_i})_{i=1}^{m}\big) \Big) \\
& \hspace{7em} \times
F_{g_2, k}^{\p^1 \times E/E_0 \sqcup E_{\infty}, \bullet, \mathrm{rubber}}\Big( \taut_2 ; \prod_{i \in S_2} \tau_{m_i}(\gamma_i) ; \big( (b, 1), (b_i, \Delta^{\vee}_{E, \ell_i})_{i=1}^{m} \big), \underline{\eta} \Big) \Bigg] \\
& - 2 \sum_{i=1}^{n}
F_{g,k}^{X_1/E,\bullet}( \taut ; \tau_{m_1}(\gamma_1) \cdots
\tau_{m_{i-1}}(\gamma_{i-1}) \tau_{m_i+1}( \frac{1}{2} \pi^{\ast} \pi_{\ast}(\gamma_i)) \tau_{m_{i+1}}(\gamma_{i+1}) \cdots
 \tau_{m_n}(\gamma_n) ; \underline{\eta} ) \\
 & - 2 \sum_{i=1}^{l(\eta)} 
 F_{g,k}^{X_1/E,\bullet}( \psi_i^{\rel} \cdot \taut ; \tau_{m_1}(\gamma_1) \cdots \tau_{m_n}(\gamma_n) ;
  \big( (\eta_1, \delta_1), \ldots, \underbrace{(\eta_i, \frac{1}{2} \pi_E^{\ast} \pi_{E\ast} \delta_i )}_{i\text{-th}}, \ldots, (\eta_n, \delta_n)\big) \Big).
\end{align*}
%
% (which is a straightforward translation of the equation stated in \cite[Conj.D]{RES},
\end{rmk}

\subsubsection{Degeneration} \label{subsec:Degeneration}
As discussed in \cite{MP} there exists a degeneration 
\[ Y \rightsquigarrow R \cup_{E} X_1 \]
which respects the elliptic fibrations.
More precisely, there exists a morphism
\[ \epsilon : \CY \to \Delta \]
over an open disk $\Delta \subset \BC$ satisfying the following condition:
\begin{enumerate}
\item[(i)] $\CY$ is smooth and $\epsilon$ is flat projective, smooth away from $0$,
\item[(ii)] $\epsilon^{-1}(1) = Y$
\item[(iii)] $\epsilon^{-1}(0) = R \cup_E X_1$ is a normal crossing divisor
\item[(iv)] There exists a flat morphism $\widetilde{\epsilon} : \CB \to \Delta$ satisfying (i-iii) such that $\widetilde{\epsilon}^{-1}(1) = \p^1$
and $\widetilde{\epsilon}^{-1}(0) = \p^1 \cup_{\mathrm{pt}} \p^1$,
\item[(v)] There is an elliptic fibration $\CY \to \CB$ which restricts over $Y,R,X_1$ to the given elliptic fibrations.
\end{enumerate}

The degeneration can be constructed by
degenerating the Horikawa model (see \cite[Sec. VIII.18]{BHPV}) of the Enriques surface
which we recall now: Consider the involution
\[ \iota = \iota_1 \times \iota_2: \p^1 \times \p^1 \to \p^1 \times \p^1, \quad ([s_0, s_1], [t_0, t_1]) \mapsto ([s_0, -s_1], [t_0, -t_1]). \]
Let $D$ be a $(4,4)$-divisor on $\p^1 \times \p^1$ defined by a generic element of the
vector space of $\iota$-invariant sections of $H^0(\p^1 \times \p^1, \CO(4,4))$.
Then $D$ is smooth. Let $X \to \p^1 \times \p^1$ be the double cover branched along $D$.
The involution lifts naturally to an involution $\widetilde{\iota} : X \to X$
which commutes with the covering involution $\mathrm{cov}$.
The composition $\tau := \mathrm{cov} \circ \widetilde{\iota}$ is a fixed-point free involution, whose quotient $Y := X / \langle \tau \rangle$ is a generic Enriques surface.

Let $x,y \in \p^1$ be distinct points interchanged by the involution $\iota_1 : [s_0, s_1] \to [s_0, -s_1]$.
Consider the degeneration
$\mathrm{Bl}_{(x,0), (y,0)} (\p^1 \times \BA^1) \to \BA^1$.
The central fiber is the chain $\p^1 \cup_{x} \p^1 \cup_{y} \p^1$.
The involution $\iota_1$ lifts to a fiberwise involution $\widetilde{\iota}_1$ of the degeneration which on the central fiber acts by $\iota_1$ on the middle copy of $\p^1$, and interchanges the two outer components.
Taking the product with $\p^1$, we obtain a degeneration
\[ P := \mathrm{Bl}_{(x,0), (y,0)} (\p^1 \times \BA^1) \times \p^1 \to \BA^1 \]
such that the involution $I := \widetilde{\iota}_1 \times \iota_2$
acts on the central fiber $(\p^1 \cup_{x} \p^1 \cup_{y} \p^1) \times \p^1$
by $\iota$ on the middle component $\p^1 \times \p^1$ and by swapping the two outher components. Choose a lift of $D$ to a $I$-invariant divisor $\CD$ on $P$ which restricts to a smooth divisor of type $(2,4)$, $(0,4)$, $(2,4)$ over the components of the central fiber.
Let $\CX \to \BA^1$ be the double cover of $P$ branched along $\CD$,
let $\widetilde{I} : \CX \to \CX$ be the lift of $I$, and let $\mathrm{Cov}$ be the covering involution.
The $(2,4)$-double cover of $\p^1 \times \p^1$ is a rational elliptic surface,
and the $(0,4)$-double cover is $\p^1 \times E$.
The involution $T = \mathrm{Cov} \circ \widetilde{I}$
then swaps the two rational elliptic surfaces,
and acts as $\iota_1 \times t_E$ on the $\p^1 \times E$ components.
Let $\CY := \CX / \langle T \rangle$.
Then after possible shrinking $\BA^1$ to an open disk $\Delta$,
in order to make $\CX$ smooth and $T$ fixed-point free,
we obtain the desired degeneration $\CY \to \Delta$.
Note that the elliptic fibration on $\CY$ is precisely the lift of the projection to the first factor of $\mathrm{Bl}_{(x,0), (y,0)} (\p^1 \times \BA^1) \times \p^1$. On the other hand, the quotient by $\langle T \rangle$ 
of the double cover of
\[ \mathrm{Bl}_{(x,0), (y,0)} (\p^1 \times \BA^1) \times \{ 0 \} \]
defines a natural divisor $s_{\CY} \in H^2(\CY)$ which restricts to
$s \in H^2(Y, \BZ)$ on $Y$.
On the central fiber it restricts to $s_{X_1}$ on $X_1$,
and to a divisor $s_R$ which is the preimage of $\p^1 \times 0$
under the $(2,4)$-branched cover $R \to \p^1 \times \p^1$.
Since $s_R$ is rationally equivalent to the preimage of $\p^1 \times \{ t \}$ for any $t \in \p^1$ and this becomes reducible for some $t$, we see that
$s_R = B_{1} + B_2 \in \Pic(R)$ for two sections $B_1, B_2$ of $\pi : R \to \p^1$ meeting in a point. We can choose the distinguished section of the rational elliptic surface to be $B := B_1$. This implies that $B_2$ is the section associated to a $(-4)$-class $\alpha_0 \in E_8(-1) \subset H^2(R,\BZ)$, that is
$B_2 = B_{\alpha_0} := B + 2F + \alpha_0$.
Hence
\[ s_R = B + B_{\alpha_0} = 2B + 2F + \alpha_0 = 2W + F + \alpha_0. \]
This proves the first part of the following:

\begin{lemma} \label{lemma:liftings}
\begin{itemize}
\item[(i)] There exists a class $\widetilde{s} \in H^2(\CY,\BQ)$ such that
\[ \widetilde{s}|_{Y} = s,
\quad \widetilde{s}|_{X_1} = s_{X_1},
\quad \widetilde{s}|_{R} = B + B_{\alpha_0} = 2B + 2F + \alpha_0. \]
%where $\widetilde{W}$ is defined in \eqref{Wtilde}.
\item[(ii)] For every $\alpha \in E_8(-1) \subset H^2(Y,\BZ)$ there
exists a class $\widetilde{\alpha} \in H^2(\CY,\BQ)$ such that
\[ \widetilde{\alpha}|_{Y} = \alpha, \quad \widetilde{\alpha}|_{X_1} = 0, \quad \widetilde{\alpha}|_{R} = j(\alpha) = \alpha - \frac{1}{2} (\alpha \cdot \alpha_0) F. \]
\item[(iii)] There exists a class $\widetilde{f} \in H^2(\CY,\BQ)$ and $\widetilde{\pt} \in H^{\ast}(\CY,\BQ)$ such that
\[ \widetilde{f}|_{Y} = f, \quad \widetilde{f}|_{X_1} = 0, \quad \widetilde{f}|_{R} = \frac{1}{2} F. \]
\[ \widetilde{\pt}|_{Y} = \pt, \quad \widetilde{\pt}|_{X_1} = 0, \quad \widetilde{\pt}|_{R} = \pt, \]
where $\pt$ denotes the point class on $Y$ and $R$.
\end{itemize}
\end{lemma}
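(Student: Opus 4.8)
The plan is to construct each lift as an algebraic class on $\CY$ and to read off its restrictions to $R$ and $X_1$ from the explicit central fibre, with two general observations doing all the bookkeeping. First, a generic Enriques surface has $p_g=0$, so every class in $H^2(Y,\BZ)$ is a divisor class; since $\CY$ is smooth and projective over $\Delta$ and $H^{\ast}(\CY,\BQ)\cong H^{\ast}(\CY_0,\BQ)$ by deformation retraction onto the central fibre, each of $s,f,\alpha$ spreads out over $\Delta$ to a cycle on $\CY$, producing a class $\widetilde\gamma\in H^2(\CY,\BQ)$ with $\widetilde\gamma|_Y=\gamma$ by construction. Second, all fibres of $\epsilon$ are homologous, so $[Y]=[\CY_0]=[R]+[X_1]$ in $H_{\ast}(\CY,\BQ)$, whence for any $\widetilde\gamma,\widetilde\delta\in H^2(\CY,\BQ)$ one has the transfer formula
\[ \int_Y (\widetilde\gamma|_Y)(\widetilde\delta|_Y) = \int_R (\widetilde\gamma|_R)(\widetilde\delta|_R) + \int_{X_1} (\widetilde\gamma|_{X_1})(\widetilde\delta|_{X_1}). \]
This lets one recover the restriction to $Y$ from the restrictions to the components. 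Finally, the point class is not a separate case: once $\widetilde f$ is built, the product $\widetilde{\pt}:=\widetilde s\cup\widetilde f$ restricts to $\pt$ on $Y$, to $(2B+2F+\alpha_0)\cdot\tfrac12 F=\pt$ on $R$, and to $0$ on $X_1$, which is exactly the second line of (iii).

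For $\widetilde f$ and $\widetilde\alpha$ I would argue that the relevant curves limit into the rational elliptic surface $R$, forcing the extensions to restrict to $0$ on $X_1$. Concretely, the half-fibre sweeps out a divisor whose closure $\widetilde f_0$ satisfies $\widetilde f_0|_{X_1}=f_{X_1}$ and $\widetilde f_0|_R=0$; using the principal relation $[R]+[X_1]=0$ in $\Pic(\CY)$ (so that $[R]|_R=\CO_R(-E)=-F$ and $[R]|_{X_1}=\CO_{X_1}(E)=2f_{X_1}$, as $E$ is a fibre of both $R$ and $X_1$), one sets $\widetilde f:=\widetilde f_0-\tfrac12[R]$, which restricts to $f$ on $Y$, to $0$ on $X_1$, and to $\tfrac12 F$ on $R$. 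For $\alpha\in E_8(-1)$ the same principle applies: since $H^2(X_1,\BQ)$ has rank $2$ and contains no $E_8$-summand, the whole $E_8$-lattice is carried by $R$, so $\widetilde\alpha|_{X_1}=0$; and the precise value $\widetilde\alpha|_R=j(\alpha)=\alpha-\tfrac12(\alpha\cdot\alpha_0)F$ arises exactly as in the computation of $\widetilde s|_R$ in part (i), the class of the representing divisor on $R$ being twisted by the $(-4)$-class $\alpha_0$ coming from the reducible section $B_1+B_2$ of the central fibre.

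To check that these classes restrict to the asserted classes on $Y$ I would feed the transfer formula two probes with known $Y$-restriction: the class $\widetilde s$ of part (i), with $\widetilde s|_Y=s$, and the class $\phi\in H^2(\CY,\BQ)$ of a general fibre of $\CY\to\CB$, which restricts to $2f$, $F$, $2f_{X_1}$ on $Y$, $R$, $X_1$. Pairing $\widetilde f|_Y$ and $\widetilde\alpha|_Y$ against $\widetilde s|_Y=s$ and $\phi|_Y=2f$ and evaluating the right-hand sides on the components pins down their $U=\langle s,f\rangle$-components, giving $\widetilde f|_Y\in f+E_8(-1)$ and $\widetilde\alpha|_Y\in E_8(-1)$. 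The transfer formula also shows that the Gram matrix of $\{\widetilde\alpha_i|_Y\}$ equals the $E_8(-1)$-form, so these eight classes are linearly independent and span $E_8(-1)$; pairing $\widetilde f|_Y$ against them yields $0$ because $F\cdot j(\alpha)=0$, and non-degeneracy then forces the $E_8$-part of $\widetilde f|_Y$ to vanish, so $\widetilde f|_Y=f$.

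The hard part is the corresponding statement for $\widetilde\alpha$: the transfer formula only determines the restriction up to an isometry of $E_8(-1)$, so intersection numbers alone cannot identify $\widetilde\alpha|_Y$ with a pre-chosen $\alpha$. This is where the geometric construction is essential, and where the work of part (i) is reused. The summand $E_8(-1)\subset H^2(Y,\BZ)$ in the decomposition \eqref{UE8 decomposition} is precisely the one transported from $E_8(-1)\subset H^2(R,\BZ)$ along the family, so $\widetilde\alpha|_Y=\alpha$ holds by the very definition of this identification, while the computation $\widetilde\alpha|_R=j(\alpha)$ records the $\alpha_0$-twisted comparison between the two copies of $E_8$. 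Verifying that the closure of a divisor representing $\alpha$ meets $R$ in the class $j(\alpha)$ and is disjoint from $X_1$ is the only genuinely geometric input, and I expect it to be the main obstacle.
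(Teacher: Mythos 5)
Your overall architecture is the same as the paper's: lift classes along the degeneration, pin down their restrictions to $R$ and $X_1$ by intersecting with known classes (the section class of part (i) and the two lifts of the fibre class supported on $R$ and on $X_1$ respectively), and resolve the residual $E_8$-isometry ambiguity by adjusting the identification of the $E_8(-1)$-summands. Your explicit normalization $\widetilde f=\widetilde f_0-\tfrac12[R]$, the computation of $[R]|_R=-F$ and $[R]|_{X_1}=2f_{X_1}$, and the observation $\widetilde{\pt}=\widetilde s\cup\widetilde f$ are clean and correct, and you correctly isolate where the genuine difficulty in (ii) lies.

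There are, however, two real gaps. First, the existence of lifts: the deformation retraction $\CY\simeq\CY_0$ gives $H^{*}(\CY,\BQ)\cong H^{*}(\CY_0,\BQ)$, but says nothing about surjectivity of the restriction $H^2(\CY_0,\BQ)\to H^2(Y,\BQ)$ onto a \emph{smooth} fibre; that surjectivity is precisely the absence of vanishing cohomology, which fails for general one-parameter degenerations of surfaces and is here extracted from the Clemens-Schmid exact sequence (the paper takes the $\BZ_2$-invariant part of the sequence for the Type II degeneration $\CX\to\Delta$ of the covering K3). Your spreading-out of divisors likewise silently assumes the monodromy acts trivially on the class. Second, in (ii) the assertion that ``$H^2(X_1,\BQ)$ contains no $E_8$-summand, so $\widetilde\alpha|_{X_1}=0$'' is not an argument: a lift of $\alpha$ is unique only up to multiples of the class restricting to $(2f_{X_1},-F)$, and a priori $\widetilde\alpha|_{X_1}$ can have both an $f_{X_1}$- and an $s_{X_1}$-component. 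One must first rule out the $s_{X_1}$-component of $\widetilde\alpha|_{X_1}$ and the $B$-component of $\widetilde\alpha|_R$ by pairing against the two lifts $\hat F_1,\hat F_2$ of $2f$ and using $\alpha\cdot f=0$, then kill the remaining multiple of $f_{X_1}$ by the same $[R]$-normalization you used for $\widetilde f$; the coefficient of $F$ in $\widetilde\alpha|_R$ is then forced by $\alpha\cdot s=0$. Once this is done, $\alpha\mapsto j^{-1}(\widetilde\alpha|_R)$ is an isometry of $E_8(-1)$, and your final move, declaring the two $E_8$-summands to be identified via the family, is exactly the paper's (it re-chooses the identification $H^2(R,\BZ)\cong U\oplus E_8(-1)$ so that this isometry is the identity); the ``main obstacle'' you flag at the end is therefore not a further geometric verification but dissolves once the convention is fixed.
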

\begin{proof}
We prove (ii). The degeneration $\CX \to \Delta$ is a Type II degeneration of a K3 surface. The associated Clemens-Schmid exact sequence is well-understood and can be found for example in \cite{Greer}. By taking the invariant part, we obtain the following Clemens-Schmid exact sequence for the degeneration $\CY \to \Delta$:
% which takes the following form:
\[ 0 \to H^0(Y,\BQ) \to H_4(\CY_0,\BQ) \to H^2(\CY_0,\BQ) \to H^2(Y,\BQ) \to 0. \]
In particular, there is no vanishing cohomology and every class in $H^2(Y,\BQ)$ can be lifted. If $\alpha \in E_8(-1) \subset H^2(Y,\BZ)$, the class $s_{\alpha} = s + \alpha - (\alpha^2/2) f$ is a $2$-section of $\pi : Y \to \p^1$ which is the half-fiber of another elliptic fibration on $Y$.
By considering the Horikawa model for the pair $(f,s_{\alpha})$ we see that
there is in fact an integral class $\widehat{s}_{\alpha} \in H^2(\CY, \BZ)$ such that $\widehat{s}_{\alpha}|_{Y} = s_{\alpha}$, and hence an integral class $\widehat{\alpha} \in H^2(\CY, \BZ)$ such that $\widehat{\alpha}|_{Y} = \alpha$ (like $s$, the class $f$ can be easily lifted to an integral class).
Alternatively, the existence of an integral lift of $\alpha$ also follows since the Clemens-Schmid exact sequence for $\CX \to \Delta$ is exact over $\BZ$, see \cite{Friedman}.
Now, by the Clemens-Schmid sequence, the pair of restrictions
\[ (\widehat{\alpha}|_{X_1}, \widehat{\alpha}|_{R}) \in H^2(X_1,\BZ) \times H^2(R,\BZ) \]
is unique up to adding an integral multiple of $(2 f_{X_1}, -F)$.
We can lift $2f$ to classes $\hat{F}_1, \hat{F}_2 \in H^2(\CY,\BZ)$ such that
\[ (\hat{F}_1|_{X_1}, \hat{F}_1|_{R})=(2f_{X_1},0), \quad 
(\hat{F}_2|_{X_1}, \hat{F}_2|_{R})=(0,F). \]
From $\int_Y \alpha \cdot f = 0$ we hence obtain that
\[ 0 = \int_Y \alpha \cdot 2f = \int_{Y} \widehat{\alpha} \cup \widehat{F}_i
= \int_{R} \widehat{\alpha}|_R \cdot \widehat{F}_i|_R +
\int_{X_1} \widehat{\alpha}|_{X_1} \cdot \widehat{F}_i|_{X_1}. \]
Inserting $i=1,2$ we get that
\[ \widehat{\alpha}|_{X_1} \in \BZ f, \quad 
\widehat{\alpha}|_R \in \BZ F \oplus E_8(-1) \subset H^2(R,\BZ). \]
Replace now $\widetilde{\alpha}$ with the unique half-integral class
$\widetilde{\alpha} + \frac{\ell}{2} (-\hat{F}_1 + \hat{F}_2)$ 
for some $\ell \in \BZ$ such that 
\[ \widetilde{\alpha}|_{X_1} = 0, \quad \widetilde{\alpha}|_{R} = b F + g(\alpha) \in \frac{1}{2} \BZ F \oplus E_8(-1) \]
where $g(\alpha) \in E_8(-1)$.
Since $s \cdot \alpha = 0$, we get $\widetilde{\alpha}|_{R} \cdot \widetilde{s}|_{R} = 0$, and therefore $b = -(g(\alpha) \cdot \alpha_0)/2$.
The map
\[ \alpha \mapsto \widetilde{\alpha}|_{R} = g(\alpha) -(g(\alpha) \cdot \alpha_0)/2 F \xrightarrow{j^{-1}} E_8(-1) \]
is then given by $\alpha \mapsto g(\alpha)$ and an integral isometry of the $E_8$-lattice.
After changing our identification of $H^2(R,\BZ) \cong U \oplus E_8(-1)$ we hence can assume $g=\id$. Thus we get as desired
\[ \widetilde{\alpha}|_{R} = \alpha - \frac{1}{2} (\alpha \cdot \alpha_0) F. \qedhere \]
The case (iii) is immediate.
\end{proof}

\subsubsection{Degeneration formula}
\label{subsubsec:applying degeneration}
We now apply the degeneration formula of \cite{Li1, Li2} to the degeneration
$Y \rightsquigarrow R \cup_{E} X_1$ discussed in the last section.
Given cohomology classes $\gamma_1, \ldots, \gamma_n \in H^{\ast}(Y,\BZ)$
we lift the classes in $H^2(Y,\BZ) \oplus H^4(Y,\BZ)$ to the total space of the degeneration as constructed in Lemma~\ref{lemma:liftings}. The unit is lifted to the unit. This yields the following:

Define the isometry $\varphi:H^{\ast}(Y,\BQ) \to H^{\ast}(R,\BQ)$ by
\[ \varphi(1)=1, \quad \varphi(\pt)= \pt, \quad \varphi(s)=2W, \quad \varphi(f) = \frac{1}{2} F, \quad \varphi(\alpha)=\alpha \text{ for } \alpha \in E_8(-1) \subset H^2(Y,\BZ) \]
For every $\gamma \in H^{\ast}(Y,\BZ)$ with lift $\widetilde{\gamma}$ we then have
\[ e^{\frac{1}{2} t_{\alpha_0}}( \varphi(\gamma)) = \widetilde{\gamma}|_{R}. \]
Similarly, define $\psi : H^{\ast}(Y,\BZ) \to H^{\ast}(X_1,\BQ)$ by
\[ \psi(1)=1, \quad \psi(s) = s_{X_1}, \quad \psi(x)=0 \text{ for } x \in \{ f, \pt \} \sqcup E_8(-1). \]
Consider the series of disconnected\footnote{As before, we require that the every connected component $C'$ of the domain curve of the stable map $f:C\to Y$ either satisfies (i) $\pi \circ f|_{C'}$ is non-constant, or (ii) $C'$ has genus $g'$ and $n'$ markings with $2g'-2+n'>0$.} Gromov-Witten invariants of $Y$,
\[ F^{Y,\bullet}_{g,k}( \taut ; \tau_{m_1}(\gamma_1) \cdots \tau_{m_n}(\gamma_n) )
=
\sum_{\substack{ \beta \in H_2(Y,\BZ) \\ \beta \cdot f = k }}
q^{s \cdot \beta} \zeta^{\beta} \left\langle \taut ; \tau_{m_1}(\gamma_1) \cdots \tau_{m_n}(\gamma_n) \right\rangle^{Y,\bullet}_{g, \beta}
\]
The quasi-Jacobi form property and holomorphic anomaly equations
are equivalent for disconnected and connected series,
so we may consider the disconnected case, see \cite[Sec.3.2]{RES}.

The degeneration formula yields:
\begin{multline}
F_{g,k}^{Y, \bullet}( \taut; \tau_{m_1}(\gamma_1) \cdots \tau_{m_n}(\gamma_n) ) \\
=
\sum_{g_1, g_2, \eta} \sum_{ 
\{ 1, \ldots , n \} = A \sqcup B}
F_{g_1,k}^{X_1/E, \bullet}\left( \taut_1; \prod_{j \in A}\tau_{m_j}(
\psi(\gamma_j))\, ; \eta \right)
\widetilde{F}_{g_2,k}^{R/E, \bullet} \left( \taut_2; \prod_{j \in B} \tau_{m_j}(\varphi(\gamma_j)) \, ; \eta^{\vee} \right)
\label{degeneration formula}
\end{multline}
By Corollary~\ref{cor:F tilde} we have
\begin{equation*} \widetilde{F}_{g_2,2k}^{R/E, \bullet} \left( \taut_2; \prod_{j \in B} \tau_{m_j}(\gamma_j) \, ; \eta^{\vee} \right)
\in \Delta(q^2)^{-k} 
\QJac_{\frac{1}{2} k Q_{E_8}}(\Gamma_0(2) \ltimes (2 \BZ^8 \oplus \BZ^8)).
\end{equation*}
By Remark~\ref{rmk:X1 relative} we have
\[ F_{g_1,k}^{X_1/E, \bullet}\left( \taut_1; \prod_{j \in A}\tau_{m_j}(\gamma_j) \, ; \eta \right) \in \QMod(\Gamma_0(2)). \]
Hence together we obtain that:
\begin{equation} \label{dfsdf211}
F_{g,k}^{Y, \bullet}( \taut; \tau_{m_1}(\gamma_1) \cdots \tau_{m_n}(\gamma_n) )
\in \Delta(q^2)^{-k} \QJac_{\frac{1}{2} k Q_{E_8}}(\Gamma_0(2) \ltimes (2 \BZ^8 \oplus \BZ^8)).
\end{equation}

Moreover, the proof of \cite[Prop.21]{RES} shows the compatibility of the holomorphic anomaly equation for $Y$, $(X_1,E)$ and $(R,E)$ with the degeneration formula.
Hence the holomorphic anomaly equation for $(X_1,E)$ and $(R,E)$
given in Remark~\ref{rmk:X1 relative} and Corollary~\ref{cor:F tilde}
imply the holomorphic anomaly equation for $Y$ given in Theorem~\ref{thm:HAE}.

\subsubsection{Use of the monodromy} \label{subsubsec:monodromy}
In the last section we have seen that
\begin{equation} \label{dsfodfos} F_{g,k}( \taut; \tau_{m_1}(\gamma_1) \cdots \tau_{m_n}(\gamma_n) )
\in \Delta(q^2)^{-k} \QJac_{\frac{1}{2} k Q_{E_8}}(\Gamma_0(2) \ltimes (2 \BZ^8 \oplus \BZ^8)). \end{equation}
and satisfies the holomorphic anomaly equation \eqref{HAE}.
We prove here that the series is a quasi-Jacobi form for the larger group $\Gamma_0(2) \ltimes (\BZ^8 \oplus \BZ^8)$.

To do so, observe first that each
$F_{g,k}( \taut; \tau_{m_1}(\gamma_1) \cdots \tau_{m_n}(\gamma_n) )$
%\eqref{dsfodfos} 
satisfies the elliptic holomorphic anomaly equation stated in Corollary~\ref{cor:elliptic anomaly}. Indeed, the proof of Corollary~\ref{cor:elliptic anomaly}, which is given in \cite[Sec.3.3]{RES}, does not require the larger group, because it is only uses relations coming from the Lie algebra of the Jacobi group.

Next we use the monodromy group of the Enriques surface.
For any $\lambda \in E_8(-1) \subset H^2(Y,\BZ)$ recall the operator:%\footnote{Under the usual identification $\mathrm{so}(H^2(Y,\BQ)) = \wedge^2 H^2(Y,\BQ)$, we have $t_{\lambda} = \lambda \wedge f$.}
\[ t_{\lambda}(x) = (f \cdot x) \lambda - (\lambda \cdot x) f. \]
Exponentiating we obtain an element in the monodromy group:
\[ e^{t_{\lambda}} \in O^{+}(H^2(Y,\BZ)) = \Mon(Y). \]
By using deformation invariance of Gromov-Witten invariants we get
\[ \left\langle \taut ; \prod_i \tau_{k_i}(\gamma_i) \right\rangle^{Y}_{g,ks + df + \alpha}
=
\left\langle \taut ; \prod_i \tau_{k_i}( e^{t_{\lambda}} \gamma_i) \right\rangle^{Y}_{g,k s + (d - \alpha \cdot \lambda - \frac{1}{2} k \lambda^2) f + \alpha + k \lambda}.
\]
Thus we have
\begin{align*}
F_{g,k}( \taut; \tau_{m_1}(\gamma_1) \cdots \tau_{m_n}(\gamma_n) )
& =
q^{-\frac{1}{2} k \lambda^2} \zeta^{k \lambda} F_{g,k}( \taut; \tau_{m_1}( e^{t_{\lambda}} \gamma_1) \cdots \tau_{m_n}( e^{t_{\lambda}} \gamma_n) )(x + \lambda \tau, \tau) \\
& = 
q^{-\frac{1}{2} k \lambda^2} \zeta^{k \lambda} 
e^{\xi_{\lambda}} F_{g,k}( \taut; \tau_{m_1}( \gamma_1) \cdots \tau_{m_n}( \gamma_n) )(x + \lambda \tau, \tau),
\end{align*}
where in the last line we used the elliptic anomaly equation.
This precisely says that the non-holomorphic Jacobi form corresponding to
\eqref{dsfodfos} satisfies the transformation law of Jacobi forms under the action of $(x,\tau) \mapsto (x + \lambda \tau, \tau)$.

\subsubsection{Prefactor} \label{subsubsec:prefactor}
In the last section we have seen that
\[ F_{g,k}( \taut; \tau_{m_1}(\gamma_1) \cdots \tau_{m_n}(\gamma_n) )
\in \Delta(q^2)^{-k} \QJac_{\frac{1}{2} k Q_{E_8}}(\Gamma_0(2) \ltimes (\BZ^8 \oplus \BZ^8)). \]
It remains to show that
\[ F_{g,k}( \taut; \tau_{m_1}(\gamma_1) \cdots \tau_{m_n}(\gamma_n) )
\in \left(\frac{ \eta^{8}(q^2) }{\eta^{16}(q)} \right)^k \QJac_{\frac{1}{2} k Q_{E_8}}(\Gamma_0(2) \ltimes (\BZ^8 \oplus \BZ^8)). \]

Let us denote $F = F_{g,k}( \taut; \tau_{m_1}(\gamma_1) \cdots \tau_{m_n}(\gamma_n) )$ and write
\[ F = \frac{1}{\Delta(q^2)^k} H, \quad \text{where} \quad H \in \QJac_{\frac{1}{2} k Q_{E_8}}(\Gamma_0(2) \ltimes (\BZ^8 \oplus \BZ^8)). \]
%for a quasi-Jacobi form $H \in \QJac_{\frac{1}{2} k Q_{E_8}}(\Gamma_0(2) \ltimes (\BZ^8 \oplus \BZ^8))$.
By Remark~\ref{rmk:GW stuff} $F$ has no terms with negative $q$-exponents. We find that $H(p,q) = O(q^{2k})$. 

Let $f(q) = \eta^{16}(q^2) / \eta^{8}(q)$. By Lemma~\ref{lemma:function vanishing at cusp} it follows that
\[ \widetilde{H} := \frac{H}{f^{2k}} \in \QJac_{\frac{1}{2} k Q_{E_8}}(\Gamma_0(2) \ltimes (\BZ^8 \oplus \BZ^8)). \]
We conclude that
\[ F = \frac{f(q)^{2k}}{\Delta(q^2)^k} \widetilde{H} 
%= \frac{\eta(q^2)^{32 k}}{\eta(q)^{16 k} \eta(q^2)^{24 k}} \widetilde{H} 
= \left( \frac{\eta^8(q^2)}{\eta(q)^{16}} \right)^{k} \widetilde{H}. \]
This completes the proof of Theorem~\ref{thm:HAE}. \qed

\section{Donaldson-Thomas theory of the Enriques Calabi-Yau threefold}
\label{sec:DT of Enriques CalabiYau}
\subsection{Definition}
Let $X$ be a K3 surface and let $\tau : X \to X$ be a fixed-point free involution.
%let $Y = S / \langle \sigma \rangle$ the associated Enriques surface.
Let $E$ be an elliptic curve and consider the involution
\[ (\tau, -1) : X \times E \to X \times E, \quad (x,e) \mapsto (\tau(x), -e). \]
The Enriques Calabi-Yau threefold is the quotient:
\[ Q = (X \times E)/G, \quad G = \langle (\tau,-1) \rangle. \]
By projecting to the second factor $Q$ has an isotrivial K3 fibration with 4 double Enriques fibers:
\[ p : Q \to \p^1 = E/\langle -1 \rangle. \]
By projecting to the first factor we have an isotrivial elliptic fibration
\[ \pi_Y : Q \to X/\langle \tau \rangle = Y. \]
The fibration $\pi_Y$ has $4$ sections 
index by the $2$-torsion points $a \in E[2]$,
\[ \iota_a : Y_a \hookrightarrow Q, \quad Y_a := (X \times a)/\BZ_2. \]
Since we work modulo torsion, the pushforward of classes
\[ \iota_{a \ast} : H_{\ast}(Y,\BZ) \to H^{\ast}(Q,\BZ) \] 
is independent of $a$ and we often drop $a$ from the notation.

\subsection{Overview} \label{subsec:overview}
In this section we study the invariants of the Enriques Calabi-Yau threefold,
in particular, the Gromov-Witten and Pandharipande-Thomas invariants and the correspondence between them (Section~\ref{subsec:correspondence}),
and the generalized Donaldson-Thomas invariants of Joyce and Song (Section~\ref{subsec:DT fiber}).
We consider only fiber curve classes and fiber sheaves
with respect to the fibration $p : Q \to \p^1$.
For the Donaldson-Thomas invariants
we follow closely the work of Toda \cite{Toda}
on the local K3 surface $K3 \times \BC$.
First, in Theorem~\ref{prop:DT dependence}
we establish that the DT invariants are unchanged under the derived monodromy group and hence only depend on the square, divisibility and type of the class.
Then in Theorem~\ref{thm:Todas formula} we prove Toda's formula relating Pandharipande-Thomas invariants and $2$-dimensional DT invariants. Applications are discussed in Section~\ref{subsec:Consequences of Toda} of which the most important is that the Gromov-Witten invariant depend on the curve class only through the square and the divisibility (Proposition~\ref{prop:GW dependence}).

%which he compactifies to $K3 \times \p^1$.
A key observation throughout the section is that Toda's methods from \cite{Toda} carry over here almost literally 
by applying them to $K3 \times E$ instead and then taking every step $G$-equivariantly.
This is based on the priciple:
\vspace{3pt}
\begin{center} Geometry of $Q$\quad $=$\quad $G$-equivariant Geometry of $X \times E$.\\
\end{center}
\vspace{3pt}
For example, one has the equivalence of the (derived) category of coherent sheaves on $Q$
with the (derived) category of $G$-equivariant sheaves on $X \times E$,
\[ \Coh(Q) \cong \Coh_G(X \times E), \quad D^b(\Coh(Q)) \cong D^b_G(\Coh(X \times E)). \]
Under this equivalence, Gieseker stability of sheaves on $Q$ correspond to Giesker stability of sheaves on $X \times E$ with respect to the pullback of the polarization. Moreover, $G$-invariant Bridgeland stability conditions (on subcategories of) $D^b(X \times E)$ induce stability conditions on (corresponding subcategories of) $D^b(Q)$ \cite{MMS}.
$G$-equvariant autoequivalences on $X$ induce $G$-equivariant auto-equivalences of $X \times E$, which then induce auto-equivalences of $Q$ \cite{Ploog}.
A $G$-invariant semi-orthogonal decomposition induces a semi-orthogonal decomposition on the equivariant category \cite{Elagin}.
Moduli stacks of semi-stable sheaves on $Q$ are the fixed loci of the induced $G$-action on moduli stacks of semistable sheaves on $X \times E$ \cite{BO1}, etc.
Hence at several steps below we will just refer to Toda's work, instead of rewriting every detail. For an introduction to equivariant categories and further references we refer to \cite{BO2}.

\subsection{Gromov-Witten theory}
For $\beta \in H_2(Y,\BZ)$ consider the Gromov-Witten invariant
\[ N^Q_{g,\beta} = \int_{[ \Mbar_{g}(Q,\iota_{\ast} \beta) ]^{\vir}} 1. \]
By \cite{MP} one has the following relationship between the Gromov-Witten invariants of $Q$ and the invariants \eqref{Ngbeta} of the Enriques $Y$, which we denote here for clarity by $N_{g,\beta}^Y$.
%and that of the Enriques surface $Y$:

\begin{prop}[{\cite[Lemma 2]{MP}}] \label{prop:GW QvsY}
 $N_{g,\beta}^Q = 4 N_{g,\beta}^Y$. \end{prop}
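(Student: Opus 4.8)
The plan is to analyze $\Mbar_g(Q,\iota_\ast\beta)$ through the K3 fibration $p\colon Q\to\p^1$ and to reduce the count to the local Calabi--Yau threefold $K_Y$. Since $p_\ast\iota_\ast\beta=0$, any stable map $f\colon C\to Q$ in class $\iota_\ast\beta$ with $C$ connected is contracted by $p$, so its image lies in a single fibre $p^{-1}(t)$. This organizes $\Mbar_g(Q,\iota_\ast\beta)$ according to $t\in\p^1$, separating the contributions of the smooth K3 fibres from those of the four double Enriques fibres $Y_a$, $a\in E[2]$.

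First I would show that the smooth K3 fibres contribute nothing. A smooth fibre $X_t=p^{-1}(t)\cong X$ has trivial normal bundle $N_{X_t/Q}\cong p^\ast T_t\p^1|_{X_t}=\CO_{X_t}$, so a formal neighbourhood of $X_t$ in $Q$ agrees with that of the zero section of $X\times\BC$. The contribution of maps landing in such a fibre is therefore an ordinary (non-reduced) Gromov--Witten invariant of a K3 surface, which vanishes: the holomorphic symplectic form on $X_t$ induces a cosection of the obstruction sheaf (equivalently, the trivial normal direction splits off a trivial factor). Hence only the four Enriques fibres can contribute. Using deformation invariance to reduce to a generic Enriques surface $Y$ (where $\Pic(X)$ is the invariant lattice and the fibre loci are cleanly separated), I may then write the relevant part of $\Mbar_g(Q,\iota_\ast\beta)$ as four disjoint copies of $\Mbar_g(Y,\beta)$, one over each special point of $\p^1$.

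Next I would identify the contribution of each Enriques fibre with $N^Y_{g,\beta}$. As $Y_a$ is the reduced part of a double fibre of $p$, the line bundle $\CO_{Y_a}(2Y_a)=p^\ast\CO_{\p^1}(\pt)|_{Y_a}$ is trivial, so $N_{Y_a/Q}$ is $2$-torsion; adjunction in the Calabi--Yau threefold $Q$ gives $N_{Y_a/Q}=K_{Y_a}\otimes (K_Q|_{Y_a})^{-1}=K_{Y_a}=\omega_Y=K_Y$. Thus a formal neighbourhood of $Y_a$ in $Q$ is that of the zero section of the total space of $K_Y$, the stable maps of class $\iota_\ast\beta$ cannot move into the neighbouring (smooth K3) fibres, and the restriction of the $Q$-obstruction theory to the component $\Mbar_g(Y,\beta)$ splits as the $Y$-obstruction theory together with the normal contribution, namely the obstruction bundle with fibre $H^1(C,f^\ast K_Y)$ at $[f\colon C\to Y]$. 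The contribution of $Y_a$ is therefore the Gromov--Witten invariant of the local Calabi--Yau $K_Y$, which is exactly $N^Y_{g,\beta}=\int_{[\Mbar_g(Y,\beta)]^{\vir}}(-1)^{g-1}\lambda_{g-1}$ as recalled in the introduction. Summing over the four fibres yields $N^Q_{g,\beta}=4N^Y_{g,\beta}$.

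The main obstacle is the virtual-class bookkeeping: making precise that $[\Mbar_g(Q,\iota_\ast\beta)]^{\vir}$ localizes as the sum of the contributions of the fibre loci, and that each contribution is computed by the local invariant of the corresponding normal bundle --- trivial for the K3 fibres (giving $0$) and $K_Y$ for the Enriques fibres (giving the Hodge integral). This is a standard but delicate comparison of obstruction theories; the cleanest route, in the spirit of the $G$-equivariant principle of Section~\ref{subsec:overview}, is to work $G$-equivariantly on $X\times E$, where the normal directions and the symplectic form on the K3 fibres are transparent, and to invoke deformation invariance to pass from a generic Enriques surface back to an arbitrary one.
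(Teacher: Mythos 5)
Your strategy --- localize the count along the fibres of $p:Q\to\p^1$, discard the smooth K3 fibres by a cosection, and read off the local invariant of $K_Y$ from each of the four Enriques fibres --- is genuinely different from the paper's. The paper instead degenerates $Q\rightsquigarrow T\cup_X T$ with $T=(X\times\p^1)/\langle(\tau,\mathrm{inv}_{\p^1})\rangle\cong\p(\CO_Y\oplus\omega_Y)$, applies the degeneration formula (the relative terms over the K3 divisor are killed by a cosection on $X\times\p^1$, where the holomorphic $2$-form genuinely exists), and then computes $N^T_{g,\beta}=2N^Y_{g,\beta}$ by $\BC^{\ast}$-localization on the $\p^1$-bundle $T\to Y$. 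That detour exists precisely to avoid the step where your argument has a gap.

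The gap is the claimed separation of $\Mbar_g(Q,\iota_\ast\beta)$ into four Enriques-fibre components plus a smooth-fibre part that can be discarded, and it fails whenever $\beta$ has even divisibility. Take $\beta=2\delta$ with $\delta$ effective. Then $\pi^\ast\delta$ is algebraic and effective on the covering K3, so every smooth fibre $X_t$ carries stable maps whose pushforward class is $\iota_\ast(\pi_\ast\pi^\ast\delta)=\iota_\ast(2\delta)=\iota_\ast\beta$; these sweep out a positive-dimensional locus over the complement of the four branch points whose closure meets the Enriques fibres (as $X_t$ degenerates to the double fibre $2Y_a$, the limit map factors through $Y_a$ in class $\iota_{a\ast}\beta$). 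So the moduli space is not a disjoint union, the assertion that stable maps "cannot move into the neighbouring fibres" is false for such $\beta$, and neither the local-model computation over $Y_a$ nor the single-fibre cosection vanishing applies to a well-defined open-and-closed piece of the virtual class. There is also no global cosection to fall back on: the holomorphic $2$-form on the K3 fibres is anti-invariant under $(\tau,-1)$ and does not descend to $Q$ (indeed $h^{2,0}(Q)=0$), so one would need a meromorphic cosection and a Kiem--Li analysis of its degeneracy locus along exactly the four special fibres --- which is the substantive issue, not bookkeeping. For $\beta$ of odd divisibility your argument does essentially go through (for generic $Y$ no curves in smooth fibres exist in class $\iota_\ast\beta$, the moduli space really is four disjoint copies of $\Mbar_g(Y,\beta)$, and the normal bundle is $\omega_Y$ by adjunction), but the proposition is needed for all $\beta$.
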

\begin{proof}
We give a sketch. Let $\mathrm{inv}_{\p^1}$ be an involution on $\p^1$ and consider the threefold
\[ T = (X \times \p^1)/ \langle (\tau, \mathrm{inv}_{\p^1}) \rangle. \]
The projection $T \to Y$ is isomorphic to the $\p^1$-bundle $\p(\CO_Y \oplus \omega_Y) \to Y$. There exists\footnote{The quotient map $E \to E/\langle -1 \rangle = \p^1$ has $4$ branch points. Degenerate $\p^1$ to the union of two $\p^1$'s meeting at a point,
where two of the branch points specialize to each component. By taking the double cover of the total space of this degeneration branched along this locus and the corresponding covering involution we obtain a degeneration of the pair $(E,-1)$ to
$(\p^1 \cup_{x,y} \p^1, \mathrm{inv})$, where $\mathrm{inv}$ acts by $\mathrm{inv}_{\p^1}$ on each component and interchanges the two gluing points $x,y$.
The degeneration $Q \rightsquigarrow T \cup_X T$ is constructed from this by taking the product with $X$ and taking the quotient with respect to the diagonal action.}
 a degeneration $Q \rightsquigarrow T \cup_{X} T$,
which gives (with the obvious notation)
\[
N_{g,\beta}^Q = 2 N_{g,\beta}^{T/X} = 2 N_{g,\beta}^{T} = 4 N_{g,\beta}^{Y}.
\]
where the second equality follows by the degeneration formula for
the normal cone degeneration $T \rightsquigarrow T \cup_X (X \times \p^1)$
and since the Gromov-Witten invariants of $X \times \p^1$ vanish by a cosection argument, and the third equality follows by a localization argument with respect to the fiber $\BC^{\ast}$-action of $T \to Y$.
\end{proof}

Let also $N^{Q,\prime}_{g,\beta}$ denote the Gromov-Witten invariants of $Q$ with disconnected domain, but with no collapsed connected components \cite{PP}.
The connected and disconnected Gromov-Witten invariants can be related by the
transformation
\[ \exp\left( \sum_{g \geq 0} \sum_{\beta>0} N_{g,\beta} u^{2g-2} q^{\beta} \right) = \sum_{g, \beta} N^{Q, \prime}_{g,\beta} u^{2g-2}  q^{\beta}. \]

\subsection{GW/PT correspondence} \label{subsec:correspondence}
%Let $[E] \in H_2(Q,\BZ)$ denote the class of a fiber of $\pi_Y$.
%Since $\pi_{\ast} : H_2(Q,\BZ) \to H_2(Y,\BZ)$ is split by the inclusion $\iota_{\ast} : H_2(Y,\BZ) \to H_2(Q,\BZ)$ we have a natural splitting
%\[ H_2(Q,\BZ) = H_2(Y,\BZ) \oplus \BZ [E]. \]
%where $[E]$ is the class of a fiber of $\pi_Y$.
%For $\beta \in H_2(Y,\BZ)$ 
Consider the Pandharipande-Thomas invariant \cite{PT}
\[ \PT_{n,\beta} := \int_{[ P_{n,\iota_{\ast} \beta }(Q) ]^{\vir} } 1, \]
where we let $P_{n,\beta}(Q)$ denote the moduli space of stable pairs $(F,s)$ on $Q$
with $\chi(F)=n$ and $\ch_2(F) = \beta$.
In \cite[Sec.7.6]{PP}
the GW/PT correspondence \cite{PT} was proven for the Enriques Calabi-Yau threefold.
In particular, in the special case of fiber classes we obtain:
\begin{thm}[Pandharipande-Pixton, \cite{PP}]
\label{thm:GW correspondence}
For any $\beta \in H_2(Y,\BZ)$ the series $\sum_{n} \PT_{n,\beta} (-p)^n$ is the expansion of a rational function, and under the variable change $p=e^{z}$ we have
\[
\sum_{n} \PT_{n,\beta} (-p)^n = \sum_{g} N^{Q,\prime}_{g,\beta} (-1)^{g-1} z^{2g-2}.
\]
\end{thm}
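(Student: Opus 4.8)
The plan is to deduce this statement directly from the full Gromov--Witten/stable-pairs correspondence for the threefold $Q$, which was established by Pandharipande and Pixton in \cite[Sec.7.6]{PP}, rather than to reprove the underlying rationality or matching results. The essential point to recall is that the GW/PT correspondence of \cite{PT} is formulated and proven \emph{one curve class at a time}: for each fixed class $\beta' \in H_2(Q,\BZ)$ the stable-pairs series $\sum_n \PT_{n,\beta'}(-p)^n$ is the Laurent expansion of a rational function of $p$ which is invariant under $p \mapsto p^{-1}$, and after the substitution $p = e^z$ it agrees with the disconnected, non-collapsed Gromov--Witten series $\sum_g N^{Q,\prime}_{g,\beta'}(-1)^{g-1}z^{2g-2}$. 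Since $Q$ is a compact Calabi--Yau threefold, this is the absolute (numerical) form of the correspondence, with no descendent or primary insertions to track.

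First I would specialize this class-by-class statement to the curve classes of the form $\beta' = \iota_{\ast}\beta$ with $\beta \in H_2(Y,\BZ)$, that is, to the fiber classes of $p : Q \to \p^1$. Because the correspondence already holds for every class of $Q$, restricting attention to this subset requires no additional argument beyond observing that $\iota_{\ast}\beta$ is a genuine curve class on $Q$ and that, under our modulo-torsion conventions, it is independent of the chosen section $\iota_a$. This immediately gives both the rationality of $\sum_n \PT_{n,\beta}(-p)^n$ and the stated identity with the right-hand side, once the latter is interpreted through the invariants $N^{Q,\prime}_{g,\beta}$ with disconnected, non-collapsed domains introduced above.

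The remaining point is a bookkeeping check reconciling the variable conventions here with the standard normalization of \cite{PT,PP}. Writing $z = iu$, one has $-p = -e^{z} = -e^{iu}$ and $(-1)^{g-1}z^{2g-2} = u^{2g-2}$, so that $\sum_n \PT_{n,\beta}(-p)^n = \sum_n \PT_{n,\beta}q^n$ with $q = -e^{iu}$ and the right-hand side becomes $\sum_g N^{Q,\prime}_{g,\beta}u^{2g-2}$; this is precisely the change of variables appearing in the usual statement of the correspondence. The hard analytic content --- establishing rationality of the stable-pairs series and the primary matching for the non-toric threefold $Q$ --- is not encountered in this specialization at all, since it has already been carried out in \cite{PP}; the only care needed is to ensure that the non-collapsed convention for $N^{Q,\prime}_{g,\beta}$ matches the one under which \cite{PP} proves the correspondence, which it does.
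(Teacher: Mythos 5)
Your proposal is correct and coincides with what the paper does: the theorem is obtained purely by citing the GW/PT correspondence proven for $Q$ in \cite[Sec.7.6]{PP} and specializing it to the fiber classes $\iota_{\ast}\beta$, with the variable change $z=iu$, $q=-e^{iu}$ reconciling the conventions exactly as you describe. No further argument is given or needed in the paper.
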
 

\subsection{Donaldson-Thomas invariants of fiber sheaves}
\label{subsec:DT fiber}
Consider a fixed class
\[ v = (r,\beta,n) \in H^{\ast}(Y) = H^0(Y,\BZ) \oplus H^2(Y,\BZ) \oplus H^4(Y,\BZ). \]
For any $v=(r,\beta,n)$ and $v'=(r',\beta',n')$ we define the Euler pairing\footnote{Previously we defined the Mukai pairing $(v,v') = \beta \beta' - r n' - r'n$. Here we work with the Chern character of a sheaf and use therefore a different pairing. The both pairings are related by 
$(v, v' \td_Y) = v \cdot v'$.}
\[
v \cdot v' := \int_{Y} \beta \beta' - r r' - r n' - r' n. 
\]
For any objects $E,F \in D^b(Y)$ we have
\[
\ch(F) \cdot \ch(G) = -\chi(F,G).
\]

Consider the cone of effective classes\footnote{The class $\ch(F)$ lies in $H^{\ast}(Y,\BZ)$ since the intersection form on $H^2(Y,\BZ)$ is even.}
\[ C(\Coh\, Y) := \mathrm{Im}( \ch : \Coh(Y) \to H^{\ast}(Y,\BZ) ). \]

\begin{defn}
Let $\omega \in \Pic(Q)$ be ample and let $v \in H^{\ast}(Y,\BZ)$.
Define the invariant $\DT_{\omega}(v) \in \BQ$ as follows:
\begin{itemize}
\item If $v \in C(\Coh Y)$, then let
$\DT_{\omega}(v)$ be the generalized Donaldson-Thomas invariant of $Q$ defined by Joyce and Song \cite{JS}, which counts $\omega$-Gieseker semi-stable sheaves $\CF$ on $Q$ with Chern character $\ch(\CF) = \iota_{\ast} v \in H^{\ast}(Q,\BQ)$ for the ample class $\omega$.
\item If $-v \in C(\Coh Y)$, then set $\DT_{\omega}(v) := \DT_{\omega}(-v)$.
\item If $\pm v \notin C(\Coh Y)$, then set $\DT_{\omega}(v) := 0$.
\end{itemize}
\end{defn}

%\begin{prop}
%The invariant $\DT_{\omega}(v)$ does not depend on the choice of $\omega$. We write $\DT(v) = \DT_{\omega}(v)$.
%\end{prop}
%\begin{proof}
%This is parallel to \cite[Proof of Thm. 4.21]{Toda}.
%For any two choices $\omega, \omega'$ the invariants $\DT_{\omega}(v)$, $\DT_{\omega'}(v)$ are related by Joyce's wall-crossing formula \cite{Joyce4}. Let $\chi(v,w) = \int_Q v^{\vee} w \cdot \td_{Q}$ be the Euler pairing on $H^{\ast}(Q,\BQ)$. Then because of the vanishing $\chi( \iota_{\ast} v, \iota_{\ast} v') = 0$, the wall-crossing terms do not contribute, and we have $\DT_{\omega}(v) = \DT_{\omega'}(v)$.
%\end{proof}

For a non-zero vector $v$ in a lattice $L$, we 
write $\mathrm{div}(v)$ (or $\mathrm{div}_L(v)$ if we want to emphasize the lattice) for the largest positive integer such that $v/\mathrm{div}(v) \in L$.
If $v=0$ we also set $\mathrm{div}(v)=0$.
We analyse the dependence of $\DT_{\omega}(v)$ on the polarization $\omega$ and the Chern character $v$:
%By using the derived monodromy group of $Y$ we obtain the following 
%dependence of $\DT_{\omega}(v)$ on the vector $v$.

\begin{thm} \label{prop:DT dependence}
Let $v=(r,\beta,n) \in H^{\ast}(Y,\BZ)$.
\begin{enumerate}
\item[(i)] The invariant $\DT_{\omega}(v)$ does not depend on the choice of $\omega$. We write $\DT(v) = \DT_{\omega}(v)$.
\item[(ii)] The invariant $\DT(v)$ depends upon $v$ only through:
\begin{itemize}
\item the square $d := v \cdot v = \beta^2 - r^2 - 2rn$,
\item the divisibility\footnote{Please note that we multiply $n$ by $2$.} $m := m(v) := \mathrm{div}_{H^{\ast}(X,\BZ)^G}(\pi^{\ast} v) = \mathrm{gcd}(r,\mathrm{div}(\beta),2n)$.
%$\beta_i$ are the coefficients of $\beta$ in an integral basis of $H^2(Y,\BZ)$,
\item the type $t:=t(v) \in \{ \text{odd}, \text{even}\}$ of
$\sqrt{\td_X} \cdot \pi^{\ast}(v)/\mathrm{div}(\pi^{\ast} v)$
in $H^{\ast}(X,\BZ)^G$, given by
\[
t = \begin{cases}
\text{ even } & \text{ if } \frac{r}{m},\frac{2n+r}{m} \text{ are both even } \\
\text{ odd } & \text{ otherwise }.
\end{cases}
\]
%the type $t := t(v) := r\ (\text{mod }2)$, often denoted by $t \in \{ \text{odd}, \text{even}\}$.
\end{itemize}
We write $\DT^{t}_{d,m} := \DT(v)$.
\end{enumerate}
\end{thm}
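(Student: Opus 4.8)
The plan is to follow Toda's analysis of the local K3 surface $K3\times\BC$ in \cite{Toda}, carried out $G$-equivariantly on $X\times E$ and descended to $Q$ through the principle $\Coh(Q)\cong\Coh_G(X\times E)$ recorded in Section~\ref{subsec:overview}. For part (i) I would join two ample classes $\omega,\omega'$ on $Q$ by a path and run the wall-crossing formula of Joyce--Song \cite{JS} for the resulting family of $G$-invariant Gieseker stabilities. Identifying each moduli stack of semistable sheaves on $Q$ with the $G$-fixed locus of the corresponding stack on $X\times E$, Toda's argument that the generalized Donaldson--Thomas invariant is unchanged across every such wall carries over, yielding the independence of $\omega$.

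For part (ii) the principal step is to show that $\DT(v)$ depends only on the orbit of $\pi^{\ast}v\in H^{\ast}(X,\BZ)^G$ under $\DMon(Y)=O^{+}(H^{\ast}(X,\BZ)^G)$ (Corollary~\ref{cor:Dmon}). By Remark~\ref{rmk:distinguished component} this group is generated by lifted parallel transport operators and by the cohomological actions $\widetilde{\Phi}^H$ of $G$-equivariant derived autoequivalences $\widetilde{\Phi}:D^b(X_1)\to D^b(X_2)$ that preserve the distinguished stability components. Parallel transport operators preserve $\DT(v)$ by deformation invariance of the Joyce--Song invariants. Any such $\widetilde{\Phi}$ induces a $G$-equivariant autoequivalence of $D^b(X\times E)$ and hence an autoequivalence $\Phi_Q$ of $D^b(Q)$ \cite{Ploog}; it takes $\omega$-semistable fiber sheaves to objects semistable for the transformed stability condition and acts on fiber classes through $\widetilde{\Phi}^H|_{H^{\ast}(X,\BZ)^G}$. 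Since $\DT$ is independent of the chosen stability --- by part (i) together with the identification of the Gieseker and Bridgeland counting invariants as in \cite{Toda} --- it follows that $\DT(v)=\DT(gv)$ for every $g\in\DMon(Y)$.

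The remaining step is lattice-theoretic. Under the Mukai pairing one has $H^{\ast}(X,\BZ)^G\cong U\oplus U(2)\oplus E_8(-2)=M$, with the hyperbolic summand $U$ coming from $H^0\oplus H^4$. The Mukai vector $\mathbf{w}=\sqrt{\td_X}\cdot\pi^{\ast}v=(r,\pi^{\ast}\beta,2n+r)$ has divisibility $m$ and Mukai square $(\pi^{\ast}\beta)^2-2r(2n+r)=2(\beta^2-r^2-2rn)=2d$. Writing $\mathbf{w}=m\mathbf{w}_0$ with $\mathbf{w}_0$ primitive gives $\mathbf{w}_0\cdot\mathbf{w}_0=2d/m^2$, while the type $t$ is read off from the $H^0\oplus H^4$-coordinates $(r,2n+r)$ of $\mathbf{w}_0$. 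By Corollary~\ref{cor:orbit of primitive vector in M} the $O^{+}(M)$-orbit of $\mathbf{w}_0$, hence of $\mathbf{w}$ and of $v$, is determined by the triple $(d,m,t)$. Together with the previous step this shows $\DT(v)=\DT^t_{d,m}$.

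I expect the main obstacle to be the monodromy-invariance step, namely verifying that the descended autoequivalence $\Phi_Q$ genuinely preserves the generalized Donaldson--Thomas invariants. This relies on the compatibility of autoequivalences with Bridgeland stability and on the stability-independence of part (i), which together form the technical heart imported from \cite{Toda}.
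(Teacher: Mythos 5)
Your proposal follows essentially the same route as the paper: wall-crossing for part (i), and invariance of $\DT$ under the derived monodromy group combined with the lattice classification of orbits (Corollaries~\ref{cor:Dmon} and~\ref{cor:orbit of primitive vector in M}) for part (ii), with the key technical input being exactly the compatibility of $G$-equivariant autoequivalences with the distinguished Bridgeland component as in Toda. The one point you should make explicit in (i) is that the wall-crossing correction terms vanish because the Euler pairing $\chi(\iota_{\ast}v,\iota_{\ast}v')$ between fiber classes on the Calabi--Yau threefold $Q$ is identically zero; this is the precise mechanism by which Toda's argument ``carries over.''
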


\begin{proof}
(i) This is parallel to \cite[Proof of Thm. 4.21]{Toda}.
For any two choices $\omega, \omega'$ the invariants $\DT_{\omega}(v)$, $\DT_{\omega'}(v)$ are related by Joyce's wall-crossing formula \cite{Joyce4}. Let $\chi(v,w) = \int_Q v^{\vee} w \cdot \td_{Q}$ be the Euler pairing on $H^{\ast}(Q,\BQ)$. Then because of the vanishing $\chi( \iota_{\ast} v, \iota_{\ast} v') = 0$, the wall-crossing terms do not contribute, and we have $\DT_{\omega}(v) = \DT_{\omega'}(v)$.

(ii) We use the notation of Section~\ref{subsec:derived monodromy group}.
Let $\Lambda_Y \subset H^{\ast}(Y,\BQ)$ be the lattice generated by all Mukai vectors of objects in $D^b(Y)$. Consider the isomorphism
\[ \varphi : H^{\ast}(Y,\BZ) \xrightarrow{\cdot \sqrt{\td_Y}} \Lambda_Y \xrightarrow{\pi^{\ast}} \pi^{\ast} \Lambda_Y \subset H^{\ast}(X,\BZ)^G. \]
Precisely, for $v=(r,\beta,n) \in H^{\ast}(Y,\BZ)$ we have
\[ \varphi(v) = \pi^{\ast}(v \sqrt{\td_Y}) = \pi^{\ast}(v) \sqrt{\td}_X = (r, \pi^{\ast} \beta, 2n+r) \in \pi^{\ast} \Lambda_Y \subset H^{\ast}(X,\BZ)^G. \]
For $w \in \pi^{\ast} \Lambda_Y$ define $J(w) := \DT(\varphi^{-1}(w))$.

Any derived autoequivalence $\Phi : D^b(Y) \to D^b(Y)$ of the Enriques surface
lifts to a $G$-equivariant derived autoequivalence $\widetilde{\Phi} : D^b(X) \to D^b(X)$ which induces a $G$-equivariant action 
$\widetilde{\Phi}^H : H^{\ast}(X,\BZ) \to H^{\ast}(X,\BZ)$.
By restriction we obtain
\[ \widetilde{\Phi}^H|_{H^{\ast}(X,\BZ)^G} : H^{\ast}(X,\BZ)^G \to H^{\ast}(X,\BZ)^G \]
which sends $\pi^{\ast}(\Lambda_Y)$ to itself.
Assume that $\widetilde{\Phi}$ preserves the distinguished component of the stability manifold $\Stab(X)$ constructed by Bridgeland in \cite{Bridgeland} and that $Y$ is generic. Then by Proposition~\ref{prop:automorphy property} below we have
\[
J( \widetilde{\Phi}^H w ) = J(w).
\]

Any deformation between two Enriques surfaces induces a deformation of the covering K3 surfaces, and hence a deformation of the associated Enriques Calabi-Yau threefolds.
By the deformation invariance of generalized Donaldson-Thomas invariants, it follows that $\DT(v) = \DT(gv)$ and $J(w) = J(\tilde{g} w)$ for any parallel-transport operator $g$ and lifted parallel transport operator $\tilde{g}$ between two Enriques surfaces respectively.

By Remark~\ref{rmk:distinguished component} we conclude the
basic invariance
\[ \forall\, g \in \widetilde{\DMon}(Y): \quad J(w) = J(gw). \]

With respect to the Mukai lattice we have
\[ H^{\ast}(X,\BZ)^G \cong U \oplus U(2) \oplus E_8(-2), \]
and by Corollary~\ref{cor:Dmon}
the image of $\widetilde{\DMon}(Y)$ under the restriction to the invariant part is
$O^{+}(H^{\ast}(X,\BZ)^G)$.
By Corollary~\ref{cor:orbit of primitive vector in M}
we hence conclude that $J(w)$ only depends on the divisibility
of $w$ in the lattice $H^{\ast}(X,\BZ)^G$,
%by which we mean the largest integer $\mathrm{div}(w)$ such that $w/\mathrm{div}(w) \in H^{\ast}(X,\BZ)^G$,
the square $(w,w)$ with respect to the Mukai lattice,
and the type of $w/\mathrm{div}(w)$.
With $w=\varphi(v) = (r, \pi^{\ast} \beta, 2n+r)$ for $v=(r,\beta,n)$ we have (see the beginning of Section~\ref{subsec:orbit of vectors} for the type):
\begin{gather*} \mathrm{div}(w) = \mathrm{gcd}(r, \mathrm{div}(\beta), 2n+r) = 
\mathrm{gcd}(r, \mathrm{div}(\beta), 2n) = \mathrm{div}_{H^{\ast}(X,\BZ)^G}(\pi^{\ast} v) = m \\
(w,w) = (\pi^{\ast} \beta)^2 - 2 r (2n+r) = 2 \Big( \beta \cdot \beta - r(2n+r) \Big) = 2 v \cdot v,
\\
\text{Type of } \frac{w}{\mathrm{div}(w)} \text{ in } H^{\ast}(X,\BZ)^G =
\begin{cases}
\text{ even } & \text{ if } \frac{r}{m},\frac{2n+r}{m} \text{ both even } \\
\text{ odd } & \text{ otherwise }.
\end{cases} \qedhere
\end{gather*}
\end{proof}

The following result was used in the proof above:
\begin{prop} \label{prop:automorphy property}
Let $X \to Y$ be the covering K3 surface of a generic Enriques surface $Y$, and let $\widetilde{\Phi} : D^b(X) \to D^b(X)$ be a $G$-equivariant Fourier-Mukai transform
which preserves the distinguished component $\Stab^{\circ}(X)$. Then
for any $w \in \pi^{\ast} \Lambda_Y$ we have
\[ J( \widetilde{\Phi}^H w ) = J( w ). \]
\end{prop}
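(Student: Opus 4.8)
The plan is to follow Toda's strategy for the local K3 surface \cite[Thm.4.21]{Toda}, carried out $G$-equivariantly via the principle of Section~\ref{subsec:overview} that the geometry of $Q$ is the $G$-equivariant geometry of $X \times E$. The central mechanism is that the generalized Donaldson--Thomas invariant, regarded as a function on the space of Bridgeland stability conditions, is invariant under the induced action of an autoequivalence and is moreover \emph{locally constant}, because the relevant wall-crossing terms vanish on the Calabi--Yau threefold $Q$.

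First I would promote $\widetilde{\Phi}$ to an autoequivalence of $Q$. Since $\widetilde{\Phi}$ commutes with the covering involution, the product $\widetilde{\Phi} \boxtimes \mathrm{id}_E$ is a $G$-equivariant autoequivalence of $D^b(X \times E)$, which under the equivalence $D^b(Q) \cong D^b_G(X \times E)$ descends to an autoequivalence $\Phi_Q$ of $D^b(Q)$ (see \cite{Ploog}). As it is the identity in the $E$-direction, $\Phi_Q$ preserves fiber-supportedness over $\p^1$, and it acts on the numerical classes of fiber sheaves via $\widetilde{\Phi}^H$ on $H^{\ast}(X,\BZ)^G$; precisely $\Phi_Q^H \circ \varphi^{-1} = \varphi^{-1} \circ (\widetilde{\Phi}^H|_{H^{\ast}(X,\BZ)^G})$, which in particular maps $\pi^{\ast}\Lambda_Y$ to itself.

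Next, following Toda, I would compare the Gieseker and the Bridgeland counts. A $G$-invariant stability condition $\sigma \in \Stab^{\circ}(X)$ induces, by \cite{MMS}, a stability condition $\overline{\sigma}$ on $D^b(Q)$ adapted to the fiber sheaves. In the large-volume limit $\overline{\sigma}$-semistability specializes to $\omega$-Gieseker semistability, and the two generalized DT invariants agree: the wall-crossing separating them is governed by Joyce's formula \cite{Joyce4}, whose terms are weighted by the Euler pairing $\chi(\iota_{\ast} v, \iota_{\ast} v')$, which vanishes identically for fiber classes on the Calabi--Yau threefold $Q$ (exactly as in the proof of part~(i) of Theorem~\ref{prop:DT dependence}). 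Hence $\DT(\varphi^{-1}(w)) = \DT_{\overline{\sigma}}(\varphi^{-1}(w))$, the right-hand side being the Bridgeland-stable count. The autoequivalence $\Phi_Q$ then sends $\overline{\sigma}$-semistable objects of class $v$ bijectively to $\Phi_Q(\overline{\sigma})$-semistable objects of class $\Phi_Q^H v$ and preserves the Behrend-function weights, since it respects the Calabi--Yau $\Ext$-pairing; this gives an isomorphism of the relevant moduli stacks and hence $\DT_{\overline{\sigma}}(v) = \DT_{\Phi_Q(\overline{\sigma})}(\Phi_Q^H v)$. Because $\widetilde{\Phi}$ preserves $\Stab^{\circ}(X)$, the condition $\Phi_Q(\overline{\sigma})$ lies in the same connected component as $\overline{\sigma}$, and the vanishing $\chi(\iota_{\ast} v, \iota_{\ast} v') = 0$ again forces all wall-crossing terms along a connecting path to vanish, so $\DT_{\Phi_Q(\overline{\sigma})}(\Phi_Q^H v) = \DT_{\overline{\sigma}}(\Phi_Q^H v)$. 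Combining these equalities with $\Phi_Q^H \varphi^{-1} = \varphi^{-1} \widetilde{\Phi}^H$ yields $J(w) = J(\widetilde{\Phi}^H w)$.

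The main obstacle is the comparison step in the third paragraph: transporting Toda's large-volume-limit identification of Gieseker and Bridgeland invariants into the $G$-equivariant setting, and verifying that the induced stability conditions on $Q$ from \cite{MMS} interpolate to Gieseker stability while crossing only walls of vanishing Euler pairing. Once the vanishing of $\chi$ on fiber classes is in hand, the remaining steps are formal consequences of the deformation and wall-crossing invariance of the Joyce--Song invariants \cite{JS, Joyce4}.
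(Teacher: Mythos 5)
Your proposal is correct and follows essentially the same route as the paper's proof: both induce the autoequivalence on the quotient via the kernel $\CF \boxtimes \CO_{\Delta_E}$, use the $G$-invariant stability conditions from $\Stab^{\circ}(X)$ induced on the equivariant category via \cite{MMS}, identify the Bridgeland count with the Gieseker count in the large-volume limit following Toda, and exploit the vanishing of the Euler pairing on fiber classes to kill all wall-crossing terms. The step you flag as the main obstacle is exactly the one the paper also delegates to Toda's arguments \cite[Thm.4.24]{Toda} together with the equivariant moduli-stack machinery of \cite{BO1}.
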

\begin{proof}
We sketch the argument following \cite{Toda}: Let 
\[ \Coh(X \times E)_0 \subset \Coh(X \times E) \]
be the subcategory of coherent sheaves supported on fibers of $X \times E \to E$,
and let 
\[ \CD_0 = D^b(\Coh(X \times E)_0). \]
As explained in \cite[Thm.4.20]{Toda} 
there is a distinguished connected component
$\Stab^{\circ}( \CD_0 ) \subset \Stab(\CD_0)$
satisfying
\[ \Stab^{\circ}( \CD_0 ) \cong \Stab^{\circ}(X). \]
%where the left hand side denotes one particular connected component of $\Stab(\CD_0)$.
Let $\Coh(Q)_0$ be the subcategory of $\Coh(Q)$ consisting of sheaves supported on fibers of $p : Q \to \p^1$, and let $\CD_0^{Q}$ be its derived category.
There is then an equivalence of categories:
\[ \CD_0^Q \cong (\CD_0)_G. \]

Let $\sigma \in \Stab^{\circ}(X)$ be a stability condition and write 
$\sigma' \in \Stab^{\circ}(\CD_0)$ for the corresponding stability condition on $\CD_0$.
Since $Y$ is generic, by \cite[Prop.3.12]{MMS} $\sigma$ is $G$-invariant and hence so is $\sigma'$. Therefore, by the main result of \cite{MMS} the stability condition $\sigma'$ induces a stability condition $\sigma'_G$ on $(\CD_0)_G = \CD_0^Q$.
We let $J_{\sigma}(w) \in \BQ$ be the generalized Donaldson-Thomas invariant counting
$\sigma'_G$-semistable objects $\CE$ in $(\CD_0)_G \cong \CD_0^Q$ satisfying 
$v(\CE) = j_{\ast}(w) \in H^{\ast}(X \times E)$ where $j : X \to X \times E$ is the inclusion of a fiber.
(The existence of moduli stacks of semi-stable objects in $(\CD_0)_G$ and their boundedness can be seen as follows: First the discussion in \cite[p.33]{Toda} implies the existence/boundedness for the $\sigma'$-semistable sheaves in $\CD_0$,
and the $G$-equivariant case follows then by \cite[Sec.3.6]{BO1}).
By an argument as in Theorem~\ref{prop:DT dependence}(i) $J_{\sigma}(w)$ does not depend on $\sigma$ (compare \cite[Thm.4.21]{Toda}), and hence as in \cite[Thm.4.24]{Toda} we have 
\[ J_{\sigma}(w) = J(w). \]

The $G$-equivariant Fourier-Mukai transform $\widetilde{\Phi} : D^b(X) \to D^b(X)$ induces a $G$-equivariant Fourier-Mukai transform
$\widetilde{\Phi}' : D^b(X \times E) \to D^b(X \times E)$.
Indeed, if $\widetilde{\Phi}$ has $G$-equivariant kernel $\CF \in D^b(X \times X)_G$,
we have the $G$-equivariant kernel $\CF \boxtimes \CO_{\Delta_{E}} \in D^b((X \times E)^2)_G$ which defines $\widetilde{\Phi}'$.
Since $\widetilde{\Phi}$ preserves $\Stab^{\circ}(X)$, $\widetilde{\Phi}'$ preserves $\Stab^{\circ}(\CD_0)$. Hence as in \cite[Sec.4.10]{Toda} we get
\[ J(w) = J_{\sigma}(w) = J_{\widetilde{\Phi} \sigma}( \widetilde{\Phi}^H \sigma) = 
J( \widetilde{\Phi}^H \sigma). \]
\end{proof}

\begin{example}
Let $\beta \in H^2(Y,\BZ)$ be primitive with $\beta \cdot \beta = 0$. Then
\[ v_1 = (0,0,1), \quad v_2 = (0,\beta,0) \]
are primitive classes $H^{\ast}(Y,\BZ)$ of square zero,
which lie in different orbits of the derived monodromy group.
Indeed, we have $\mathrm{div}(\pi^{\ast} v_1) = 2$, $t(v_1)$ odd,
and $\mathrm{div}(\pi^{\ast} v_2) = 1$, $t(v_2)$ even.
\end{example}

\begin{rmk} \label{rmk:divisibility 1 odd even}
Assume that $v = (r,\beta,n) \in H^{\ast}(Y,\BZ)$ has divisibility $\mathrm{div}(\pi^{\ast} v) = \mathrm{gcd}(r, \beta, 2n) = 1$. If $v$ is odd,
then $r$ is odd,
and then $v \cdot v = \beta^2 - 2rn - r^2$ is odd.
If $v$ is even, then $r$ is even, and $v^2$ is even.
Hence $\DT^{\mathrm{odd}/\mathrm{even}}_{d,1}$ is non-zero only for $d$ odd/even.
\end{rmk}

\begin{rmk}
We will give representatives of the $\widetilde{\DMon}(Y)$ orbits on 
$H^{\ast}(Y,\BZ)$ and $H^{\ast}(X,\BZ)^G$ which will be useful later on;
we write $v \sim v'$ if two vectors lie in the same orbit, or equivalently, have the same square, divisibility, and type.
For any $d \in \BZ$, let $\alpha_d \in H^2(Y,\BZ)$ be a primitive class of square $2d$.
We first consider representatives of the orbits of {\em primitive} vectors $w \in H^{\ast}(X,\BZ)^G$ which are given as follows:
\begin{itemize}
\item[(i)] $w$ even, primitive, then $w \sim (0,\pi^{\ast} \alpha_d,0)$ for some $d$
\item[(ii)] $w$ odd, primitive, then $w \sim (1,0,n)$ for some $n \in \BZ$.
\end{itemize}
However, instead of (ii) one can also consider the following:
\begin{itemize}
\item[(ii'-a)] $w$ odd, primitive, $4|(w,w)$, then $w \sim (0, \pi^{\ast} \alpha_d, 1)$ for some $d$,
\item[(ii'-b)] $w$ odd, primitive, $4 \nmid (w,w)$, then $w \sim (1,\alpha_d,1)$ where $d$ is odd.
\end{itemize}
Vectors $w$ as in (ii'-a) do not lie in $\pi^{\ast} \Lambda_Y$,
%\varphi(H^{\ast}(Y,\BZ))$,
only their even multiplies do. Hence if $v \in H^{\ast}(Y,\BZ)$
%is odd, and $v \cdot v/m(v)^2$ is even, then $m(v)$ must be even.
has $(t(v),m(v),v \cdot v) = (\text{odd},m,d)$ and $d/m^2$ is even, then $m$ must be even.
We get the following orbits for
$v \in H^{\ast}(Y,\BZ)$ and the corresponding vector $w=\varphi(v) \in H^{\ast}(X,\BZ)$:
\begin{itemize}
\item[(i)] $v$ even, divisibility $m$, then $v \sim (0, m \alpha_d, 0)$ for some $d$
\item[(ii-a)] $v$ odd, divisibility $m$, $v^2/m^2$ odd, then $v \sim (m,m\alpha_d,0)$ and $w \sim (m,m \pi^{\ast} \alpha_d, m)$.
\item[(ii-b)] $v$ odd, divisibility $m$, $v^2/m^2$ even, then $m=2m'$ even and
$v \sim (0, 2m' \alpha_d, m')$ and $w \sim (0,2 m' \pi^{\ast} \alpha_d, 2m')$.
\end{itemize}
\end{rmk}

The last remark gives the following Corollary:
\begin{cor} \label{cor:orbits}
Let $\alpha_d \in H^2(Y,\BZ)$ be a primitive class of square $2d$. The invariants $\DT(v)$ for all $v \in H^{\ast}(Y,\BZ)$ of divisibility $\mathrm{div}(\pi^{\ast} v) \leq m_0$ are determined by the following set of invariants (where $d$ runs over all integers):
\begin{enumerate}
\item[(i)] $\DT(0, m\alpha_d,0)$ for $1 \leq m \leq m_0$,
\item[(ii)] $\DT(0, 2 m' \alpha_d, m')$ for $m' \geq 1$ with $2m' \leq m_0$,
\item[(iii)] $\DT(m, m \alpha_d, 0)$ for $1 \leq m \leq m_0$
\end{enumerate}
\end{cor}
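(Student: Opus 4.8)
The plan is to deduce the statement directly from the orbit–invariance of $\DT$ established in Theorem~\ref{prop:DT dependence} together with the explicit list of orbit representatives recorded in the preceding remark. By Theorem~\ref{prop:DT dependence}(ii) the number $\DT(v) = \DT^{t}_{d,m}$ depends on $v$ only through the triple consisting of the square $d = v\cdot v$, the divisibility $m = m(v) = \mathrm{div}_{H^{\ast}(X,\BZ)^G}(\pi^{\ast}v)$, and the type $t = t(v)$; equivalently, in view of Corollary~\ref{cor:Dmon} and Corollary~\ref{cor:orbit of primitive vector in M}, $\DT$ is constant on the orbits of $\widetilde{\DMon}(Y)$ acting on $H^{\ast}(Y,\BZ)$. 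It therefore suffices to exhibit, for each such orbit with $m \leq m_0$, a representative lying in one of the three families (i)--(iii), and to observe that these families exhaust all orbits of divisibility at most $m_0$.

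First I would split into the three cases of the orbit classification of the previous remark. If $v$ is of even type with $m(v)=m$, the representative is $(0, m\alpha_d, 0)$ for a suitable $d$, which is the family (i) for $1\leq m\leq m_0$. If $v$ is of odd type with $v\cdot v/m^2$ odd, the representative is $(m, m\alpha_d, 0)$, which is the family (iii). The remaining case is $v$ of odd type with $v\cdot v/m^2$ even; here one must note, as in the previous remark and consistently with the parity bookkeeping of Remark~\ref{rmk:divisibility 1 odd even}, that the divisibility is forced to be even, $m = 2m'$, and that the representative is $(0, 2m'\alpha_d, m')$, which is the family (ii) with $2m'\leq m_0$. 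Letting $d$ range over $\BZ$, so that $\alpha_d$ runs over primitive classes of every square $2d$, the representatives produced in this way realise every $\widetilde{\DMon}(Y)$-orbit of divisibility at most $m_0$, and for each $v$ the value $\DT(v)$ equals the value of $\DT$ on the corresponding representative.

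The entire content of the argument lies in the completeness and disjointness of this case split, so the only point requiring care is the verification that no orbit is omitted and that the divisibility in the last case is exactly even. This is precisely what the previous remark records, resting in turn on Proposition~\ref{prop:orbit in M} applied to $w=\varphi(v)=\pi^{\ast}(v)\sqrt{\td}_X\in H^{\ast}(X,\BZ)^G$; no further geometric or analytic input is needed. In particular, no new wall-crossing or deformation argument is required beyond those already entering the proof of Theorem~\ref{prop:DT dependence}, and the corollary follows by simply reading off the three representative families.
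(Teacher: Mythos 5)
Your argument is correct and is exactly the paper's: the corollary is stated there as an immediate consequence of the preceding remark's orbit classification (even type; odd type with $v\cdot v/m^2$ odd; odd type with $v\cdot v/m^2$ even forcing $m=2m'$), combined with the invariance of $\DT$ under $\widetilde{\DMon}(Y)$ from Theorem~\ref{prop:DT dependence}. Nothing is missing.
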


\begin{prop} \label{prop:primitive DT invariants}
The primitive DT invariants are determined as follows:
\begin{align} \forall d \in \BZ \text{ odd: }& \DT^{\mathrm{odd}}_{d,1} = 8 e(\Hilb^{(d+1)/2}(Y)) = 8 \left[ \frac{1}{\eta(\tau)^{12}} \right]_{q^{d/2}} \tag{i} \\
\forall d \in \BZ \text{ even: }&  \DT^{\mathrm{even}}_{d,1} = 0. \tag{ii}
\end{align}
Equivalently, for all $v \in H^{\ast}(Y,\BZ)$ of divisibility $\mathrm{div}(\pi^{\ast} v) = 1$,
\[ \DT(v) = 8 \left[ \frac{1}{\eta(\tau)^{12}} \right]_{q^{v \cdot v/2}}. \]
\end{prop}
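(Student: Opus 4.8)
The plan is to evaluate both invariants by choosing the most convenient representative in each $\widetilde{\DMon}(Y)$-orbit, as permitted by Theorem~\ref{prop:DT dependence}, and then identifying the resulting moduli spaces. Recall from Remark~\ref{rmk:divisibility 1 odd even} that for $\mathrm{div}(\pi^{\ast}v)=1$ the odd (resp. even) type is equivalent to odd (resp. even) square, so it suffices to treat one representative of each type. The decisive geometric observation is that the generic (K3) fibre of $p\colon Q\to\p^1$ is homologically twice an Enriques fibre, $[X_t]=2[Y_a]$; more precisely, under the elliptic projection $\pi_Y\colon Q\to Y$ the pushforward $\pi_{Y\ast}$ doubles the class of a curve contained in a generic fibre but restricts to the identity on $\iota_{\ast}H_2(Y)$. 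Hence a semistable sheaf whose class $\iota_{\ast}v$ is primitive cannot be supported on a generic fibre, and must be supported on one of the four Enriques fibres $Y_a$, $a\in E[2]$. Throughout I would work with a generic $Y$ (legitimate by deformation invariance) and use the equivalence $\Coh_G(X\times\{a\})\cong\Coh(Y)$ on each $2$-torsion fibre.

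For the odd case I would take $v=(1,0,-N)$, so that $d=v\cdot v=2N-1$ and $N=(d+1)/2$. A semistable sheaf with class $\iota_{\ast}v$ is then a rank-one torsion-free sheaf on some $Y_a$, of trivial determinant modulo torsion and with $c_2=N$; every such sheaf is automatically stable and is of the form $L\otimes I_Z$ with $L\in\{\CO_Y,\omega_Y\}$ and $Z\in\Hilb^N(Y)$. Running over the four fibres and the two torsion twists identifies the moduli space with $8$ disjoint copies of $\Hilb^N(Y)$. Since $N_{Y_a/Q}=\omega_Y$, the deformation space is $\Ext^1_Q(\iota_{\ast}\CF,\iota_{\ast}\CF)=\Ext^1_Y(\CF,\CF)\oplus\Hom_Y(\CF,\CF\otimes\omega_Y)$, and the second summand vanishes because $\Hom_Y(I_Z,I_Z\otimes\omega_Y)\cong H^0(Y,\omega_Y)=0$. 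Thus the moduli space is smooth of even dimension $2N$, the Behrend function is the constant $(-1)^{2N}=1$, and $\DT^{\mathrm{odd}}_{d,1}=8\,e(\Hilb^{N}(Y))$.

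For the even case I would take $v=(0,\alpha,0)$ with $\alpha$ primitive of square $2d$; these are one-dimensional sheaves on $Q$ and, by the same primitivity argument (a generic-fibre curve would have to have class $c$ with $\pi_{Y\ast}c=\alpha$, forcing $\alpha\in 2H_2(Y)$, a contradiction), they too are supported on the Enriques fibres. The count therefore reduces to the Behrend-weighted Euler characteristic of the moduli of stable one-dimensional sheaves on the Enriques surface $Y$ with support class $\alpha$, and the goal is to show this vanishes. I would try to exhibit a fixed-point-free symmetry of this moduli space: the relative compactified Jacobian $M_Y\to|\alpha|$ carries the involution $\CF\mapsto\CF\otimes\omega_Y$, which is free precisely because the restriction of the nontrivial $2$-torsion bundle $\omega_Y$ to each integral support curve $C$ is nontrivial (equivalently, the étale double cover $\pi^{-1}(C)\to C$ is connected), so that $\omega_Y|_C\not\cong\CO_C$. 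The main obstacle is to upgrade this to the exact vanishing $\DT^{\mathrm{even}}_{d,1}=0$: a free $\BZ_2$ only forces an even Euler characteristic, so one must combine the twist with the relative abelian-variety (Jacobian/Prym) structure of the fibres — whose smooth members already have vanishing Euler characteristic — and control the contributions of the singular and reducible members of $|\alpha|$. This is exactly where the even/odd dichotomy becomes essential, and it is the step I expect to require the most care.

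Finally, the closed form follows by bookkeeping. G\"ottsche's formula \cite{Goettsche} gives $\sum_{n\geq0}e(\Hilb^n Y)q^n=\prod_{n\geq1}(1-q^n)^{-12}$, while $\eta^{-12}=q^{-1/2}\prod_{n\geq1}(1-q^n)^{-12}$ is supported in half-integral powers of $q$. Hence $[\eta^{-12}]_{q^{d/2}}$ equals $e(\Hilb^{(d+1)/2}(Y))$ for odd $d$ and vanishes for even $d$, which packages (i) and (ii) into the single formula $\DT(v)=8\,[\eta^{-12}]_{q^{v\cdot v/2}}$ valid for every $v$ of divisibility one.
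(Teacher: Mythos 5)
Your part (i) is essentially the paper's own argument: identify the moduli space with eight copies of $\Hilb^{N}(Y)$ (four sections times the two twists $I_Z$, $I_Z\otimes\omega_Y$), note that the Behrend function of a smooth even-dimensional scheme is $+1$, and invoke G\"ottsche. Your explicit verification that $\Hom_Y(\CF,\CF\otimes N_{Y_a/Q})=\Hom_Y(I_Z,I_Z\otimes\omega_Y)=0$, so that the moduli space of sheaves on $Q$ (and not merely on $Y$) is smooth, is a detail the paper leaves implicit, and the support reduction is sound here because it takes place in $H_4$: the generic K3 fibre has divisor class $2[Y_a]$, so no rank-one sheaf with $\ch_1=\iota_{\ast}(1,\cdot,\cdot)$ can live on it.

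Part (ii) is where there is a genuine gap, and it is the one you yourself flag: a free involution $\CF\mapsto\CF\otimes\omega_Y$ only forces the (Behrend-weighted) Euler characteristic to be even, and the proposed upgrade via the abelian-fibration structure of the relative Jacobian is not carried out. Closing it directly would be delicate for several reasons you underestimate. The reduction to the Enriques fibres is not topological: $\pi_{Y\ast}$ doubles the class of a curve in a generic K3 fibre only because, for generic $Y$, every curve class on $X$ is of the form $\pi^{\ast}\gamma$ and $\pi_{\ast}\pi^{\ast}\gamma=2\gamma$; and even granting this, strictly semistable sheaves of total class $\iota_{\ast}\alpha$ with one summand of class $2\gamma$ on a generic K3 fibre and another of class $\alpha-2\gamma$ on an Enriques fibre are not excluded by primitivity of $\alpha$, so the moduli space is not eight copies of a relative Jacobian over $|\alpha|\subset Y$, and on the strictly semistable locus the invariant is defined by the full Joyce--Song machinery rather than a weighted Euler characteristic. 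The paper avoids all of this geometry and proves (ii) by generating-function analysis: the Bridgeland--Toda factorization of the stable-pairs series together with the symmetries $\DT_{n,\beta}=\DT_{-n,\beta}=\DT_{n+\mathrm{div}(\beta),\beta}$ shows that for primitive $\beta$ the $q^{\beta}$-coefficient of $\log\PT(Q)$ equals $-\DT_{1,\beta}\,p/(1-p)^2$ plus a Laurent polynomial in $p$; the genus-zero Gromov--Witten invariants of $Q$ vanish (dimension count on $Y$ plus Proposition~\ref{prop:GW QvsY}), so by the GW/PT correspondence (Theorem~\ref{thm:GW correspondence}) this coefficient has no pole at $p=1$, forcing $\DT_{1,\beta}=0$. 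Some such indirect input appears to be essential; I would not expect your direct geometric route to terminate without substantial additional work.
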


\begin{proof}
(i) For $\DT^{\mathrm{odd}}_{d,1}$ to be non-zero, we must have $d$ odd (see Remark~\ref{rmk:divisibility 1 odd even}), let us say $d=2n-1$. 
It hence suffices to prove that
\[ \DT(1,0,-n) = 8 \left[ \frac{1}{\eta(\tau)^{12}} \right]_{q^{n-1/2}}
=
8
\left[ \prod_{r \geq 1} \frac{1}{(1-q^r)^{12}} \right]_{q^{n}}. \]
%To prove (i) it suffices to prove that for $v=(1,0,-n)$ where $d=2n-1$. 
Any semi-stable sheaf in class $v=(1,0,-n)$ is the pushforward by $\iota_{a} : Y_a \to Q$ of a semi-stable sheaf $E$ on $Y$ with $\ch(E) = (1,0,-n)$, where $a \in E[2]$ is a $2$-torsion point. Hence $E$ is stable torsion free sheaf on $Y$ with $c_1(E) = 0 \in H^2(Y,\BZ)$. There are two possibilities: Either $E = I_Z$ for some length $n$ subscheme $Z \subset Y$, or $E = I_Z \otimes \omega_Y$.
We see that the moduli space of semi-stable sheaves on $Q$ in class $\iota_{\ast} (1,0,-n)$ is isomorphic to $8$ copies of the Hilbert schemes $\Hilb^n(Y)$. 
If semi-stability equals stability, 
the generalized DT invariant is by definition just given by the Behrend-function weighted Euler characteristic of the scheme.
Moreover, since the $\Hilb^n(Y)$ is even-dimensional and smooth, the Behrend function is $+1$ everywhere \cite{JS}. Hence we obtain $\DT_{\omega}(v) = 8 e(\Hilb^n(Y))$.
The desired equality follows therefore by G\"ottsche's formula \cite{Goettsche}.
% and since $(1,0,-n) \cdot (1,0,-n) = 2n-1$.

(ii) Let us denote $\DT_{n,\beta} = \DT(0,\beta,n)$.
By Bridgeland \cite{Br1} and Toda \cite{Toda2, Toda3}
the generating series of PT invariants can be expanded as
\begin{equation} \label{TE}
\sum_{n,\beta \in H_2(Y,\BZ)_{\geq 0}} \PT_{n,\beta} (-p)^n q^{\beta}  
=
\exp\left(-\sum_{n > 0, \beta>0} n \DT_{n,\beta} p^n q^{\beta} \right)
\cdot \sum_{n, \beta} \mathsf{L}_{n,\beta} (-p)^n q^{\beta}
\end{equation}
where for every $\beta$ the invariants $\mathsf{L}_{n,\beta} \in \BZ$ satisfy
$\mathsf{L}_{n,\beta} = \mathsf{L}_{-n,\beta}$ for all $n$,
and $\mathsf{L}_{n,\beta} = 0$ for all $n \gg 0$.
Moreover, for each $\beta > 0$ we have $\DT_{n,\beta} = \DT_{n+\mathrm{div}(\beta), \beta}$ and $\DT_{n,\beta} = \DT_{-n,\beta}$.

Assume $\beta$ is primitive. Then from this conditions we get
\[
\sum_{n > 0, \beta>0} n \DT_{n,\beta} p^n = 
\DT_{1,\beta} \frac{p}{(1-p)^2}.
\]
%is a rational function in $p$ invariant under $p \mapsto 1/p$.
Hence the $q^{\beta}$-coefficient of the logarithm of \eqref{TE} reads
\begin{equation} \label{dfsd-0f9i0-}
-\DT_{1,\beta} \frac{p}{(1-p)^2} + \sum_{n} \mathsf{L}_{n,\beta} (-p)^n
\end{equation}
All genus zero Gromov-Witten invariants of the Enriques $Y$ vanish for dimension reasons. Hence the same holds by Proposition~\ref{prop:GW QvsY} for the Enriques threefold $Q$.
By the Gromov-Witten correspondence (Theorem~\ref{thm:GW correspondence})
we conclude that
\eqref{dfsd-0f9i0-} does not have a $z^{-2}$ under the variable change $p=e^{z}$, and hence no pole at $p=1$. So $\DT_{1,\beta}=0$.
\end{proof}

\begin{lemma} \label{lemma:dt for div 2}
%Let $\beta \in H_2(Y,\BZ)$ be of divisibility $2$. Then
For all even $d$, we have
$\DT^{\mathrm{odd}}_{4d,2} = -\DT^{\mathrm{even}}_{4d,2}$.
\end{lemma}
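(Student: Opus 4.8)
The plan is to reduce the statement to a relation between two \emph{rank-zero} invariants and then to run the pole analysis of the Bridgeland--Toda expansion \eqref{TE} exactly as in the proof of Proposition~\ref{prop:primitive DT invariants}(ii).

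First I would identify the two invariants with rank-zero classes. Since $d$ is even, fix a primitive class $\alpha := \alpha_{d/2} \in H^2(Y,\BZ)$ of square $d$ and set $\beta = 2\alpha$, so $\mathrm{div}(\beta) = 2$. Reading off the orbit representatives in Corollary~\ref{cor:orbits}, one has, in the notation $\DT_{n,\beta} = \DT(0,\beta,n)$ of Proposition~\ref{prop:primitive DT invariants},
\[ \DT^{\mathrm{even}}_{4d,2} = \DT(0,2\alpha,0) = \DT_{0,\beta}, \qquad \DT^{\mathrm{odd}}_{4d,2} = \DT(0,2\alpha,1) = \DT_{1,\beta}. \]
Indeed both have square $4d$, divisibility $2$, and the hypothesis that $d$ is even is precisely what places the odd class in case (ii-b) of the preceding Remark, i.e. makes it a rank-zero class sharing the same underlying $\beta$. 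Thus the assertion becomes $\DT_{1,\beta} = -\DT_{0,\beta}$.

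Next I would exploit the structure of $(\DT_{n,\beta})_n$. Using $\mathrm{div}(\beta)=2$ together with the symmetries $\DT_{n,\beta} = \DT_{n+\mathrm{div}(\beta),\beta}$ and $\DT_{n,\beta} = \DT_{-n,\beta}$ recorded in the proof of Proposition~\ref{prop:primitive DT invariants}(ii), the sequence is $2$-periodic and even, so it is determined by $\DT_{0,\beta}$ and $\DT_{1,\beta}$, with $\DT_{n,\beta}=\DT_{1,\beta}$ for $n$ odd and $=\DT_{0,\beta}$ for $n$ even. Extracting the $q^{2\alpha}$-coefficient of the logarithm of \eqref{TE}, the contribution of the $\DT$-exponential is
\[ -\sum_{n>0} n\,\DT_{n,\beta}\,p^n = -\DT_{1,\beta}\,\frac{p(1+p^2)}{(1-p^2)^2} - \DT_{0,\beta}\,\frac{2p^2}{(1-p^2)^2}, \]
while the contribution of the factor $\sum_{n,\beta'}\mathsf{L}_{n,\beta'}(-p)^nq^{\beta'}$ is regular at $p=1$: its $\beta'=0$ part equals $1$ by comparing the $q^0$-terms in \eqref{TE}, and each $\sum_n\mathsf{L}_{n,\beta'}(-p)^n$ is a finitely supported symmetric Laurent polynomial in $p$, so every coefficient of the logarithm of this factor is again a Laurent polynomial in $p$.

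Finally I would invoke the correspondence. As in Proposition~\ref{prop:primitive DT invariants}(ii), all genus-zero Gromov--Witten invariants of $Q$ vanish (Proposition~\ref{prop:GW QvsY} and dimension reasons), so by Theorem~\ref{thm:GW correspondence} the $q^{2\alpha}$-coefficient of the logarithm of \eqref{TE} has no $z^{-2}$-term under $p=e^{z}$; together with the $p\mapsto p^{-1}$ symmetry of the PT series this forces regularity at $p=1$. Hence the displayed $\DT$-contribution has no double pole at $p=1$. A short computation shows both rational functions have leading Laurent expansion $\tfrac12 z^{-2}$ at $z=0$, so vanishing of the $z^{-2}$-term yields $\tfrac12(\DT_{1,\beta}+\DT_{0,\beta}) = 0$, i.e. $\DT^{\mathrm{odd}}_{4d,2} = -\DT^{\mathrm{even}}_{4d,2}$. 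The one step requiring care is the regularity of the $\mathsf{L}$-factor's logarithm at $p=1$ — that no pole is hidden in the decompositions $2\alpha = \beta_1+\beta_2$ into other effective classes — which is guaranteed by the normalization $\sum_n\mathsf{L}_{n,0}(-p)^n = 1$, and the use of the $p\mapsto p^{-1}$ symmetry to rule out a residual simple pole.
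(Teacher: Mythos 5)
Your proof is correct and follows essentially the same route as the paper: identify $\DT^{\mathrm{even}}_{4d,2}$ and $\DT^{\mathrm{odd}}_{4d,2}$ with the rank-zero invariants $\DT(0,\beta,0)$ and $\DT(0,\beta,1)$ for a divisibility-$2$ class $\beta$ of square $4d$, use the $2$-periodicity and symmetry of $\DT(0,\beta,n)$ to write $\sum_{n>0} n\,\DT(0,\beta,n)p^n$ as an explicit rational function, and invoke the GW/PT correspondence together with the vanishing of genus-zero invariants to force regularity at $p=1$. Your displayed denominator $(1-p^2)^2$ in the first term is the correct one (the paper's printed formula has a typo there), and your extra care about the $\mathsf{L}$-factor and the $p\mapsto p^{-1}$ symmetry only makes explicit what the paper leaves implicit.
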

\begin{proof}
Let $d$ even and let $\beta=2s+df \in H_2(Y,\BZ)$ which is of divisibility $2$.
We then have
\[ \DT^{\mathrm{even}}_{4d,2} = \DT(0,\beta,0), \quad
\DT^{\mathrm{odd}}_{4d,2} = \DT(0,\beta,1). \]
We argue similarly as in the proof of Proposition~\ref{prop:primitive DT invariants}(ii). By $\DT(0,\beta,n) = \DT(0,\beta,n+2)$ we find
\[
\sum_{n > 0} n \DT(0,\beta,n) p^n
=
\DT(0,\beta,1) \frac{p (1+p^2)}{(1-p^2)} + \DT(0,\beta,0) \frac{2 p^2}{(1-p^2)^2}. \]
By the GW/PT correspondence this series
does not have any pole at $p=1$, which implies the claim.
Indeed, if $\DT(0,\beta,0) = -\DT(0,\beta,1)$ the series becomes
$\DT(0,\beta,1) p/(1+p)^2$. 
\end{proof}
\begin{rmk}
The Gopakumar-Vafa finiteness conjecture implies that
if all positive degree genus 0 Gromov-Witten invariants of a Calabi-Yau threefold
vanish, then also the $1$-dimensional generalized Donaldson-Thomas invariants vanish
for all curve classes and Euler characteristics.
The finiteness conjecture was recently proven in \cite{DIW},
so we would get the vanishing of $\DT_{d,m}^{\text{even}}$ for free.
However the proof in \cite{DIW} uses methods from symplectic geometry,
so we prefer here to give a direct algebraic argument.
The vanishing of $\DT_{d,m}^{\text{even}}$ for all $d,m$ follows later from
Corollary~\ref{cor:consequence of main thm}.
\end{rmk}

\begin{lemma} \label{DT vanishing}
Let $\beta \in H_2(Y,\BZ)$ be non-zero.
The invariant $\DT(r,\beta,n)$ is non-zero only if
\[ \beta^2 + 4 (\beta \cdot \omega)^2 \geq 2 rn + r^2. \]
\end{lemma}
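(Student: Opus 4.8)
The plan is to deduce the inequality from the existence of a Gieseker semistable sheaf realizing the class, via the Bogomolov--Gieseker bound on the covering $K3$ surface together with the Hodge index theorem on $Y$. Since $\DT(r,\beta,n)=\DT(-r,-\beta,-n)$ and the asserted inequality is invariant under $(r,\beta,n)\mapsto(-r,-\beta,-n)$, I may assume $v=(r,\beta,n)\in C(\Coh Y)$, so that non-vanishing of $\DT(r,\beta,n)$ forces the existence of an $\omega$-Gieseker semistable sheaf $\CF$ on $Q$ with $\ch(\CF)=\iota_{\ast}v$.

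First I would reduce to a sheaf on a surface. The sheaf $\CF$ is supported on fibers of $p\colon Q\to\p^1$, so under the equivalence $\Coh(Q)\cong\Coh_G(X\times E)$ recalled in Section~\ref{subsec:overview}, and using that Gieseker stability corresponds to Gieseker stability for the pulled-back polarization, $\CF$ corresponds to a $G$-invariant semistable sheaf on a fiber of $X\times E\to E$. This is in turn a Gieseker semistable sheaf $F$ on the $K3$ surface $X$ whose Mukai vector is $w=\varphi(v)=(r,\pi^{\ast}\beta,2n+r)$, exactly as in the proof of Theorem~\ref{prop:DT dependence}.

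The main input is then the Bogomolov--Gieseker inequality on $X$: a Gieseker semistable sheaf is slope semistable, and the moduli space containing it is non-empty, so $(w,w)\ge -2$. Since $(w,w)=(\pi^{\ast}\beta)^2-2r(2n+r)=2\,(v\cdot v)$, this gives the clean bound $v\cdot v\ge -1$, i.e. $2rn+r^2\le\beta^2+1$. It then remains only to absorb the extra $+1$: because $\beta\neq 0$ and $\omega|_{Y}$ is ample, I would argue that $(\beta\cdot\omega)^2\ge 1$, whence $\beta^2+1\le\beta^2+4(\beta\cdot\omega)^2$ and the lemma follows.

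The delicate point, which I expect to be the main obstacle, is precisely the estimate $(\beta\cdot\omega)^2\ge 1$ in the degenerate regime where $\beta$ is numerically orthogonal to $\omega$. Here the purely numerical reasoning breaks down and one must use the geometry of the sheaf: for a pure one-dimensional sheaf ($r=0$) the class $\beta$ is the effective support, so $\beta\cdot\omega\ge 1$ is automatic, while in the torsion-free case one has to control $\beta\in\omega^{\perp}$, for instance by exploiting effectivity of the underlying support or the boundedness of the negative self-intersection of effective classes of bounded $\omega$-degree on the Enriques surface. Verifying that the reduction to $X$ is compatible with all of these positivity statements $G$-equivariantly is where the genuine work lies; the remainder is the formal combination of the Bogomolov and Hodge index inequalities recorded above.
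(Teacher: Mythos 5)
Your reduction to a Gieseker semistable sheaf on the covering K3 surface $X$ with Mukai vector $w=(r,\pi^{\ast}\beta,2n+r)$ is the same first step as the paper's (which quotes \cite[Lemma 2.8]{BO1} for semistability of the pullback). The gap is in the inequality you then apply. The bound $(w,w)\ge -2$ is a statement about stable objects, equivalently about primitive Mukai vectors; for the semistable sheaves counted by the generalized DT invariants it fails as soon as $w$ is non-primitive (e.g.\ $L^{\oplus m}$ has $(w,w)=-2m^{2}$), and the correct uniform consequence of Bogomolov for $\mu$-semistable torsion-free sheaves is only the discriminant bound $(w,w)\ge -2r^{2}$. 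Neither version produces the term $4(\beta\cdot\omega)^{2}$ nor the $r^{2}$ on the right-hand side, and that is exactly what the lemma is for: it bounds $r$ for fixed $\beta$ (take $n=0$: plain Bogomolov gives $\beta^{2}\ge 0$ and no bound on $r$ whatsoever), which is what makes the infinite product in Toda's formula well defined. Your plan of recovering the missing term a posteriori from $(\beta\cdot\omega)^{2}\ge 1$ cannot work in general, as you partly anticipate: $\beta\neq 0$ does not imply $\beta\cdot\omega\neq 0$, since the orthogonal complement of an ample class in the rank-$10$ lattice $H^{2}(Y,\BZ)$ contains nonzero classes, all of which occur as $c_{1}$ of rank-one torsion-free sheaves on the Enriques surface; so the extra ``$+1$'' cannot be absorbed.

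The paper avoids all of this by citing \cite[Lemma 2.5]{Toda}, which is the Bogomolov inequality already combined with the Hodge index theorem: for an $H$-Gieseker semistable sheaf on a K3 surface with $c_{1}=B\neq 0$ and Mukai vector $(r,B,s)$ one has $B^{2}+2(B\cdot H)^{2}\ge 2rs$. Applied to $w$ on $X$ this gives $(\pi^{\ast}\beta)^{2}+2(\pi^{\ast}\beta\cdot\pi^{\ast}\omega)^{2}\ge 2r(2n+r)$, which is literally the stated inequality after dividing by $2$; there is no leftover constant and no degenerate case to treat separately. If you want to keep your route, you would have to prove this refined inequality yourself, by summing the Bogomolov bound over the Harder--Narasimhan or Jordan--H\"older factors and controlling the cross terms with the Hodge index theorem---at which point you have reproduced Toda's lemma rather than bypassed it.
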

\begin{proof}
If $F \in \Coh(Y)$ is $\omega$-Gieseker semi-stable with $\ch(F) = \iota_{\ast}(r,\beta,n)$, then $\pi_Q^{\ast}(F)$ is $\pi_Q^{\ast}(\omega)$-semistable with class
$\ch( \pi_Q^{\ast} F) = \iota_{\ast}(r, \beta, 2n)$, see e.g. \cite[Lemma 2.8]{BO1};
Here $\pi_Q : X \times E \to Q$ denotes the projection.
Hence by \cite[Lemma 2.5]{Toda} we have
\begin{equation} (\pi^{\ast} \beta)^2 + 2 (\pi^{\ast}(\beta) \cdot \pi^{\ast} \omega)^2 \geq 2 r (2n+r). \label{efsdf} \end{equation}
We see that if the inequality \eqref{efsdf} is violated, then the moduli space of semi-stable sheaves of Chern character $\iota_{\ast} (r,\beta,n)$ is empty, and so $\DT(r,\beta,n)=0$.
%which gives the claim by divising by $2$.
\end{proof}

\subsection{Toda's formula}
Define the generating series in fiber classes
\[ \PT(Q) = \sum_{\beta \in H_2(Y,\BZ)} \sum_{n \in \BZ} \PT_{n,\beta} (-p)^n q^{\beta}. \]

\begin{thm}[Toda's formula] \label{thm:Todas formula}
\[ \PT(Q) = \prod_{\substack{r \geq 0 \\ \beta > 0 \\ n \geq 0 }} \exp\left( (-1)^{r-1} (n+r) \DT(r,\beta,n) q^{\beta} p^n \right) 
\times
\prod_{\substack{r>0 \\ \beta>0 \\ n>0}} \exp\left( (-1)^{r-1} (n+r) \DT(r,\beta,n) q^{\beta} p^{-n} \right)
\]
\end{thm}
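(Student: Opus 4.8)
The plan is to deduce this from Toda's formula for the local K3 surface $K3 \times \mathbb{C}$ in \cite{Toda} by means of the equivariant dictionary of Section~\ref{subsec:overview}: prove the corresponding identity on $X \times E$ and then take every step $G$-equivariantly. The natural arena is the category $\CD_0 = D^b(\Coh(X \times E)_0)$ of objects supported on the fibers of the projection to $E$, together with its equivariant version $\CD_0^Q \cong (\CD_0)_G$ of fiber-supported objects on $Q$ introduced in the proof of Proposition~\ref{prop:automorphy property}. Both sides of the asserted identity should be realized as specializations of a single wall-crossing identity in the motivic Hall algebra (equivalently, the Joyce--Song Lie algebra) of $\CD_0^Q$.

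First I would identify the objects being counted. A stable pair $(F,s)$ on $Q$ supported on fibers of $p$ gives, through the two-term complex $[\CO_Q \to F]$, an object of $\CD_0^Q$ that is stable for a limiting PT-type weak stability condition, whereas the $\omega$-Gieseker semistable two-dimensional sheaves defining $\DT(r,\beta,n)$ are the semistable objects for a Gieseker-type weak stability condition. Both families of stability conditions arise by $G$-equivariant descent from the distinguished component $\Stab^{\circ}(\CD_0)$: for generic $Y$ the relevant conditions on $\CD_0$ are $G$-invariant by \cite[Prop.3.12]{MMS} and hence descend to $(\CD_0)_G$ by the main result of \cite{MMS}. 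The existence and boundedness of the moduli stacks of semistable objects in $(\CD_0)_G$ follow from the non-equivariant statements in \cite[p.33]{Toda} combined with \cite[Sec.3.6]{BO1}, exactly as in the proof of Proposition~\ref{prop:automorphy property}.

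Next I would run the Joyce--Song wall-crossing formula \cite{JS, Joyce4} between these two stability conditions, following \cite[Thm.4.20, Sec.4.10]{Toda} line by line. The decisive simplification — the very same one exploited in Theorem~\ref{prop:DT dependence}(i) — is the vanishing of the Euler pairing on fiber classes,
\[ \chi(\iota_{\ast} v, \iota_{\ast} v') = 0 \quad \text{for all } v, v' \in H^{\ast}(Y,\BQ), \]
which forces every Lie bracket in the wall-crossing formula to vanish, so that all higher commutator terms drop out and the identity collapses to a product of \emph{commuting} exponentials. The exponent $(n+r)$ is the holomorphic Euler characteristic $\chi(Y,E) = \int_Y \ch(E)\,\td_Y = n+r$ of a sheaf $E$ on $Y$ with $\ch(E)=(r,\beta,n)$, which governs the dimension of the space of sections entering the pair construction, while $(-1)^{r-1}$ is the rank-$r$ shift/orientation sign. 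The split into a product over $p^{n}$ and a product over $p^{-n}$ records the functional equation $p \mapsto p^{-1}$ of the rational PT series: the rank-zero pieces contribute only positive powers, recovering \eqref{TE}, while the higher-rank pieces appear symmetrically. This matching against \eqref{TE} serves as a useful internal consistency check, since the rank-$0$, $n>0$ factors of the claimed product reproduce the explicit $\exp(-\sum_{n>0} n\,\DT_{n,\beta}\,p^n q^\beta)$ prefactor there.

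The step I expect to be the main obstacle is the faithful transport of Toda's weak-stability and wall-crossing argument — designed for the \emph{noncompact} $K3 \times \mathbb{C}$ — to $X \times E$ and its subsequent $G$-equivariant descent, so that the PT stable pairs on $Q$ (defined via the virtual class on $P_{n,\beta}(Q)$) are identified \emph{exactly} with the stable objects in the descended heart, with no spurious terms arising either from the four double Enriques fibers of $p$, where the $G$-action on $E$ has its $2$-torsion fixed points, or from the $G$-fixed locus on moduli. Establishing that the descent of stability conditions and the equivariant Hall-algebra integration map are compatible with the Behrend-function weighting on both sides is where the genuine work lies; once these compatibilities are in place, the vanishing of the Euler form does the remaining analytic work and the product formula follows formally from the wall-crossing identity.
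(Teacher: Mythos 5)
Your proposal follows essentially the same route as the paper: the proof there consists precisely of running Toda's wall-crossing argument for $K3\times\BC$ on $X\times E$ with every step taken $G$-equivariantly, using the vanishing of the Euler pairing on fiber classes to kill all higher commutators, and identifying the wall-crossing exponent with $\chi(\CO_Q,\iota_\ast(r,\beta,n))=n+r$ in place of Toda's $n+2r$. You correctly locate the main obstacle near the $2$-torsion of $E$, but you leave open the one computation there that actually affects the numerical form of the answer: in Toda's argument the framing objects $p^\ast\CO_{\p^1}(r)$ must be replaced by the set $(p^\ast\Pic^r(E))_G$ of $G$-equivariant pullbacks of degree-$r$ line bundles, and this set has exactly $8$ elements (the $4$ two-torsion points of $E$, each line bundle $\CO(X\times a)$ admitting $2$ linearizations), so the moduli space $\widehat M_{\omega,\theta}(1,r,0)$ consists of $8$ reduced points rather than one. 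This factor of $8$ is then cancelled because, working modulo torsion, a stable pair on $Q$ corresponds to a $2$-term complex $\CL\to F$ for any of the $8$ torsion line bundles $\CL$, so the pair invariant is $\frac18$ times the count of stable objects of Chern character $(1,0,-\beta,-n)$. Without carrying out this $8$ versus $\frac18$ bookkeeping your product formula would be off by a factor of $8$ in each exponent; the rest of your outline (descent of stability conditions via \cite{MMS}, boundedness via \cite{BO1}, Behrend weighting producing the sign $(-1)^{n+r-1}$ with $(-1)^n$ absorbed into $p$) matches the paper's proof.
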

\begin{proof}
We first remark that the formula is well-defined by Lemma~\ref{DT vanishing}.
To prove the formula, one argues exactly as in \cite{Toda}
with the only difference that one works on $X \times E$, but performs every step $G$-equivariantly. For example, given a semi-orthogonal decomposition, one considers the induced semi-orthongal decomposition on the $G$-equivariant category, etc. We refer to Section~\ref{subsec:overview} for references for the results that are needed. This is mostly straightforward, and hence we only highlight the main differences here:
\begin{enumerate}
\item The geometry $X \times \p^1$ in \cite{Toda} is replaced by the $G$-equivariant geometry of $X \times E$.
\item The pairing $\chi : \Gamma \times \Gamma_0 \to \BZ$ in \cite[2.7]{Toda} has to be replaced by the $G$-equivariant pairing, which gives
\[ \chi(\ch(F_1), \ch(F_2)) = \sum_{i} (-1)^i \dim \Ext^i_{X \times E}(F_1, F_2)^G, \]
or equivalently, which computes the Euler pairing on the quotient $Q = (X \times E)/\BZ_2$. In particular, 
the wall-crossing factor $\chi(\CO_{X \times \p^1}, \iota_{\ast}(r,\beta,n)) = (n+2r)$ which appears in the main formula in \cite[Thm.1.1]{Toda} is replaced in our case by the pairing
\[ \chi_{Q}( \ch(\CO_Q), \iota_{\ast} (r,\beta,n)) = \int_{Y} (r,\beta,n) \td_Y = n+r. \]
\item The sheaf $p^{\ast} \CO_{\p^1}(r)$ in \cite[Defn.2.12]{Toda} has to be replaced by the set of $G$-equivariant sheaves on $X \times E$ which are pullbacks of degree $r$ line bundles of $E$, or in our words, by the set $(p^{\ast} \Pic^r(E))_G$.

Note that there are precisely 8 of them for each $r$: Indeed, for any $a \in E$, the line bundle $p^{\ast} \CO_E(a) = \CO_{X \times E}(X \times a)$ is $G$-invariant if and only if $a$ is $2$-torsion. Moreover, $\CO(X \times a)$ is simple,
so for any $2$-torsion point $a \in E[2]$ the line bundle $\CO(X \times a)$ admits precisely $2$ different $G$-linearizations, by \cite[Lemma 1]{Ploog}.
Hence there are $4 \cdot 2$ elements in $(p^{\ast} \Pic^1(E))_G$ which shows the claim for $r=1$.\footnote{The $8$ elements in $(p^{\ast} \Pic^0(E))_G$
correspond to the $8$ line bundles on $Q$ given by
$\CO_Q, \CO_Q(Y_a - Y_b)$ for $a,b \in E[2]$ with $a \neq b$,
and $\CO_Q(Y_{a_1} - Y_{a_2} + Y_{a_3} - Y_{a_4})$ for $E[2] = \{ a_1, a_2, a_3, a_4 \}$.}
The case of general $r$ follows since $(p^{\ast} \Pic^r(E))_G$
is non-canonically isomorphic to $(p^{\ast} \Pic^1(E))_G$.

As a consequence, $\widehat{M}_{\omega,\theta}(1,r,\beta)$ in \cite[Prop.3.17]{Toda} consists now of $8$ isolated reduced points if $\beta = 0$, and is empty for $\beta \neq 0$.
\item Right after \cite[Defn.4.2]{Toda}, the moduli space of stable pairs $(F,s)$ in class $(n,\beta)$ is identified with the moduli space of certain $2$-term complexes $\CL \to F$  in the derived category of Chern character $(1,0,-\beta,-n)$. In \cite{Toda} the condition on the Chern character implies $c_1(\CL) = 0$, so $\CL \cong \CO$. In the $G$-equivariant case, we work modulo torsion,
and there are precisely $8$ torsion line bundles, so
the stable pair invariant is $\frac{1}{8}$ times the invariant counting stable objects in the derived category. This cancels the factor of 8 which appears in point (3), so overall there is no change in the final formula.
\item The paper \cite{Toda} uses DT invariants, which are defined by unweighted (virtual) Euler numbers, that is, which do not carry any weight by the Behrend function. This simplification was made for technical reasons and the technical gaps were later filled in \cite{Toda3}. Hence the Behrend weight can be added in \cite{Toda} and we do the same here. This produces a sign $(-1)^{n+r-1}$ in the wall-crossing formula (the $(-1)^n$ factor of which is absorbed by the variable $p$).
\qedhere
\end{enumerate}
\end{proof}

\subsection{Consequences of Toda's formula}
\label{subsec:Consequences of Toda}
Define the invariants $\dt(v) \in \BQ$ for $v \in H^{\ast}(Y,\BZ)$ inductively by the equation
\begin{equation} \DT(v) = \sum_{\substack{k|v \\ k \geq 1 \text{ odd}}} \frac{1}{k^2} \dt\left( \frac{v}{k} \right) \label{DTdt} \end{equation}
Note here 'odd $k|v$' means $k$ is an odd integer $\geq 1$ such that $v/k \in H^{\ast}(Y,\BZ)$ or equivalently, $k|\gcd(r,\beta,n)$. Since $k$ is odd,
this is equivalent to $k| m(v) = \gcd(r,\beta,2n)$.
By Theorem~\ref{prop:DT dependence}
the invariant $\dt(v)$ again only depends upon $v$ through the square $d=v \cdot v$, the divisibility $m=\mathrm{gcd}(r,\beta,2n)$ and the type $t(v)$. Hence we again write
\[ \dt^{t}_{m,d} := \dt(v). \]

By taking the log in Toda's formula and inserting
\eqref{DTdt}, and interchanging sums, we obtain:
\[
\log \PT(Q)
=
\sum_{\beta>0} 
\sum_{\substack{k \geq 1 \\ k \text{ odd}}} \frac{1}{k} q^{k \beta}
\left[ 
\sum_{r \geq 0} \sum_{n \geq 0} (-1)^{r-1} (n+r)  \dt(r,\beta,n) p^{kn}
+
\sum_{r > 0 } \sum_{n > 0}
(-1)^{r-1}  (n+r) \dt(r,\beta,n) 
p^{-k n} \right]
\]
Define the series:
\begin{align} 
f_{\beta}^{\PT}(p) & =
\sum_{r \geq 0} \sum_{n \geq 0} (-1)^{r-1} (n+r)  \dt(r,\beta,n) p^{n}
+ \sum_{r > 0 } \sum_{n > 0} (-1)^{r-1} (n+r) \dt(r,\beta,n) p^{-n} \notag \\
& = \sum_{n>0} \sum_{r>0} (n+r) (-1)^{r-1} \dt(r,\beta,n) (p^n+p^{-n}) 
- \sum_{n>0} n \dt(0,\beta,n) p^n 
+ \sum_{r>0} (-1)^{r-1} r \dt(r,\beta,0) \label{230f0sdf0}
\end{align}
We find that
\[
\log \PT(Q) =
\sum_{\beta>0} 
\sum_{\substack{k \geq 1 \\ k \text{ odd}}} \frac{1}{k} q^{k \beta}
f_{\beta}^{\PT}(p^k).
\]

On the Gromov-Witten side, define invariants $n_{g,\beta}^{Q}$ inductively by
\[ N_{g,\beta}^{Q} = \sum_{\substack{k|\beta \\ k \geq 1 \text{ odd}}}
k^{2g-3} n_{g,\beta/k}^{Q}. \]
Define also
\[
f_{\beta}^{\GW}(z) = \sum_{g} n_{g,\beta}^Q (-1)^{g-1} z^{2g-2}
\]
By the Gromov-Witten/Pairs correspondence we have under the variable change $p=e^{z}$
\begin{align*}
\log \PT(Q) & = 
\sum_{g,\beta} N^Q_{g,\beta} (-1)^{g-1} z^{2g-2} q^{\beta} \\
& = \sum_{\beta>0} \sum_{\substack{k \geq 1 \\ k \text{ odd}}} \frac{1}{k} q^{k \beta} \sum_{g} n^Q_{g,\beta} (-1)^{g-1} (kz)^{2g-2} \\
& = \sum_{\beta>0} \sum_{\substack{k \geq 1 \\ k \text{ odd}}} \frac{1}{k} q^{k \beta}
f_{\beta}^{\GW}(kz).
\end{align*}

We hence find that under the variable change $p=e^{z}$ we have:
\[ f_{\beta}^{\GW}(z) = f_{\beta}^{\PT}(p). \]

\begin{prop} \label{prop:GW dependence}
The Gromov-Witten invariant
$N^{Q}_{g,\beta}$ depends on $\beta$ only through $\beta^2$ and the divisibility of $\beta$.
\end{prop}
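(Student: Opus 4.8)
The plan is to trace the dependence on $\beta$ through the chain of equalities established just above the statement, starting from the sheaf side, where the required invariance is already encoded in Theorem~\ref{prop:DT dependence}.

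First I would record that, by Theorem~\ref{prop:DT dependence} and the inversion formula \eqref{DTdt}, for each \emph{fixed} pair $(r,n)$ the invariant $\dt(r,\beta,n)$ depends on $\beta$ only through the square $\beta^2$ and the divisibility $\mathrm{div}(\beta)$. Indeed, the three quantities that determine $\dt(r,\beta,n)$ are: the Mukai square $d = \beta^2 - r^2 - 2rn$, which depends on $\beta$ only through $\beta^2$; the divisibility $m = \gcd(r,\mathrm{div}(\beta),2n)$, which depends on $\beta$ only through $\mathrm{div}(\beta)$; and the type $t$, which by its defining formula in Theorem~\ref{prop:DT dependence} is a function of $r/m$ and $(2n+r)/m$ alone, hence again depends on $\beta$ only through $\mathrm{div}(\beta)$.

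Next, since the generating series $f_{\beta}^{\PT}(p)$ of \eqref{230f0sdf0} is assembled purely from the numbers $\dt(r,\beta,n)$ with universal coefficients, it too depends on $\beta$ only through $\beta^2$ and $\mathrm{div}(\beta)$. By the identity $f_{\beta}^{\GW}(z) = f_{\beta}^{\PT}(p)$ under $p = e^z$ — which follows from Toda's formula (Theorem~\ref{thm:Todas formula}) together with the GW/PT correspondence (Theorem~\ref{thm:GW correspondence}) — the same holds for $f_{\beta}^{\GW}(z)$. Reading off the coefficient of $(-1)^{g-1} z^{2g-2}$ then shows that the reduced invariant $n_{g,\beta}^Q$ depends on $\beta$ only through $\beta^2$ and $\mathrm{div}(\beta)$.

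Finally, I would recover $N_{g,\beta}^Q$ from its multiple-cover expansion $N_{g,\beta}^Q = \sum_{k|\beta,\, k \text{ odd}} k^{2g-3}\, n_{g,\beta/k}^Q$. For each odd $k$ dividing $\mathrm{div}(\beta)$ the class $\beta/k$ has square $\beta^2/k^2$ and divisibility $\mathrm{div}(\beta)/k$, both determined by $\beta^2$ and $\mathrm{div}(\beta)$, while the index set of odd $k \mid \mathrm{div}(\beta)$ is itself determined by $\mathrm{div}(\beta)$; combined with the previous step this yields the claimed dependence. The only point needing genuine care is the verification that the type $t$ entering $\dt(r,\beta,n)$ is insensitive to $\beta$ beyond its divisibility — everything else is formal bookkeeping, and the substantive input has already been supplied by Theorems~\ref{prop:DT dependence}, \ref{thm:Todas formula}, and \ref{thm:GW correspondence}.
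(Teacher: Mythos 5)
Your argument is correct and follows essentially the same route as the paper's own proof: reduce to the reduced invariants $n_{g,\beta}^Q$ via the multiple-cover sum, observe that for fixed $(r,n)$ the square, divisibility and type of $(r,\beta,n)$ depend on $\beta$ only through $\beta^2$ and $\mathrm{div}(\beta)$, and then transport the invariance from $f_\beta^{\PT}$ to $f_\beta^{\GW}$ via the GW/PT correspondence and Toda's formula. The one point you flag as needing care — that the type is insensitive to $\beta$ beyond its divisibility — is handled exactly as you describe, since the type is a function of $r/m$ and $(2n+r)/m$ with $m=\gcd(r,\mathrm{div}(\beta),2n)$.
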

\begin{proof}
It suffices to show that $n_{g,\beta}^{Q}$ only depends on $\beta^2$ and $\mathrm{div}(\beta)$. For $v=(r,\beta,n) \in H^{\ast}(Y,\BZ)$, the square of $v$, the divisibility of $\pi^{\ast}(v)$, and the type of $v$
only depends on $r,n,\mathrm{div}(\beta), \beta \cdot \beta$.
Hence by Theorem~\ref{prop:DT dependence}, the series $f_{\beta}^{\PT}(p)$ only depends upon $\beta$ through $\beta^2$ and $\mathrm{div}(\beta)$. The claim hence follows from $f_{\beta}^{\GW}(z) = f_{\beta}^{\PT}(p)$.
\end{proof}

Recall the coefficients $\omega_g(n)$ defined for all $g,n$ by
\[ \omega_g(n) = (-1)^{g-1} \left[
\frac{\Theta(z,2\tau)^2}{\Theta(z,\tau)^2} \frac{\eta(2 \tau)^{8}}{\eta(\tau)^{16}}
\right]_{z^{2g-2} q^n}
\]

\begin{prop} \label{prop:GW vs DT}
The following two statements are equivalent:
\begin{enumerate}
\item[(i)] For all $v \in H^{\ast}(Y,\BZ)$ of divisibility $\mathrm{div}(\pi^{\ast} v) \leq m$ we have
\[ \dt(v) = 8 [ \eta^{-12}(\tau) ]_{q^{v \cdot v/2}}. \]
\item[(ii)] For all effective $\beta \in H_2(Y,\BZ)$ of divisibility $\mathrm{div}(\beta) \leq m$ we have
\[
n_{g,\beta}^{Q} = 8 \omega_{g}\left( \frac{\beta^2}{2} \right).
\]
\end{enumerate}
\end{prop}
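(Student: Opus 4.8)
The plan is to reduce both (i) and (ii) to the single closed-form identity
\begin{equation}
f^{\PT}_{\beta}(p) = 8\left[\frac{\Theta(z,2\tau)^2}{\Theta(z,\tau)^2}\frac{\eta(2\tau)^8}{\eta(\tau)^{16}}\right]_{q^{\beta^2/2}}\qquad\text{for all effective }\beta\text{ with }\mathrm{div}(\beta)\le m,
\tag{$\ast$}
\end{equation}
where $p=e^z$ and the bracket extracts the $q^{\beta^2/2}$-coefficient of a series in $p,q$. The equivalence (ii)$\iff(\ast)$ is immediate from the definitions: by the defining expansion of the $\omega_g(n)$ the right-hand side of $(\ast)$ equals $8\sum_g\omega_g(\beta^2/2)(-1)^{g-1}z^{2g-2}$, while $f^{\GW}_{\beta}(z)=\sum_g n^Q_{g,\beta}(-1)^{g-1}z^{2g-2}$, and we have already established $f^{\GW}_{\beta}(z)=f^{\PT}_{\beta}(p)$ under $p=e^z$; comparing coefficients of $z^{2g-2}$ gives exactly $n^Q_{g,\beta}=8\omega_g(\beta^2/2)$. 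Thus the real content is the equivalence (i)$\iff(\ast)$.

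For the direction (i)$\Rightarrow(\ast)$ I would compute the Laurent coefficients of $f^{\PT}_{\beta}$ directly. From the expansion preceding this proposition one reads off $[f^{\PT}_{\beta}]_{p^n}=\sum_{r\ge 0}(n+r)(-1)^{r-1}\dt(r,\beta,n)$ for $n>0$, with the analogous formulas for the $p^0$ and $p^{-n}$ coefficients. Substituting the hypothesis $\dt(r,\beta,n)=8\left[\eta(\tau)^{-12}\right]_{q^{(\beta^2-r^2-2rn)/2}}$ is legitimate, since every vector $(r,\beta,n)$ has $\mathrm{div}(\pi^{\ast}v)=\gcd(r,\mathrm{div}(\beta),2n)\le\mathrm{div}(\beta)\le m$. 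The key observation is that $\eta(\tau)^{-12}=q^{-1/2}\prod_k(1-q^k)^{-12}$ has only half-odd-integer $q$-powers, whereas $\beta^2/2\in\BZ$; hence the exponent $\beta^2/2-rn-r^2/2$ lies in $\BZ+\tfrac12$ precisely when $r$ is odd, so every even-$r$ term vanishes. The surviving odd-$r$ sum $8\sum_{r\ge 1,\ r\text{ odd}}(n+r)\left[\eta(\tau)^{-12}\right]_{q^{\beta^2/2-rn-r^2/2}}$ is, term by term, the $q^{\beta^2/2}p^n$-coefficient of the product of $\eta(\tau)^{-12}$ with the series of Proposition~\ref{prop:theta identity}(iii), which is exactly the right-hand side of $(\ast)$.

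For $(\ast)\Rightarrow$(i) I would run this computation in reverse. Setting $g(r,\beta,n):=\dt(r,\beta,n)-8\left[\eta(\tau)^{-12}\right]_{q^{(\beta^2-r^2-2rn)/2}}$, the (unconditional) identity just established together with $(\ast)$ yields, for every effective $\beta$ with $\mathrm{div}(\beta)\le m$ and every $n$, the relation $\sum_{r\ge 0}(n+r)(-1)^{r-1}g(r,\beta,n)=0$ (and the corresponding $p^0$-relation $\sum_{r>0}(-1)^{r-1}r\,g(r,\beta,0)=0$). I would then prove $g\equiv 0$ on vectors of divisibility $\le m$ by a double induction. The outer induction is on the divisibility $m'$, with base case $m'=1$ furnished by Proposition~\ref{prop:primitive DT invariants} and the imprimitive-to-primitive reduction \eqref{DTdt}; the inner induction is downward on the square $d=v\cdot v$, whose base case (all $d\ll 0$) is supplied by the emptiness/boundedness in Lemma~\ref{DT vanishing}. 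Since $\dt$ depends on $v$ only through $(t,m',d)$ by Theorem~\ref{prop:DT dependence}, the inductive step isolates each $g^{t}_{m',d}$ by choosing the orbit representative from the remark preceding Corollary~\ref{cor:orbits} that appears as the term of largest square among the divisibility-$m'$ terms of a suitable coefficient of $f^{\PT}_{\beta}$. Concretely I would take $\beta$ with $\mathrm{div}(\beta)=m'$ and use $(0,\beta,m')$ inside $[f^{\PT}_{\beta}]_{p^{m'}}$ (multiplier $-m'$) to realize the even type, and $(m',\beta,0)$ inside $[f^{\PT}_{\beta}]_{p^0}$ (multiplier $(-1)^{m'-1}m'$) or, when $m'=2m''$, $(0,\beta,m'')$ inside $[f^{\PT}_{\beta}]_{p^{m''}}$ (multiplier $-m''$) to realize the odd type. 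In each case all divisibility-$m'$ terms other than the target have strictly smaller square (so their $g$ vanishes by the inner induction) and all terms of divisibility $<m'$ vanish by the outer induction, so the relation collapses to a nonzero multiple of $g^{t}_{m',d}$ equal to zero.

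The main obstacle is this last inversion rather than the forward computation, which is short once the half-integrality remark is in place. For the backward direction one must verify that the chosen representatives really exhaust all triples $(t,m',d)$ occurring for vectors of divisibility $\le m$ — here the parity constraints between type, divisibility and square (Remark~\ref{rmk:divisibility 1 odd even}, in particular that odd type with $d/m'^2$ even forces $m'$ even) are exactly what guarantees that the three families above suffice — that an effective $\beta$ of the prescribed divisibility and square always exists (using that $U\oplus E_8(-1)$ primitively represents all even integers, together with deformation invariance of the invariants), and that in each selected coefficient the targeted term is genuinely the unique divisibility-$m'$ term of maximal square with nonzero multiplier.
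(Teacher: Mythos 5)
Your proposal is correct and follows essentially the same route as the paper: reduce both statements to the identity $f^{\PT}_{\beta}=f^{\KM}_{\beta}$, prove the forward direction by substituting the closed form into \eqref{230f0sdf0} and using the half-integrality of the exponents of $\eta^{-12}$ together with Proposition~\ref{prop:theta identity} (this is exactly Lemma~\ref{lemma:computation}), and prove the converse by showing the linear relations determine $\dt$ uniquely via induction on divisibility and square using the orbit classification of Corollary~\ref{cor:orbits}. The only (harmless) divergence is that the paper pins down the rank-zero and even-type invariants at once by observing that $f^{\KM}_{\beta}$ is a Laurent polynomial invariant under $p\mapsto 1/p$, which forces $\dt(0,\beta,n)=0$, whereas you fold these cases into the same induction by extracting the $p^{m'}$- and $p^{m''}$-coefficients.
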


We start with the following basic computation: 
\begin{lemma} \label{lemma:computation}
For any $\beta \in H_2(Y,\BZ)$ we have
\begin{multline*}
\left[ \frac{\Theta(z,2\tau)^2}{\Theta(z,\tau)^2} \frac{\eta(2 \tau)^{8}}{\eta(\tau)^{16}} \right]_{q^{\beta^2/2}} \\
 = \sum_{\substack{n>0\\r>0 \text{ odd}}} (n+r) [ \eta^{-12}(\tau) ]_{q^{\beta^2/2 - rn-r^2/2}} (p^n + p^{-n})
%+ \sum_{n>0} n \dt(0,\beta,n) p^n
+ \sum_{\substack{r > 0\\ r \text{ odd}}} r [\eta^{-12}(\tau)]_{q^{\beta^2/2 - r^2/2}} \end{multline*}
\end{lemma}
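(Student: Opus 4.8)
The plan is to derive this directly from part (iii) of Proposition~\ref{prop:theta identity}. The key observation is that the left-hand side factors as
\[
\frac{\Theta(z,2\tau)^2}{\Theta(z,\tau)^2} \frac{\eta(2\tau)^8}{\eta(\tau)^{16}} = \frac{1}{\eta(\tau)^{12}} \cdot \left( \frac{\Theta(z,2\tau)^2}{\Theta(z,\tau)^2} \frac{\eta(2\tau)^8}{\eta(\tau)^4} \right),
\]
and the bracketed factor is precisely the series evaluated in Proposition~\ref{prop:theta identity}(iii). Substituting that expansion, the whole expression becomes $\eta(\tau)^{-12}$ times an explicit power series in $p,q$ whose only $q$-exponents are $r^2/2$ and $rn + r^2/2$ with $r$ odd.

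Next I would extract the coefficient of $q^{\beta^2/2}$ by linearity. Since multiplication by $q^{a}$ shifts $q$-exponents, each monomial $c\, q^{a}$ in the bracketed factor contributes $c \cdot \left[ \eta^{-12}(\tau) \right]_{q^{\beta^2/2 - a}}$ to the coefficient. Applying this separately to the two families of terms in (iii) — the terms $r q^{r^2/2}$ and the terms $(n+r)(p^n+p^{-n}) q^{rn+r^2/2}$ — immediately produces the two sums on the right-hand side, namely $\sum_{r \text{ odd}} r\, [\eta^{-12}]_{q^{\beta^2/2 - r^2/2}}$ and $\sum_{n>0,\ r>0 \text{ odd}} (n+r)\,[\eta^{-12}]_{q^{\beta^2/2 - rn - r^2/2}}(p^n+p^{-n})$. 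After reordering the double sum this matches the claimed identity term by term, with the $p$-dependence carried along unchanged.

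The only point requiring a small check is that all the sums are finite and the coefficient extraction is well-defined. Here I would use that $\eta^{-12}(\tau) = q^{-1/2}\prod_{m\geq 1}(1-q^m)^{-12}$ has lowest $q$-exponent $-1/2$, so that $[\eta^{-12}]_{q^{s}} = 0$ whenever $s < -1/2$; consequently only the finitely many pairs $(r,n)$ with $rn + r^2/2 \leq \beta^2/2 + 1/2$ contribute, and both sides terminate. I would also record that $\beta^2/2 \in \BZ$ because the intersection form on $H^2(Y,\BZ)$ is even, while $r^2/2$ is a half-integer for odd $r$, so the shifted arguments $\beta^2/2 - r^2/2$ and $\beta^2/2 - rn - r^2/2$ lie in $\BZ + \tfrac12$, which is exactly the support of the $q^{-1/2}$-shifted series $\eta^{-12}$. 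I do not anticipate any genuine obstacle: the statement is a purely formal consequence of Proposition~\ref{prop:theta identity}(iii), and its entire content is this coefficient-extraction together with the finiteness remark.
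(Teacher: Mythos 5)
Your proposal is correct and is essentially identical to the paper's own proof: both factor the left-hand side as $\eta^{-12}(\tau)$ times the series from Proposition~\ref{prop:theta identity}(iii) and then extract the $q^{\beta^2/2}$-coefficient term by term. The additional remarks on finiteness and half-integral exponents are sound but not needed beyond what the paper records.
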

\begin{proof}
By Proposition~\ref{prop:theta identity} we have:
\begin{align*}
\eta^{-12}(\tau) \frac{ \Theta(z,2 \tau)^2 \eta(2 \tau)^8 }{\Theta(z,\tau)^2 \eta^{4}(\tau) } 
= & \eta^{-12}(\tau) \sum_{\substack{r > 0\\ r \text{ odd}}}
\left( \sum_{n>0} (n+r) (p^n+p^{-n}) q^{rn + r^2/2} + r q^{r^2/2} \right) \\
= & \sum_{\substack{d \geq 0, n>0\\r>0 \text{ odd}}} 
(n+r) [ \eta^{-12}(\tau) ]_{q^{d - rn-r^2/2}} (p^n + p^{-n}) q^{d-rn-r^2/2} q^{rn+r^2/2} \\
& + \sum_{d} \sum_{\substack{r > 0\\ r \text{ odd}}} r [\eta^{-12}(\tau)]_{q^{d - r^2/2}} q^{d-r^2/2} q^{r^2/2}.
\end{align*}
The claim follows by taking the $q^{\beta^2/2}$ coefficient.
\end{proof}

\begin{proof}[Proof of Proposition~\ref{prop:GW vs DT}]
Assume (i) first. 
Since the $q$-expansion of $\eta^{-12}(\tau)$ has only half-integral exponents, it follows that $\dt(v)=0$ for all $v$ of divisibility $\leq m$ with $v \cdot v$ even.
Let $\beta \in H_2(Y,\BZ)$ be of divisibility $\leq m$.
Since on the right hand side of \eqref{230f0sdf0} there are only DT invariants of classes $v=(r,\beta,n)$ of divisibility $\leq m$, we get
\begin{align*}
f_{\beta}^{\GW} = 
f_{\beta}^{\PT} & = 8 \sum_{\substack{n>0\\r>0 \text{ odd}}} (n+r) [ \eta^{-12}(\tau) ]_{\beta^2/2 - rn-r^2/2} (p^n + p^{-n})
+ 8 \sum_{\substack{r > 0\\ r \text{ odd}}} r [\eta^{-12}(\tau)]_{\beta^2/2 - r^2/2}  \\
& =
8 \left[ \frac{\Theta(z,2\tau)^2}{\Theta(z,\tau)^2} \frac{\eta(2 \tau)^{8}}{\eta(\tau)^{16}} \right]_{q^{\beta^2/2}}
\end{align*}
where the second line is 
Lemma~\ref{lemma:computation}.
Taking the $z^{2g-2}$-coefficient yields:
$n_{g,\beta}^{Q} = 8 \omega_{g}\left( \beta^2/2 \right)$.

Hence (i) implies (ii).
Conversely, we need to argue that the condition
\begin{multline} \label{condition}
\forall \beta \in H_2(Y,\BZ) \text{ with } \mathrm{div}(\beta) \leq m : \quad 
 \left[ 8 \frac{ \Theta(z,2 \tau)^2 \eta(2 \tau)^8 }{\Theta(z,\tau)^2 \eta^{16}(\tau) } \right]_{q^{\beta^2/2}} \\ =
\sum_{n>0} \sum_{r>0} (-1)^{r-1} (n+r) \dt(r,\beta,n) (p^n+p^{-n}) 
- \sum_{n>0} n \dt(0,\beta,n) p^n 
+ \sum_{r>0} (-1)^{r-1} r \dt(r,\beta,0)
\end{multline}
has at most one solution for $\dt(v)$ where $\mathrm{div}(\pi^{\ast} v) \leq m$.

By Proposition~\ref{prop:theta identity}
the $q$-coefficients of
$\Theta(z,2 \tau)^2 \eta(2 \tau)^8/\Theta(z,\tau)^2 \eta^{16}(\tau)$
are Laurent {\em polynomials} in $p$ invariant under $p \mapsto 1/p$.
Hence condition \eqref{condition} and Lemma~\ref{DT vanishing} imply
\[ \dt(0,\beta,n) = 0 \text{ for all } \mathrm{div}(\beta) \leq m \text{ and } n > 0. \]
Since $\dt(0,\beta,n) = \dt(0,\beta, n+\mathrm{div}(\beta))$ we get that
\begin{equation} \dt(0,\beta,n) = 0 \text{ for all } \mathrm{div}(\beta) \leq m, \quad n \in \BZ. \tag{$\dagger$} \label{dagger} \end{equation}

Let $v = (r,\beta,n) \in H^{\ast}(Y,\BZ)$ of divisibility $m'\leq m$.
Let $\alpha_d \in H^2(Y,\BZ)$ denote any primitive class of square $2d$.
We check the possible cases of $v$ according to Corollary~\ref{cor:orbits}:

\begin{itemize}
\item If $v$ is of even type, then $\dt(v) = \dt(0,m' \alpha_d,0)$ for some $d$.
By \eqref{dagger} this gives $\dt(v)=0$ which is as claimed by (i).
\item If $v$ is odd and $v^2/(m')^2$ is even,
then $m'=2m''$ is even and $\dt(v) = \dt(0,2m'' \alpha_d, m'')$ for some $d$.
Hence $\dt(v)=0$ as claimed by (i).
\item If $v$ is odd and $v^2/(m')^2$ odd,
then $\dt=\dt(m',m' \alpha_d,0)$ for some $d$.
Assume that $\dt(v')$ is already determined inductively if $v'$ is of divisibility $m(v') < m'$, or if $v'$ has divisibility $m(v') = m'$ but $(v')^2 < v^2$.
Then consider the $p^0$-coefficient of $f^{\PT}_{m' \alpha_d}(p)$:
\[
[ f^{\PT}_{m' \alpha_d}(p) ]_{p^0} = \sum_{r>0} (-1)^{r-1} r \dt(r,m' \alpha_d,0).
\]
The left hand side is determined from (ii).
If $r<m'$ then $(r,m' \alpha_d,0)$ has divisibility $<m'$, so $\dt(r,m' \alpha_d,0)$ is known. If $r>m'$, then $(r,m' \alpha_d,0)^2 = (m')^2 \alpha_d^2 - r^2 < v^2$, so is also known. Hence $\dt(m', m' \alpha_d,0)$ is the only undetermined term in the above equation, hence is a posteriori also determined.
\end{itemize}
Thus given (ii) for divisibility $\leq m$,
there is a unique way to fix $\dt(v)$ for divisibility $\leq m$,
and in the first part we have seen that this must be $\dt(v) = [ \eta^{-12}(\tau) ]_{q^{v^2/2}}$.
\end{proof}

\section{Putting everything together} \label{sec:putting everything together}
In this section
we conclude the Klemm-Mari\~{n}o formula from what we have done before.
\subsection{Statement of result}
Consider the generating series of Gromov-Witten invariants:
\[
F^{\GW}_{\ell} = \sum_{g \geq 1} (-1)^{g-1} z^{2g-2} F^{\GW}_{g,\ell},
\quad \quad 
F^{\GW}_{g,\ell}(\zeta,q)
:=
\sum_{d \geq 0} \sum_{\alpha \in E_8(-1)} N_{g,\ell s+df+\alpha}^{Q} \zeta^{\alpha} q^{d}
\]
where for $\ell=0$ we assume $g>1$ or $d f + \alpha > 0$. 
In the language of the last section, we have
\begin{equation} \label{d0fsdf}
F^{\GW}_{\ell} = \sum_{d \geq 0} \sum_{\alpha \in E_8(-1)} q^d \zeta^{\alpha}
\sum_{\substack{ k|(\ell,d,\alpha) \\ k \text{ odd}}}
\frac{1}{k} f^{\GW}_{\frac{\ell}{k} s + \frac{d}{k} f + \frac{\alpha}{k}}(kz)
\end{equation}

Consider also the analogues expected from the Klemm-Mari\~{n}o formula:
\[
F^{\KM}_{\ell} = \sum_{g \geq 1} (-1)^{g-1} z^{2g-2} F^{\KM}_{g,\ell},
\quad \quad
F^{\KM}_{g,\ell}
:=
\sum_{\substack{d \geq 0 \\ \alpha \in E_8(-1)}} \sum_{\substack{k|(\ell,d,\alpha) \\ k \text{ odd}}} 8 k^{2g-3} \omega_g\left( \frac{ 2 \ell d + \alpha^2 }{2 k^2 } \right) q^d \zeta^{\alpha}
\]
where 
\[ \sum_{n} \omega_g(n) q^n = (-1)^{g-1} \left[ \frac{\Theta(z, 2 \tau)^2}{\Theta(z,\tau)^2} \frac{\eta(2 \tau)^8}{\eta(\tau)^{16}} \right]_{z^{2g-2}}. \]

For the cases $\ell>0$ we have equivalently:
\begin{align*}
F^{\KM}_{g,1}(\zeta,q) 
& = 8 \Theta_{E_8}(\zeta,q) \sum_{n} \omega_g(n) q^n  \\
%& =
%8 \Theta_{E_8}(\zeta,q) (-1)^{g-1} \left[ \frac{\Theta(z, 2 \tau)^2}{\Theta(z,\tau)^2} \frac{\eta(2 \tau)^8}{\eta(\tau)^{16}} \right]_{z^{2g-2}} \\
F^{\KM}_{g,\ell} & = F^{\KM}_{g,1}|_{2g-2, \frac{1}{2} Q_{E_8}}V_\ell
\end{align*}
where the Hecke operator was defined in \eqref{Hecke:main text}. If we also let
\begin{equation} f^{\KM}_{\beta} := \left[ \frac{\Theta(z,2\tau)^2}{\Theta(z,\tau)^2} \frac{\eta(2 \tau)^{8}}{\eta(\tau)^{16}} \right]_{q^{\beta^2/2}} 
\label{f beta KM} \end{equation}
then the analogue of \eqref{d0fsdf} is:
\[
F^{\KM}_{\ell} = \sum_{d \geq 0} \sum_{\alpha \in E_8(-1)} q^d \zeta^{\alpha}
\sum_{\substack{ k|(\ell,d,\alpha) \\ k \text{ odd}}}
\frac{1}{k} f^{\KM}_{\frac{\ell}{k} s + \frac{d}{k} f + \frac{\alpha}{k}}(kz)
\]

Our goal in this section is to prove that all of these above are equal:
\begin{thm} \label{thm: main thm restated}
The difference
\[
\widehat{F}_{g,\ell} := F^{\GW}_{g,\ell}(\zeta,q) - F^{\KM}_{g,\ell}(\zeta,q).
\]
vanishes for all $g,\ell$. In particular,
\[ F^{\KM}_{\ell} = F^{\GW}_{\ell}, \quad F^{\KM}_{g,\ell} = F^{\GW}_{g,\ell}, \quad f^{\KM}_{\beta} = f^{\GW}_{\beta} = f^{\PT}_{\beta}. \]
\end{thm}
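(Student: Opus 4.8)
The plan is to deduce the theorem from the multiple-cover structure together with the modular constraints of Theorem~\ref{thm:HAE}, reducing everything to the vanishing of the reduced invariants
\[
\widehat n_{g,\beta} := n^Q_{g,\beta} - 8\,\omega_g(\beta^2/2).
\]
First I would record the two modularity inputs. By Proposition~\ref{prop:GW QvsY} the series $F^{\GW}_{g,\ell}$ equals $4$ times the Hodge-integral series $F_{g,\ell}((-1)^{g-1}\lambda_{g-1})$, so by Theorem~\ref{thm:HAE} and Corollary~\ref{cor:weight} it is a quasi-Jacobi form for $\Gamma_0(2)\ltimes(\BZ^8\oplus\BZ^8)$ of weight $2g-2$ and index $\tfrac12\ell Q_{E_8}$ with a pole of order $\le\ell$ at the cusps. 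On the other hand $F^{\KM}_{g,\ell}=F^{\KM}_{g,1}|_{2g-2,\frac12 Q_{E_8}}V_\ell$ with $F^{\KM}_{g,1}=8\,\Theta_{E_8}\sum_n\omega_g(n)q^n$, so Proposition~\ref{prop:Hecke operator} shows $F^{\KM}_{g,\ell}$ is a quasi-Jacobi form of the same weight, index and pole order. Hence so is the difference $\widehat F_{g,\ell}$. Finally, extracting the $q^d\zeta^\alpha$-coefficient of \eqref{d0fsdf} and of its Klemm--Mari\~{n}o analogue shows that this coefficient equals $(-1)^{g-1}\sum_{k\mid(\ell,d,\alpha),\,k\text{ odd}}k^{2g-3}\,\widehat n_{g,\beta/k}$ with $\beta=\ell s+df+\alpha$, and by Proposition~\ref{prop:GW dependence} each $\widehat n_{g,\beta'}$ depends only on $(\beta')^2$ and $\mathrm{div}(\beta')$.

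It therefore suffices to prove $\widehat n_{g,\beta}=0$ for all $g,\beta$, which I would do by induction on $m:=\mathrm{div}(\beta)$. The base case $m=1$ is already in hand: Proposition~\ref{prop:primitive DT invariants} evaluates the primitive DT invariants, and the implication (i)$\Rightarrow$(ii) of Proposition~\ref{prop:GW vs DT} then gives $n^Q_{g,\beta}=8\,\omega_g(\beta^2/2)$, i.e. $\widehat n_{g,\beta}=0$, for primitive $\beta$.

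For the inductive step with $m>2$, assume $\widehat n_{g,\beta'}=0$ whenever $\mathrm{div}(\beta')<m$, and consider $\widehat F_{g,m}$ (the case $\ell=m$). For an index $(d,\alpha)$ write $\beta=ms+df+\alpha$. If $\mathrm{div}(\beta)<m$, then every class $\beta/k$ appearing in the coefficient (for odd $k\mid\gcd(m,d,\alpha)$) has divisibility $<m$, so the coefficient vanishes by the inductive hypothesis; if instead $m\mid d$ and $m\mid\alpha$, then $\mathrm{div}(\beta)=m$, only $k=1$ contributes, and the coefficient is $\pm\widehat n_{g,\beta}$. Thus $\widehat F_{g,m}$ is supported on $q$-powers divisible by $m>2$. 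Since its index $\tfrac12 mQ_{E_8}\neq0$, Proposition~\ref{prop:quasi Jacobi form vanishing} (with Remark~\ref{rmk:vanishing with poles} absorbing the pole) forces $\widehat F_{g,m}=0$. As the representatives $ms+df+\alpha$ realize every value of $\beta^2$ at divisibility $m$ and $\widehat n_{g,\beta}$ depends only on $(\beta^2,m)$, this gives $\widehat n_{g,\beta}=0$ for all $\beta$ of divisibility $m$. Since the step uses only strictly smaller divisibilities, once $m=1$ and $m=2$ are settled all $m\geq 3$ follow.

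The main obstacle is the remaining base case $m=2$, where modular vanishing genuinely fails: by Remark~\ref{rmk:m=2 generation} a $\Gamma_0(2)$-form supported on even $q$-powers need not vanish. Here I would bring in the holomorphic anomaly equation. The clean equation $\tfrac{d}{dG_2}F_{g,k}=-k\,F_{g-1,k}$ of Section~\ref{subsubsec:Hodge integrals} holds for $F^{\KM}$ as well (for $F^{\KM}_{g,1}$ by a direct computation from \eqref{theta gk expansion}, and it propagates through $V_\ell$ by Proposition~\ref{prop:Hecke operator}), so $\widehat F_{g,2}$ satisfies $\tfrac{d}{dG_2}\widehat F_{g,2}=-2\,\widehat F_{g-1,2}$. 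Inducting on the genus (with $\widehat F_{1,2}$ honest because $F_{0,k}=0$) reduces $\widehat F_{g,2}$ to an \emph{honest} Jacobi form supported on even powers; after the substitution of Remark~\ref{rmk:m=2 generation} this is a Jacobi form for $\SL_2(\BZ)$ of index $\tfrac12 Q_{E_8}$ lying in a finite-dimensional space. I would then pin it down to zero using the relation $\DT^{\mathrm{odd}}_{4d,2}=-\DT^{\mathrm{even}}_{4d,2}$ of Lemma~\ref{lemma:dt for div 2}, the effectivity bound of Lemma~\ref{DT vanishing}, and an explicit low-order evaluation through G\"ottsche's formula \cite{Goettsche}. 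Assembling the three cases yields $\widehat n_{g,\beta}\equiv 0$, hence $f^{\KM}_\beta=f^{\GW}_\beta=f^{\PT}_\beta$ and $\widehat F_{g,\ell}=0$ for all $g,\ell$, as claimed.
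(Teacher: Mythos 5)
Your overall architecture — reduce to $\widehat n_{g,\beta}=0$, settle divisibility $1$ via Propositions~\ref{prop:primitive DT invariants} and~\ref{prop:GW vs DT}, and kill all divisibilities $m>2$ by observing that $\widehat F_{g,m}$ is supported on $q$-powers divisible by $m$ and invoking Proposition~\ref{prop:quasi Jacobi form vanishing} with Remark~\ref{rmk:vanishing with poles} — is exactly the paper's proof, and those parts are correct. The gap is in the $m=2$ case, which you rightly identify as the crux but do not actually close. Your plan is to use the $G_2$-anomaly equation to reduce $\widehat F_{g,2}$ to an honest Jacobi form supported on even $q$-powers, pass to an $\SL_2(\BZ)$-form via Remark~\ref{rmk:m=2 generation}, and then "pin it down to zero" in a "finite-dimensional space" by a "low-order evaluation." This does not work for all $g$: after factoring $\widehat F_{g,2}=H_g(q)\,\Theta_{E_8}(\zeta^2,q^2)$, the function $h_g(\tau)=H_g(\tau/2)$ is a holomorphic $\SL_2(\BZ)$-modular form of weight $2g-6$ with vanishing constant term, i.e.\ a cusp form. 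Modularity therefore forces $h_g=0$ only for $2g-6\le 10$, i.e.\ $g\le 8$; for $g\ge 9$ the space of cusp forms is nonzero and its dimension grows linearly in $g$, so no fixed finite set of coefficient computations (G\"ottsche or otherwise) can dispose of all genera. You list the right ingredients (Lemma~\ref{lemma:dt for div 2}, Lemma~\ref{DT vanishing}) but not the mechanism that makes two conditions suffice uniformly in $g$.

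The missing idea is to abandon the genus-by-genus viewpoint for $m=2$ and work with the all-genus series $\widehat F_2=\sum_g(-1)^{g-1}z^{2g-2}\widehat F_{g,2}$ in the stable-pairs variable $p=e^z$. The modular vanishing for small genus yields the two conditions $\widehat F_2|_{z=0}=0$ and $(p\,d/dp)^2\widehat F_2|_{p=1}=0$. On the other hand, Lemma~\ref{fhat beta} together with the already-established divisibility-$1$ case (which gives $\widehat{\dt}(r,\beta_d,n)=0$ for all odd $r$) and Lemma~\ref{lemma:dt for div 2} expresses the $q^{2d}\zeta^0$-coefficient $\widehat f_{\beta_d}(p)$, $\beta_d=2s+2df$, as an explicit universal Laurent polynomial in $p$ whose coefficients involve only the single unknown $a(d)=\widehat{\dt}^{\,\mathrm{odd}}_{8d,2}$ and the quantities $a(s)$ with $s<d$. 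Inducting on $d$, the two conditions at $p=1$ force $a(d)=a(d-\tfrac12)=0$, hence $\widehat f_{\beta_d}=0$ as a function of $p$ — which kills every genus coefficient simultaneously — and the theta-factorization then gives $\widehat F_{g,2}=0$ for all $g$. Without this step your induction on $m$ cannot get past its second base case.
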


Using Proposition~\ref{prop:GW vs DT} we obtain the following corollary
(which implies Theorem~\ref{thm:main theorem} due to Proposition~\ref{prop:GW QvsY}):
\begin{cor}
\label{cor:consequence of main thm}
For any $\beta \in H_2(Y,\BZ)$, $g \geq 0$ and $v \in H^{\ast}(Y,\BZ)$ we have
\begin{alignat*}{2}
N^Q_{g,\beta} & = \sum_{\substack{k|\beta \\ k \geq 1 \text{ odd}}} 8 k^{2g-3} \omega_g\left( \frac{ \beta^2 }{2 k^2 } \right),
& 
\quad \quad \quad \quad \quad n^Q_{g,\beta} & = 8 \omega_g\left( \beta^2/2 \right) \\
\DT(v) & = 8 \sum_{\substack{k|v \\ k \geq 1 \text{ odd}}} \frac{1}{k^2} \left[ \frac{1}{\eta(\tau)^{12}} \right]_{q^{\frac{v/k \cdot v/k}{2}}},
& 
\dt(v) & = 8 \left[ \frac{1}{\eta^{12}(\tau)} \right]_{q^{v \cdot v/2}}
\end{alignat*}
\end{cor}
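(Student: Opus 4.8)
The plan is to deduce all four identities formally from Theorem~\ref{thm: main thm restated} together with Proposition~\ref{prop:GW vs DT}, by unwinding the definitions of the generating series and comparing Fourier coefficients; no further geometric input is required.

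First I would establish the formula for $N^Q_{g,\beta}$ by extracting the coefficient of $\zeta^{\alpha}q^{d}$ from the equality $F^{\GW}_{g,\ell}=F^{\KM}_{g,\ell}$ furnished by Theorem~\ref{thm: main thm restated}. By definition of $F^{\GW}_{g,\ell}$ this coefficient equals $N^Q_{g,\,\ell s+df+\alpha}$, whereas by definition of $F^{\KM}_{g,\ell}$ it equals $\sum_{\text{odd }k\mid(\ell,d,\alpha)}8k^{2g-3}\omega_{g}\!\big(\tfrac{2\ell d+\alpha^{2}}{2k^{2}}\big)$. Setting $\beta=\ell s+df+\alpha$ and using $s^{2}=f^{2}=0$, $s\cdot f=1$ and $\alpha\perp s,f$, one gets $\beta^{2}=2\ell d+\alpha^{2}$, so that $\tfrac{2\ell d+\alpha^{2}}{2k^{2}}=\tfrac{\beta^{2}}{2k^{2}}$; moreover, under the identification $H_{2}(Y,\BZ)\cong U\oplus E_{8}(-1)$ the condition $k\mid(\ell,d,\alpha)$ is precisely $k\mid\beta$. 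This produces the claimed expression for $N^Q_{g,\beta}$. Since $s$ and $f$ are nef (being half-fibers of elliptic fibrations), every effective class has $\ell=\beta\cdot f\ge0$ and $d=\beta\cdot s\ge0$ and therefore occurs in some $F^{\GW}_{g,\ell}$; for the remaining classes both sides vanish, using Remark~\ref{rmk:GW stuff}(b) on the left and the convention $\omega_{g}(n)=0$ for $n\notin\BZ_{\ge0}$ on the right.

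Next I would read off the other three identities from the first. Inverting the defining relation $N^Q_{g,\beta}=\sum_{\text{odd }k\mid\beta}k^{2g-3}n^Q_{g,\beta/k}$ by Möbius inversion over the odd divisors of $\mathrm{div}(\beta)$, and using $\omega_{g}(\beta^{2}/(2k^{2}))=\omega_{g}((\beta/k)^{2}/2)$, gives $n^Q_{g,\beta}=8\,\omega_{g}(\beta^{2}/2)$. This is exactly statement~(ii) of Proposition~\ref{prop:GW vs DT}, now valid for every divisibility, so the proposition yields its equivalent statement~(i), namely $\dt(v)=8\,[\eta^{-12}(\tau)]_{q^{v\cdot v/2}}$ for all $v\in H^{\ast}(Y,\BZ)$. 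Finally, substituting this into the defining relation \eqref{DTdt}, $\DT(v)=\sum_{\text{odd }k\mid v}k^{-2}\dt(v/k)$, gives the displayed formula for $\DT(v)$; both $\DT$-identities make sense for arbitrary $v$ since their right-hand sides depend only on $v\cdot v$ and the divisibility and are symmetric under $v\mapsto -v$.

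I do not expect a genuine obstacle: once Theorem~\ref{thm: main thm restated} is granted the argument is bookkeeping. The step requiring the most care is matching the arithmetic data across the three theories — confirming $\beta^{2}=2\ell d+\alpha^{2}$, reconciling the ``odd $k$'' divisibility conditions for $\beta$ against $(\ell,d,\alpha)$ and for $v$ against $\pi^{\ast}v$, and tracking the factors of $8$ and the signs $(-1)^{g-1}$ and $(-1)^{r-1}$ through the Hecke-operator and logarithm expansions of Section~\ref{subsec:Consequences of Toda}. A secondary issue is the passage from effective to arbitrary $\beta$ in the first two identities, which I would settle using the nefness of $s,f$ and the vanishing conventions above, noting that only the effective cone carries information and that this is exactly what feeds, via $N^Q_{g,\beta}=4N^Y_{g,\beta}$ (Proposition~\ref{prop:GW QvsY}), into the Klemm--Mari\~{n}o formula of Theorem~\ref{thm:main theorem}.
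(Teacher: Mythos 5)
Your proposal is correct and is essentially the paper's own argument: the paper offers no separate proof of this corollary, obtaining it by exactly your unwinding — extract the $q^d\zeta^{\alpha}$-coefficient of $F^{\GW}_{g,\ell}=F^{\KM}_{g,\ell}$ from Theorem~\ref{thm: main thm restated} using $\beta^2=2\ell d+\alpha^2$ and $k\mid(\ell,d,\alpha)\Leftrightarrow k\mid\beta$, invert the multiple-cover relations defining $n^Q_{g,\beta}$ and $\dt(v)$, and pass through the equivalence of Proposition~\ref{prop:GW vs DT} for all divisibilities. One small caveat: your claim that both sides vanish for every non-effective $\beta$ fails literally for anti-effective classes of positive square (e.g.\ $\beta=-s-f$ has $\beta^2=2$, so the right-hand side is $8\,\omega_g(1)\neq 0$ while $N^Q_{g,\beta}=0$), but since the generating series only encode $\ell,d\geq 0$ the paper's ``for any $\beta$'' carries the same implicit restriction to the effective cone, so this is a matter of stated convention rather than a gap in your argument.
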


\subsection{Similarities between $F^{\GW}$ and $F^{\KM}$}
Before starting the proof of Theorem~\ref{thm: main thm restated}
we recall some basic properties of our generating series from earlier sections:
\label{subsec:similarities}
%We recall the following result from the previous sections:
\begin{prop} \label{prop:Fhat HAE}
For $\ell>0$, $F^{\KM}_{g,\ell}(\zeta,q)$ and $F^{\GW}_{g,\ell}(\zeta,q)$
are elements of weight $2g-2$
in the space of quasi-Jacobi forms
$\frac{1}{\Delta(q)^{\ell}} \QJac_{\frac{1}{2} \ell Q_{E_8}}(\Gamma_0(2))$
satisfying
\[
\frac{d}{dG_2} F^{\ast}_{g,\ell} = -\ell F^{\ast}_{g-1,\ell}, \quad 
\xi_{\lambda} F^{\ast}_{g,\ell} = 0 \text{ for all } \lambda \in E_8(-1), \quad 
\text{ for }\ast \in \{ \GW, \KM \}.
\]
\end{prop}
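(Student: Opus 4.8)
The plan is to prove the two assertions for $F^{\GW}_{g,\ell}$ and $F^{\KM}_{g,\ell}$ separately, as their modularity has completely different sources. For $F^{\GW}_{g,\ell}$ everything is already contained in Theorem~\ref{thm:HAE} applied to the Hodge-integral series of Section~\ref{subsubsec:Hodge integrals}; for $F^{\KM}_{g,\ell}$ I will first settle the case $\ell=1$ by a direct computation with the theta expansion \eqref{theta gk expansion}, and then propagate the result to all $\ell$ through the Hecke operator $V_\ell$.

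For the Gromov--Witten side I would first identify $F^{\GW}_{g,\ell}=4\,F_{g,\ell}\big((-1)^{g-1}\lambda_{g-1}\big)$, combining the definition of $F^{\GW}_{g,\ell}$ with the relation $N^Q_{g,\beta}=4N^Y_{g,\beta}$ (Proposition~\ref{prop:GW QvsY}) and $N^Y_{g,\beta}=\langle(-1)^{g-1}\lambda_{g-1}\rangle^Y_{g,\beta}$. Then Corollary~\ref{cor:weight} gives weight $2g-2$, Theorem~\ref{thm:HAE} gives membership in $(\eta^8(q^2)/\eta^{16}(q))^{\ell}\,\QJac_{\frac12\ell Q_{E_8}}(\Gamma_0(2))$, and the two displayed anomaly equations are exactly the identities computed at the end of Section~\ref{subsubsec:Hodge integrals}. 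It then remains only to convert the prefactor: writing $\eta^8(q^2)/\eta^{16}(q)=(\eta(q)\eta(q^2))^8/\Delta(q)$ and using $(\eta(\tau)\eta(2\tau))^8\in\Mod_8(\Gamma_0(2))$ from \eqref{example modualr forms for gamma02}, one sees $(\eta^8(q^2)/\eta^{16}(q))^{\ell}\,\QJac_{\frac12\ell Q_{E_8}}(\Gamma_0(2))\subseteq \Delta(q)^{-\ell}\QJac_{\frac12\ell Q_{E_8}}(\Gamma_0(2))$, which places $F^{\GW}_{g,\ell}$ in the required space.

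For the Klemm--Mari\~no side I start with $\ell=1$, where $F^{\KM}_{g,1}=8\,\Theta_{E_8}(\zeta,q)\,\Omega_g$ with $\Omega_g:=\sum_n\omega_g(n)q^n$. Membership holds since $\Theta_{E_8}$ is a Jacobi form of weight $4$ and index $\tfrac12 Q_{E_8}$ (hence also for $\Gamma_0(2)$), while $\Omega_g\in\Delta(q)^{-1}\QMod(\Gamma_0(2))$ by \eqref{theta gk expansion} together with $\eta^8(q^2)/\eta^{16}(q)\in\Delta(q)^{-1}\Mod_8(\Gamma_0(2))$; the product lies in $\Delta(q)^{-1}\QJac_{\frac12 Q_{E_8}}(\Gamma_0(2))$ and has weight $2g-2$. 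For the anomaly equations the key computation is $\frac{d}{dG_2}$ applied to $H:=\frac{\Theta(z,2\tau)^2}{\Theta(z,\tau)^2}\frac{\eta(2\tau)^8}{\eta(\tau)^{16}}$. Using \eqref{theta gk expansion} together with the identity $G_2(\tau)-G_2(2\tau)=\tfrac12(G_2+F_2)$, so that $\frac{d}{dG_2}(G_2(\tau)-G_2(2\tau))=\tfrac12$ while the higher $G_k(\tau)-G_k(2\tau)$ and the meromorphic modular factor $\eta(2\tau)^8/\eta(\tau)^{16}$ are annihilated by $\frac{d}{dG_2}$, the derivation property yields $\frac{d}{dG_2}H=z^2H$. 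Extracting the $z^{2g-2}$-coefficient and shifting the genus index (the extra $z^2$ turns $g$ into $g+1$, and the sign $(-1)^{g-1}$ produces the minus) gives $\frac{d}{dG_2}\Omega_g=-\Omega_{g-1}$, and since $\frac{d}{dG_2}\Theta_{E_8}=0$ this upgrades to $\frac{d}{dG_2}F^{\KM}_{g,1}=-F^{\KM}_{g-1,1}$. The equation $\xi_\lambda F^{\KM}_{g,1}=0$ is immediate, since both factors are constant in the elliptic variable: $\Theta_{E_8}$ is a genuine Jacobi form and $\Omega_g$ does not depend on $\zeta$.

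Finally, for general $\ell$ I would feed the case $\ell=1$ through $F^{\KM}_{g,\ell}=F^{\KM}_{g,1}|_{2g-2,\frac12 Q_{E_8}}V_\ell$ and invoke Proposition~\ref{prop:Hecke operator}: it sends $\Delta^{-1}\QJac_{2g-2,\frac12 Q_{E_8}}(\Gamma_0(2))$ into $\Delta^{-\ell}\QJac_{2g-2,\frac\ell2 Q_{E_8}}(\Gamma_0(2))$, intertwines $\frac{d}{dG_2}$ with $V_\ell$ up to the factor $\ell$, and carries $\xi_\lambda$-closed forms to $\xi_\lambda$-closed forms; combined with $F^{\KM}_{g-1,\ell}=F^{\KM}_{g-1,1}|V_\ell$ this yields $\frac{d}{dG_2}F^{\KM}_{g,\ell}=-\ell F^{\KM}_{g-1,\ell}$ and $\xi_\lambda F^{\KM}_{g,\ell}=0$. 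I expect the main obstacle to be the $\ell=1$ holomorphic anomaly computation: one must correctly account for the fact that the holomorphic anomaly operator for $\Gamma_0(2)$ does \emph{not} annihilate $G_2(2\tau)$ (it has $G_2$-derivative $\tfrac12$), so that the naive guess $\frac{d}{dG_2}(G_2(\tau)-G_2(2\tau))=1$ is wrong; pinning down this constant is precisely what makes the factor $-\ell$ in the anomaly equation come out correctly and match the Gromov--Witten side.
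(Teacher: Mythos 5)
Your proposal is correct and follows essentially the same route as the paper: the Gromov--Witten side is read off from Theorem~\ref{thm:HAE}, Corollary~\ref{cor:weight} and the Hodge-integral computation of Section~\ref{subsubsec:Hodge integrals} (via Proposition~\ref{prop:GW QvsY}), while the Klemm--Mari\~no side is settled at $\ell=1$ using $\Theta_{E_8}$ and the expansion \eqref{theta gk expansion} and then propagated by the Hecke operators of Proposition~\ref{prop:Hecke operator}. The only additions are details the paper leaves implicit, both handled correctly: the conversion of the prefactor $(\eta^8(q^2)/\eta^{16}(q))^{\ell}$ into $\Delta(q)^{-\ell}$ times an element of $\Mod_8(\Gamma_0(2))^{\ell}$, and the constant $\frac{d}{dG_2}\bigl(G_2(\tau)-G_2(2\tau)\bigr)=\tfrac12$ that produces $\frac{d}{dG_2}H=z^2H$.
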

\begin{proof}
For $F^{\GW}_{g,\ell}$ this follows
from the example discussed in Section~\ref{subsubsec:Hodge integrals},
and the comparision of GW invariants of $Q$ and $Y$
in Proposition~\ref{prop:GW QvsY}.

On the KM-side, we consider first
\[ F^{\KM}_{g,1} = 8 \Theta_{E_8}(\zeta,q) (-1)^{g-1} \left[ \frac{\Theta(z, 2 \tau)^2}{\Theta(z,\tau)^2} \frac{\eta(2 \tau)^8}{\eta(\tau)^{16}} \right]_{z^{2g-2}}. \]
The right side is the weight $4$ Jacobi form $\Theta_{E_8}(\zeta,q)$
multiplied by a quasi-modular form in $\frac{1}{\Delta(q)} \QMod(\Gamma_0(2))$ of weight $2g-2-4$, see \eqref{example modualr forms for gamma02} and \eqref{theta gk expansion}.
Hence we have
\[
F^{\KM}_{g,1} \in \frac{1}{\Delta(q)} \QJac_{\frac{1}{2} Q_{E_8}}(\Gamma_0(2)),
\]
Moreover, $F^{\KM}_{g,1}$ is of weight $2g-2$ and satisfies
$\xi_{\lambda} F^{\KM}_{g,1} = 0$. 
Since by \eqref{theta gk expansion} we have
\[
\frac{\Theta(z, 2 \tau)^2}{\Theta(z,\tau)^2}
= \exp\left( G_2(\tau) z^2 + \text{ terms involving only modular forms} \right),
\]
we further find
\[ \frac{d}{dG_2} F^{\KM}_{g,1} = - F^{\KM}_{g-1,1}. \]
The general case follows by applying the Hecke operators $|V_\ell$ and 
Proposition~\ref{prop:Hecke operator}.
%using Section~\ref{subsec:Hecke operators}.
\end{proof}
\begin{prop} \label{prop:dependence of Fgr}
For $\ast \in \{ \KM, \GW \}$, 
the coefficient of $q^d \zeta^{\alpha}$ of $F_{g,\ell}^{\ast}$ only depends on $g$, the square $2 \ell d + \alpha^2$ and the divisibility $\mathrm{gcd}(\ell,d, \mathrm{div}(\alpha))$. In other words,
there exists a function $a^{\ast}(g,D,m)$ such that that
$\big[ F_{g,\ell}^{\ast} \big]_{q^d \zeta^{\alpha}} = a^{\ast}(g,2 \ell d + \alpha^2,\mathrm{gcd}(\ell,d, \mathrm{div}(\alpha)))$
for all $g,\ell,d,\alpha$.
\end{prop}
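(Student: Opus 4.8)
The plan is to reduce both cases to a single lattice computation relating the exponents $(d,\alpha)$ of the monomial $q^d \zeta^{\alpha}$ to the two invariants $\beta^2$ and $\mathrm{div}(\beta)$ of the curve class $\beta = \ell s + df + \alpha \in H_2(Y,\BZ)$. First I would record, using $s^2 = f^2 = 0$, $s \cdot f = 1$ and $\alpha \perp s, f$, the identity
\[ \beta^2 = 2 \ell d + \alpha^2 =: D. \]
Since $\ell = \beta \cdot f$ and $d = \beta \cdot s$ are exactly the two $U$-coordinates of $\beta$ under the identification \eqref{UE8 decomposition}, with $\alpha$ its $E_8(-1)$-component, the divisibility is
\[ \mathrm{div}(\beta) = \gcd(\ell, d, \mathrm{div}(\alpha)) =: m. \]

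For the Gromov--Witten series the coefficient $\big[ F^{\GW}_{g,\ell} \big]_{q^d \zeta^{\alpha}}$ is by definition $N^Q_{g,\beta}$. Proposition~\ref{prop:GW dependence} asserts that $N^Q_{g,\beta}$ depends on $\beta$ only through $\beta^2$ and $\mathrm{div}(\beta)$; combined with the two identities above this immediately produces the required function $a^{\GW}(g,D,m)$. Nothing further is needed here, so the substantive content has already been supplied by Proposition~\ref{prop:GW dependence} (whose proof rested on the identity $f^{\GW}_{\beta} = f^{\PT}_{\beta}$ together with the monodromy invariance of the Donaldson--Thomas invariants in Theorem~\ref{prop:DT dependence}).

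For the Klemm--Mari\~{n}o series I would simply read the claim off the definition of $F^{\KM}_{g,\ell}$. Its coefficient of $q^d \zeta^{\alpha}$ is
\[ \sum_{\substack{k \mid (\ell, d, \alpha) \\ k \text{ odd}}} 8 k^{2g-3} \omega_g\!\left( \frac{2 \ell d + \alpha^2}{2 k^2} \right), \]
and the divisibility condition $k \mid (\ell, d, \alpha)$ --- meaning $k \mid \ell$, $k \mid d$ and $\alpha/k \in E_8(-1)$ --- is precisely $k \mid m$. Rewriting the argument of $\omega_g$ as $D/(2k^2)$, the coefficient becomes $\sum_{k \mid m,\, k \text{ odd}} 8 k^{2g-3} \omega_g(D/(2k^2))$, which manifestly depends on $(\ell, d, \alpha)$ only through $(D,m)$ and thereby defines $a^{\KM}(g,D,m)$.

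Since both cases reduce to inspection and to the already-established Proposition~\ref{prop:GW dependence}, I expect no real obstacle. The only point requiring care is the bookkeeping that matches the symbol $k \mid (\ell, d, \alpha)$ in the definition of $F^{\KM}$ with the divisibility $m = \mathrm{div}(\beta)$, and the verification that after this substitution the expression carries no residual dependence on $\ell$ beyond $D$ and $m$.
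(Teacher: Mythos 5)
Your proof is correct and follows essentially the same route as the paper, which likewise handles the Gromov--Witten case by citing Proposition~\ref{prop:GW dependence} and disposes of the Klemm--Mari\~{n}o case by inspection of the definition. The extra bookkeeping you supply (the identities $\beta^2 = 2\ell d + \alpha^2$ and $\mathrm{div}(\beta) = \gcd(\ell,d,\mathrm{div}(\alpha))$ for $\beta = \ell s + df + \alpha$) is exactly what the paper leaves implicit.
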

\begin{proof}
For the Gromov-Witten invariants, this was proven in Proposition~\ref{prop:GW dependence}.
The KM-side follows by inspection.
\end{proof}

Consider the difference of the invariant $\dt(r,\beta,d)$ and the expected answer:
\[ \widehat{\dt}(r,\beta,d) 
:= 
\dt(r,\beta,d) - 8 [ \eta^{-12}(\tau) ]_{\beta^2/2-r^2/2 - rn} \]
Set also
\[ \widehat{f}_{\beta} := 
 f_{\beta}^{\GW}(z) - f_{\beta}^{\KM}(z)
 =
f_{\beta}^{\PT}(p) - f_{\beta}^{\KM}(p) \]
where we identify here a rational function in $p$
again with the Taylor expansion at $z=0$ under the variable change $p=e^{z}$.

\begin{lemma} \label{fhat beta} For any $\beta$ we have
\[ \widehat{f}_{\beta} 
= \sum_{n>0} \sum_{r>0} (-1)^{r-1} (n+r)  \widehat{\dt}(r,\beta,n) (p^n+p^{-n}) 
- \sum_{n>0} n \widehat{\dt}(0,\beta,n) p^n 
+ \sum_{r>0} (-1)^{r-1} r \widehat{\dt}(r,\beta,0).
\]
\end{lemma}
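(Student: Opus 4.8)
The plan is to recognise both summands of $\widehat{f}_\beta$ as instances of one and the same expression, namely the right-hand side of \eqref{230f0sdf0}, and then subtract term by term. By the GW/PT correspondence (Theorem~\ref{thm:GW correspondence}) we have $\widehat{f}_\beta = f^{\GW}_\beta(z) - f^{\KM}_\beta(z) = f^{\PT}_\beta(p) - f^{\KM}_\beta(p)$ under $p = e^z$, and by definition $\widehat{\dt}(r,\beta,n) = \dt(r,\beta,n) - 8[\eta^{-12}(\tau)]_{q^{\beta^2/2 - r^2/2 - rn}}$. Since the expression \eqref{230f0sdf0} is linear in the $\dt$-variables, the assertion reduces to two identities: that $f^{\PT}_\beta$ equals \eqref{230f0sdf0} with the invariants $\dt(r,\beta,n)$ inserted, and that the very same expression with each $\dt(r,\beta,n)$ replaced by its conjectural value $8[\eta^{-12}(\tau)]_{q^{\beta^2/2 - r^2/2 - rn}}$ reproduces $f^{\KM}_\beta$.

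The first identity requires no work: it is precisely \eqref{230f0sdf0}. For the second, I would appeal to Lemma~\ref{lemma:computation}, which evaluates the $q^{\beta^2/2}$-coefficient of $\frac{\Theta(z,2\tau)^2}{\Theta(z,\tau)^2}\frac{\eta(2\tau)^8}{\eta(\tau)^{16}}$ as a sum running over \emph{odd} $r$ only. The bridge between the signed sum over all $r>0$ occurring in \eqref{230f0sdf0} and this odd-$r$ sum is the elementary vanishing $[\eta^{-12}(\tau)]_{q^m} = 0$ for every $m \in \BZ$, which holds because $\eta^{-12}(\tau) = q^{-1/2}\prod_{n\geq 1}(1-q^n)^{-12}$ is supported in half-integral $q$-powers while $\beta^2/2 \in \BZ$ (the lattice $H^2(Y,\BZ) \cong U \oplus E_8(-1)$ being even). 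I would use this in two places: the middle term of \eqref{230f0sdf0}, carrying the expected value $8[\eta^{-12}(\tau)]_{q^{\beta^2/2}} = 0$, drops out entirely; and for every even $r>0$ the exponent $\beta^2/2 - r^2/2 - rn$ is an integer, so those summands vanish too. What survives are exactly the odd-$r$ terms, for which $(-1)^{r-1} = 1$, and the expression then collapses to $8$ times the right-hand side of Lemma~\ref{lemma:computation}, i.e. to $f^{\KM}_\beta$ as in \eqref{f beta KM}.

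With both identities in hand, subtracting and using linearity of \eqref{230f0sdf0} in the $\dt$-slots immediately yields the claimed formula for $\widehat{f}_\beta$. I do not expect a genuine obstacle here; the proof is essentially bookkeeping. The one step that demands care is the parity argument just described, where one must verify that precisely the $r=0$ and even-$r$ contributions are annihilated by the half-integrality of the $\eta^{-12}$-expansion, so that the full signed sum over $r>0$ in \eqref{230f0sdf0} matches the odd-$r$ sum delivered by the theta identity of Proposition~\ref{prop:theta identity}.
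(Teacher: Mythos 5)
Your proposal is correct and is essentially identical to the paper's proof: the paper likewise takes the difference of \eqref{230f0sdf0} and the expression for $f^{\KM}_\beta$ from Lemma~\ref{lemma:computation}, using exactly the observation that $\eta^{-12}(\tau)$ is supported in half-integral $q$-exponents so that the $r=0$ and even-$r$ expected-value contributions vanish. No gaps; the parity bookkeeping you flag as the delicate step is precisely the point the paper also singles out.
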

\begin{proof}
Recall the description from \eqref{230f0sdf0}:
\[
f_{\beta}^{\PT}(p) 
 = \sum_{n>0} \sum_{r>0} (-1)^{r-1} (n+r) \dt(r,\beta,n) (p^n+p^{-n}) 
- \sum_{n>0} n \dt(0,\beta,n) p^n 
+ \sum_{r>0} (-1)^{r-1} r \dt(r,\beta,0)
\]
By the definition \eqref{f beta KM} and Lemma~\ref{lemma:computation}
we also have
\[
f^{\KM}_{\beta}
= \sum_{\substack{n>0\\r>0 \text{ odd}}} (n+r) [ \eta^{-12}(\tau) ]_{q^{\beta^2/2 - rn-r^2/2}} (p^n + p^{-n})
+ \sum_{\substack{r > 0\\ r \text{ odd}}} r [\eta^{-12}(\tau)]_{q^{\beta^2/2 - r^2/2}}.
\]
Subtracting both terms yields the result (observe that again, $\eta^{-12}(\tau)$ is a power series with half-integer exponents only).
\end{proof}

\subsection{Proof of Theorem~\ref{thm: main thm restated}}
\label{proof:main thm}
We argue by induction on $\ell$.

\vspace{4pt}
\noindent
\textbf{Case $\ell=0$.}
By Maulik and Pandharipande's computation of $N_{g,df}$ in \cite{MP}
which was recalled in Section~\ref{subsubsec:fiber classes} we have
\[
F^{\GW}_{g,0} = \delta_{g=1} \sum_{d>0} 8 \sum_{\text{odd } k|d} \frac{1}{k} q^d.
\]
This equals $F^{\KM}_{g,0}$ by observing that
\[
\omega_g(0) = (-1)^{g-1} \left[
\frac{\Theta(z,2\tau)^2}{\Theta(z,\tau)^2} \frac{\eta(2 \tau)^{8}}{\eta(\tau)^{16}} \Bigg|_{q=0}
\right]_{z^{2g-2}} = (-1)^{g-1} [ 1 ]_{z^{2g-2}} = \delta_{g=1}.
\]

\vspace{4pt}
\noindent
\textbf{Case $\ell=1$.}
By Proposition~\ref{prop:primitive DT invariants} we know all fiber DT invariants of $Q$ for primitive classes, and by Proposition \ref{prop:GW vs DT} 
this implies that for all primitive curve classes $\beta$ we have
\[
N_{g,\beta}^{Q} = n_{g,\beta}^{Q} = 8 \omega_{g}\left( \frac{\beta^2}{2} \right).
\]
In particular, this applies to $\beta = s + df+ \alpha$ and immediately gives $\widehat{F}_{g,1}=0$.

\vspace{4pt}
\noindent
\textbf{Case $\ell=2$.}
By Proposition~\ref{prop:dependence of Fgr} and the cases $\ell \in \{ 0,1 \}$ we have for all $g$:
\begin{itemize}
\item The $q^d \zeta^{\alpha}$-coefficient of $\widehat{F}_{g,2}$ vanishes
whenever $2s+df+\alpha$ is primitive,
\item The coefficient $q^0$ of $\widehat{F}_{g,2}$ vanish.
\end{itemize}
Hence $\widehat{F}_{g,2}$ is a linear combination of the monodmials $q^{2d} \zeta^{2\alpha}$ for some $d \geq 1$ and $\alpha \in E_8(-1)$.
Since $\mathrm{gcd}(\ell,2d, 2\alpha)$ is always equal to $2$,
by Proposition~\ref{prop:dependence of Fgr} the coefficient of this monomial depends only on 
\[ \frac{1}{8} (2s + 2df+2\alpha)^2 = d+\alpha^2/2. \]
Therefore we may write
\[
\left[ \widehat{F}_{g,2} \right]_{q^{2d} \zeta^{2\alpha}} = \alpha_g(d + \alpha^2/2)
\]
for some cofficients $\alpha_g(n)$. Summing over $d, \alpha$ we get:
\begin{equation} \label{Fhatg2}
\begin{aligned}
\widehat{F}_{g,2} & = \sum_{d,\alpha} \alpha_g(d + \alpha^2/2) q^{2d} \zeta^{2 \alpha} \\
& = \sum_{d,\alpha} \alpha_g(d + \alpha^2/2) q^{2(d+\alpha^2/2)} \zeta^{2 \alpha} (q^2)^{-\alpha^2/2} \\
& = \left( \sum_{n} \alpha_g(n) q^{2n} \right) \Theta_{E_8}(\zeta^2,q^2).
\end{aligned}
\end{equation}
Hence:
\begin{itemize}
\item It suffices to prove the vanishing of $\widehat{F}_{g,2}$ for its $q^{2d} \zeta^0$-coefficients.
\end{itemize}

We can conclude one more vanishing from \eqref{Fhatg2}: Recall that
\[ \widehat{F}_{g,2} \in \frac{1}{\Delta(q)^2} \QJac_{Q_{E_8}}(\Gamma_0(2)). \]
By standard facts on Jacobi forms (e.g. \cite[Lemma 2.20]{HilbHAE} or the Appendix) we also have
\[ \Theta_{E_8}(\zeta^2, q^2) \in \QJac_{Q_{E_8},4}(\Gamma_0(2)). \]
We conclude that
\[ H_g(q) := \sum_{n} \alpha_g(n) q^{2n} \in \frac{1}{\Delta(q)^2} \QMod(\Gamma_0(2)). \]
The holomorphic anomaly equation for $\widehat{F}_{g,2}$ (Proposition~\ref{prop:Fhat HAE}) shows that
\begin{equation} \label{H_g HAE}
\frac{d}{dG_2} H_g(q) = -2 H_{g-1}(q).
\end{equation}
Since $\widehat{F}_{g,2}$ is of weight $2g-2$ and $\Theta_{E_8}(\zeta^2,q^2)$ is of weight $4$, the weight of $H_g$ is $2g-6$.

\begin{lemma}
$H_g(q) = 0$ for $g \leq 8$.
\end{lemma}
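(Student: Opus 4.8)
The plan is to exploit the fact that $H_g$ is a power series in $q^2$ in order to transport the problem from $\Gamma_0(2)$ to the full modular group $\SL_2(\BZ)$, where nonzero cusp forms first appear in weight $12$; since $\widehat F_{g,2}$ has weight $2g-2$ and $\Theta_{E_8}$ has weight $4$, the quotient $H_g$ has weight $2g-6$, and the threshold $2g-6 < 12$ is exactly what yields the bound $g \leq 8$.

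First I would record two structural facts about $H_g(q) = \sum_n \alpha_g(n) q^{2n}$. Using the factorization $\widehat F_{g,2} = H_g(q)\,\Theta_{E_8}(\zeta^2,q^2)$ from \eqref{Fhatg2}, the vanishing of the $q^0$-coefficient of $\widehat F_{g,2}$ (established above from the cases $\ell \in \{0,1\}$), and the fact that $E_8$ contains a vector of every even norm, the $q^0\zeta^{2\alpha}$-coefficient of $\widehat F_{g,2}$ equals $\alpha_g(-\alpha\cdot\alpha/2)$; its vanishing forces $\alpha_g(n) = 0$ for all $n \leq 0$, so $H_g = O(q^2)$. Since $H_g$ involves only even powers of $q$, it is invariant under $\tau \mapsto \tau + \tfrac12$, and combined with $\Gamma_0(2)$-quasimodularity and Remark~\ref{rmk:m=2 generation} it is quasimodular for the conjugate group $C^{-1}\SL_2(\BZ)C$ with $C = \binom{2\ 0}{0\ 1}$. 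Consequently $\widetilde H_g(\sigma) := H_g(\sigma/2) = \sum_{n \geq 1}\alpha_g(n) p^n$ (with $p = e^{2\pi i \sigma}$) is a quasimodular form of weight $2g-6$ for $\SL_2(\BZ)$ that is holomorphic and vanishes at the unique cusp — and holomorphy there simultaneously accounts for both cusps of $\Gamma_0(2)$.

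Then I would induct on $g$. For $g \leq 3$ the weight $2g-6 \leq 0$, and a holomorphic quasimodular form of nonpositive weight vanishing at the cusp is $0$. For $4 \leq g \leq 8$, assume $H_{g-1} = 0$; the holomorphic anomaly equation \eqref{H_g HAE}, namely $\frac{d}{dG_2}H_g = -2H_{g-1}$, then gives $\frac{d}{dG_2}H_g = 0$, so $H_g$ has depth $0$ and is a genuine (meromorphic) modular form for $\Gamma_0(2)$. Hence $\widetilde H_g$ is a genuine modular form for $\SL_2(\BZ)$, and by the previous step it is holomorphic and vanishing at the cusp, i.e. a cusp form of weight $2g-6 \leq 10 < 12$. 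Since $S_k(\SL_2(\BZ)) = 0$ for $k < 12$, we get $\widetilde H_g = 0$ and thus $H_g = 0$, completing the induction.

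The main obstacle is the bookkeeping in the rescaling step: passing from $\tau$ with the two-cusped group $\Gamma_0(2)$ to $\sigma = 2\tau$ with the one-cusped group $\SL_2(\BZ)$ must not conceal a pole, so that $\widetilde H_g$ is genuinely holomorphic; the vanishing $\alpha_g(n) = 0$ for $n \leq 0$ is precisely what secures this. The conceptual point — and the reason one obtains $g \leq 8$ rather than the weaker $g \leq 6$ that a direct argument on $\Gamma_0(2)$ (whose first cusp form has weight $8$) would give — is that the even-$q$-power structure upgrades the relevant modularity all the way to $\SL_2(\BZ)$.
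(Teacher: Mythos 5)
Your argument follows the paper's proof almost step for step: induct on $g$, use $\frac{d}{dG_2}H_g=-2H_{g-1}$ and the inductive hypothesis to make $H_g$ a genuine modular form, use the even $q$-exponents together with Remark~\ref{rmk:m=2 generation} to write $H_g(\tau)=h_g(2\tau)$ with $h_g$ modular for $\SL_2(\BZ)$, and then conclude from the single cusp, boundedness as $q\to 0$, and the vanishing constant term that $h_g$ is a cusp form of weight $2g-6\le 10$, hence zero.

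The one step that fails as written is your base case $g\le 3$. There you apply the cusp argument directly to the \emph{quasi}modular form $\widetilde H_g$ before its depth has been killed, asserting that the absence of negative Fourier coefficients makes it holomorphic at the cusp, and that a holomorphic quasimodular form of nonpositive weight vanishing at the cusp is zero. But $\widetilde H_g$ is a priori only an element of $\frac{1}{\Delta^2}\QMod$ for $\SL_2(\BZ)$, and for such forms boundedness of the $q$-expansion does not imply membership in $\QMod$: the almost-holomorphic completion has higher $\nu$-components whose poles need not be visible in the constant ($\nu^0$) part. Concretely, a suitable $\BC$-linear combination of $G_4^2G_2/\Delta$ and $G_4G_6/\Delta$ has weight $-2$, a $q$-expansion with no negative powers, and nonzero image under $\frac{d}{dG_2}$, so it is a nonzero weakly holomorphic quasimodular form of negative weight that is bounded at the cusp. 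Hence ``nonpositive weight, bounded, constant term zero'' does not by itself give $\widetilde H_g=0$. The repair is exactly the paper's choice of base case: start the induction at $H_0=0$ (there are no genus-zero contributions), so that for every $g\ge 1$ the anomaly equation first yields $\frac{d}{dG_2}H_g=0$, i.e.\ $H_g$ is an honest weakly holomorphic modular form, for which your cusp argument is valid. With that change your inductive step covers all $1\le g\le 8$ and the separate base case is unnecessary.
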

\begin{proof}
We argue by induction on $g$, with $H_0=0$ the base.
By induction and \eqref{H_g HAE} we may assume that $H_g$ lies in $\frac{1}{\Delta(q)^4} \Mod(\Gamma_0(2))$.
Since $H_g(q)$ has only even $q$-exponents,
the quasi-modular form $H_g(q)$ satisfies $H_g(\tau+\frac{1}{2}) = H_g(\tau)$ in the variable $\tau$ where $q=e^{2 \pi i \tau}$, that is it satisfies the modular transformation property also for the matrix $\begin{pmatrix} 1 & 1/2 \\ 0 & 1 \end{pmatrix}$. By Remark~\ref{rmk:m=2 generation} it follows that
$H_g(\tau)$ is $h_g(2 \tau)$ for a function $h_g(\tau)$ that satisfies the
modular transformation properties for $\SL_2(\BZ)$.
Since $h_g$ has only a single cusp, and $h_g(2 \tau) = H_g(\tau)$ is bounded for $q \to 0$, we get that $h_g(\tau) \in \Mod = \BC[G_4, G_6]$.
Since the $q^0$ coefficient of $H_g$ vanishes, also the $q^0$-coefficient of $h_g$ vanishes.
Hence $h_g=0$ if its weight $2g-6 \leq 10$, or $g \leq 8$.
\end{proof}
By the lemma we conclude:
\begin{itemize}
\item $\widehat{F}_{g,2} = 0$ for $g \leq 8$.
\end{itemize}
In particular,
the series $\widehat{F}_2 = \sum_{g} (-1)^{g-1} z^{2g-2} \widehat{F}_{g,2}$
satisfies:
% in particular:
\begin{equation} \label{3sdf-}
\widehat{F}_2|_{z=0} = 0, \quad 
\frac{d^2 \widehat{F}_2}{d^2 z}\Big|_{z=0} = 0.
\end{equation}

We want to apply Lemma~\ref{lemma:dt for div 2}
to the coefficient
\[ \left[ \widehat{F}_2 \right]_{q^{2d} \zeta^0} = \widehat{f}_{\beta_d}, \quad \text{ where } \quad \beta_d := 2s+2df. \]
For all odd $r$ we have that $(r,\beta_d,n) \in H^{\ast}(Y,\BZ)$ has divisibility $m(r,\beta_d,n)=1$.
Hence by the case $\ell=1$ above, and Theorem~\ref{prop:DT dependence} and~\ref{prop:GW vs DT} we have for all odd $r$,
\[ \widehat{\dt}(r,\beta_d,n) = 0. \]
Moreover, Lemma~\ref{lemma:dt for div 2} and using the definition of $\dt$ observe also that
\[ \dt^{\mathrm{odd}}_{4t,2} = \DT^{\mathrm{odd}}_{4t,2} = -\DT^{\mathrm{even}}_{4t,2} = -\dt^{\mathrm{even}}_{4t,2} \quad \text{ for all even } t \in \BZ \]
and hence
\[ \widehat{\dt}^{\mathrm{odd}}_{4t,2} = -\widehat{\dt}^{\mathrm{even}}_{4t,2} \quad \text{ for all even } t \in \BZ. \]
where we write $\widehat{\dt}^{t}_{s,m}$ for $\widehat{\dt}(v)$ if $v$ is of type $t$, square $v^2=s$ and divisibility $\mathrm{div}(\pi^{\ast}(v)) = m$.

Define
\[ a(s) := \widehat{\dt}^{\mathrm{odd}}_{8s,2}. \]
For even $r$, lets say $r=2 \tilde{r}$,
we then have $\text{gcd}(r,\beta_d,2n) = 2$ and so:
\begin{align*}
\widehat{\dt}(2 \tilde{r},\beta_d,n) & = 
\begin{cases}
\widehat{\dt}^{\mathrm{odd}}_{\beta_d^2 - 4 \tilde{r}^2 - 4 \tilde{r} n, 2} & \text{ if } \tilde{r} \text{ odd or } n \text{ odd }\\[7pt]
\widehat{\dt}^{\mathrm{even}}_{\beta_d^2 - 4 \tilde{r}^2 - 4 \tilde{r} n, 2} & \text{ if } \tilde{r}, n \text{ even } 
\end{cases}
\\
& = 
(-1)^{(\tilde{r}-1) (n-1)}
a\left( d - \frac{\tilde{r} n}{2} - \frac{\tilde{r}^2}{2} \right).
\end{align*}
Inserting this into Lemma~\ref{fhat beta}
%\ref{lemma:dt for div 2} 
we get that
\begin{gather*}
\widehat{f}_{\beta_d}
=
- \sum_{n, \tilde{r}>0} (-1)^{(\tilde{r}-1) (n-1)}
a\left( d - \frac{\tilde{r} n}{2} - \frac{\tilde{r}^2}{2} \right)
(n+2 \tilde{r}) (p^n + p^{-n}) \\
- \sum_{\tilde{r}>0} (-1)^{\tilde{r}-1} a\left( d - \tilde{r}^2/2 \right)
- a(d) \frac{p}{(1+p)^2}.
\end{gather*}
We argue now by induction on $d$ that $a(d)=0$ for all $d$.
The base of the induction is $d<0$, where clearly $a(d)=0$ 
(since all Gromov-Witten invariants vanish for curve classes of negative square,
and then use the argument in the proof of Proposition~\ref{prop:GW vs DT}).
Assume we have $a(d')=0$ for all $d' < d$.
Then by \eqref{3sdf-} we have
\[ 0 = \widehat{f}_{\beta_d}\Big|_{p=1} = 
-\frac{1}{4} a(d) - a(d-\frac{1}{2}) + (\ldots) \]
\[
0 = \left( \left( p \frac{d}{dp} \right)^2 \widehat{f}_{\beta_d} \right)\Big|_{p=1} = 
\frac{1}{8} a(d) + (\ldots)
\]
where $(\ldots)$ stands for terms $a(s)$ where $s \leq d-1$, so for terms which vanish by induction.
We conclude that both $a(d)=0$ and $a(d-1/2)=0$.

In summary, the vanishing of $a(d)$ implies $\widehat{f}_{\beta_d}=0$ for all $d$,
and hence the vanishing of the $\zeta^0$-coefficient of $\widehat{F}_2$.
However, we have already seen that this implies the vanishing of $\widehat{F}_2$, so we are done with this step.

\vspace{4pt}
\noindent
\textbf{Case $\ell>2$.}
Assume that we have
$\widehat{F}_{g,\ell'} = 0$ for all $\ell' < \ell$.
By Proposition~\ref{prop:dependence of Fgr} and induction it follows that
the $q^d \zeta^{\alpha}$-coefficient of $\widehat{F}_{g,\ell}$ vanishes
unless the class $\ell s+df+\alpha$ is divisible by $\ell$.
Moreover, by Proposition~\ref{prop:Fhat HAE} $\widehat{F}_{g,\ell}$ is a quasi-Jacobi form of a certain weight and index.
We conclude that $\widehat{F}_{g,\ell}$
is a quasi-Jacobi form with Fourier expansion of the form
\[ \widehat{F}_{g,\ell}(\zeta,q) = \sum_{d \geq 0} \sum_{\alpha \in E_8(-1)} b(\alpha,d) \zeta^{\ell \alpha} q^{\ell d}. \]
By Proposition~\ref{prop:quasi Jacobi form vanishing} and Remark~\ref{rmk:vanishing with poles} we get that
$\widehat{F}_{g,\ell} = 0$.
\qed

\begin{rmk}
In the proof above, we could have argued the case $\ell>2$ parallel to the $\ell=2$ case, but we instead chose the more general and simpler approach.
\end{rmk}

\section{Vafa-Witten theory} \label{sec:Vafa Witten}
%\subsection{Results}
Let $Y$ be an Enriques surface equipped with a generic polarization $\CO_Y(1)$.
%and let $\CO_Y(1)$ be a generic polarization.
Let $p:K_Y \to Y$ be the projection and let $\CO_{K_Y}(1) = p^{\ast} \CO_Y(1)$ be the induced polarization.

\begin{defn}[{\cite[3.1]{TT2}}]
Let $v \in H^{\ast}(Y,\BQ)$ and $n \gg 0$. A Joyce-Song pair $(E,s)$ consists of
a compactly supported coherent sheaf $\CE$ on $K_Y$ with $\ch(\pi_{\ast} \CE ) = v$,
and a non-zero section $s \in H^0(K_Y,\CE(n))$ such that
\begin{itemize}
\item $\CE$ is Gieseker semi-stable with respect to $\CO_{K_Y}(1)$, and
\item for any proper subsheave $\CF \subset \CE$ which destabilizes $\CE$, the section $s$ does not factor through $\CF(n) \subset \CE(n)$.
\end{itemize}
\end{defn}

The moduli space of Joyce-Song pairs $\CP_v(n)$ is fine, carries a symmetric perfect obstruction theory, and has proper fixed locus with respect to the induced $\BC^{\ast}$-action coming from scaling the fibers of $K_Y$, see \cite{TT1,TT2}.
%ver the $\BC^{\ast}$-scaling action.
Using equivariant localization we hence can define invariants
\[ P_v(n) := \int_{[ \CP_v(n)^{\BC^{\ast}} ]^{\vir}} \frac{1}{e(N^{\vir})}. \]
Since $H^{1,0}(Y) = H^{2,0}(Y)=0$
one has the following special case of a conjecture of Tanaka and Thomas:

\begin{conj}[{\cite[Conj.1.2]{TT2}}] \label{conj:TT}
There exist $\VW(v_i)\in\BQ$ such that for all $n\gg0$:
\[
P_{v}(n)\ =\ \mathop{\sum_{\ell\ge 1,\,(v_i=\delta_i v)_{i=1}^\ell:}}_{\delta_i>0,\ \sum_{i=1}^\ell\delta_i=1}
\frac{(-1)^\ell}{\ell!}\prod_{i=1}^\ell(-1)^{\chi(v_i(n))} \chi(v_i(n))\;\VW(v_i)
\]
where $v(n) := v e^{c_1(\CO_Y(n))}$ and $\chi(v) := \int_Y \td_Y v$.
\end{conj}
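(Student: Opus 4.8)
The plan is to deduce Conjecture~\ref{conj:TT} for $Y$ from the Joyce--Song wall-crossing on the \emph{projective} Calabi--Yau threefold $Q$, thereby avoiding the analytic difficulties that non-compactness of $K_Y$ creates in the general case. The key geometric input is that the normal bundle of an Enriques fibre $\iota_a : Y_a \hookrightarrow Q$ of $p : Q \to \p^1$ is $N_{Y_a/Q} \cong \omega_Y$, so the formal neighbourhood of $Y_a$ in $Q$ is isomorphic to $K_Y$. Pushforward along $\iota_a$ therefore identifies compactly supported coherent sheaves on $K_Y$ with coherent sheaves on $Q$ set-theoretically supported on $Y_a$, compatibly with Gieseker (semi)stability for $\CO_{K_Y}(1) = p^\ast\CO_Y(1)$ and for a polarization of $Q$ restricting to it; under the spectral correspondence these are the Higgs pairs $(E,\phi)$ on $Y$, and the $\BC^\ast$-action scaling the fibres of $K_Y$ matches the action scaling $\phi$.

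First I would introduce $\VW(v)$ as the generalized Donaldson--Thomas invariant counting $\CO_{K_Y}(1)$-Gieseker semistable compactly supported sheaves on $K_Y$, defined through the $\BC^\ast$-localized symmetric obstruction theory of \cite{TT1,TT2}, alongside the Joyce--Song pair invariants $P_v(n)$. On the projective threefold $Q$ the Joyce--Song formula \cite{JS} expresses pair invariants in terms of generalized DT invariants, and the only Jordan--H\"older factors that contribute are those whose reduced Hilbert polynomial is proportional to that of $v$ --- precisely the constraint $v_i = \delta_i v$, $\sum_i \delta_i = 1$ in the statement. The weights agree because $\chi_Q(\CO_Q, \iota_{a\ast}v(n)) = \int_Y \td_Y\, v(n) = \chi(v(n))$, the same pairing identity used in the proof of Theorem~\ref{thm:Todas formula}. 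Transporting this formula back along the formal-neighbourhood identification, and checking that the TT-localized invariants $P_v(n)$ coincide with the honest Joyce--Song pair invariants of $Q$ supported at $Y_a$, yields Conjecture~\ref{conj:TT} for $Y$ with the invariants $\VW(v)$ just defined.

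It then remains to evaluate $\VW(v)$. The four Enriques fibres of $p$ are mutually disjoint and each isomorphic to $Y$, so --- using that, by the analysis of Section~\ref{sec:DT of Enriques CalabiYau}, the invariant $\DT(v)$ for $v \in H^\ast(Y,\BZ)$ is supported on the Enriques-fibre locus --- the relevant moduli on $Q$ is a disjoint union of four copies of the corresponding $K_Y$-moduli. Additivity of the Joyce--Song invariant over connected components with no morphisms between objects then gives $\DT(v) = 4\,\VW(v)$. Inserting the evaluation $\DT(v) = 8\sum_{k | v,\, k\text{ odd}} k^{-2}\big[\eta^{-12}\big]_{q^{(v/k)\cdot(v/k)/2}}$ from Corollary~\ref{cor:consequence of main thm}, and G\"ottsche's formula \cite{Goettsche} in the form $[\eta^{-12}]_{q^{m}} = e(\Hilb^{m+\frac12}(Y))$, reproduces the value of $\VW_Y(r,\beta,n)$ claimed in Corollary~\ref{cor:Vafa Witten}. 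The primitive instance $\VW(1,0,-n) = 2\,e(\Hilb^n(Y))$ --- forced by $\Hom(I_Z, I_Z\otimes\omega_Y) = H^0(\omega_Y) = 0$, which kills the Higgs field and leaves only $E \in \{I_Z, I_Z\otimes\omega_Y\}$ --- checks the factor $4$ against $\DT(1,0,-n) = 8\,e(\Hilb^n(Y))$ of Proposition~\ref{prop:primitive DT invariants}.

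I expect the main obstacle to lie in the comparison across the compact/non-compact divide: identifying the $\BC^\ast$-localized pair invariants $P_v(n)$ on $K_Y$ with the Behrend-weighted Joyce--Song pair invariants on $Q$, which requires matching the two symmetric obstruction theories, and hence the Behrend functions, on the shared moduli of sheaves supported at an Enriques fibre. It is here that strictly semistable objects force the stacky wall-crossing of \cite{JS}, and this is exactly the delicate point that the general Tanaka--Thomas conjecture (settled in \cite{Liu}) confronts; the projectivity of $Q$ is what renders it accessible in the present situation.
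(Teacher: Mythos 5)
Your overall architecture --- run Joyce--Song wall-crossing on the compact threefold $Q$, then transfer to $K_Y$ at the price of a factor $4$, so that $\VW(v)=\tfrac14\DT(v)$ --- is exactly the paper's, and your numerical checks are consistent with Proposition~\ref{prop:primitive DT invariants} and Corollary~\ref{cor:consequence of main thm}. But the mechanism you propose for the factor $4$ has a genuine gap. You assert that the moduli of semistable sheaves on $Q$ in class $\iota_\ast v$ is a disjoint union of four copies of the $K_Y$-moduli, one for each Enriques fibre $Y_a$, $a\in E[2]$. This fails in general: a fibre-supported sheaf of class $\iota_\ast(r,\beta,n)$ need not be set-theoretically supported on an Enriques fibre. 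Since a K3 fibre $X_t$ has class $2[Y_a]$ and $\pi_\ast:H_2(X,\BZ)\to H_2(Y,\BZ)$ is surjective, sheaves supported on the moving K3 fibres occur in class $\iota_\ast v$ whenever $r$ is even (in particular for all the $1$-dimensional classes $(0,\beta,n)$), and these form connected families degenerating onto the double fibres $2Y_a$; moreover for decomposable semistable sheaves the Jordan--H\"older factors may sit on different fibres. The paper does not prove, and you would separately need, that the K3-fibre locus contributes zero to $\DT(v)$ --- in the paper's proof of Proposition~\ref{prop:primitive DT invariants}(ii) the moduli space of such sheaves is manifestly nonempty and the vanishing is extracted indirectly from the GW/PT correspondence, not from emptiness of the moduli.

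The paper circumvents both this issue and your ``compact/non-compact divide'' by a degeneration argument (Lemma~\ref{lemma:deg formula Pairs}): the degeneration formula for stable pairs applied to $Q\rightsquigarrow T\cup_X T$ with $T=\p(\CO_Y\oplus\omega_Y)$, then to $T\rightsquigarrow T\cup_X(X\times\p^1)$ where the $X\times\p^1$ contributions die by the usual symplectic-form vanishing, and finally fibrewise $\BC^\ast$-localization on the compact $T$, whose two fixed sections each reproduce the Tanaka--Thomas localized invariant of $K_Y$. This yields $f^Q_{v,n}(t)=f^{K_Y}_{v,n}(t)^4$ at the level of generating series of pair invariants, from which $\VW(v)=\tfrac14\DT(v)$ follows by taking the $t^1$-coefficient; the K3-fibre contributions are killed inside the degeneration rather than by a support analysis. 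To repair your argument you would have to either supply the cosection-type vanishing for the K3-fibre locus and a Hall-algebra additivity statement handling mixed supports, or adopt the degeneration route; the obstruction-theory matching you flag as the ``main obstacle'' is precisely what the localization on $T$ is designed to replace.
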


We prove here the following:
\begin{thm} \label{thm:Vafa Witten full}
Conjecture~\ref{conj:TT} holds and the invariants $\VW(v)$ are given by
\[ \VW(r,\beta,n) = 2 \sum_{\substack{k|(r,\beta,n) \\ k \geq 1 \text{ odd}}}
\frac{1}{k^2} b\left( \frac{\beta^2 - 2rn - r^2}{2k^2} \right),
\quad b(n) := \left[ \frac{1}{\eta^{12}(\tau)} \right]_{q^{n}}. \]
\end{thm}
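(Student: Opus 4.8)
The plan is to identify the Vafa--Witten invariants of $Y$ with the generalized Donaldson--Thomas invariants of the local Enriques threefold $K_Y=\mathrm{Tot}(\omega_Y)$, and then to read them off from the invariants $\DT(v)$ of the Enriques Calabi--Yau threefold $Q$ already computed in Corollary~\ref{cor:consequence of main thm}. Concretely I expect the identity
\[ \VW(v)=\tfrac{1}{4}\,\DT(v), \qquad v\in H^{\ast}(Y,\BZ), \]
from which the theorem is immediate: since $v\cdot v=\beta^2-r^2-2rn$, Corollary~\ref{cor:consequence of main thm} gives $\DT(r,\beta,n)=8\sum_{\mathrm{odd}\ k|(r,\beta,n)}k^{-2}\,b\big(\tfrac{\beta^2-2rn-r^2}{2k^2}\big)$, and dividing by $4$ produces exactly the stated formula with overall factor $2$. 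As a consistency check, for $v=(1,0,-n)$ Proposition~\ref{prop:primitive DT invariants}(i) gives $\DT(v)=8\,e(\Hilb^n Y)$, while the corresponding Higgs sheaves on $K_Y$ with vanishing Higgs field and $c_1=0$ are the two families $\iota_{Y\ast}I_Z$ and $\iota_{Y\ast}(I_Z\otimes\omega_Y)$, each $\cong\Hilb^n Y$; thus $\VW(v)=2\,e(\Hilb^n Y)=\tfrac14\DT(v)$.

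The geometric input is a local model for $Q$ near its Enriques fibers. Writing $Q=(X\times E)/G$ with $G=\langle(\tau,-1)\rangle$, the four double Enriques fibers $Y_a\subset Q$ lie over the branch points $a\in E[2]$ of $E\to\p^1$. Since $-1$ fixes $a$ and acts on $T_aE$ by $-1$, a neighbourhood of $X\times\{a\}$ in $X\times E$ is $G$-equivariantly $(X\times\BC)/G$, which is a neighbourhood of the zero section of $\mathrm{Tot}(\omega_Y)=K_Y$: indeed $\omega_Y$ is precisely the $2$-torsion line bundle obtained as the anti-invariant descent of $\CO_X$, and the fibrewise $\BC^{\ast}$-scaling of $K_Y$ matches scaling of the $\BC$-factor. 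Thus a neighbourhood of each of the four fibers $Y_a$ is $\BC^{\ast}$-equivariantly isomorphic to a neighbourhood of the zero section of $K_Y$. This is the Enriques counterpart of the identification $K_{K3}=K3\times\BC$ underlying Maulik--Thomas \cite{MT}, obtained through the $G$-equivariant principle of Section~\ref{subsec:overview}.

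With this model I would match the invariants as follows. A semistable fiber sheaf on $Q$ whose reduced support is $Y_a$ is the same datum as a compactly supported semistable sheaf on $K_Y$, i.e. a Gieseker-semistable Higgs pair on $Y$, compatibly with stability, the symmetric obstruction theory, and the Behrend function. When the rank component $r$ is odd no sheaf supported on a generic K3 fiber can carry the class $\iota_{\ast}v$ (such a sheaf would force $\ch_1$ to be an even multiple of $[Y_a]$), so the moduli stack on $Q$ is the disjoint union of the four isomorphic pieces indexed by $a\in E[2]$; the $\BC^{\ast}$-localization of the Behrend-weighted count (computing the non-proper $K_Y$-moduli through its proper fixed locus as in \cite{TT1,TT2}) and Joyce's wall-crossing then transport the Joyce--Song pair theory of $K_Y$ to that of $Q$ and yield $\DT(v)=4\,\VW(v)$. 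In particular Conjecture~\ref{conj:TT} holds, the solution $\VW(v)$ being the generalized Donaldson--Thomas invariant of the Calabi--Yau-$3$ category of compactly supported sheaves on $K_Y$, whose well-definedness and Joyce--Song formula descend from the already-constructed generalized invariants $\DT(v)$ on the compact threefold $Q$.

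The main obstacle is this comparison of invariants, and in particular the even-rank case. For $r$ even the class $\iota_{\ast}v$ can also be realized by sheaves on the generic K3 fibers, so the clean four-fold decomposition breaks down; here one must argue separately that both sides vanish. On the right this is the proven vanishing $\DT(v)=0$ for even-type classes (Corollary~\ref{cor:consequence of main thm}, since $\tfrac{v\cdot v}{2}\in\BZ$ and $\eta^{-12}$ has only half-integral exponents, so $b(\tfrac{v\cdot v}{2})=0$), and on the left the corresponding vanishing of $\VW(v)$ must be extracted from the structure of the fixed locus (equivalently from the $G$-equivariant Maulik--Thomas computation on $K3\times\BC$). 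Establishing, with full rigour, that the local $\BC^{\ast}$-equivariant isomorphism of neighbourhoods identifies the $d$-critical and Behrend-function data, and tracking the torsion bookkeeping already seen in the proof of Theorem~\ref{thm:Todas formula} (the $2$-torsion $\omega_Y$ responsible for the doubling, and the eight torsion line bundles), are the remaining delicate points; once in place, the formula follows by substituting Corollary~\ref{cor:consequence of main thm}.
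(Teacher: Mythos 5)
Your target identity $\VW(v)=\tfrac14\DT(v)$ is exactly the one the paper proves, and your local model $(X\times\BC)/G\cong K_Y$ near each $Y_a$ is correct, but the mechanism you propose for establishing the identity has a genuine gap. The claim that for odd $r$ ``the moduli stack on $Q$ is the disjoint union of the four isomorphic pieces indexed by $a\in E[2]$'' is false once $r\geq 3$: a semistable sheaf of class $\iota_{\ast}v$ may be a direct sum of pieces supported on \emph{several} fibers of $p:Q\to\p^1$, e.g.\ a rank-one piece on an Enriques fiber plus a rank-one piece on a generic K3 fiber already realizes $r=3$. Since the Joyce--Song invariant $\DT(v)$ is extracted from the full stack of semistables by a Hall-algebra logarithm, localizing it to single-fiber (hence $K_Y$-local) contributions is precisely the combinatorial content one must prove, not an input. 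The second, more serious, gap is the comparison between the two kinds of invariants: $\VW(v)$ is defined as a $\BC^{\ast}$-equivariant residue $\int_{[\CP_v(n)^{\BC^{\ast}}]^{\vir}}e(N^{\vir})^{-1}$ on a \emph{non-proper} moduli space, while $\DT(v)$ is a Behrend-weighted count on the proper moduli over $Q$. A formal or analytic isomorphism of neighbourhoods identifies the moduli schemes, obstruction theories and Behrend functions, but it does not identify a localized residue with a proper weighted Euler characteristic; you acknowledge this as ``the main obstacle'' but supply no mechanism, and this is where the entire technical content of the theorem lives. The even-type case is likewise left open.

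For contrast, the paper resolves all three points at once with a single tool: the degeneration formula for Joyce--Song \emph{pairs}. Using the degeneration $Q\rightsquigarrow T\cup_X T$ (the same one as in Proposition~\ref{prop:GW QvsY}), then $T\rightsquigarrow T\cup_X(X\times\p^1)$ with vanishing of the $X\times\p^1$ contributions and a fiberwise $\BC^{\ast}$-localization, one obtains the multiplicative relation $f^Q_{v,n}(t)=f^{K_Y}_{v,n}(t)^4$ of Lemma~\ref{lemma:deg formula Pairs}. Taking logarithms, the Joyce--Song wall-crossing expression \eqref{dfs33333} for $f^Q_{v,n}$ forces $f^{K_Y}_{v,n}$ into exactly the exponential shape demanded by Conjecture~\ref{conj:TT} with $\VW(\delta v)=\tfrac14\DT(\delta v)$; this simultaneously proves well-definedness of $\VW$, handles arbitrary rank and type (the K3-fiber and multi-fiber contributions are absorbed by the multiplicativity of the generating series in $t$), and performs the residue-versus-proper comparison via the Maulik--Thomas localization step. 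If you want to salvage your approach, you would essentially have to reprove these ingredients by hand, at which point you have reconstructed the degeneration argument.
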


Vafa and Witten predicted that for fixed rank
the generating series of Vafa-Witten invariants has modular behaviour \cite[Sec.1.4]{TT2}.
We obtain here the following corollary:
\begin{cor} \label{cor:modularity}
For each $r>0$ the series
\[ Z^{\VW}_{r,0}(q) = \sum_{n} \VW(r,0,n) q^{-2n - r} \]
is a (weakly-holomorphic) modular form for $\Gamma_0(4r)$ of weight $-6$.
\end{cor}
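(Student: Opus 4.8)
The plan is to start from the closed formula for $\VW(r,0,n)$ given by Theorem~\ref{thm:Vafa Witten full} and collapse its divisor sum into a combination of rescaled eta quotients. Write $b(m)=[\eta^{-12}(\tau)]_{q^m}$, and recall that $b$ is supported on half-integers $m\in-\tfrac12+\BZ_{\geq0}$ because $\eta^{-12}(\tau)=q^{-1/2}\prod_{n\geq1}(1-q^n)^{-12}$. For $\beta=0$ the argument of $b$ in the formula is $\tfrac{-r(2n+r)}{2k^2}$, which is a half-integer exactly when $r/k$ is odd; hence $Z^{\VW}_{r,0}$ vanishes identically unless $r$ is odd, in which case the statement is trivial. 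Assuming $r$ odd, I would substitute $n=k\tilde n$, $r=k\tilde r$ in the (odd) divisor sum and set $M=2\tilde n+\tilde r$, which gives
\[ Z^{\VW}_{r,0}(\tau)=2\sum_{\substack{k\mid r\\ k\text{ odd}}}\frac{1}{k^2}\,\Phi_{r/k}(k\tau),\qquad \Phi_{r'}(\tau):=\sum_{M\equiv r'(2)}b\!\left(\tfrac{-r'M}{2}\right)q^{-M}. \]

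The first real step is to identify $\Phi_{r'}$ with a classical operation on $\eta^{-12}$. Reindexing by the $q$-exponent shows that $\Phi_{r'}(\tau)$ is precisely the integer-exponent part of $\eta^{-12}(2\tau/r')$: an exponent $2m/r'$ of $\eta^{-12}(2\tau/r')$ is an integer iff $r'\mid 2m$, and since $r'$ is odd this forces that integer to be odd, matching the defining sum. Extracting the integer part is the averaging operator, so with $\tilde h(\tau):=\eta^{-12}(2\tau/r')$,
\[ \Phi_{r'}(\tau)=\frac{1}{r'}\sum_{j=0}^{r'-1}\tilde h(\tau+j), \]
using that $\tfrac{1}{r'}\sum_{j}e^{2\pi i mj}$ equals $1$ for $m\in\BZ$ and $0$ otherwise. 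For $r'=1$ this recovers $\Phi_1(\tau)=\eta(2\tau)^{-12}$, which by the Ligozat eta-quotient criterion is a weakly holomorphic modular form of weight $-6$ and trivial character on $\Gamma_0(4)$ (the two balance conditions $\sum_{d} d\,r_d\equiv\sum_{d}(N/d)r_d\equiv 0\ (24)$ hold with $N=4$, $r_2=-12$, and $s=2^{-12}$ is a square, so the nebentypus is trivial).

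Next I would track the level. The function $\eta(2\sigma)^{-12}$ is modular of weight $-6$ on $\Gamma_0(4)$, and a direct conjugation computation shows that for $\gamma=\left(\begin{smallmatrix}\alpha&\beta\\ c&\delta\end{smallmatrix}\right)\in SL_2(\BZ)$ the matrix $\left(\begin{smallmatrix}1&0\\0&r'\end{smallmatrix}\right)\gamma\left(\begin{smallmatrix}1&0\\0&r'\end{smallmatrix}\right)^{-1}$ lies in $\Gamma_0(4)$ exactly when $r'\mid\beta$ and $4\mid c$ (here $\gcd(4,r')=1$). Thus $\tilde h$ is invariant under the group $\{\gamma:\ r'\mid\beta,\ 4\mid c\}$, and the averaged form $\Phi_{r'}$, being a genuine power series in \emph{integer} powers of $q$, is in addition invariant under $\tau\mapsto\tau+1$. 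Since every $\gamma\in\Gamma_0(4r')$ satisfies $\gcd(\alpha,r')=1$ (because $r'\mid c$ and $\alpha\delta\equiv1\ (r')$), one can adjust $\beta$ modulo $r'$ by a power of $T$ to land inside $\{r'\mid\beta,\ 4\mid c\}$; hence these two invariances generate all of $\Gamma_0(4r')$, and $\Phi_{r'}\in M^{!}_{-6}(\Gamma_0(4r'))$ with trivial character. Finally, the standard fact that $g(k\tau)\in M^{!}_{-6}(\Gamma_0(kL))$ whenever $g\in M^{!}_{-6}(\Gamma_0(L))$ turns each term $\Phi_{r/k}(k\tau)$ into a form on $\Gamma_0\!\big(k\cdot 4(r/k)\big)=\Gamma_0(4r)$, and summing over the odd divisors $k\mid r$ yields $Z^{\VW}_{r,0}\in M^{!}_{-6}(\Gamma_0(4r))$.

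I expect the \emph{main obstacle} to be exactly this level-and-character bookkeeping in the middle step: confirming that sieving plus rescaling of the level-$4$ form $\eta(2\tau)^{-12}$ lands in precisely $\Gamma_0(4r')$ with trivial nebentypus, rather than in a larger level or carrying a character. This rests on the explicit conjugation analysis together with the observation that the averaged series has only integer exponents (so that $T$-invariance is automatic and completes the generation of $\Gamma_0(4r')$). By contrast, the divisor-sum collapse in the first display and the elementary level shift $g(\tau)\mapsto g(k\tau)$ are routine and need no further input.
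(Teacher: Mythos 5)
Your overall route is sound and is in fact the paper's own proof with the black box opened: the paper identifies $Z^{\VW}_{r,0}=2\,\eta^{-12}(2\tau)|_{-1}V_r$ for its $\Gamma_0(4)$ Hecke operator and cites the ``wrong-weight'' Hecke operator results of Bae--Buelles, and your decomposition $Z^{\VW}_{r,0}=2\sum_{k\mid r,\,k\text{ odd}}k^{-2}\Phi_{r/k}(k\tau)$ with $\Phi_{r'}=\eta(2\tau)^{-12}|U_{r'}$ is exactly the $U_{r'}V_k$-decomposition that those references rest on. The vanishing for even $r$, the identification of $\Phi_{r'}$ with the integer-exponent sieve of $\eta(2\tau/r')^{-12}$, the case $\Phi_1=\eta(2\tau)^{-12}\in M^{!}_{-6}(\Gamma_0(4))$, and the final level shift under $V_k$ are all correct.

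However, there is a genuine flaw in your justification of the step you yourself flag as the main obstacle, namely $\Phi_{r'}\in M^{!}_{-6}(\Gamma_0(4r'))$. You prove that $\tilde h(\tau)=\eta(2\tau/r')^{-12}$ transforms correctly under $H=\{\gamma:\,r'\mid\beta,\ 4\mid c\}$ and that $\Phi_{r'}$ is $T$-invariant, and then combine ``these two invariances''. But they are invariances of two \emph{different} functions: the average $\Phi_{r'}=\tfrac{1}{r'}\sum_j\tilde h|T^j$ is not invariant under all of $H$, and your argument tacitly assumes it is. If it were, then since $H$ contains $\bigl(\begin{smallmatrix}1&0\\4&1\end{smallmatrix}\bigr)$ and $\langle H,T\rangle\supset\Gamma_0(4)$, you would conclude $\Phi_{r'}\in M^{!}_{-6}(\Gamma_0(4))$ for every odd $r'$, i.e.\ that $U_{r'}$ never raises the level --- which is false. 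The correct coset-permutation argument $T^jh=h_jT^{j'}$ with $h_j\in H$ requires solving $j'(\alpha+jc)\equiv j\delta\pmod{r'}$, which needs $\alpha+jc$ invertible mod $r'$; this holds when $r'\mid c$ (so $\alpha+jc\equiv\alpha$ and $\gcd(\alpha,r')=1$), but fails for general $h\in H$ (take $r'=3$, $h=\bigl(\begin{smallmatrix}1&3\\4&13\end{smallmatrix}\bigr)$, $j=2$, where $\alpha+jc=9\equiv0$). The fix is to run the permutation argument only for $h\in H\cap\Gamma_0(4r')$: writing $\gamma\in\Gamma_0(4r')$ as $(\gamma T^{j})T^{-j}$ as you do, the factor $\gamma T^{j}$ has the same lower-left entry as $\gamma$ and hence lies in $H\cap\Gamma_0(4r')$, where the argument works, and $T$-invariance handles the other factor. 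Equivalently, simply invoke the standard fact that $U_m$ maps weakly holomorphic forms of weight $k$ on $\Gamma_0(N)$ to weight $k$ on $\Gamma_0(Nm)$, which is what the paper's citation supplies.
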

\begin{proof}
Let $c(n) = 2 b(n/2)$ where $b(n)$ is defined in Theorem~\ref{thm:Vafa Witten full}. Then we find
\[
Z^{\VW}_{r,0}(q) = \sum_{n} \sum_{\text{odd } k |(r,n)} \frac{1}{k^2} c\left( \frac{nr}{k^2} \right) q^n = 2 \eta^{-12}(2 \tau)|_{-1} V_r
\]
where $V_r$ is the Hecke operator in weight $-1$ for group $\Gamma_0(4)$,
see Section~\ref{subsec:Hecke operators appendix}.
By \cite[Exercise III \S 3.17]{Koblitz} the function $\eta^{12}(2 \tau)$ is a modular form of weight $6$ for $\Gamma_0(4)$, so the result follows
from the discussion of "wrong-weight" Hecke operators in \cite[Prop.13]{BB} or \cite[Sec.2.8]{HilbHAE}.
\end{proof}

\begin{rmk}
In order to generalize Corollary~\ref{cor:modularity} to non-vanishing first Chern class, it is tempting to consider the formal series
\[ Z_r^{\VW} = \sum_{\beta \in H_2(Y,\BZ)} \sum_{n \in \BZ} \VW(r,\beta,n) \zeta^{\beta} q^{-n-\frac{r}{2}}. \]
Since $H^2(Y,\BZ) \cong U \oplus E_8(-1)$ is indefinite,
the theta-like series
\[
\vartheta_{H^2(Y,\BZ)} = \sum_{\beta \in H^2(Y,\BZ)} \zeta^{\beta} q^{-\frac{1}{2} \beta^2}.
\]
does not converge. Nevertheless we may view $\vartheta_{H^2(Y,\BZ)}$ as a formal Jacobi form of weight $5$ and index $\frac{1}{2} H^2(Y,\BZ)$. Then Theorem~\ref{thm:Vafa Witten full} says that
\[
Z_r^{\VW} = \sum_{\beta} \sum_{n : n-r/2 \in \BZ} q^n \zeta^{\beta}
\sum_{k|(r,\beta,2n)} \frac{1}{k^2} b\left( \frac{rn + \beta^2/2}{k^2} \right)
= \left( 2 \vartheta_{H^2(Y,\BZ)} \eta^{-12}(\tau) \right)|_{-1} V_r
\]
so $Z_r^{\VW}$ may be viewed as a formal Jacobi form of weight $-1$ and index $\frac{r}{2} H^2(Y,\BZ)$.
To actually obtain a convergent function for $Z_r^{\VW}$ which satisfies (Mock-)Jacobi form behaviour, one needs to instead regularize the theta series.
We refer to \cite{Manschot} for a discussion in the case of Hirzebruch surfaces and further references. \qed
\end{rmk}

The proof of Theorem~\ref{thm:Vafa Witten full} follows from our computation of the generalized Donaldson-Thomas invariants $\DT(r,\beta,n)$ and a degeneration argument
as we now explain.

Let $Q=(R \times E)/\BZ_2$ be the Enriques Calabi-Yau threefolds,
where $X \to Y$ is the K3 cover,
and let $p : Q \to Y$ and $\iota : Y \to Q$ denote the projection
and one of the sections respectively.
Let $\CO_Q(1) = p^{\ast} \CO_Y(1)$.
For $n \gg 0$, let $\CP^Q_{v}(n)$ be the moduli space of Joyce-Song pairs $(\CE,s)$ on $Q$ with respect to $\CO_Q(1)$ satisfying
$\ch(\CE) = \iota_{\ast}(v)$, or equivalently, that $\CE$ is supported on the fibers of $Q \to E/\BZ_2 = \p^1$ and $\ch(p_{\ast} \CE) = v$.\footnote{The pairs space is formed here with respect to $\CO_Q(1)$ which is not ample, but only relative ample with respect to $Q \to \p^1$, but this suffices since the sheaves we consider are supported in the fibers.} The moduli space $\CP^Q_{v}(n)$ is proper and carries a symmetric perfect obstruction theory. Define the pairs invariants
\[ P^Q_v(n) := \int_{[ \CP^Q_{v}(n) ]^{\vir}} 1. \]
By the wall-crossing formula of Joyce-Song \cite[Thm.5.27]{JS} (compare \cite[Eqn. (3.4)]{TT2}) we have
\begin{equation} \label{dfs33333}
P^Q_{v}(n)\ =\ \mathop{\sum_{\ell\ge 1,\,(v_i=\delta_i v)_{i=1}^\ell:}}_{\delta_i>0,\ \sum_{i=1}^\ell\delta_i=1}
\frac{(-1)^\ell}{\ell!}\prod_{i=1}^\ell(-1)^{\chi(v_i(n))} \chi(v_i(n))\;\DT(v_i).
\end{equation}

Consider the generating series:
\[
f^Q_{v,n}(t) = 1 + \sum_{0 < \delta \leq 1} P^Q_{\delta v}(n) t^{\delta} + O(t^{1+}) \]
where $O(t^{1+})$ stands for considering the series modulo $t^{1+\epsilon}$ for all $\epsilon > 0$, that is as an element in the ring
\[ \BQ[ t^{\delta} | \delta \in \BQ_{>0} ]/ ( t^{1+\epsilon} )_{\epsilon>0} \]
Similarly, we can put the invariants of $K_Y$ in a generating series:
\[ f^{K_Y}_{v,n}(t) = 1 + \sum_{0 < \delta \leq 1} P_{\delta v}(n) t^{\delta} + O(t^{1+}).
%\quad \in \quad \BQ[ t^{\delta} | \delta \in \BQ_{>0} ]/ ( t^{1+\epsilon} )_{\epsilon>0}.
\]

\begin{lemma}[Degeneration formula] \label{lemma:deg formula Pairs}
$f^Q_{v,n}(t) = f^{K_Y}_{v,n}(t)^4$
\end{lemma}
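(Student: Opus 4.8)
The plan is to prove the identity geometrically, at the level of the pairs moduli, by degenerating $Q$ so as to separate its four double Enriques fibres $Y_a$ ($a\in E[2]$), each of which has a neighbourhood modelled on $K_Y$, and then passing to a $\BC^\ast$-action that reduces the count to the local model. Note the comparison is genuinely needed because $P^Q_v(n)$ is defined by honest integration against $[\CP^Q_v(n)]^{\vir}$ on the proper, compact $Q$, whereas $P_v(n)$ is defined by $\BC^\ast$-localization on the non-compact $K_Y$; the degeneration is exactly the device that makes a torus action available on the $Q$-side. Throughout, all equalities of generating series are understood modulo $t^{1+}$, in the ring $\BQ[t^\delta\mid\delta\in\BQ_{>0}]/(t^{1+\epsilon})_{\epsilon>0}$.

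First I would recall the degeneration $Q \rightsquigarrow T\cup_X T$ from the footnote to Proposition~\ref{prop:GW QvsY}, where $T=(X\times\p^1)/\langle(\tau,\mathrm{inv}_{\p^1})\rangle$, the projection $T\to Y$ is the $\p^1$-bundle $\p(\CO_Y\oplus\omega_Y)$, and the two copies of $T$ are glued along a smooth K3 fibre $X$ of $p$. The four double Enriques fibres of $Q$ distribute as two on each copy of $T$, and they are disjoint from the gluing divisor $X$. Since every sheaf $\CE$ with $\ch(\CE)=\iota_\ast v$ is supported on fibres of $p$ and has $\ch_1(\CE)=r[Y_a]$, all relevant pairs stay supported away from $X$ during the degeneration. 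Applying deformation invariance of the virtual pairs count together with the degeneration formula \cite{Li1,Li2,MT}, the absence of any contact with the relative divisor $X$ means no expanded degenerations or gluing/relative data occur, so the count on the central fibre factors as the count over the disjoint interiors of the two components, giving
\[ f^Q_{v,n}(t) = f^{T}_{v,n}(t)^2 . \]

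Second I would treat a single copy of $T$ by $\BC^\ast$-localization. The fibrewise $\BC^\ast$-action on $T=\p(\CO_Y\oplus\omega_Y)$ scaling the $\omega_Y$-summand fixes exactly the two sections $Y_0$ and $Y_\infty$. A formal neighbourhood of each $Y_a$ in $T$ is the total space of its normal bundle $N_{Y_a/T}\cong\omega_Y=K_Y$, using that $\omega_Y$ is $2$-torsion so that $N_{Y_\infty/T}\cong\omega_Y^{-1}\cong K_Y$ as well; i.e.\ a neighbourhood of the zero section in $K_Y$, and under this identification the $\BC^\ast$-action restricts to the fibre-scaling action defining $P_v(n)$. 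By virtual $\BC^\ast$-localization on the proper moduli $\CP^{T}_v(n)$, only sheaves supported on $Y_0\sqcup Y_\infty$ contribute, and the localized contribution of each fixed section equals the Tanaka–Thomas pairs invariant $P_{v_a}(n)$ of $K_Y$ (the opposite $\BC^\ast$-weights at $Y_0$ and $Y_\infty$ do not change the invariant, as these counts are independent of the weight normalization). Because $Y_0$ and $Y_\infty$ are disjoint, a fixed pair decomposes as a direct sum indexed by the two sections and the generating series factors,
\[ f^{T}_{v,n}(t) = f^{K_Y}_{v,n}(t)^2 , \]
and combining the two displays gives $f^Q_{v,n}(t)=f^{K_Y}_{v,n}(t)^4$.

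The hard part will be making the degeneration step rigorous for Joyce–Song pairs \cite{JS,TT2} rather than for ordinary stable pairs: one must check that the symmetric perfect obstruction theory of $\CP^Q_v(n)$ deforms compatibly to the central fibre and that, precisely because the supports avoid the singular K3 $X$, the contributions factor as a product over the two components with matching virtual normal data and no residual gluing terms. The subtlety specific to Joyce–Song pairs is that the single section $s$ a priori couples the two disjoint-support summands $\CE=\CE_1\oplus\CE_2$, so the multiplicativity of the pairs generating series over disjoint supports has to be justified at the level of the invariants and their virtual classes, not merely set-theoretically. A secondary technical point is the identification of the $\BC^\ast$-fixed pairs moduli of $T$ near each $Y_a$ with that of $K_Y$ together with the equality of their contributions to $1/e(N^{\vir})$; this is where the local Calabi–Yau model $N_{Y_a/T}\cong K_Y$ and the compatibility of the two fibre-scaling $\BC^\ast$-actions are essential. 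Once the local isomorphism and the disjointness of supports are in place, each of these reductions is routine, and the whole count collapses, fibre by fibre, onto the local Enriques geometry of $K_Y$.
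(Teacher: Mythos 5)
Your overall strategy is the paper's: degenerate $Q\rightsquigarrow T\cup_X T$ with $T=\p(\CO_Y\oplus\omega_Y)$, then reduce $T$ to $K_Y$ by the fibrewise $\BC^{\ast}$-localization at the two sections. The second step ($f^{T}_{v,n}=f^{K_Y}_{v,n}(t)^2$) matches the paper, which likewise invokes localization for the fiber $\BC^{\ast}$-action of $T\to Y$ and defers the Joyce--Song-pairs technicalities to \cite[Sec.4, Sec.7]{MT}.

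The genuine gap is in your first step. You claim the degeneration formula yields $f^Q_{v,n}=f^{T}_{v,n}(t)^2$ directly because ``all relevant pairs stay supported away from $X$,'' but this support-avoidance assertion is false in general. The sheaves counted have $\ch(\CE)=\iota_{\ast}v$ and are supported on fibers of $p:Q\to\p^1$; since the generic K3 fiber has class $[X_t]=2[Y_a]$, a sheaf with $\ch_1=r[Y_a]$ can perfectly well have components supported on generic K3 fibers (e.g.\ for $r$ even, for $r=0$, or as a summand $[Y_a]+[X_t]$ when $r=3$), hence on fibers that collide with the gluing divisor $X$ in the degeneration. The degeneration formula therefore genuinely produces the \emph{relative} series $f^{T/X}_{v,n}(t)^2$, and the passage $f^{T/X}_{v,n}=f^{T}_{v,n}$ requires an argument. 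The paper supplies it by a second, normal-cone degeneration $T\rightsquigarrow T\cup_X(X\times\p^1)$ together with the vanishing of the invariants of $X\times\p^1$ by a cosection argument (the holomorphic symplectic form on the K3 kills the virtual class); it is this vanishing, not a support constraint, that eliminates the K3-fiber and relative contributions. Your proof as written has no mechanism for discarding those contributions, so the identity $f^Q_{v,n}=f^{T}_{v,n}(t)^2$ is unjustified. Repairing it along the paper's lines (insert the intermediate relative series and prove the cosection vanishing for pairs on $X\times\p^1$) would complete the argument.
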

\begin{proof}
We use the same degeneration as in Proposition~\ref{prop:GW QvsY}.
By the degeneration formula for pairs \cite{LiWu} and with the obvious notation
we have
\[
f^Q_{v,n}(t) = f^{T/R}_{v,n}(t)^2 = f^{T}_{v,n}(t)^2 = f^{K_Y}_{v,n}(t)^4,
\]
where the second and third equality can be argued like in the proof of Proposition~\ref{prop:GW QvsY}; the modifications to the pairs space are explained in detail in \cite[Sec.4, Sec.7]{MT}.
\end{proof}

\begin{proof}[Proof of Theorem~\ref{thm:Vafa Witten full}]
Note that we can rewrite \eqref{dfs33333} in generating form as:
\[
f^Q_{v,n}(t)
=
\exp\left( \sum_{\delta>0} t^{\delta} (-1)^{ \chi( \delta v(n) )} \chi( \delta v(n) ) \DT( \delta v ) \right) + O(t^{1+}). \]
Hence by Lemma~\ref{lemma:deg formula Pairs} we obtain
\[
f^{K_Y}_{v,n}(t) = 
\exp\left( \sum_{\delta>0} t^{\delta} (-1)^{ \chi( \delta v(n) )} \chi( \delta v(n) ) \cdot \frac{1}{4} \DT( \delta v ) \right) + O(t^{1+}). \]
By taking the $t^1$-coefficient we obtain that $\VW(v)$ are well-defined and satisfy
$\VW(v) = \frac{1}{4} \DT( v )$.
The precise value of $\VW(v)$ now follows from Corollary~\ref{cor:consequence of main thm}.
\end{proof}

\appendix

\section{Background on quasi-Jacobi forms}
\label{sec:Appendix Background quasi-Jacobi forms}
In Section~\ref{subsec:appendix QJac defn} we first introduce slash operators
for quasi-Jacobi forms following partially work of Ziegler \cite{Ziegler}.
Subsequently we give two basic ways to modify quasi-Jacobi forms.
Then we discuss Hecke operators for quasi-Jacobi forms. This proves claims made in Section~\ref{sec:Modular and Jacobi forms}.

\subsection{Definition} \label{subsec:appendix QJac defn}
Write $R^{(m,n)}$ for the group of $m \times n$-matrices with coefficients in a ring $R$.
%Let $\BR^{(m,n)}$ be the set of real $m \times n$ matrices.
The Heisenberg group on $n$ variables is given by
\[ H_R^{(n)}  = \{ [(\lambda, \mu), \kappa] | \lambda, \mu \in R^{(n,1)}, \kappa \in R^{(n,n)}, (\kappa + \mu \lambda^{t}) \text{ symmetric} \}. \]
The group structure is defined by
\[ [(\lambda, \mu), \kappa] \cdot [(\lambda', \mu'), \kappa']
=
[(\lambda + \lambda', \mu + \mu'), \kappa + \kappa' + \lambda \mu^{\prime t} - \mu \lambda^{\prime t}]. \]
The group $\SL_2(\BR)$ acts on $H_{\BR}^{(n)}$ from the right by
\[ [(\lambda, \mu), \kappa] \cdot \gamma = [ (\lambda, \mu) \cdot \gamma, \kappa ], \quad \gamma \in \SL_2(\BR). \]
The associated semidirect product is called the Jacobi group:
\[ G_{\BR}^{(n)} := \SL_2(\BR) \ltimes H_{\BR}^{(n)}. \]
Explicitly the product in this group is written by
\[ (\gamma, [X, \kappa]) \cdot
(\gamma', [X', \kappa'])
=
(\gamma \gamma', [X \gamma' + X', \kappa + \kappa' + X \gamma' J X^{\prime t}]) \]
where $X=(\lambda, \mu)$, $X' = (\lambda', \mu')$ and $J=\binom{\phantom{-}0\ 1}{-1\ 0}$.

Let $\BH = \{ \tau \in \BC | \Im(\tau) > 0 \}$ be the upper half plane.
The group $G_{\BR}^{(n)}$ acts on $\BC^n \times \BH$ by
\[ 
(\gamma, [(\lambda, \mu), \kappa]) \cdot (x,\tau)
=
\left( \frac{x + \lambda \tau + \mu}{c \tau + d}, \frac{a \tau + b}{c \tau + d} \right),
\quad \text{ where } \gamma = \binom{a\ b}{c\ d} \text{ and } x = (x_1, \ldots, x_n)^t.
\]

Consider the following real analytic functions on $\BC^n \times \BH$:
\[ 
\nu(\tau) = \frac{1}{8 \pi \mathrm{Im}(\tau)},
\quad
\alpha_i(x,\tau) = \frac{x_i - \overline{x_i}}{\tau - \overline{\tau}} = \frac{\mathrm{Im}(x_i)}{\mathrm{Im}(\tau)},
\quad i = 1, \ldots, n.
\]
An \emph{almost holomorphic function} on $\BC^n \times \BH$ is a function
\[ \Phi(x, \tau) = \sum_{i \geq 0} \sum_{j = (j_1, \ldots, j_n) \in (\BZ_{\geq 0})^n}
\phi_{i, j}(x,\tau) \nu^{i} \alpha^j, \quad \quad \alpha^j = \alpha_1^{j_1} \cdots \alpha_n^{j_n} \]
such that each of the finitely many non-zero $\phi_{i, j}(x,\tau)$ is holomorphic on $\BC^n \times \BH$.
We write $\AH(\BC^n \times \BH)$ for the vector space of almost-holomorphic function.

\begin{defn}
Let $L$ be a symmetric rational $n \times n$ matrix. Let $k \in \BZ$.
For any $\gamma = \binom{a\ b}{c\ d} \in \SL_2(\BR)$, $[(\lambda, \mu),\kappa] \in H_{\BR}^{(n)}$ and $\Phi(x,\tau) \in \AH(\BC^n \times \BH)$ we define
the slash operator:
\begin{gather*}
(\Phi|_{k,L} \gamma)(x,\tau)
=
(c \tau + d)^{-k} e\left( - \frac{ c x^t L x }{c \tau + d} \right) \Phi\left( \frac{x}{c \tau + d}, \frac{a \tau + b}{c \tau + d} \right) \\
(\Phi|_{L} [(\lambda, \mu),\kappa])
=
e\left( \lambda^{t} L \lambda \tau + 2 \lambda^{t} L x + \lambda^{t} L \mu + \mathrm{Tr}(L \kappa) \right) \Phi( x + \lambda \tau + \mu, \tau ) 
\end{gather*}
where $e(x) := e( 2 \pi i x)$ for every $x \in \BC$.
\end{defn}

The functions $\nu$ and $\alpha = (\alpha_1, \ldots, \alpha_n)$ satisfy
the transformations:
\begin{align*}
\nu\left( \frac{a \tau+b}{c \tau+d} \right)
%& = \frac{1}{\ell} \nu(\tau) | c \tau + d |^2 \\
& = \frac{1}{\det(\gamma)} \left[ (c \tau+d)^2 \nu(\tau) + \frac{c (c \tau+d)}{4 \pi i} \right] \\
\alpha\left( \frac{ \det(\gamma) x}{c \tau+d}, \frac{a \tau+b}{c \tau+d} \right) 
& =
( c \tau + d) \cdot \alpha(x,\tau) - c x \\
\alpha(x + \lambda \tau + \mu,\tau) & = \alpha(x,\tau) + \lambda
\end{align*}
for all $\gamma = \binom{a\ b}{c\ d} \in \mathrm{GL}_2^{+}(\BR)$
and $\lambda, \mu \in \BR$.
This shows that the slash operator
sends (almost) holomorphic functions to themselves.
A further direct calculation shows: 

\begin{lemma}[{\cite[Lemma 1.2]{Ziegler}}] \label{lemma:Ziegler}
For $\gamma, \gamma' \in \SL_2(\BR)$, $\zeta, \zeta' \in H_{\BR}^{(n)}$ we have
\[ \Phi|_{k,L} \gamma |_{k,L} \gamma' = \Phi|_{k,L} (\gamma \gamma'),
\quad \quad 
\Phi |_L \zeta |_L \zeta' = \Phi|_{L} (\zeta \zeta') , \quad  \quad
\Phi|_{L} \zeta |_{k,L} \gamma = \Phi|_{k,L} \gamma |_{L} (\zeta \gamma) 
\]
\end{lemma}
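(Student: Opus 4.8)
The plan is to verify all three identities as purely formal consequences of the definitions, valid for an arbitrary function $\Phi$ on $\BC^n\times\BH$: the almost-holomorphicity plays no role, since each slash operator merely multiplies $\Phi$ by an explicit scalar automorphy factor and precomposes with the standard left action $\rho$ of $G_{\BR}^{(n)}$ on $\BC^n\times\BH$. Consequently each identity reduces to two ingredients: the relevant action relation for $\rho$, which is immediate from the action formula, namely $\rho(\gamma)\rho(\gamma')=\rho(\gamma\gamma')$, $\rho(\zeta)\rho(\zeta')=\rho(\zeta\zeta')$ and $\rho(\zeta)\rho(\gamma)=\rho(\gamma)\rho(\zeta\gamma)$; and the matching cocycle relation for the scalar factors $J_\gamma(x,\tau)=(c\tau+d)^{-k}e\big(-c\,x^tLx/(c\tau+d)\big)$ and $J_\zeta(x,\tau)=e\big(\lambda^tL\lambda\tau+2\lambda^tLx+\lambda^tL\mu+\Tr(L\kappa)\big)$.

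For the first identity, write $\gamma\gamma'=\binom{a''\ b''}{c''\ d''}$. Under $\tau\mapsto\gamma'\tau$, $x\mapsto x/(c'\tau+d')$ one has $c\gamma'\tau+d=(c''\tau+d'')/(c'\tau+d')$, so the powers $(c\tau+d)^{-k}$ compose to $(c''\tau+d'')^{-k}$, the classical $\SL_2$-cocycle. Collecting the two quadratic exponents then reduces the claim to the scalar identity $\tfrac{c}{(c'\tau+d')(c''\tau+d'')}+\tfrac{c'}{c'\tau+d'}=\tfrac{c''}{c''\tau+d''}$, which upon clearing denominators reads $c''d'-c'd''=c$; this is exactly $\det\gamma'=1$. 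For the second identity one collects the two Heisenberg exponentials after the shift $x\mapsto x+\lambda'\tau+\mu'$: using $L=L^t$, the $\tau$- and $x$-linear terms assemble into $(\lambda+\lambda')^tL(\lambda+\lambda')\tau$ and $2(\lambda+\lambda')^tLx$, matching the shift $(\lambda+\lambda',\mu+\mu')$ of $\zeta\zeta'$, while the $x$-free terms match precisely because of the central correction $\lambda\mu^{\prime t}-\mu\lambda^{\prime t}$ in the product $\kappa''=\kappa+\kappa'+\lambda\mu^{\prime t}-\mu\lambda^{\prime t}$, whose trace against $L$ is $\Tr\!\big(L(\lambda\mu^{\prime t}-\mu\lambda^{\prime t})\big)=\lambda^tL\mu'-\lambda^{\prime t}L\mu$.

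The third, mixed identity is the \textbf{main obstacle}, since it couples the fractional-linear substitution with the Heisenberg shift. I would compute both sides directly. On each side the argument of $\Phi$ becomes $\big(x+(a\lambda+c\mu)\tau+(b\lambda+d\mu)\big)/(c\tau+d)$ evaluated at $\gamma\tau$, which already realizes the action relation $\rho(\zeta)\rho(\gamma)=\rho(\gamma)\rho(\zeta\gamma)$ with $\zeta\gamma=[(\lambda,\mu)\gamma,\kappa]$, i.e. $\lambda'=a\lambda+c\mu$ and $\mu'=b\lambda+d\mu$. It then remains to match the scalar prefactors, that is the cocycle relation $J_\gamma(x,\tau)\,J_\zeta(\rho(\gamma)(x,\tau))=J_{\zeta\gamma}(x,\tau)\,J_\gamma(\rho(\zeta\gamma)(x,\tau))$. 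Expanding $-c\,\hat x^tL\hat x/(c\tau+d)$ with $\hat x=x+\lambda'\tau+\mu'$ on the right against the term $-c\,x^tLx/(c\tau+d)$ together with the evaluated exponent $J_\zeta(x/(c\tau+d),\gamma\tau)$ on the left, and simplifying with $c\tau+d$, $ad-bc=1$ and $L=L^t$, one checks that the quadratic-in-$x$, linear-in-$x$, and $x$-free parts agree term by term. The only genuine difficulty is the bookkeeping; no idea beyond $\det\gamma=1$ and the symmetry of $L$ enters. As recorded in \cite[Lemma 1.2]{Ziegler}, this is a direct calculation.
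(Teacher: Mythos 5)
Your proposal is correct: all three identities are indeed purely formal cocycle computations for the automorphy factors $J_\gamma$ and $J_\zeta$, reducing respectively to $c''d'-c'd''=c$ (i.e.\ $\det\gamma'=1$), to $\Tr\bigl(L(\lambda\mu'^{t}-\mu\lambda'^{t})\bigr)=\lambda^tL\mu'-\lambda'^tL\mu$, and to $d\lambda'-c\mu'=\lambda$ for $(\lambda',\mu')=(\lambda,\mu)\gamma$, exactly as you indicate. This is the same route as the paper, which simply records the lemma as a direct calculation and cites Ziegler.
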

\begin{cor}
The group $G_{\BR}^{(n)}$ acts on $\AH(\BC^n \times \BH)$ by
$\Phi \mapsto \Phi|_{k,L} (\gamma, \zeta) := \Phi|_{k,L} \gamma |_{L} \zeta$.
\end{cor}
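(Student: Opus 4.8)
The plan is to verify directly that $\Phi \mapsto \Phi|_{k,L}(\gamma,\zeta)$ defines a right action of $G_\BR^{(n)}$ on $\AH(\BC^n \times \BH)$, relying only on the three compatibility relations of Lemma~\ref{lemma:Ziegler}. First I would dispose of the identity: the neutral element of $G_\BR^{(n)}$ is $(\id, [(0,0),0])$, and both $\Phi|_{k,L}\id = \Phi$ and $\Phi|_L[(0,0),0] = \Phi$ are immediate from the defining formulas (for $\gamma = \id$ one has $c=0$, $d=1$, and for the trivial Heisenberg element all exponential factors are $1$). Thus the whole content is the composition law
\[ (\Phi|_{k,L}(\gamma,\zeta))\big|_{k,L}(\gamma',\zeta') = \Phi\big|_{k,L}\big((\gamma,\zeta)\cdot(\gamma',\zeta')\big). \]

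To prove this, I would expand the left side as $\Phi|_{k,L}\gamma|_L\zeta|_{k,L}\gamma'|_L\zeta'$ and push the factor $|_L\zeta$ to the right past $|_{k,L}\gamma'$. Applying the third relation of Lemma~\ref{lemma:Ziegler} with $\Phi|_{k,L}\gamma$ in place of $\Phi$ rewrites $|_L\zeta|_{k,L}\gamma'$ as $|_{k,L}\gamma'|_L(\zeta\gamma')$, giving $\Phi|_{k,L}\gamma|_{k,L}\gamma'|_L(\zeta\gamma')|_L\zeta'$. The first relation then collapses $|_{k,L}\gamma|_{k,L}\gamma'$ to $|_{k,L}(\gamma\gamma')$, and the second collapses $|_L(\zeta\gamma')|_L\zeta'$ to $|_L\big((\zeta\gamma')\zeta'\big)$; by definition the result is $\Phi|_{k,L}\big(\gamma\gamma',(\zeta\gamma')\zeta'\big)$.

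It then remains to match the pair $(\gamma\gamma',(\zeta\gamma')\zeta')$ with the product $(\gamma,\zeta)\cdot(\gamma',\zeta')$ dictated by the explicit group law. Writing $\zeta=[X,\kappa]$ and $\zeta'=[X',\kappa']$, the $\SL_2$-action gives $\zeta\gamma'=[X\gamma',\kappa]$, and the Heisenberg multiplication yields $(\zeta\gamma')\zeta'=[X\gamma'+X',\ \kappa+\kappa'+\lambda''\mu'^t-\mu''\lambda'^t]$ with $(\lambda'',\mu''):=X\gamma'$. Since the group law records the central term as $X\gamma'JX'^t$ with $J=\binom{0\ 1}{-1\ 0}$, the only actual computation is the identity $X\gamma'JX'^t=\lambda''\mu'^t-\mu''\lambda'^t$, which follows from $(\lambda'',\mu'')J=(-\mu'',\lambda'')$.

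I expect no genuine obstacle: the argument is purely formal once Lemma~\ref{lemma:Ziegler} is in hand. The one point requiring care is keeping the $\SL_2$-twist $\zeta\mapsto\zeta\gamma'$ in its correct position while interchanging the two kinds of slash operators — precisely the role of the third relation — together with the bookkeeping check that the Heisenberg cocycle agrees with the cross term $X\gamma'JX'^t$ appearing in the semidirect product.
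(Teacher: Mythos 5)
Your proposal is correct and is exactly the intended argument: the paper presents this as an immediate corollary of Lemma~\ref{lemma:Ziegler}, and your verification (pushing $|_L\zeta$ past $|_{k,L}\gamma'$ via the third relation, collapsing with the first two, and checking that the Heisenberg cocycle $\lambda''\mu'^t-\mu''\lambda'^t$ equals the cross term $X\gamma' J X'^t$ in the semidirect product) spells out precisely the formal computation the paper leaves to the reader. The bookkeeping identity $(\lambda'',\mu'')J=(-\mu'',\lambda'')$ is also correct.
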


Let $\Gamma \subset \SL_2(\BZ)$ be a congruence subgroup,
and let $\Lambda \subset \BZ^{(n,2)}$ be a finite index subgroup
which is preserved under the action of $\Gamma$ on $\BZ^{(n,2)}$ by multiplication on the right. 
Define the subgroup of $H_{\BZ}^{(n)}$ given by
\[ H_{\Lambda} = \left\{ [(\lambda, \mu), \kappa] \in H_{\BZ}^{(n)}\ \middle|\ (\lambda, \mu) \in \Lambda, \kappa \in \mathrm{Span}_{\BZ}( \mu \lambda^{t}, \lambda \mu^t | (\lambda, \mu) \in \Lambda), \kappa + \mu \lambda^t \text{ symmetric} \right\}. \]
We obtain the subgroup $\Gamma \ltimes H_{\Lambda} \subset G_{\BR}^{(n)}$.

\begin{defn} \label{defn:AHJ}
An almost holomorphic Jacobi form 
for $\Gamma \ltimes \Lambda$ of weight $k$ and index $L$  is an
almost-holomorphic function $\Phi(x,\tau)
= \sum_{i \geq 0} \sum_{j} \phi_{i, j}(x,\tau) \nu^{i} \alpha^j$ satisfying
\begin{itemize}
\item[(i)] $\Phi(x,\tau)|_{k,L} g = \Phi(x,\tau)$
for all $g \in \Gamma \ltimes H_{\Lambda}$,
\item[(ii)] for all $g \in \SL_2(\BZ)$, the almost-holomorphic function $\Phi|_{k,L} g$ is of the form $\sum_{i,j} \phi_{i,j} \alpha^i \nu^j$ such that
each of the finitely many non-zero holomorphic functions $\phi_{i,j}$ admits a Fourier expansion of the form $\sum_{v \geq 0} \sum_{r \in \BZ^n} c(v,r) q^{v/N} \zeta^{r/N}$
in the region $|q|<1$ for some $N \geq 1$,
and we used here the notation
$\zeta^r = e(x \cdot r)$.
\end{itemize}
\end{defn}

Any element $[(0,0), \kappa] \in H_{\Lambda}$ acts trivially on $\BH \times \BC^n$;
hence for an almost-holomorphic Jacobi form for group $\Gamma \ltimes \Lambda$ to be non-zero we must have that the index $L$ satisfies:
\begin{equation}
\tag{$\dagger$}
\Tr(L \kappa) \in \BZ \quad \text{for all} \quad [(0,0), \kappa] \in H_{\Lambda}
\end{equation}
In Section~\ref{subsec:Lattice index quasi Jacobi forms} we assumed that $L$ satisfies ($\dag$), hence 
%If $L$ satisfies ($\dag$),
%then 
Definition~\eqref{defn:AHJ} recovers and generalizes the previous Definition~\ref{defn:quasi jacobi forms main text}.

We will use the definitions of quasi-Jacobi forms, the vector spaces $\AHJ_{k,L}(\Gamma \ltimes \Lambda)$ and $\QJac_{k,L}(\Gamma \ltimes \Lambda)$,
and the holomorphic anomaly operators $\frac{d}{dG_2}$, $\xi_{\lambda}$ as in Section~\ref{subsec:Lattice index quasi Jacobi forms}.

\subsection{Modifications} \label{subsec:appendix modifications}
We describe two basic ways to modify quasi-Jacobi forms.

We start with two technical lemmata.
Extend the slash operators to the group $\GL_2(\BR)^{+}$ of $2 \times 2$-matrices with positive determinant as follows.
For any $\gamma = \binom{a\ b}{c\ d} \in \GL_2(\BR)^{+}$ let
$\overline{\gamma} := \gamma / \sqrt{\det \gamma }$ where we take the positive squareroot. Then set
\[ f|\gamma := f| \overline{\gamma}
=
\det(\gamma)^{k/2} (c \tau + d)^{-k} e\left( - \frac{ c x^t L x }{c \tau + d} \right) \Phi\left( \frac{\sqrt{\det \gamma} \cdot x}{c \tau + d}, \frac{a \tau + b}{c \tau + d} \right).
\]
If we let $\GL_2(\BR)^+$ act on $H_{\BR}^{(n)}$ by $\zeta \cdot \gamma := \zeta \cdot \overline{\gamma}$, then the slash operator for $\GL_2(\BR)^{+}$ satisfies the relations in Lemma~\ref{lemma:Ziegler} and we obtain an action of $\GL_2(\BR)^{+} \ltimes H_{\BR}^{(n)}$.

For $\ell \in \BR_{>0}$ define also
\[ (f|U_{\ell})(x,\tau) := f( \ell x, \tau). \]

\begin{lemma} \label{lemma:Uslash commutator}
For any $\gamma \in \GL_{2}^+(\BR)$ and $\zeta = [(\lambda,\mu),\kappa] \in H_{\BR}^{(n)}$ we have
\[ f|_{k,L} \gamma | U_{\ell} = f| U_{\ell} |_{k,\ell^2 L} \gamma,
\quad
f|_{L} [(\lambda,\mu),\kappa] | U_{\ell} = f|U_{\ell} |_{\ell^2 L} 
\left[ \left( \frac{\lambda}{\ell},\frac{\mu}{\ell} \right),\frac{\kappa}{\ell^2} \right]
\]
\end{lemma}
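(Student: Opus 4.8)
Both identities are purely formal consequences of unwinding the definitions of the slash operators and of $U_{\ell}$, so the plan is simply to substitute and track two bookkeeping effects: the normalization factor $\sqrt{\det\gamma}$ coming from the convention $f|\gamma = f|\overline{\gamma}$, and the fact that replacing $x$ by $\ell x$ scales the quadratic index by $\ell^2$. The guiding dictionary, which I would verify termwise, is: \emph{pre-composing with $U_{\ell}$ is the same as scaling the index $L$ by $\ell^2$ and dividing the Heisenberg translation data $(\lambda,\mu,\kappa)$ by $(\ell,\ell,\ell^2)$.}

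First I would prove the $\GL_2^+(\BR)$ identity. Starting from the explicit formula
\[ (f|_{k,L}\gamma)(x,\tau) = \det(\gamma)^{k/2}(c\tau+d)^{-k}\, e\!\left(-\frac{c\,x^t L x}{c\tau+d}\right) f\!\left(\frac{\sqrt{\det\gamma}\,x}{c\tau+d},\frac{a\tau+b}{c\tau+d}\right), \]
I would apply $U_{\ell}$, i.e.\ replace $x$ by $\ell x$ everywhere. The exponential prefactor then acquires $(\ell x)^t L(\ell x)=x^t(\ell^2 L)x$, which is exactly the prefactor for index $\ell^2 L$; and the interior argument becomes $\ell\sqrt{\det\gamma}\,x/(c\tau+d)$, which is precisely what results from first applying $U_{\ell}$ to $f$ and then the weight-$k$, index-$\ell^2 L$ slash operator of $\gamma$. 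Matching the two sides gives $f|_{k,L}\gamma|U_{\ell} = f|U_{\ell}|_{k,\ell^2 L}\gamma$. The only point requiring care here is that the weight factor $\det(\gamma)^{k/2}(c\tau+d)^{-k}$ is untouched by $U_{\ell}$, so it appears identically on both sides.

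The Heisenberg identity is handled the same way, and I would do it termwise. With $\lambda'=\lambda/\ell$, $\mu'=\mu/\ell$, $\kappa'=\kappa/\ell^2$ and index $\ell^2 L$, each of the four terms in the exponent of $f|U_{\ell}|_{\ell^2 L}[(\lambda',\mu'),\kappa']$ collapses to the expected one: $(\lambda')^t(\ell^2 L)\lambda'\tau=\lambda^t L\lambda\,\tau$, $\;2(\lambda')^t(\ell^2 L)x=2\ell\,\lambda^t L x$, $\;(\lambda')^t(\ell^2 L)\mu'=\lambda^t L\mu$, and $\Tr((\ell^2 L)\kappa')=\Tr(L\kappa)$; meanwhile the translated argument satisfies $f(\ell(x+\lambda'\tau+\mu'),\tau)=f(\ell x+\lambda\tau+\mu,\tau)$ since $\ell\lambda'=\lambda$ and $\ell\mu'=\mu$. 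On the left-hand side, substituting $\ell x$ into $f|_L[(\lambda,\mu),\kappa]$ produces exactly the same exponent (the linear term $2\lambda^t L x$ becomes $2\lambda^t L(\ell x)=2\ell\,\lambda^t L x$) and the same translated argument, so the two sides agree.

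\textbf{Main obstacle.} There is no genuine difficulty; the computation is routine once the $\sqrt{\det\gamma}$ convention and the $\ell^2$-scaling of the index are kept consistent. The one place to be vigilant is the linear term $2\lambda^t L x$ in the Heisenberg exponent, which is the unique term that does \emph{not} scale symmetrically under $(\lambda,L)\mapsto(\lambda/\ell,\ell^2 L)$ but instead picks up the expected factor of $\ell$ from the $U_{\ell}$-substitution on the other side; checking that these two factors of $\ell$ agree is the crux. Conceptually, the lemma records that $U_{\ell}$ intertwines the weight-$k$ index-$L$ action of $G^{(n)}_{\BR}$ with the weight-$k$ index-$\ell^2 L$ action, and I would state the final identities in exactly that form.
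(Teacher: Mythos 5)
Your computation is correct and is precisely the "straightforward computation" the paper invokes without writing out: both identities follow by substituting $x\mapsto\ell x$ into the explicit formulas for the slash operators and matching terms, with the index rescaling to $\ell^2 L$ and the Heisenberg data to $(\lambda/\ell,\mu/\ell,\kappa/\ell^2)$. Nothing to add.
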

\begin{proof} Straightforward computation. \end{proof}

\begin{lemma} \label{lemma:GL2 expansion}
Let $\Phi$ be an almost-holomorphic Jacobi form for $\Gamma \ltimes \Lambda$ of weight $k$ and index $L$. Then for any $\gamma = \binom{a\ b}{c\ d} \in \GL_2^{+}(\BQ)$ we have that $\Phi|_{k,L}\gamma|U_{\sqrt{\det(\gamma)}}$ is of the form $\sum_{i,j} \phi_{i,j} \alpha^i \nu^j$ such that each of the finitely many non-zero holomorphic functions $\phi_{i,j}$ admits a Fourier expansion of the form $\sum_{v \geq 0} \sum_{r \in \BZ^n} c(v,r) q^{v/N} \zeta^{r/N}$ in the region $|q|<1$ for some $N \geq 1$,
\end{lemma}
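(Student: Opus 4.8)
The plan is to reduce everything to property (ii) of Definition~\ref{defn:AHJ}, which already supplies the required Fourier expansion for the ordinary slashes $\Phi|_{k,L}g$ with $g\in\SL_2(\BZ)$, and then to verify that postcomposing with an upper-triangular rational matrix and with $U_{\ell}$ does not spoil the shape of such an expansion. First I would produce a decomposition of $\gamma$. Clearing a common denominator $N_0$ so that $N_0\gamma\in M_2(\BZ)$ and invoking the classical description of the cosets $\SL_2(\BZ)\backslash\{M\in M_2(\BZ):\det M=m>0\}$ by upper-triangular representatives, I obtain $g\in\SL_2(\BZ)$ and $a',b',d'\in\BQ$ with $a',d'>0$ and
\[ \gamma = g\,T, \qquad T = \begin{pmatrix} a' & b' \\ 0 & d' \end{pmatrix}, \qquad \det T = a'd' = \det\gamma. \]

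Next, since the $\GL_2^+(\BR)$-slash obeys the cocycle relation of Lemma~\ref{lemma:Ziegler} (as recorded just before Lemma~\ref{lemma:Uslash commutator}), I may write $\Phi|_{k,L}\gamma = \Psi|_{k,L}T$ where $\Psi:=\Phi|_{k,L}g$. Because $g\in\SL_2(\BZ)$, condition (ii) of Definition~\ref{defn:AHJ} applies directly to $\Psi$: it is almost-holomorphic, $\Psi=\sum_{i,j}\psi_{i,j}\alpha^i\nu^j$, and each $\psi_{i,j}$ has an expansion $\sum_{v\geq0}\sum_{r}c_{i,j}(v,r)q^{v/N}\zeta^{r/N}$ with $v\geq0$. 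Since $T$ has vanishing lower-left entry the exponential factor in the slash is trivial, and a direct computation (the scaling $x\mapsto\sqrt{a'd'}\,x$ from $U_{\sqrt{\det\gamma}}$ combining with the internal $\sqrt{a'/d'}\,x$ to give $a'x$) yields
\[ \bigl(\Psi|_{k,L}T\,|\,U_{\sqrt{\det\gamma}}\bigr)(x,\tau) = (a'd')^{k/2}(d')^{-k}\,\Psi\Bigl(a'x,\ \tfrac{a'\tau+b'}{d'}\Bigr). \]
Thus the whole question reduces to the effect of the substitution $(x,\tau)\mapsto(a'x,\tau')$ with $\tau'=\frac{a'\tau+b'}{d'}$.

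Then I would check that this substitution preserves the claimed shape. As $a',d'>0$ one has $\mathrm{Im}(\tau')=(a'/d')\mathrm{Im}(\tau)>0$, so $\tau'\in\BH$, and the real-analytic generators transform by positive scalars, $\nu(\tau')=\tfrac{d'}{a'}\nu(\tau)$ and $\alpha_l(a'x,\tau')=d'\,\alpha_l(x,\tau)$; hence the output is again almost-holomorphic of the form $\sum_{i,j}\phi_{i,j}\alpha^i\nu^j$ with $\phi_{i,j}$ a constant multiple of $\psi_{i,j}(a'x,\tau')$. On the holomorphic parts the substitution sends $q^{v/N}\mapsto e\!\left(\tfrac{vb'}{Nd'}\right)q^{va'/(Nd')}$ and $\zeta^{r/N}=e\!\left(\tfrac{x\cdot r}{N}\right)\mapsto e\!\left(\tfrac{x\cdot a'r}{N}\right)$: the phase is a constant absorbed into the coefficient, the exponent $va'/(Nd')$ is $\geq0$ because $v\geq0$ and $a',d'>0$, and after choosing $N'$ to clear the bounded denominators of $a'/d'$ and $a'$ all exponents lie in $\tfrac1{N'}\BZ_{\geq0}$ and $\tfrac1{N'}\BZ^n$. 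The overall prefactor $(a'd')^{k/2}(d')^{-k}$ is a nonzero constant, so this is exactly the required form.

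I do not expect a genuine obstacle here; the two points that demand care are producing the upper-triangular decomposition with positive diagonal over $\BQ$, which is the standard Hecke-coset argument, and making sure the operators $U_{\sqrt{\det\gamma}}$ and $|_{k,L}T$ combine so that non-negativity of the $q$-exponents is retained. The hard part, to the extent there is one, is precisely this last bookkeeping, and it hinges on the positivity of $a',d'$ — equivalently $\det\gamma>0$ — which is why the hypothesis $\gamma\in\GL_2^+(\BQ)$ is used.
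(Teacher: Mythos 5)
Your proposal is correct and follows essentially the same route as the paper: decompose $\gamma=g\,T$ with $g\in\SL_2(\BZ)$ and $T$ upper-triangular, invoke condition (ii) of Definition~\ref{defn:AHJ} for $\Phi|_{k,L}g$, and then compute explicitly that slashing by $T$ followed by $U_{\sqrt{\det\gamma}}$ rescales the $q$-exponents by the positive factor $a'/d'$ and only introduces bounded denominators. Your extra care about the positivity of the diagonal entries and the transformation of $\alpha,\nu$ is exactly the bookkeeping the paper's proof performs via the prefactor $(d/\sqrt{ad})^{-k+2j+\sum_r i_r}$.
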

\begin{proof}
Any $\gamma \in \GL_2^{+}(\BQ)$ can be written as $\gamma_1 \cdot \gamma_2$ where $\gamma_1 \in \SL_2(\BZ)$ and $\gamma_2 = \binom{a\ b}{0\ d}$, compare \cite[III.3, Lemma 2]{Koblitz}. By definition of an almost-holomorphic Jacobi form we have that 
\[ \Phi|_{k,L} \gamma_1 = \sum_{j \geq 0} \sum_{i=(i_1, \ldots, i_n) \in (\BZ_{\geq 0})^n} \alpha^i \nu^{j} \sum_{v \geq 0} \sum_{r \in \BZ^n} c_{i,j}(v,r) e\left( \frac{v \tau}{N} \right) e\left( \frac{rx}{N} \right) \]
for some $N \geq 1$. Hence we get that
\[ \Phi|_{k,L} \gamma_1 | \gamma_2 = %(ad)^{k/2} d^{-k} 
\sum_{i,j} \left(\frac{d}{\sqrt{ad}} \right)^{-k+2j+\sum_{r} i_r} \alpha^i \nu^{j} \sum_{v \geq 0} \sum_{r \in \BZ^n} c_{i,j}(v,r) e\left( \frac{v (a \tau+b)}{N d} \right) e\left( \frac{\sqrt{ad} rx}{d N} \right). \]
Appling $U_{\sqrt{ad}}$ we hence get the desired form.
\end{proof}

Let $L$ be a fixed index and consider the subgroup
\[ V_{L} = \{ [(0,0),\kappa] \in H_{\BZ}^{(n)}\, |\, \Tr(L \kappa) \in \BZ \}. \]
Let $\zeta_X = [X, \kappa_X] \in H_{\BZ}^{(n)}$,
$X = (\lambda, \mu)$, and
%Let $X = (\lambda,\mu) \in \BZ^{(n,2)}$. Choose an element $\kappa_X \in \BZ^{(n,n)}$ such that $\zeta_X = [X, \kappa_X] \in H_{\BZ}^{(n)}$.
let $\overline{\zeta_X}$ be its image in the coset $(H_{\Lambda} \cdot V_L) \setminus H_{\BZ}^{(n)}$. 
Let $\Gamma_{X} \subset \Gamma$
be the stabilizer of $\overline{\zeta_X}$
with respect to the induced action of $\Gamma$ on the coset.\footnote{$\Gamma_{X}$ is independent of the choice of $\kappa_X$.%Here $\Gamma$ acts on the quotient because $\Lambda \Gamma = \Lambda$, so $H_{\Lambda} = H_{\Lambda}$. and also $V_L \Gamma = V_L$. Also note that $\Gamma_{X,L}$ is independent of the choice of $\kappa_X$.
}
With $J=\binom{\phantom{-}0\ 1}{-1\ 0}$ as before, consider also the subgroup
\[ \Lambda_{X} = \{ X' \in \Lambda \  |\  2 \cdot \Tr( L X J (X')^{t} ) \in \BZ \}. \]

\begin{lemma} \label{lemma:Jac2}
Let $\phi(x,\tau)$ be a quasi-Jacobi form for $\Gamma \ltimes \Lambda$
 of weight $k$ and index $L$.
 Then 
\[
\phi||_L (\lambda,\mu) :=
e\left( \lambda^t L \lambda \tau + 2 \lambda^t L x \right)
\left( e^{\xi_{\lambda}} \phi  \right)(x + \lambda \tau + \mu, \tau)
\] 
is a quasi-Jacobi form for $\Gamma_{X} \ltimes \Lambda_X$ of weight $k$ and index $L$.
Moreover,
\begin{equation} \frac{d}{dG_2} \left( \phi||_L (\lambda,\mu) \right) = \left( \frac{d}{dG_2} \phi \right) ||_{L} (\lambda,\mu). \label{G2 equationdfsdf} \end{equation}
\end{lemma}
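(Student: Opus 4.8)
The plan is to reduce everything to the genuine slash action of the Jacobi group on the associated almost-holomorphic form. Let $\Phi = \mathrm{ct}^{-1}(\phi) = \sum_{i,j}\phi_{i,j}\,\nu^i\alpha^j \in \AHJ_{k,L}(\Gamma\ltimes\Lambda)$ be the almost-holomorphic Jacobi form whose constant term is $\phi$, and set $\zeta_0 := [(\lambda,\mu),0]\in H_{\BR}^{(n)}$. The heart of the argument is the identity
\[ \phi||_L(\lambda,\mu) = \mathrm{ct}(\Psi), \qquad \Psi := e(-\lambda^t L\mu)\,\Phi|_L\zeta_0 = e(\lambda^tL\lambda\tau + 2\lambda^tLx)\,\Phi(x+\lambda\tau+\mu,\tau), \]
which identifies the explicit formula for $\phi||_L(\lambda,\mu)$ with the constant term of a translate of $\Phi$. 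To prove it I would use the transformation rule $\alpha(x+\lambda\tau+\mu,\tau) = \alpha(x,\tau)+\lambda$ together with the fact that $\nu$ is independent of $x$: substituting $x\mapsto x+\lambda\tau+\mu$ shifts $\alpha\mapsto\alpha+\lambda$ but leaves the $\nu$-grading untouched, so the $\nu^0\alpha^0$-part of $\Phi(x+\lambda\tau+\mu,\tau)$ is $\sum_j\phi_{0,j}(x+\lambda\tau+\mu,\tau)\lambda^j$. Writing $D_\lambda := \sum_i\lambda_i\,d/d\alpha_i$, the operator $e^{D_\lambda}$ is exactly the shift $\alpha\mapsto\alpha+\lambda$, and since $\mathrm{ct}^{-1}\circ\xi_\lambda = D_\lambda\circ\mathrm{ct}^{-1}$ one has $(e^{\xi_\lambda}\phi)(x,\tau) = \sum_j\phi_{0,j}(x,\tau)\lambda^j$. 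As the prefactor $e(\lambda^tL\lambda\tau+2\lambda^tLx)$ is holomorphic, and hence invisible to $\mathrm{ct}$, this yields $\mathrm{ct}(\Psi) = \phi||_L(\lambda,\mu)$.

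With this in hand, the membership $\phi||_L(\lambda,\mu)\in\QJac_{k,L}(\Gamma_X\ltimes\Lambda_X)$ follows once $\Psi$ is shown to be an almost-holomorphic Jacobi form for $\Gamma_X\ltimes\Lambda_X$, since $\mathrm{ct}$ is the isomorphism \eqref{constant term map}. As $\Psi$ is a nonzero scalar multiple of $\Phi|_L\zeta_0$, it is almost-holomorphic and lies in the $G_{\BR}^{(n)}$-orbit of $\Phi$; I check its invariance generator by generator using Lemma~\ref{lemma:Ziegler}. For $\zeta' = [(\lambda',\mu'),\kappa']\in H_{\Lambda_X}\subset H_\Lambda$, I write $\Psi|_L\zeta' = e(-\lambda^tL\mu)\,\Phi|_L(\zeta_0\zeta')$ and conjugate, $\zeta_0\zeta' = (\zeta_0\zeta'\zeta_0^{-1})\zeta_0$ with $\zeta_0\zeta'\zeta_0^{-1} = [(\lambda',\mu'),\kappa'']$. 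A direct computation of the Heisenberg product gives $\Tr(L(\kappa''-\kappa')) = 2\big((\mu')^tL\lambda - (\lambda')^tL\mu\big) = 2\Tr(L\,XJ(X')^t)$, which is integral precisely by the defining condition of $\Lambda_X$; hence $\Phi|_L(\zeta_0\zeta'\zeta_0^{-1}) = \Phi$ and $\Psi|_L\zeta' = \Psi$. For $\gamma\in\Gamma_X$, the third relation of Lemma~\ref{lemma:Ziegler} and the $\Gamma$-modularity of $\Phi$ give $\Psi|_{k,L}\gamma = e(-\lambda^tL\mu)\,\Phi|_L(\zeta_0\cdot\gamma)$ with $\zeta_0\cdot\gamma = [(\lambda,\mu)\gamma,0]$; by the definition of $\Gamma_X$ as the stabilizer of $\overline{\zeta_X}$ in $(H_\Lambda\cdot V_L)\backslash H_\BZ^{(n)}$ (taking $\kappa_X=0$), the elements $\zeta_0\cdot\gamma$ and $\zeta_0$ lie in the same coset, so they differ by an element of $H_\Lambda\cdot V_L$, which acts trivially on $\Phi$ (the $V_L$-part by the index condition), whence $\Psi|_{k,L}\gamma = \Psi$. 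The Fourier-expansion condition (ii) of Definition~\ref{defn:AHJ} for $\Psi$ at every cusp is inherited from that of $\Phi$, since translating by an integral vector and slashing by $\SL_2(\BZ)$ preserve the shape of the Fourier expansion, as in Lemma~\ref{lemma:GL2 expansion}.

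Finally, for the holomorphic anomaly identity \eqref{G2 equationdfsdf} I would observe that $d/d\nu$ commutes with the slash $|_L\zeta_0$: the $\alpha$-shift $\alpha\mapsto\alpha+\lambda$ and the holomorphic prefactor both preserve the $\nu$-grading, so $\tfrac{d}{d\nu}(\Phi|_L\zeta_0) = (\tfrac{d}{d\nu}\Phi)|_L\zeta_0$. Applying $\mathrm{ct}$ and unwinding $\frac{d}{dG_2} = \mathrm{ct}\circ\frac{d}{d\nu}\circ\mathrm{ct}^{-1}$ together with $\mathrm{ct}^{-1}(\frac{d}{dG_2}\phi) = \frac{d}{d\nu}\Phi$ then yields $\frac{d}{dG_2}(\phi||_L(\lambda,\mu)) = (\frac{d}{dG_2}\phi)||_L(\lambda,\mu)$.

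I expect the main obstacle to be the bookkeeping in the invariance check of the second paragraph: the exact matching $\Tr(L(\kappa''-\kappa')) = 2\Tr(L\,XJ(X')^t)$ is what forces the passage from $\Lambda$ to the subgroup $\Lambda_X$, and likewise from $\Gamma$ to $\Gamma_X$, so getting the Heisenberg cocycle and the index-condition cancellations exactly right is the delicate point; everything else is formal manipulation with the slash operators and the constant-term isomorphism.
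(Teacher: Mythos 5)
Your argument is correct and follows essentially the same route as the paper's proof: pass to the almost-holomorphic completion, identify $\phi||_L(\lambda,\mu)$ (up to a nonzero constant) with the constant term of $\Phi|_L\zeta_X$, and verify invariance under $\Gamma_X$ and $H_{\Lambda_X}$ via the slash-operator relations, with the Heisenberg cocycle $2\Tr(L\,XJ(X')^t)$ producing exactly the integrality condition defining $\Lambda_X$, and the coset-stabilizer definition of $\Gamma_X$ handling the modular part. The only cosmetic point is your choice $\zeta_0=[(\lambda,\mu),0]$, which need not lie in $H_{\BZ}^{(n)}$ when $\mu\lambda^t$ fails to be symmetric; since the slash operator depends on $\kappa$ only through the scalar $\Tr(L\kappa)$, replacing $\zeta_0$ by $[(\lambda,\mu),\kappa_X]$ for an admissible $\kappa_X$ multiplies everything by a global nonzero constant and leaves the argument unchanged.
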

\begin{proof}
Let $\Phi = \ct^{-1}(\phi)$ be the non-holomorphic completion of $\phi$. Observe that
\[ \phi||_L (\lambda,\mu) = \ct( \Phi|_{L} \zeta_X ). \]
To prove the first claim it hence suffices to prove that
$\Phi|_{L} \zeta_X$ is an almost-holomorphic Jacobi form
of weight $k$ and index $L$ for $\Gamma_X \ltimes \Lambda_X$.
This is a straightforward application of the slash operator: 
First, for $\gamma \in \Gamma_X$
we can write $\zeta_X \gamma = \zeta \circ [0,\kappa] \circ [X,\kappa_X]$ for some $\zeta \in H_{\Lambda}$ and $[0,\kappa] \in V_{L}$.
Then we get (with omitting the index $L$ from the notation):
\begin{align*}
\Phi| \zeta_X | \gamma = \Phi| \gamma | (\zeta_X \circ \gamma) = \Phi | (\zeta_X\circ  \gamma)
=
\Phi | \zeta | [0,\kappa] | \zeta_X = \Phi | \zeta_X.
\end{align*}
Second, 
for any %$X' \in \Lambda_X$ with
 $\zeta_{X'} = [X', \kappa'] \in H_{\Lambda_X}$ we have 
\begin{align*}
\Phi| \zeta_X | \zeta_{X'}
& = \Phi | \zeta_{X'} | [0, -X' J X^t + X J X^{\prime t}] | \zeta_X \\
& = e( \Tr(L (-X' J X^t + X J X^{\prime t}))) \Phi| \zeta_X \\
& = e( 2 \Tr(L X J  X^{\prime t})) \Phi| \zeta_X \\
& = \Phi| \zeta_X.
%\phi|[X,0]|[X',\kappa'] & = 
%\phi|[X',\kappa']|[0, -X' J X^t + X J X^{\prime t}] | [X,0] \\
%& =
%e( \Tr(L (-X' J X^t + X J X^{\prime t}))) \phi|X \\
%& = e( 2 \Tr(L X J  X^{\prime t})) \phi|X \\
%& = \phi|X. \qedhere
\end{align*}
This shows that $\Phi$ satisfies part (i) of Definition~\ref{defn:AHJ}.
The part (ii) of the definition is immediate by using $\Phi|\zeta_X | g = \Phi |g | (\zeta_X g)$
for any $g \in \SL_2(\BZ)$ and condition (ii) for $\Phi$.

The equality \eqref{G2 equationdfsdf} follows immediately by observing
\[
\frac{d}{dG_2} \left( \phi||_L (\lambda,\mu) \right)
=
\frac{d}{dG_2} \ct( \Phi|_{L} \zeta_X ) = \mathrm{Coeff}_{\nu^1 \alpha^0}( \Phi|_{L} \zeta_X ). \qedhere
\]
\end{proof}

For an integer $N \geq 1$ define an operator $R_N$ 
on functions $f(x,\tau)$ by
\[ (f|R_N)(x,\tau) := f( x, N \tau). \]
%\[ (R_N \phi)(x,\tau) = \phi(x, N \tau). \]

\begin{lemma} \label{lemma:G2 derivative and scaling}
The operator $R_N$ restricts to a homomorphism
\[ \QJac_{k,L}(\SL_2(\BZ) \ltimes \BZ^{(n,2)}) \to \QJac_{k,L/N}( \Gamma_0(N) \ltimes (N \BZ^n \oplus \BZ^n)), \quad f \mapsto f|R_N \]
satisfying $\frac{d}{dG_2} (f|R_N) = \frac{1}{N} ( \frac{d}{dG_2} f)|R_N$
and $\xi_{\lambda} (f| R_N) = \frac{1}{N} (\xi_{\lambda} f)|R_N$.
\end{lemma}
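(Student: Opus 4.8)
The plan is to work with almost-holomorphic completions and to realize $R_N$ through the slash operators of this appendix. Writing $W_N := \binom{N\ 0}{0\ 1}\in\GL_2(\BQ)^+$ and using the $\GL_2(\BR)^+$-extended slash together with the rescaling operators $U_\ell$, a direct computation gives
\[ \Phi|R_N = N^{-k/2}\,\Phi|_{k,L}W_N|U_{1/\sqrt N}, \]
since $\Phi|_{k,L}W_N = N^{k/2}\Phi(\sqrt N x,N\tau)$ and $U_{1/\sqrt N}$ then undoes the rescaling of the elliptic variable. I would first record the scalings $\nu(N\tau) = \tfrac1N\nu(\tau)$ and $\alpha_i(x,N\tau) = \tfrac1N\alpha_i(x,\tau)$, which show both that $\cdot|R_N$ preserves almost-holomorphicity and that it commutes with the constant-term map $\ct$. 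Setting $\Phi := \ct^{-1}(f)$, it then suffices to prove that $\Phi|R_N$ is an almost-holomorphic Jacobi form of weight $k$ and index $L/N$ for $\Gamma_0(N)\ltimes(N\BZ^n\oplus\BZ^n)$; granting this, $f|R_N = \ct(\Phi|R_N)$ is the asserted quasi-Jacobi form by the isomorphism \eqref{constant term map}.

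The heart of the proof is the transformation law of Definition~\ref{defn:AHJ} for $\Phi|R_N$, and the mechanism is conjugation by $W_N$. For $\gamma = \binom{a\ b}{c\ d}\in\Gamma_0(N)$ one has $W_N\gamma W_N^{-1} = \binom{a\ Nb}{c/N\ d}\in\SL_2(\BZ)$; commuting $\gamma$ leftward past $U_{1/\sqrt N}$ by Lemma~\ref{lemma:Uslash commutator} (this is where the index changes from $L/N$ to $L$) and applying the cocycle relations of Lemma~\ref{lemma:Ziegler}, the $\Gamma_0(N)$-invariance at index $L/N$ collapses to the known identity $\Phi|_{k,L}(W_N\gamma W_N^{-1}) = \Phi$. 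For the Heisenberg part I would take $[(\lambda,\mu),\kappa]\in H_{N\BZ^n\oplus\BZ^n}$ and commute it leftward, first past $U_{1/\sqrt N}$ (second relation of Lemma~\ref{lemma:Uslash commutator}) and then past the $W_N$-slash (mixed relation of Lemma~\ref{lemma:Ziegler}); the net effect on the translation part is $(\lambda,\mu)\mapsto(\lambda/N,\mu)$, which is integral exactly because $\lambda\in N\BZ^n$, so the element is carried into $H_{\BZ^{(n,2)}}$ and is absorbed by the invariance of $\Phi$. Finally, condition (ii) at every cusp follows from Lemma~\ref{lemma:GL2 expansion} applied to $\Phi|_{k,L}(W_N g)$ for $g\in\SL_2(\BZ)$, the remaining discrepancy between $U_{1/\sqrt N}$ and $U_{\sqrt N}$ amounting to a harmless $U_{1/N}$ that only rescales the elliptic variable and hence enlarges the denominator of the Fourier exponents while preserving the nonnegativity of the $q$-powers.

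The compatibility with the holomorphic anomaly operators I would then deduce purely formally. Expanding $\Phi = \sum_{i,j}\phi_{i,j}\nu^i\alpha^j$, the scalings of $\nu$ and $\alpha$ give $\Phi|R_N = \sum_{i,j}N^{-i-|j|}(\phi_{i,j}|R_N)\,\nu^i\alpha^j$ with $|j| = j_1+\cdots+j_n$; comparing coefficients of $\nu^{i-1}\alpha^j$ yields $\tfrac{d}{d\nu}(\Phi|R_N) = \tfrac1N\big(\tfrac{d}{d\nu}\Phi\big)|R_N$ and likewise $\tfrac{d}{d\alpha_i}(\Phi|R_N) = \tfrac1N\big(\tfrac{d}{d\alpha_i}\Phi\big)|R_N$. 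Since $\tfrac{d}{dG_2} = \ct\circ\tfrac{d}{d\nu}\circ\ct^{-1}$ and $\xi_\lambda = \ct\circ\big(\sum_i\lambda_i\tfrac{d}{d\alpha_i}\big)\circ\ct^{-1}$, and $\cdot|R_N$ commutes with $\ct$, the identities $\tfrac{d}{dG_2}(f|R_N) = \tfrac1N\big(\tfrac{d}{dG_2}f\big)|R_N$ and $\xi_\lambda(f|R_N) = \tfrac1N(\xi_\lambda f)|R_N$ follow immediately.

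The main obstacle is not conceptual but the bookkeeping with the index and the Heisenberg lattice through the two commutation relations of Lemma~\ref{lemma:Uslash commutator}. Concretely, I expect the only genuine verifications to be integrality statements: that $L/N$ really is an index for $N\BZ^n\oplus\BZ^n$, i.e. $\Tr\big((L/N)\kappa\big)\in\BZ$ for all admissible $\kappa$ (which holds since such $\kappa$ lie in $N\BZ^{(n,n)}$), and that the $\kappa$-component of a Heisenberg element of $H_{N\BZ^n\oplus\BZ^n}$ is carried into $\Span_\BZ(\mu\lambda^t,\lambda\mu^t\mid(\lambda,\mu)\in\BZ^{(n,2)})$ after the two commutations. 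Once these are checked, each step above is a mechanical application of the slash calculus of Lemmas~\ref{lemma:Ziegler} and~\ref{lemma:Uslash commutator}.
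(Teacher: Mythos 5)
Your proposal is correct and follows essentially the same route as the paper's proof: the identity $f|R_N = N^{-k/2}\,f|_{k,L}\gamma_N|U_{1/\sqrt N}$ with $\gamma_N=\binom{N\ 0}{0\ 1}$, the commutation lemmas to conjugate $\Gamma_0(N)$ into $\SL_2(\BZ)$ and to carry $H_{N\BZ^n\oplus\BZ^n}$ into $H_{\BZ^{(n,2)}}$ via $(\lambda,\mu)\mapsto(\lambda/N,\mu)$, the cusp condition via Lemma~\ref{lemma:GL2 expansion}, and the coefficient comparison $\Phi|R_N=\sum_{i,j}N^{-i-|j|}(\phi_{i,j}|R_N)\nu^i\alpha^j$ for the anomaly operators. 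No gaps.
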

\begin{proof}
Let $\gamma_N=\binom{N\ 0}{0\ 1}$. Then we have
\[ f|R_N = N^{-k/2} \cdot f|_{k,L}\gamma_N | U_{1/\sqrt{N}}. \]
Let $f \in \AHJ_{k,L}(\SL_2(\BZ) \ltimes \BZ^{(n,2)})$.
We check that $f|R_N$ satisfies the conditions of Definition~\ref{defn:AHJ}.
For $\gamma \in \Gamma_0(N)$ by Lemma~\ref{lemma:Uslash commutator} we have
\begin{align*}
(f|R_N)|_{k,L/N} \gamma 
& = N^{-k/2} f|_{k,L} \gamma_N | U_{1/\sqrt{N}} |_{k,L/N} \gamma \\
& = N^{-k/2} f|_{k,L} \gamma_N \gamma | U_{1/\sqrt{N}} \\
& = N^{-k/2} f|_{k,L} \gamma_N \gamma \gamma_N^{-1} |_{k,L} \gamma_N | U_{1/\sqrt{N}} \\
& = f|R_N,
\end{align*}
since $\gamma_N \Gamma_0(N)\gamma_N^{-1} \subset \SL_2(\BZ)$.
Similarly, for $\zeta=[(\lambda,\mu), \kappa] \in H_{N \BZ^n \oplus \BZ^n}$, we have
\begin{align*}
(f|R_N)|_{L/N} \zeta 
& = N^{-k/2} \cdot f|_{k,L} \gamma_N |_{L} \left[ \left( \frac{\lambda}{\sqrt{N}}, \frac{\mu}{\sqrt{N}} \right), \frac{\kappa}{N} \right] | U_{1/\sqrt{N}} \\
& = N^{-k/2} \cdot f |_{L} \left[ \left( \frac{\lambda}{\sqrt{N}}, \frac{\mu}{\sqrt{N}} \right), \frac{\kappa}{N} \right] \circ \gamma_{N}^{-1} | \gamma_N | U_{1/\sqrt{N}} \\
& = N^{-k/2} \cdot f|_{L}  \left[ \left( \frac{\lambda}{N}, \mu \right), \frac{\kappa}{N} \right] | \gamma_N | U_{1/\sqrt{N}} \\
& = f|R_N.
\end{align*}
Finally, for any $\gamma \in \SL_2(\BZ)$ we have
\[
f|_{k,L} \gamma_N|U_{1/\sqrt{N}} |_{k,L/N} \gamma
=
f|_{k,L} \gamma_a \gamma | U_{1/\sqrt{N}}
=
f|_{k,L} \gamma_a \gamma | U_{\sqrt{N}} | U_{1/N},
\]
hence property (ii) of Definition~\ref{defn:AHJ} follows from Lemma~\ref{lemma:GL2 expansion}.
We hence have an operator
\[ \AHJ_{k,L}(\SL_2(\BZ) \ltimes \BZ^{(n,2)}) \to \AHJ_{k,L/N}( \Gamma_0(N) \ltimes (N \BZ^n \oplus \BZ^n)). \]
The first claim follows from this and by noting that $\ct( f|R_N ) = \ct(f)|R_N$.
If $f = \sum_{i,j} f_{ij} \alpha^i \nu^j$ then
$f|R_N = \sum_{i,j} f_{ij} N^{-j - \sum_r i_r} \alpha^i \nu^j$,
hence the $\nu^1$ and $\alpha_i^1$ coefficient of $f|R_N$ are both multiplied by $1/N$, which shows the second claim.
\end{proof}

\begin{proof}[Proof of Lemma~\ref{lemma: QJac scaling to Gamma0(2)} in Section~\ref{sec:Modular and Jacobi forms}]
Let $f(x,\tau)$ be a quasi-Jacobi form for $\SL_2(\BZ) \ltimes (\BZ^8 \oplus \BZ^8)$
of weight $k$ and index $m Q_{E_8}$ where $m \in \BZ_{\geq 0}$.
By using the commutation between $R_2$ with $\xi_{\alpha_0}$ 
stated in Lemma~\ref{lemma:G2 derivative and scaling} we have
\[
M_{\alpha_0,m}(f) = ( f|R_2 )||_{\frac{m}{2} Q_{E_8}} (\alpha_0, 0).
\]
By Lemma~\ref{lemma:G2 derivative and scaling} $f|R_2$ is a quasi-Jacobi form
of weight $k$ and index $\frac{m}{2} Q_{E_8}$
for $\Gamma_0(2) \ltimes (2 \BZ^8 \oplus \BZ^8)$.
Hence by Lemma~\ref{lemma:Jac2},
$M_{\alpha_0,m}(f)$ is a quasi-Jacobi form of the same weight and index, for the group $\Gamma_{(\alpha_0,0)} = \Gamma_0(2)$ (use that for $\binom{a\ b}{c\ d} \in \Gamma_0(2)$ we must have $a$ odd), and lattice
\[
\Lambda_{(\alpha_0,0)} = \{ (\lambda', \mu') \in 2 \BZ^8 \oplus \BZ^8 | m (\alpha_0 \cdot_{E_8} \mu') \in \BZ \} = 2 \BZ^8 \oplus \BZ^8.
\]
The compatibility with the $G_2$-derivative follows likewise by
Lemmata~\ref{lemma:G2 derivative and scaling} and
\ref{lemma:Jac2}.
\end{proof}

\subsection{Hecke operators} \label{subsec:Hecke operators appendix}
Let $N \geq 1$ be fixed. For $\ell \geq 1$ consider the set
\[ S_{\ell} = \left\{ \begin{pmatrix} a & b \\ c & d \end{pmatrix} \in \BZ^{(2,2)}
\middle|
c \equiv 0(N), \gcd(a,N) = 1, ad-bc=\ell \right\}. \]
The set $S_{\ell}$ is preserved under multiplication by $\Gamma_0(N)$ from the left and right. 

For $f \in \AHJ_{k,L}(\Gamma_0(N))$ define the Hecke operator
\[ f|_{k,L} V_{\ell} := \ell^{k-1} \sum_{\gamma \in \Gamma_0(N) \setminus S_{\ell} }
(c \tau + d)^{-k} e\left( - \frac{c \ell x^t L x}{c \tau + d} \right) f\left( \frac{ \ell x}{c \tau + d}, \frac{a \tau + b}{c \tau + d} \right). \]
By observing that this can be rewritten as
\[
f|_{k,L} V_{\ell} = \ell^{k/2-1} \sum_{ \gamma \in \Gamma_0(N) \setminus S_{\ell} }
f|_{k,L} \gamma | U_{\sqrt{\ell}}
\]
one observes that this is well-defined, i.e. independent of the representative of $\gamma$.

By using Lemma~\ref{lemma:Uslash commutator}, the properties of the slash operation,
and Lemma~\ref{lemma:GL2 expansion} one checks that the Hecke operator gives a map
\[
\AHJ_{k,L}(\Gamma_0(N)) \to \AHJ_{k,\ell L}(\Gamma_0(N)), f \mapsto f|_{k,L} V_{\ell}.
\]
We obtain a Hecke operator on quasi-Jacobi forms:
\begin{equation} \label{dsf0df}
\QJac_{k,L}(\Gamma_0(N)) \to \QJac_{k,\ell L}(\Gamma_0(N)), f \mapsto f|_{k,L} V_{\ell} := \ct\left( \ct^{-1}(f)|_{k,L} V_{\ell} \right).
\end{equation}

A system of representatives of $\Gamma_0(N) \setminus S_{\ell}$ is given by the collection of matrices \cite[A.24]{JS_CHL} %(see \cite{JS})
\[
\begin{pmatrix} a & b \\ 0 & d \end{pmatrix}, \quad ad=\ell,\ a,d>0, \ \gcd(a,N)=1, \ 0 \leq b \leq d-1.
\]
Then arguing precisely as in \cite[Proposition 2.18]{HilbHAE} proves that
if $f \in \QJac_{k,L}(\Gamma_0(N))$ has Fourier expansion
\[ f(x,\tau) = \sum_{v} \sum_{r \in \BZ^n} c(v,r) q^v \zeta^r \]
then
\begin{equation} \label{Fourier expansion}
 (f|_{k,L} V_{\ell})(x,\tau)
=
 \sum_{v} \sum_{r \in \BZ^n} q^n \zeta^r
 \sum_{\substack{ a|(v,r,\ell) \\ \mathrm{gcd}(a,N)=1 }}
 a^{k-1} c\left( \frac{\ell v}{a^2}, \frac{r}{a} \right).
 \end{equation}
Moreover, as in \cite[Proposition 2.18]{HilbHAE} for all $\lambda \in \BZ^n$ we get
\begin{gather*}
\frac{d}{dG_2} (f |_{k,L} V_{\ell}) = \ell \left( \frac{d}{dG_2} f \right)\Big|_{k-2,L} V_{\ell} \\
\xi_{\lambda} (f |_{k,L} V_{\ell}) = \ell (\xi_{\lambda} f)|_{k-1,L}
\end{gather*}

\begin{proof}[Proof of Proposition~\ref{prop:Hecke operator}]
The above arguments about the transformation properties of $f|V_{\ell}$, its Fourier expansion and the holomorphic anomaly equation
work identifically if we start with $f \in \frac{1}{\Delta(q)^s} \QJac_{k,L}(\Gamma_0(N))$. We hence only have to show that for $N=2$, the
function $\Delta(\tau)^{\ell s} ( f|_{k,L} V_{\ell})$ is a quasi-Jacobi form.

Let $F = \ct^{-1}(f)$ be the non-holomorphic completion of $f$.
We hence have to show that
\[ F' = \Delta(\tau)^{\ell s} ( F|_{k,L} V_{\ell}) \]
satisfies the cusp condition given by Definition~\ref{defn:AHJ}(ii).
We only need to check the conditions for representatives of the coset $\Gamma_0(2) \setminus \SL_2(\BZ)$. Since $\Gamma_0(2)$ has two cusps, at $\tau \in \{ 0, i \infty \}$, these representatives can be chosen to be the identity matrix and $S = \begin{pmatrix} 0 & 1 \\ -1 & 0 \end{pmatrix}$.
The case of the identity matrix is clear by the Fourier-expansion \eqref{Fourier expansion}. For $S$, a system of representatives of $\Gamma_0(2) \setminus S_{\ell}$ can be chosen by an elementary argument (see \cite[App.C]{JS_CHL}) to be
\begin{equation} \label{Dsfdsdddd}
I_1 = \left\{ \begin{pmatrix} a & 0 \\ c & d \end{pmatrix} \ \middle| \  \quad ad=\ell,\ a,d>0, \ \gcd(a,N)=1, \ c=Nc_0, 0 \leq c_0 \leq a-1 \right\}.
\end{equation}
if $\ell$ is odd, and the set $I_1 \sqcup I_2$ if $\ell$ is even, where
%\eqref{Dsfdsdddd} together with
\begin{equation*}
I_2 = \left\{ \begin{pmatrix} a & -b \\ c & 0 \end{pmatrix} \ \middle| \  \quad bc=\ell,\ bc>0, \ \gcd(a,N)=1, \ c \equiv 0(N),\ 0 \leq a \leq c-1 \right\}.
\end{equation*}
%if $\ell$ is even.

We obtain that:
\begin{align*}
F|V_{\ell}|_{k, \ell L} S
& = \ell^{k/2-1} \sum_{\gamma \in I_1} (F|_{k,L} S) |_{k,L} (S^{-1} \gamma S) | U_{\sqrt{\ell}}  \\
& + 
\delta_{\ell \text{even}}
\ell^{k/2-1} \sum_{\gamma \in I_1} (F|_{k,L} \gamma S) | U_{\sqrt{\ell}} 
\end{align*}
By observing that $S^{-1} \gamma S = \begin{pmatrix} \delta & -\gamma \\ 0 & \alpha \end{pmatrix}$ for $\gamma \in I_1$,
and $\gamma S = \begin{pmatrix} -b & a \\ 0 & \gamma \end{pmatrix}$ for $\gamma \in I_2$, one checks immediately that the $\nu^i \alpha^j$-coefficients of $F|V_{\ell}|S$ 
have a Fourier expansion with lowest term at most $q^{-s \ell}$.
Hence $(\Delta(\tau)^{s \ell} (F|V_{\ell}))|S$ is bounded as $q \to 0$.
\end{proof}

\section{Elliptic holomorphic anomaly equation in relative geometries}
Let $\pi : X \to B$ be an elliptic fibration with section $\iota : B \to X$ and integral fibers such that $H^{2,0}(X) = 0$. Let $D \subset X$ be a smooth divisor and assume that $\pi$ restricts to an elliptic fibration $\pi_D : D \to A$ for a smooth divisor $A \subset B$. In \cite{RES} the $\pi$-relative Gromov-Witten classes of $(X,D)$ were studied and conjecturally linked to quasi-Jacobi forms.
A main conjecture is the holomorphic anomaly equation with respect to $G_2$ stated in \cite[Conj D]{RES}.
By the Lie algebra relations for quasi-Jacobi forms \cite[Sec.1]{RES}
the conjecture immediately implies an {\em elliptic} holomorphic anomaly equation.
The purpose of this subsection is to state this elliptic equation.
This is used in Theorem~\ref{thm:R HAE} in the main text.

We use the notation of \cite{RES} for Gromov-Witten classes.
In particular, let $N_{\iota}$ be the normal bundle of the section and
set $W=\iota_{\ast}[B] - \frac{1}{2} \pi^{\ast} c_1(N_{\iota}) \in H^2(X,\BQ)$.
Consider the natural splitting given in \cite[Sec.2.1.2]{RES},
\[ H^2(X,\BZ) \cong \BZ [B] \oplus \pi^{\ast} H^2(B,\BZ) \oplus \Lambda, \]
let $b_1, \ldots, b_n$ be a basis of the lattice $\Lambda$,
and identify $x=(x_1,\ldots, x_n) \in \BC^n$ with $\sum_i x_i b_i$. 

Let $\gamma_1, \ldots, \gamma_n \in H^{\ast}(X)$ and consider an ordered cohomology weighted partition
\[
\underline{\eta} = \big( (\eta_1, \delta_1), \ldots, (\eta_{l(\eta)}, \delta_{l(\eta)} ) \big), \quad \delta_i \in H^{\ast}(D).
\]
Let $\kk \in H_2(B,\BZ)$. The $\pi$-relative disconnected Gromov-Witten series of $X$ is then
\[
\CC_{g,\kk}^{\pi/D, \bullet}( \gamma_1, \ldots, \gamma_n ; \underline{\eta} )
=
\sum_{\pi_{\ast} \beta = \kk} e(x \cdot \beta) q^{W \cdot \beta}
\pi_{\ast} \left( \left[  \Mbar_{g,n}'(X/D, \beta; \eta) \right]^{\text{vir}} \prod_{i=1}^{n} \ev_i^{\ast}(\gamma_i) \prod_{i=1}^{l(\eta)} \ev_i^{\mathrm{rel} \ast}(\delta_i) \right).
\]
\begin{conj}[{\cite[Sec.4]{RES}}] \label{conj:RES} We have
\[ \CC_{g,\kk}^{\pi/D, \bullet}( \gamma_1, \ldots, \gamma_n ; \underline{\eta} )
\in H_{\ast}(\Mbar_{g,n}^{\bullet}(B/A,\kk;\eta)) \otimes \Delta(q)^{\frac{1}{2} c_1(N_{\iota}) \cdot \kk} \QJac_{\frac{1}{2} Q_{k}}. \]
Moreover, the cycle-valued quasi-Jacobi form $\CC_{g,\kk}^{\pi/D, \bullet}( \gamma_1, \ldots, \gamma_n ; \underline{\eta} )$
satisfies a holomorphic anomaly equation of the form
\[
\frac{d}{dG_2} \CC_{g,\kk}^{\pi/D, \bullet}( \gamma_1, \ldots, \gamma_n ; \underline{\eta} ) = (\text{4 explicit terms}),
\]
where we refer to \cite[Conj D]{RES} for the precise form of the terms.
\end{conj}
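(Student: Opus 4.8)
The plan is to establish the two assertions of Conjecture~\ref{conj:RES} — the membership of $\CC_{g,\kk}^{\pi/D,\bullet}(\gamma_1,\ldots,\gamma_n;\underline{\eta})$ in $H_{\ast}(\Mbar_{g,n}^{\bullet}(B/A,\kk;\eta))\otimes\Delta(q)^{\frac12 c_1(N_\iota)\cdot\kk}\QJac_{\frac12 Q_{\kk}}$ and the $G_2$-holomorphic anomaly equation — together, by a double induction on the genus $g$ and the base degree $\kk$. Being a lattice-index quasi-Jacobi form amounts to three transformation laws: the elliptic transformation under $x\mapsto x+\lambda\tau+\mu$, the $\SL_2(\BZ)$-modular transformation together with the predicted prefactor and pole order at the cusps, and the $G_2$-anomaly recording the failure of strict holomorphicity. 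I would prove these by three separate geometric mechanisms, which I describe in turn.

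The elliptic transformation law is the most accessible and follows from deformation invariance of relative Gromov-Witten classes together with the monodromy of $\pi$. Translation of the elliptic variable $x$ by an element of the index lattice corresponds to acting on the curve classes $\beta$ by the operators $t_\lambda(x)=(f\cdot x)\lambda-(\lambda\cdot x)f$ generated by the vanishing cycles in the fibres (the same operators appearing in Corollary~\ref{cor:elliptic anomaly}). These reflections lie in the monodromy group $\Mon$, so the cycle-valued series $\CC_{g,\kk}^{\pi/D,\bullet}$ are invariant under them; after unwinding the $q^{W\cdot\beta}e(x\cdot\beta)$ bookkeeping this is exactly the quasi-Jacobi elliptic transformation of index $\tfrac12 Q_{\kk}$. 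This step uses only the existence of the section $\iota$ and the integrality of the fibres.

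For the $G_2$-holomorphic anomaly equation I would degenerate the generic fibre $E$ of $\pi$ to a nodal rational curve, inducing a normal-crossings degeneration of the total space over the base, and apply the degeneration formula of \cite{Li1,Li2} fibrewise. Under the constant-term isomorphism $\mathrm{ct}$ the operator $\frac{d}{dG_2}$ is the formal derivative in the non-holomorphic datum $\nu=1/(8\pi\,\mathrm{Im}\,\tau)$, which geometrically records the insertion of a node on the fibre elliptic curve. The degeneration formula then distributes this node either onto a single smooth component, producing the genus-reduction term with a matched pair of insertions along the split fibre, or between two components, producing the splitting term; the descendent and relative-marking corrections come from comparing the $\psi$-classes across the gluing node. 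Matching these four boundary contributions against the four terms of \cite[Conj D]{RES} is a bookkeeping computation of exactly the type carried out for the trivial elliptic fibration in \cite[Cor.2]{HAE} and for $(R,E)$ in Section~\ref{subsec:Degeneration}; the key compatibility is that the fibrewise degeneration preserves the relative divisor, since $\pi$ restricts to an elliptic fibration $\pi_D:D\to A$.

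The main obstacle — and the reason Conjecture~\ref{conj:RES} is stated rather than proven — is the full $\SL_2(\BZ)$-modularity, namely the transformation under $\tau\mapsto-1/\tau$ together with the prefactor $\Delta(q)^{\frac12 c_1(N_\iota)\cdot\kk}$ and the controlled pole order at the second cusp. Unlike the previous two steps, this compares the fibre-class generating series at the two inequivalent cusps of the $q$-expansion and has no direct geometric source. The route I would take is to isolate the fibre-direction building blocks — the relative Gromov-Witten series of the fixed elliptic fibre relative to points, which are quasimodular by the classical evaluation of the stationary theory of an elliptic curve — and then to propagate their modularity to all $\kk$ and $g$ using the product and degeneration formulae together with the already-established $G_2$-anomaly equation as a bootstrap: the anomaly equation pins down the non-holomorphic completion, and a weight-and-pole-order count then forces the completion to be a genuine Jacobi form once the seed correlators are modular. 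Closing this bootstrap uniformly — controlling the finitely many seed correlators and the pole orders across all $g,\kk$ — is precisely the ingredient not available in general, and I expect it to be the crux of any complete proof.
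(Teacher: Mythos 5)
The statement you are trying to prove is not proven anywhere in this paper: it is stated as a \emph{conjecture}, imported from \cite[Conj.\ D]{RES}, and the paper only ever uses it in two ways — as a black box whose special cases for the rational elliptic surface $(R,E)$ are theorems of \cite{RES} (this is how Theorem~\ref{thm:R HAE} is justified, by citing Proposition 26 and Theorems 23--24 of \cite{RES}), and as a hypothesis from which the elliptic anomaly equation is formally derived in Proposition~\ref{prop:elliptic anomaly appendix} via the commutation relation $[\frac{d}{dG_2}, D_{\lambda}] = -2\xi_{\lambda}$ and the divisor equation. So there is no paper proof to match, and your proposal — which by its own closing admission cannot close the modularity bootstrap — is a strategy sketch for an open problem, not a proof. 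Note also that your logical architecture is inverted relative to the paper's: you propose to establish the elliptic transformation law first and independently, whereas in the paper (and in \cite{RES}) the elliptic anomaly equation is a \emph{consequence} of the $G_2$-anomaly equation through the Lie algebra of the Jacobi group, precisely because no independent geometric proof of it is available in general.

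Beyond the self-acknowledged gap, the two steps you claim as established do not hold up in the stated generality. First, your elliptic transformation argument assumes that the operators $e^{t_{\lambda}}$ are realized by monodromy, i.e.\ by deformations of the pair $(X,D)$; this paper proves such a statement only for the Enriques surface, and it requires the full strength of the Torelli theorem (Proposition~\ref{prop:Mon-tilde}, Corollary~\ref{cor:Monodromy Enriques}) — for a general elliptic fibration $\pi : X \to B$ with $H^{2,0}(X)=0$ there is no such monodromy result, and the hypothesis "integral fibers plus a section" does not supply one. Second, the proposed geometrization of $\frac{d}{dG_2}$ as "inserting a node by degenerating the fiber to a nodal rational curve" is not a valid mechanism: degenerating the fiber degenerates the total space, and the degeneration formula of \cite{Li1,Li2} computes the invariants of $X$ in terms of the special fiber of that degeneration — it says nothing about the $\nu$-derivative of the non-holomorphic completion of the generating series. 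In \cite{RES} and \cite{HAE} the anomaly equations, where proven, come from an entirely different source: torus localization, the exact quasimodular evaluation of the stationary theory of the elliptic curve, and an intricate induction through relative geometries such as $(\p^1 \times E, E)$; the "node insertion" picture is a physics heuristic with no known rigorous counterpart. Since both of your first two pillars fail and the third is conceded, the proposal does not constitute a proof, consistent with the statement's status in the paper as Conjecture~\ref{conj:RES}.
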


Here we are interested in the following consequence of this conjecture:

\begin{prop} \label{prop:elliptic anomaly appendix}
If Conjecture~\ref{conj:RES} holds for $X \to B$, then for any $\lambda \in \Lambda$ we have the elliptic holomorphic anomaly equation
\begin{align*}
\xi_{\lambda}  \CC_{g,\kk}^{\pi/D, \bullet}( \gamma_1, \ldots, \gamma_n ; \underline{\eta} )
& =
\sum_{i=1}^{n} \CC_{g,\kk}^{\pi/D, \bullet}( \gamma_1, \ldots, \gamma_{i-1}, t_{\lambda}(\gamma_i), \gamma_{i+1}, \ldots, \gamma_n ; \underline{\eta} ) \\
& +
\sum_{i=1}^{\ell(\eta)}
\CC_{g,\kk}^{\pi/D, \bullet}\left( \gamma_1, \ldots, \gamma_n ; \underline{\eta}\big|_{\delta_i \mapsto t_{\lambda|_{D}}(\delta_i)} \right)
\end{align*}
where $t_{\lambda}(\gamma) = \lambda \cup \pi^{\ast} \pi_{\ast}(\gamma) - \pi^{\ast} \pi_{\ast}( \lambda \cup \gamma)$,
and $\underline{\eta}\big|_{\delta_i \mapsto t_{\lambda|_{D}}(\delta_i)}$
stands for the cohomology weighted partition $\underline{\eta}$ but with the $i$-th weighting replaced by $t_{\lambda|_{D}}(\delta_i)$.

Moreover, if Conjecture~\ref{conj:RES} only holds numerically, i.e. after integrating against any tautological class pulled back from the moduli space of curves (see \cite{RES}),
then the elliptic anomaly equation above also holds numerically.
\end{prop}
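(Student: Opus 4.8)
The plan is to deduce the elliptic equation from the $G_2$-holomorphic anomaly equation of Conjecture~\ref{conj:RES} by means of the commutation relations among the anomaly operators recorded in \cite[Sec.1]{RES}. Write $D_{\lambda} = \sum_i \lambda_i \frac{d}{dx_i}$ for the holomorphic derivative in the elliptic variables in the direction $\lambda \in \Lambda$. On the algebra of (cycle-valued) quasi-Jacobi forms this operator raises the weight by $1$, while $\frac{d}{dG_2}$ lowers it by $2$ and $\xi_{\lambda}$ lowers it by $1$; the key input is the bracket relation
\[ \xi_{\lambda} = c\,\Big[ D_{\lambda},\, \tfrac{d}{dG_2} \Big] \]
on this algebra, valid up to the fixed normalising constant $c$ determined in \cite[Sec.1]{RES}. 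Applying it to $\CC := \CC_{g,\kk}^{\pi/D,\bullet}(\gamma_1, \ldots, \gamma_n; \underline{\eta})$ reduces the statement to evaluating
\[ \xi_{\lambda} \CC = c\,\Big( D_{\lambda} \tfrac{d}{dG_2} \CC - \tfrac{d}{dG_2} D_{\lambda} \CC \Big), \]
in which both inner operators are under control: $\frac{d}{dG_2}\CC$ is given by the four explicit terms of Conjecture~\ref{conj:RES}, and $D_{\lambda}$ has the geometric meaning discussed next.

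On the generating series, $\frac{1}{2\pi i} D_{\lambda}$ multiplies the coefficient of $e(x\cdot\beta)q^{W\cdot\beta}$ by $\lambda\cdot\beta$. By the divisor equation for relative stable maps this is the same relative series carrying one additional interior insertion $\tau_0(\lambda)$, corrected by the terms in which $\lambda$ meets the descendents and the relative markings (through $\lambda|_D$). In particular $D_{\lambda}\CC$ is again a finite combination of $\pi$-relative Gromov--Witten series of $(X,D)$ of the same shape, so the $G_2$-anomaly equation of Conjecture~\ref{conj:RES} applies to it verbatim and expresses $\frac{d}{dG_2}D_{\lambda}\CC$ as a further combination of the same four types of terms.

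Substituting both expansions into the commutator, one expands term by term. The factor $\lambda\cdot\beta$ is additive under the splitting $\beta = \beta_1 + \beta_2$ of a node and under the degeneration to the rubber target, so that the genus-reducing term and the splitting term of the anomaly equation are acted upon by $D_{\lambda}$ in the same way whether $D_{\lambda}$ is applied before or after $\frac{d}{dG_2}$; by the Leibniz rule these contributions therefore cancel in the difference. What does not cancel are the $\pi^{\ast}\pi_{\ast}$-type terms attached to the interior and relative markings: there the extra factor $\lambda\cdot\beta$ and the inserted divisor $\tau_0(\lambda)$ combine with $\pi^{\ast}\pi_{\ast}$, via the divisor equation applied to the raised descendent, to reassemble on each marking the operator $t_{\lambda}(\gamma) = \lambda \cup \pi^{\ast}\pi_{\ast}(\gamma) - \pi^{\ast}\pi_{\ast}(\lambda\cup\gamma)$ on the interior insertions $\gamma_i$ and $t_{\lambda|_D}$ on the relative insertions $\delta_i$. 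With $c$ chosen so these appear with coefficient $1$, this is exactly the asserted identity.

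For the numerical refinement, all three operators $\frac{d}{dG_2}$, $\xi_{\lambda}$ and $D_{\lambda}$ act coefficientwise in $q$ and $x$ and commute with pairing against a tautological class pulled back from $\Mbar_{g,n}$, the pullback $\iota^{\ast}$ appearing in the genus-reducing term being carried along unchanged; hence the identical manipulation proves the elliptic anomaly equation after integration whenever Conjecture~\ref{conj:RES} is assumed only numerically. I expect the main obstacle to be precisely the bookkeeping of the third paragraph: verifying that every contribution of the $G_2$-anomaly except the two $\pi^{\ast}\pi_{\ast}$-families cancels in the commutator, and that the surviving terms recombine, with the correct value of $c$, into $t_{\lambda}$ and $t_{\lambda|_D}$ with coefficient exactly one. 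This requires a careful use of the relative divisor equation against the rubber and relative-marking insertions, where the descendent corrections must be tracked.
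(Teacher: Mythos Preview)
Your approach is the same as the paper's: use the commutator $[\frac{d}{dG_2},D_{\lambda}]=-2\xi_{\lambda}$, interpret $D_{\lambda}$ via the divisor equation as inserting an extra marking with class $\lambda$, and compare the $G_2$-anomaly equation before and after this insertion. The paper writes this as
\[
-2\xi_{\lambda}\CC = p_{\ast}\Big(\tfrac{d}{dG_2}\CC_{g,\kk}^{\pi/D,\bullet}(\gamma_1,\ldots,\gamma_n,\lambda;\underline{\eta})\big|_{\Mbar^{\bullet,\dag}_{g,n+1}}\Big) - D_{\lambda}\tfrac{d}{dG_2}\CC,
\]
where $p$ forgets the extra marking, and then evaluates the difference term by term.

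Your description of the cancellation is not quite accurate, however. You assert that the genus-reducing (diagonal) and rubber-splitting terms cancel by additivity of $\lambda\cdot\beta$ and Leibniz, leaving only the two ``$\pi^{\ast}\pi_{\ast}$-type'' terms of the anomaly equation. In the actual computation all four terms contribute: the contributions from the first (diagonal) and second (rubber) terms do \emph{not} vanish but arise exactly from configurations in which the extra $\lambda$-marking lies on a genus-$0$ tail or bubble that gets contracted by $p$. Concretely, the first term yields $2\sum_i \CC(\ldots,\pi^{\ast}\pi_{\ast}(\lambda\cdot\gamma_i),\ldots)$, the third term (the interior $\psi$-comparison $p^{\ast}\psi_i-\psi_i$) yields $-2\sum_i \CC(\ldots,\lambda\cdot\pi^{\ast}\pi_{\ast}(\gamma_i),\ldots)$, and their sum is $-2\sum_i \CC(\ldots,t_{\lambda}(\gamma_i),\ldots)$. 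The second (rubber) and fourth (relative $\psi$) terms combine analogously to produce $-2\sum_i \CC(\ldots;\underline{\eta}|_{\delta_i\mapsto t_{\lambda|_D}(\delta_i)})$, after some cross terms between distinct relative markings cancel between them. So the operator $t_{\lambda}$ is assembled from two \emph{different} terms of the anomaly equation, not from a single $\pi^{\ast}\pi_{\ast}$-family. Your final caveat already anticipates that this bookkeeping is the crux; once you track it correctly, the argument goes through as in the paper.
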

\begin{proof}
This is argued precisely as in \cite[Lemma 16]{RES}.
If $\lambda=\sum_i \lambda_i b_i$ is the decomposition of $\lambda$ in our chosen basis, and $D_{\lambda} = \sum_i \lambda_i \frac{1}{2 \pi i} \frac{d}{dx_{i}}$, then one has the commutation relation:
\[ \left[ \frac{d}{dG_2}, D_{\lambda} \right] = -2 \xi_{\lambda}. \]
Let $p:\Mbar^{\bullet, \dag}_{g,n+1}(B,k) \to \Mbar^{\bullet}_{g,n}(B,k)$ be the morphism forgetting the $(n+1)$-th component, where the supscript '$\dag$' stands for the union of all connected components of $\Mbar^{\bullet}_{g,n+1}(B,k)$ where $p$ is defined.
Then by the divisor equation one has
\[ D_{\lambda} \CC_{g,\kk}^{\pi/D, \bullet}( \gamma_1, \ldots, \gamma_n ; \underline{\eta} ) =
p_{\ast}\left( \CC_{g,\kk}^{\pi/D, \bullet}( \gamma_1, \ldots, \gamma_n, \lambda ; \underline{\eta} )|_{\Mbar^{\bullet, \dag}_{g,n+1}(B,k)} \right). \]
Hence we obtain:
\begin{multline*}
-2 \xi_{\lambda} \CC_{g,\kk}^{\pi/D, \bullet}( \gamma_1, \ldots, \gamma_n ; \underline{\eta} )
=
p_{\ast}\left( \frac{d}{dG_2} \left( \CC_{g,\kk}^{\pi/D, \bullet}( \gamma_1, \ldots, \gamma_n, \lambda ; \underline{\eta} )|_{\Mbar^{\bullet, \dag}_{g,n+1}(B,k)} \right) \right) \\
- D_{\lambda} \frac{d}{dG_2} \CC_{g,\kk}^{\pi/D, \bullet}( \gamma_1, \ldots, \gamma_n ; \underline{\eta} ).
\end{multline*}
In this difference four terms contribute, corresponding to the four terms
of the $G_2$ holomorphic anomaly equation on $\Mbar^{\bullet}_{g,n+1}(B,k)$.

The first term arises when there is a contracted genus $0$ component with 3 markings labeld by some $i \leq n$, by $n+1$ and either $n+2$ or $n+3$ (these correspond to the relative diagonal insertion). 
By using the relative diagonal splitting (e.g. \cite[Sec.4.6]{Marked}) it evaluates to
\[
2 \sum_{i=1}^{n} \CC_{g,\kk}^{\pi/D, \bullet}( \gamma_1, \ldots, \gamma_{i-1}, \pi^{\ast}\pi_{\ast}(\lambda \cdot \gamma_i), \gamma_{i+1}, \ldots, \gamma_n ; \underline{\eta} ).
\]
%More precisely, since this is a component with degree 0 (!) in B, genus 0 and 3 markings, so the gluing consists just of gluing a genus 0 bubble, we can work numerically. So we can use \cite[Sec.4.6]{Marked} to split the diagonal. The term using the unrelative diagonal \Delta_B then evaluates to the invariant $\int_{[\Mbar_{0,3}(X/D,0)]^{\vir}} \ev_1^{\ast}(\gamma) \ev_2^{\ast}(\lambda) \ev_3^{\ast}(\Delta_{B,1}) = \int_{X} \gamma_i \lambda \Delta_{B,1}$, and yields the above term. The term where the (n+2) and (n+3) marked points lie on a bubble, must vanish, since the given component fixes the rubber. Moreover, note that the term where the (n+1,n+2,n+3) marked points all lie on g=0,n=3 component gives a vanishing contribution.

The second term arises from the second term in the holomorphic anomaly equation when the marking $(n+1)$ lies on the bubble, and the bubble is contracted after forgetting the marking. Since the $(n+1)$-th insertion can lie on a connected component with relative markings either $(b, \Delta_{A,\ell}^{\vee})$ and $(\eta_i,\delta_i)$, or $(b_i, \Delta_{D,\ell}^{\vee}$ and $(\eta_i,\delta_i)$ we get the contribution:
\begin{gather*}
2 \sum_{i=1}^{\ell(\eta)}
\CC_{g,\kk}^{\pi/D, \bullet}\left( \gamma_1, \ldots, \gamma_n ; \underline{\eta}\big|_{\delta_i \mapsto \pi_D^{\ast}\pi_{D \ast}( \lambda|_{D} \cdot \delta_i)} \right) 
+ 
2 \sum_{\substack{i,j=1 \\ i \neq j}}^{\ell(\eta)}
\frac{\eta_i}{\eta_j}
\CC_{g,\kk}^{\pi/D, \bullet}\left( \gamma_1, \ldots, \gamma_n ; \underline{\eta}\big|_{\substack{\delta_i \mapsto \lambda \cdot \delta_i \\ \delta_j \mapsto \pi_D^{\ast} \pi_{D \ast}(\delta_j)}} \right).
\end{gather*}
The third term arises from comparing $p^{\ast}(\psi_i)$ with $\psi_i$, corresponding to contributions when the $i$-th marking lies on a rational tail together with the $(n+1)$-th markings. 
There are again two choices corresponding to the distributions of the relative markings, giving the contribution
\[ -2 \sum_{i=1}^{n} \CC_{g,\kk}^{\pi/D, \bullet}( \gamma_1, \ldots, \gamma_{i-1}, \lambda \cdot \pi^{\ast}\pi_{\ast}(\gamma_i), \gamma_{i+1}, \ldots, \gamma_n ; \underline{\eta} ).
\]
Finally, the last term comes from comparing the relative $\psi$-classes
$p^{\ast}(\psi_i^{\rel})$ and $\psi_i^{\rel}$ which yields contributions
from bubbles which are contracted when forgetting the $(n+1)$-th marking (which lies in the rubber necessarily). This yields:
\begin{gather*}
-2 \sum_{i=1}^{\ell(\eta)}
\CC_{g,\kk}^{\pi/D, \bullet}\left( \gamma_1, \ldots, \gamma_n ; \underline{\eta}\big|_{\delta_i \mapsto \lambda|_{D} \cdot \pi_D^{\ast}\pi_{D \ast}( \delta_i)} \right) 
-2  \sum_{\substack{i,j=1 \\ i \neq j}}^{\ell(\eta)}
\frac{\eta_i}{\eta_j}
\CC_{g,\kk}^{\pi/D, \bullet}\left( \gamma_1, \ldots, \gamma_n ; \underline{\eta}\big|_{\substack{\delta_i \mapsto \lambda \cdot \delta_i \\ \delta_j \mapsto \pi_D^{\ast} \pi_{D \ast}(\delta_j)}} \right).
\end{gather*}
Adding up then yields the claim.
The second claim follows by cupping the above computations by an arbitrary tautological class.
\end{proof}

\section{Including the torsion}
\label{appendix:torsion}
Let $Y$ be an Enriques surface. Let $\widetilde{H}_2(Y,\BZ)$ denote the integral second homology of $Y$,
and let $H_2(Y,\BZ) = \widetilde{H}_2(Y,\BZ)/\mathrm{torsion}$. There is a natural exact sequence
\[ 0 \to \BZ_2 \to \widetilde{H}_2(Y,\BZ) \xrightarrow{[ - ]}
H_2(Y,\BZ) \to 0 \]
where the first map sends the generator to $c_1(\omega_Y) \cap [Y]$,
and the second map is written as $\gamma \mapsto [\gamma]$.
The sequence splits but not canonically.

For $\beta \in H_2(Y,\BZ)$ the moduli space of stable maps $\Mbar_g(Y,\beta)$ is
a disjoint union of two components $\Mbar_g(Y,\gamma)$ corresponding to maps of degree $\gamma \in \widetilde{H}_2(Y,\BZ)$ with $[\gamma] = \beta$. We define the Gromov-Witten invariants of degree $\gamma$ to be
\[ N_{g, \gamma} = \int_{[ \Mbar_g(Y,\gamma) ]^{\vir}} (-1)^{g-1} \lambda_{g-1}. \]
If $\gamma_1$ and $\gamma_2 = \gamma_1 + c_1(\omega_Y)$ are the two lifts of $\beta$, then
\[ N_{g,\beta} = N_{g, \gamma_1} + N_{g,\gamma_2}. \]

An interesting question is to compute these refined invariants $N_{g,\gamma}$.
We give two basic results in this direction:

\begin{prop} \label{prop:not 2 divisible}
If $\beta \in H_2(Y,\BZ)$ is not $2$-divisibile (that is $\beta/2 \notin H_2(Y,\BZ)$), then $N_{g,\gamma_1} = N_{g,\gamma_2}$ where $\gamma_1, \gamma_2 \in \widetilde{H}_2(Y,\BZ)$ are the two lifts of $\beta$.
\end{prop}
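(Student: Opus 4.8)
The proof will rest on the deformation invariance of Gromov--Witten invariants together with a monodromy computation on the \emph{integral} homology $\widetilde{H}_2(Y,\BZ)$. Let $\Mon^{\mathrm{int}}(Y)\subset \Aut(\widetilde{H}_2(Y,\BZ))$ be the group generated by parallel transport around loops in families of Enriques surfaces. By deformation invariance $N_{g,\gamma}=N_{g,\phi(\gamma)}$ for all $\phi\in\Mon^{\mathrm{int}}(Y)$, so it suffices to produce $\phi$ with $\phi(\gamma_1)=\gamma_2$. Writing $w=c_1(\omega_Y)\cap[Y]$ for the generator of the torsion (so that $\gamma_2=\gamma_1+w$), every element of $\Mon^{\mathrm{int}}(Y)$ fixes $w$ (it is the canonical class) and preserves the intersection form, and since $w$ is $2$-torsion it pairs to zero with every class. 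Hence any $\phi$ acting trivially on $H_2(Y,\BZ)$ has the form $\phi_\psi(x)=x+\psi([x])\,w$ for a homomorphism $\psi\colon H_2(Y,\BZ)\to\BZ_2$, and $\phi_\psi$ exchanges the two lifts of $\beta$ exactly when $\psi(\beta)=1$. Thus the task reduces to realising, by monodromy, some $\psi$ with $\psi(\beta)=1$.

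Let $\Psi\subset\Hom(H_2(Y,\BZ),\BZ_2)$ denote the subgroup of homomorphisms realised in this way, i.e.\ the image of $\ker\big(\Mon^{\mathrm{int}}(Y)\to\Mon(Y)\big)$. This kernel is normal, and by Corollary~\ref{cor:Monodromy Enriques} we have $\Mon(Y)=O^{+}(H^2(Y,\BZ))$; a direct computation with $\phi_\psi$ shows that conjugation by a lift of $\bar g\in O^{+}$ sends $\psi\mapsto\psi\circ\bar g^{-1}$, so $\Psi$ is invariant under $O^{+}(H^2(Y,\BZ))$ acting on $\Hom(H_2(Y,\BZ),\BZ_2)\cong H^2(Y,\BZ)\otimes\BF_2$. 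Because $U\oplus E_8(-1)$ is even, its reduction modulo $2$ carries a nondegenerate alternating form, and the reflections $s_\delta$ in $(-2)$-classes reduce to symplectic transvections whose vectors $\bar\delta$ span $H^2(Y,\BZ)\otimes\BF_2$; hence the image of $O^{+}$ in $Sp\big(H^2(Y,\BZ)\otimes\BF_2\big)$ acts irreducibly. Consequently $\Psi$ is either $0$ or all of $\Hom(H_2(Y,\BZ),\BF_2)$. Once a single nonzero element of $\Psi$ is exhibited it follows that $\Psi$ is everything; as $\beta$ is not $2$-divisible its reduction $\bar\beta$ is nonzero, so some $\psi\in\Psi$ has $\psi(\beta)=1$, and the proposition follows.

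It remains to produce one nontrivial $\psi\in\Psi$. Here I would use an elliptic fibration $\pi\colon Y\to\p^1$ with half-fibers $f_1,f_2$, for which $f_1-f_2=w$. In a family of such elliptic Enriques surfaces one can traverse a loop interchanging the two multiple fibers while fixing the elliptic fibration, a chosen $2$-section, and the fibrewise $E_8$-lattice modulo torsion; the resulting monodromy $\sigma$ is then the identity on $H_2(Y,\BZ)$ but sends $f_1\mapsto f_2=f_1+w$, so $\sigma=\phi_\psi$ with $\psi(f)=1$, giving the desired nonzero element. The main obstacle is precisely this last step: constructing the swap family carefully and checking that $\sigma$ is trivial modulo torsion. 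An alternative route to $\Psi\neq 0$ is the identity $T_{\delta+w}\circ T_\delta=\phi_{\psi_{\bar\delta}}$, where $T_\delta(x)=x+(x\cdot\delta)\delta$ is the integral Picard--Lefschetz operator attached to a nodal degeneration with vanishing cycle $\delta$ (so $\delta^2=-2$) and $\psi_{\bar\delta}(x)=(x\cdot\delta)\bmod 2$; this reduces $\Psi\neq0$ to realising both integral lifts $\delta$ and $\delta+w$ of a single $(-2)$-class as vanishing cycles of algebraic degenerations, which is the subtle point, since on a fixed surface only one of the two lifts of a $(-2)$-class is effective.
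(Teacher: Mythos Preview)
The paper's proof is a single sentence: it cites Knutsen \cite{Knutsen}, who shows that the pairs $(Y,\gamma_1)$ and $(Y,\gamma_2)$ are deformation equivalent if and only if $\beta$ is not $2$-divisible, and then invokes deformation invariance of Gromov--Witten invariants.

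Your approach is, in effect, an attempt to reprove Knutsen's result from scratch via an explicit monodromy computation on the integral homology. The reduction to showing $\Psi\neq 0$ is clean, and the irreducibility step is essentially correct (one small refinement: the reflections also preserve the quadratic form $q(x)=x^2/2\bmod 2$, so the image of $O^+$ lands in $O(q)\subset Sp$; irreducibility still holds for this orthogonal group on the $10$-dimensional $\BF_2$-space). However, as you candidly acknowledge, the proof is incomplete at the final and decisive step: producing a single nontrivial element of $\Psi$. Both of your suggested routes---constructing a family swapping the half-fibers with trivial mod-torsion monodromy, or realising both integral lifts $\delta$ and $\delta+w$ of a $(-2)$-class as vanishing cycles of algebraic degenerations---require genuine geometric input that you have not supplied. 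This is exactly the content of Knutsen's theorem (proved there by analysing the irreducibility of moduli spaces of numerically polarised Enriques surfaces), so your proposal correctly isolates the structure of the problem but defers the hard part to an unproved claim, whereas the paper simply cites the place where that claim is established.
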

\begin{proof}
By a recent result of Knutsen \cite{Knutsen} the pairs $(Y, \gamma_1)$ and $(Y,\gamma_2)$ are deformation equivalent if and only if $\beta$ is not $2$-divisible.
The claim hence follows from the deformation invariance of Gromov-Witten invariants.
\end{proof}

If $\beta$ is $2$-divisible, then the lifts $\gamma_1, \gamma_2$ are not deformation-equivalent: one of them is $2 \alpha$ for a class $\alpha \in \widetilde{H}_2(Y,\BZ)$, while the other is not.
Hence the invariants $N_{g,\gamma_1}$ and $N_{g, \gamma_2}$ may be different in this case. We give an example that they are in fact different:

Let $f_1, f_2$ be the half-fibers of an elliptic fibration, and let $f=[f_i]$.
The two lifts of $df$ are $d f_1$ and $(d-1) f_1 + f_2$.
Consider their difference
\[ N_{g,df}^{-} = N_{g, d f_1} - N_{g, (d-1) f_1 + f_2}. \]
%\[ N_{g,df} = N_{g, d f_1} + N_{g, (d-1) f_1 + f_2}. \]
\begin{prop}
\[
N_{g,df}^{-} = 
\begin{cases}
0 & \text{ if } d \text{ is odd}\\
N_{g, d f}
& \text{ if } d \text{ is even}
\end{cases}
\]
\end{prop}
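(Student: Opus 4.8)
The plan is to reduce everything to the geometry of the elliptic fibration $\pi : Y \to \p^1$ and to keep careful track of the torsion part of the curve class, i.e.\ of the parities with which the two half-fibers are covered. Both $df_1$ and $(d-1)f_1+f_2$ reduce to the fiber class $df$ modulo torsion, and both have zero intersection with the general fiber class $2f_1$ (since $f_1\cdot f_1 = f_1\cdot f_2 = 0$); hence any stable map in either degree has constant composition with $\pi$, so a \emph{connected} domain is carried into a single fiber of $\pi$. The fibers of $\pi$ are the generic smooth fibers and the $12$ nodal fibers, all of class $2f_1=2f_2$, together with the two double fibers over points $p_1,p_2\in\p^1$ whose reductions are the half-fibers $f_1$ and $f_2$. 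I would record as a short lemma the torsion bookkeeping: a connected curve mapped to a generic or nodal fiber as an $m$-fold cover has class $2mf_1$ (torsion $0$), while one mapped to $f_i$ as a $k$-fold cover has class $kf_i$, whose torsion part is $0$ for $i=1$ and $k\bmod 2$ for $i=2$, using $f_2-f_1 = c_1(\omega_Y)$ with $2(f_2-f_1)=0$.

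From this I would carry out the enumeration of contributions. A connected fiber-supported curve contributes to $N_{g,df_1}$ only as a degree-$d/2$ cover of a generic or nodal fiber (requiring $d$ even), as a degree-$d$ cover of $f_1$ over $p_1$, or as a degree-$d$ cover of $f_2$ over $p_2$ with $d$ even (so that $df_2=df_1$ integrally). It contributes to $N_{g,(d-1)f_1+f_2}$ only as a degree-$k$ cover of $f_2$ over $p_2$ with $k=d$ and $k$ odd, since this is the unique fiber type producing torsion $1$.

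For $d$ even the enumeration shows that \emph{no} connected fiber-supported curve realizes the class $(d-1)f_1+f_2$: an odd-degree cover of $f_2$ of even degree $d$ is impossible, and every other fiber type has torsion $0$. Hence $\Mbar_g(Y,(d-1)f_1+f_2)=\varnothing$ and $N_{g,(d-1)f_1+f_2}=0$, so $N^{-}_{g,df}=N_{g,df_1}=N_{g,df_1}+N_{g,(d-1)f_1+f_2}=N_{g,df}$, as claimed. For $d$ odd the enumeration isolates the two invariants completely: $N_{g,df_1}$ is exactly the contribution of the degree-$d$ covers of the half-fiber $f_1$, while $N_{g,(d-1)f_1+f_2}$ is exactly the contribution of the degree-$d$ covers of $f_2$ (all remaining fiber types force $d$ even). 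It then remains to prove that these two contributions coincide. I would deduce this from deformation invariance: in a suitable family of elliptic Enriques surfaces the two multiple fibers are interchanged by monodromy, inducing on $\widetilde{H}_2(Y,\BZ)$ the involution swapping $f_1\leftrightarrow f_2$; since $df_2=(d-1)f_1+f_2$ for $d$ odd, this gives $N_{g,df_1}=N_{g,df_2}=N_{g,(d-1)f_1+f_2}$ and therefore $N^{-}_{g,df}=0$.

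The main obstacle is precisely this last equality of the two half-fiber contributions in the odd case. To make the monodromy route rigorous one must exhibit a family whose monodromy acts on $\widetilde{H}_2$ by $f_1\leftrightarrow f_2$ (equivalently, use that the moduli of elliptic Enriques surfaces with an \emph{ordered} pair of half-fibers is a connected double cover whose deck transformation realizes the swap). An alternative, monodromy-free, argument notes that each half-fiber is a super-rigid elliptic curve in $K_Y$ — its normal bundle in $Y$ is the order-$2$ torsion line bundle $\CO_Y(f_i)|_{f_i}$ and $\omega_Y|_{f_i}$ is again $2$-torsion, so the normal bundle of $f_i$ in $K_Y$ has no sections or obstructions — whence the degree-$d$ cover contribution to $\int(-1)^{g-1}\lambda_{g-1}$ is the universal local contribution of a multiplicity-$2$ half-fiber and is independent of the isomorphism class of $(f_i, N_{f_i/Y})$. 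Establishing this universality (or the family with swap monodromy) is the only nonformal input; the reduction to single fibers and the torsion bookkeeping, and hence the entire even case, are elementary.
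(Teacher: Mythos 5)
Your even case is exactly the paper's argument: a connected stable map in a fiber class lands in a single fiber of $\pi$, the possible image cycles are $df_1$, $df_2$, or $(d/2)F_x$, and for $d$ even all of these have refined degree $df_1$, so $\Mbar_g(Y,(d-1)f_1+f_2)$ is empty and $N^{-}_{g,df}=N_{g,df_1}=N_{g,df}$. That part is complete and correct.

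For the odd case you diverge from the paper, and this is where your proposal has a gap that you yourself flag. You correctly isolate $N_{g,df_1}$ and $N_{g,(d-1)f_1+f_2}$ as the degree-$d$ multiple-cover contributions of $f_1$ and of $f_2$ respectively, but the equality of these two contributions is left resting on an unestablished input: either a family of Enriques surfaces whose monodromy swaps the half-fibers on \emph{integral} homology (note that the monodromy computation in the paper, Corollary~\ref{cor:Monodromy Enriques}, is only for cohomology modulo torsion, so it does not hand you this for free), or the universality of super-rigid local contributions, which requires knowing that the contribution depends only on the order of the torsion normal bundle and not on the $j$-invariants of $f_1$ and $f_2$ (which need not be isomorphic elliptic curves). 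Neither of these is needed: for $d$ odd the class $df$ is not $2$-divisible, so Proposition~\ref{prop:not 2 divisible} — proved immediately before this statement via Knutsen's deformation-equivalence result — applies verbatim to the two lifts $df_1$ and $(d-1)f_1+f_2$ and gives $N_{g,df_1}=N_{g,(d-1)f_1+f_2}$, hence $N^{-}_{g,df}=0$, in one line. So the gap is real as written but is closed by a result already available in the paper; if you prefer your self-contained routes, the half-fiber-swap monodromy or the Bryan--Pandharipande-type local contribution statement would each have to be proved or cited explicitly.
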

\begin{proof}
The case $d$ odd follows from Proposition~\ref{prop:not 2 divisible} so let us assume that $d$ is even. Stable maps $h : C \to Y$ in class $df$ must map to a single fiber:
Hence $h_{\ast}[C]$ as a divisor is either $d f_1$, $d f_2$ or $d/2 \cdot F_x$ where $F_x$ is a fiber which is not a half-fiber. Thus the refined degree of $h$ is always $d f_1$ and we get $N_{g, (d-1) f_1 + f_2} = 0$. This shows that
$N_{g, df}^{-} = N_{g, d f_1} = N_{g, df}$.
%\[ N_{g, df}^{-} = N_{g, d f_1} = N_{g, df}. \qedhere \]
\end{proof}

The last proposition shows that $N_{g, \gamma}$ contains non-trivial information.
It is plausible that the methods of this paper can be extended to also determine $N_{g,\gamma}$. However it would require to lift all steps in the proof to include torsion, which would take us too far here.

\end{document}